\documentclass[a4paper,twoside,11pt,reqno]{amsart}
\usepackage[margin=1in]{geometry}
\usepackage[english]{babel}
\usepackage[utf8]{inputenc}
\usepackage{amsmath}
\usepackage{amssymb}
\usepackage{amsfonts}
\usepackage{amsthm}
\usepackage{bm}
\usepackage{mathrsfs}
\usepackage{hyperref}
\usepackage{comment}
\hypersetup{
	colorlinks=true,
	linkcolor=blue,
	filecolor=blue,
	citecolor=purple,      
	urlcolor=cyan,
}
\usepackage{xcolor}
\usepackage{dsfont}
\usepackage{physics}
\usepackage{stmaryrd}
\usepackage{enumitem}

\usepackage{tikz-cd}
\usepackage{tikz}
\usepackage{tikzsymbols}
\usepackage{mathtools}
\usetikzlibrary{matrix}

\usepackage[matrix,arrow,cmtip]{xy} 
\SelectTips{cm}{}
\newdir{(}{{}*!/-5pt/@^{(}}
\newdir{(x}{{}*!/-5pt/@_{(}}
\newdir{+}{{}*!/-9pt/{}}
\newdir{>+}{@{>}*!/-9pt/{}}
\entrymodifiers={+!!<0pt,\fontdimen22\textfont2>}

\theoremstyle{plain}
\newtheorem{theorem}[subsubsection]{Theorem}

\newtheorem{lemma}[subsubsection]{Lemma}
\newtheorem{proposition}[subsubsection]{Proposition}

\newtheorem{corollary}[subsubsection]{Corollary}
\newtheorem{conjecture}[subsubsection]{Conjecture}
\newtheorem{properties}[subsubsection]{Properties}
\theoremstyle{definition}
\newtheorem{definition}[subsubsection]{Definition}
\newtheorem{definition-proposition}[subsubsection]{Definition/Proposition}

\newtheorem{question}[subsubsection]{Question}
\newtheorem{example}[subsubsection]{Example}
\theoremstyle{remark}
\newtheorem{remark}[subsubsection]{Remark}

\numberwithin{equation}{section}

\newcommand{\CC}{\mathbb{C}}
\renewcommand{\AA}{\mathbb{A}}

\newcommand{\QQ}{\mathbb{Q}}
\newcommand{\ZZ}{\mathbb{Z}}

\newcommand{\HH}{\mathrm{H}}

\newcommand{\Pic}{\mathrm{Pic}}

\newcommand{\Br}{\mathrm{Br}}
\newcommand{\PP}{\mathbb{P}}

\newcommand{\CH}{\mathrm{CH}}

\newcommand{\Gm}{\mathbb{G}_{m}}

\newcommand{\cH}{\mathcal{H}}

\newcommand{\Spec}{\mathrm{Spec}}

\title{Vector bundles on rational topologically contractible affine threefolds}
\date{}
\author{Haoyang Liu and Biman Roy}
\address{University of California, Santa Barbara}
\email{haoyangliu@ucsb.edu}
\address{University of Southern California}
\email{bimanroy@usc.edu}

\begin{document}

\begin{abstract}
The generalized Serre question asks whether every algebraic vector bundle on a topologically contractible smooth affine complex variety is trivial. We give an affirmative answer for rational threefolds. More generally, for a topologically contractible smooth affine complex threefold $X$, we prove that $\CH^2(X)=0$ whenever $X$ admits a smooth projective compactification whose Chow group of $0$-cycles is supported on a curve. This uncovers the link between the generalized van de Ven question, Bloch's conjecture and the generalized Serre question for threefolds. We also prove that every Koras--Russell threefold is rational and therefore has only trivial algebraic vector bundles, hence answer a question of Koras and Russell.
\end{abstract}
	\maketitle
    \tableofcontents
\section{Introduction}
In classical topology, for a CW complex $X$, the projection map $X \times I \to X$ induces a bijection between the isomorphism classes of rank-$n$ vector bundles
$$\operatorname{Vect}_n^k(X) \xrightarrow{\sim} \operatorname{Vect}_n^k(X \times I),$$
where $I = [0,1]$ is the closed unit interval and $\operatorname{Vect}_n^k(Y)$ denotes the set of all isomorphism classes of rank $n$ real or complex vector bundles over a topological space $Y$ (with $k = \mathbb{R}$ or $\mathbb{C}$). Moreover, the set-valued functor $X \mapsto \operatorname{Vect}^k_n(X)$ on the category of CW complexes is representable in the homotopy category of CW complexes. Specifically, the pullback of the universal bundle over the infinite Grassmannian $G_n(k^\infty)$ induces a bijection
$$
[X, G_n(k^\infty)] \xrightarrow{\sim} \operatorname{Vect}^k_n(X),
$$
where $[X, Y]$ denotes the set of homotopy classes of maps from $X$ to $Y$. Consequently, any real or complex (topological) vector bundle over a topologically contractible CW complex is trivial. More generally, this topological result also holds over any paracompact topological space.

Let $k$ be a field, and let $\mathbf{Sm}/k$ denote the category of smooth varieties over $k$. The situation in algebraic geometry differs from the topological picture. Here, the counterpart of the unit interval $[0,1]$ is the affine line $\mathbb{A}^1_k = \Spec\ k[t]$. The set-valued functor associating to a variety $X \in \mathbf{Sm}/k$ the set $\mathcal{V}_r(X)$ of isomorphism classes of algebraic vector bundles of rank $r$ over $X$ is not $\mathbb{A}^1$-invariant on $\mathbf{Sm}/k$. For instance, there exists a rank-$2$ bundle over $\mathbb{P}^1_{k} \times_{k} \mathbb{A}^1_{k}$ that is not the pullback of any vector bundle over $\mathbb{P}^1_{k}$ \cite[Example 5.2.4.7]{Asok2021}. The celebrated Quillen--Suslin theorem says that any algebraic vector bundle over $\mathbb{A}^n_k$ is trivial. Lindel proved that if the base $X \in \mathbf{Sm}/k$ is an affine variety, then the canonical projection $X \times_{k} \mathbb{A}^1_{k} \to X$ induces a bijection
$$
\mathcal{V}_r(X) \xrightarrow{\sim} \mathcal{V}_r(X \times_{k} \mathbb{A}^1_{k}).
$$

A smooth complex variety naturally carries the structure of a smooth complex manifold. Consequently, any topological vector bundle over a topologically contractible smooth complex variety is trivial. Inspired by this, the generalized Serre question asks the following:
\begin{question} (\cite[\S 8]{ZaidenbergSurvey}; \cite[Question 6, Chapter 5]{Asok2021}) \label{serre question}
   Let $X$ be a topologically contractible smooth affine variety over $\mathbb{C}$. Is every algebraic vector bundle over $X$ trivial?
\end{question}
Here the affineness assumption is crucial; there is a topologically contractible smooth strictly quasi-affine complex fourfold that admits a nontrivial rank-$2$ algebraic vector bundle \cite[Example 5.3.5.4]{Asok2021}. Fujita showed that topologically contractible smooth complex surfaces are affine \cite{Fujita1982OnTT}. However, it is not known whether every topologically contractible complex threefold is affine \cite[Question 5.3.5.5]{Asok2021}.

The Chow groups of algebraic cycles play an important role in the theory of algebraic vector bundles. The $i$-th Chern class associates to an algebraic vector bundle on $X \in \mathrm{Sm}/k$ an element of the Chow group $\CH^i(X)$ of codimension-$i$ cycles. For $X \in \mathrm{Sm}/k$, the projection map $X \times_k \mathbb{A}^n_k \to X$ induces an isomorphism
$$\CH^i(X) \to \CH^i(X \times_k \mathbb{A}^n_k).$$
To determine the triviality of algebraic vector bundles over a complex affine variety of dimension $d$, Serre's splitting principle allows us to consider only algebraic vector bundles of rank at most $d$. Any topologically contractible smooth complex variety has trivial Picard group \cite[Theorem 5.5.2.7]{Asok2021}, which coincides with $\CH^1(X)$. If $X$ is a smooth affine complex threefold, Asok--Fasel and Mohan Kumar--Murthy showed that the Chern class map
$$\mathcal{V}_2(X) \to \CH^1(X) \times \CH^2(X)$$
is a bijection \cite[Theorem 5.5.2.8]{Asok2021}. Gurjar and Shastri \cite{GSa, GSb} subsequently proved that topologically contractible smooth complex surfaces are indeed rational.
This in particular implies that the generalized Serre question has an affirmative answer in dimensions $\leq 2$.

\begin{theorem}\cite[Theorem 5.5.2.8]{Asok2021}
    Let $X$ be a topologically contractible smooth complex variety of dimension at most $2$. Then every algebraic vector bundle over $X$ is trivial.
\end{theorem}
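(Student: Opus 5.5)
We argue by dimension. Two facts are used throughout: $\Pic(X)=\CH^1(X)=0$ for every topologically contractible smooth complex variety \cite[Theorem 5.5.2.7]{Asok2021}, and Serre's splitting principle. On a smooth affine variety of dimension $d$, Serre's splitting principle says every vector bundle of rank $r>d$ splits off a trivial summand. Hence it suffices to treat bundles of rank at most $d$.

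If $\dim X=0$, then $X$ is a point and there is nothing to prove. If $\dim X=1$, then $X$ is a smooth contractible curve. Such a curve is noncompact, hence affine; in fact it is $\mathbb{A}^1$, since a smooth affine curve with trivial topology has genus $0$ and a single puncture. Every vector bundle on a smooth affine curve is a direct sum of a trivial bundle and a line bundle. So the result follows from $\Pic(X)=0$, or directly from Quillen--Suslin on $\mathbb{A}^1$.

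Now let $\dim X=2$. By Fujita \cite{Fujita1982OnTT}, $X$ is affine. By Serre's splitting principle, any bundle of rank $\geq 3$ on $X$ is of the form $E'\oplus\mathcal{O}^{r-2}$ with $E'$ of rank $2$. A rank-$2$ bundle then has the form $E'\cong \mathcal{O}\oplus\det E'$ as soon as it admits a nowhere-vanishing section. By Murthy's theorem for smooth affine surfaces, the obstruction to such a section is the top Chern class $c_2(E')\in\CH^2(X)=\CH_0(X)$. Alternatively, the Chern class map $\mathcal{V}_2(X)\to \CH^1(X)\times\CH^2(X)$ is a bijection for smooth affine surfaces, the low-dimensional analogue of the Asok--Fasel/Mohan Kumar--Murthy statement. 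Either way, we reduce to showing $\CH^1(X)=0$ and $\CH_0(X)=0$. Once both vanish, cancellation for stably free modules on affine surfaces gives $E'\cong\mathcal{O}\oplus\mathcal{O}$, and rank-one bundles are trivial because $\Pic(X)=0$.

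The main step is $\CH_0(X)=0$, and here we use the Gurjar--Shastri theorem \cite{GSa, GSb} that $X$ is rational. Choose a smooth projective compactification $\bar X\supset X$ with boundary $D=\bar X\setminus X$. Since $\bar X$ is a smooth projective rational surface, $\CH_0(\bar X)\cong\ZZ$ via the degree map, because rational equivalence of $0$-cycles is a birational invariant. The localization sequence
\[
\CH_0(D)\to\CH_0(\bar X)\to\CH_0(X)\to 0
\]
then shows $\CH_0(X)$ is a quotient of $\ZZ/\deg(\CH_0(D))$. Since $D$ is nonempty, a point on $D$ has degree $1$, so $\CH_0(X)=0$.

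The only genuinely deep input is the rationality theorem of Gurjar--Shastri, which we take as given. The remaining care lies in invoking the correct rank-$2$ classification on affine surfaces: either Murthy's splitting theorem with Suslin cancellation, or the bijectivity of the Chern class map in dimension $2$.
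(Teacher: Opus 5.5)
Your proposal is correct and follows the same route the paper indicates; the paper itself only cites Asok's survey and sketches the argument in the introduction. That route is: Fujita for affineness, $\Pic(X)=0$, and Gurjar--Shastri rationality giving $\CH_0(X)=0$ via the localization sequence on a smooth projective compactification, followed by Serre splitting and the Chern-class classification of rank-$2$ bundles on smooth affine surfaces. The final appeal to cancellation is redundant, since $c_2(E')=0$ together with $\Pic(X)=0$ already gives $E'\cong\mathcal{O}\oplus\det E'\cong\mathcal{O}^{\oplus 2}$.
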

More generally, Asok showed that a topologically contractible smooth complex surface has Voevodsky motive isomorphic to the motive of a point \cite[Theorem 1]{Asok_motive}.
Mohan Kumar--Murthy showed that the Chern class map
$$\mathcal{V}_3(X) \to \CH^1(X) \times \CH^2(X) \times \CH^3(X)$$
is a bijection \cite[Theorem 2.1(iii)]{MohanKumarMurthy}. Thus, in dimension $3$, the following criterion is known:
\begin{theorem} \cite[Theorem 5.5.2.10]{Asok2021}\label{threefold}
    Let $X$ be a topologically contractible smooth affine threefold over $\mathbb{C}$. Then every algebraic vector bundle over $X$ is trivial if and only if $\CH^2(X)$ and $\CH^3(X)$ are trivial.
\end{theorem}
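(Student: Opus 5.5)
The plan is to reduce the statement to the Chern class bijections recalled just above, using that $\Pic(X)=\CH^1(X)=0$ for topologically contractible $X$. Serre's splitting theorem says that on a smooth affine variety of dimension $3$, every vector bundle of rank $r>3$ splits as $E'\oplus\mathcal{O}^{r-3}$ with $\operatorname{rk}E'=3$. So it suffices to handle ranks $1,2,3$. I would also use cancellation: by Suslin's cancellation theorem, stably free bundles of rank $\geq \dim X=3$ on a smooth affine threefold are free. This lets the rank-$3$ case be compared with higher ranks when needed.

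For the ``if'' direction, assume $\CH^2(X)=\CH^3(X)=0$. Rank $1$ bundles are classified by $\Pic(X)=0$, so they are trivial. For rank $2$, the Asok--Fasel/Mohan Kumar--Murthy bijection $\mathcal{V}_2(X)\to \CH^1(X)\times\CH^2(X)$ has trivial target, so every rank-$2$ bundle is isomorphic to $\mathcal{O}^2$. For rank $3$, the Mohan Kumar--Murthy bijection $\mathcal{V}_3(X)\to\CH^1(X)\times\CH^2(X)\times\CH^3(X)$ again has trivial target, so every rank-$3$ bundle is trivial. Higher ranks then follow from Serre splitting as above.

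For the ``only if'' direction, assume every vector bundle is trivial. Surjectivity of the rank-$3$ Chern class map shows that every triple $(0,\alpha,\beta)\in \CH^1\times\CH^2\times\CH^3$ is realized as the Chern classes of some rank-$3$ bundle $E$. By hypothesis $E\cong\mathcal{O}^3$, whose Chern classes vanish, so $\alpha=0$ and $\beta=0$. Hence $\CH^2(X)=\CH^3(X)=0$. One could instead use the rank-$2$ bijection for $\CH^2$, but the rank-$3$ bijection handles both groups at once.

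There is no real obstacle here: the content lies entirely in the cited Chern class bijections, which are taken as given. The only point needing care is that these bijections hold in the stated form on smooth affine threefolds over $\CC$, which is algebraically closed. In particular the rank-$3$ statement of Mohan Kumar--Murthy requires $\dim X=3$ and an algebraically closed base field, and both hypotheses are met.
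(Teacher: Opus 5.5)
Your proposal is correct and follows the same route the paper sketches before citing the result: $\Pic(X)=0$ for contractible $X$, Serre splitting to reduce to ranks $\le 3$, and the Asok--Fasel/Mohan Kumar--Murthy Chern class bijections in ranks $2$ and $3$, with surjectivity of the rank-$3$ map giving the converse. The appeal to Suslin cancellation is unnecessary, since Serre splitting already writes any bundle of rank $>3$ as a rank-$3$ bundle plus a trivial summand.
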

However, for a topologically contractible smooth complex variety of dimension greater than $2$, it is not known whether its Chow groups, or more generally its Voevodsky motive, are trivial.
Thus, the generalized Serre question is open in higher dimensions. Unless otherwise stated, we assume that the base field is $\mathbb{C}$.

A variety over $\mathbb{C}$ is called rational if it is birational to the projective space $\mathbb{P}^n_\mathbb{C}$. In dimension $1$, the affine line $\mathbb{A}^1_\mathbb{C}$ is the only topologically contractible smooth complex curve. Gurjar and Shastri \cite{GSa, GSb} proved that topologically contractible smooth complex surfaces are indeed rational. However, the situation in dimensions three and higher is not known in general.
In this context, the generalized van de Ven question asks the following: 
\begin{question} \cite[Question 8]{Asok2021} \label{van question}
   Let $X$ be a topologically contractible smooth complex variety. Is $X$ rational?
\end{question}

There are also several important classes of ``nearly rational" varieties, namely, stably rational, retract rational, and unirational varieties.
All these varieties belong to a general class, namely rationally connected varieties. A smooth proper variety $X$ is called rationally connected if any two general points of $X$ can be joined by a $\mathbb{P}^1$ in $X$. Following the convention used here for non-proper varieties, a smooth variety is called rationally connected if it admits a smooth projective rationally connected compactification; this condition is independent of the chosen smooth projective compactification. If the dimension of $X$ is at most $2$, then rational connectedness is equivalent to rationality.
A rationally connected variety $X$ is uniruled; that is, $X$ admits a dominant rational map \cite[Section 1.1, Chapter IV]{Kollar}
$$\mathbb{P}^1 \times Y \dashrightarrow X \quad \text{with} \quad \dim(Y) = \dim(X) - 1.$$

Several cohomological invariants have been studied to detect rationality properties. Artin--Mumford constructed a unirational threefold $X$ that is not rational \cite{ArtinMumford} by showing that the torsion subgroup of $H^3(X, \mathbb{Z})$ is nontrivial. They showed that $H^3(X, \mathbb{Z})_{\mathrm{tors}}$ is a birational invariant for smooth projective varieties $X$, and this group is related to
the Brauer group $\Br(X):= H^2_{\acute{e}t}(X, \mathbb{G}_m)$, which is a torsion group.
However, the Artin--Mumford example cannot contain any topologically contractible affine open subvariety (Corollary \ref{contractible trivial Brauer group}). This suggests that the generalized van de Ven question may admit a positive answer for threefolds.

Hodge theory plays an important role in the study of projective varieties. A smooth complex projective variety $X$ is a K\"ahler manifold. The singular cohomology $H^n(X, \mathbb{C})$ admits a canonical Hodge decomposition
$$ H^n(X,\mathbb{C})= \bigoplus_{p+q=n}H^{p,q}(X),
H^{p,q}(X)\cong H^q(X, \Omega^p_X),
h^{p,q}(X)= \dim_\mathbb{C}(H^{p,q}(X)).$$
Here $\Omega^p_X$ is the sheaf of holomorphic $p$-forms on $X$ \cite[Chapter 6]{voisin1}. The integers
$$P_{m,n}(X):= h^0(X, (\Omega_X^n)^{\otimes m}) = \dim_\mathbb{C}\Gamma(X,(\Omega_X^n)^{\otimes m})$$
are important birational invariants of a smooth projective variety $X$ over $\mathbb{C}$ for integers $m,n \geq 0$.

If $X$ is rationally connected, then all $P_{m,n}$ vanish for $m,n \geq 1$. Castelnuovo's rationality criterion states that a smooth projective surface $S$ is rational if and only if $q(S)=h^1(S,\mathcal O_S)=0$ and $P_{2,2}(S)=0$. A projective variety $X$ of dimension $d$ for which the integers $P_{m,d}$ vanish for every $m \geq 1$ is said to have negative Kodaira dimension. An uniruled variety has negative Kodaira dimension. In dimension $3$, the converse follows from three-dimensional minimal model theory; see \cite{Kollar}.

    A smooth affine variety $X$ over $\mathbb{C}$ admits mixed Hodge structure \cite{PMIHES_1971__40__5_0}. Therefore, if $\bar{X}$ is a smooth projective compactification of $X$ such that $X$ is topologically contractible and $\bar{X} \setminus X$ is a divisor with simple normal crossings, then the Hodge numbers $h^{p,0}(\bar{X}) = 0$, for $p \geq 1$ (Theorem \ref{regular q-forms of contractible}). On the other hand, if $\bar{X}$ is rationally connected, then $\CH_0(\bar{X}) \cong \mathbb{Z}$. More generally, the generalized Mumford theorem \cite[Theorem 3.13]{voisinannals} implies that if $\CH_0(\bar{X})$ is isomorphic to $\mathbb{Z}$, then $h^{p,0}(\bar{X}) = 0$. Bloch's conjecture asks for the converse:
    \begin{conjecture} \cite[Conjecture 1.11]{voisinannals} \label{bloch}
      Let $X$ be a smooth projective variety over $\mathbb{C}$ such that $h^{p,0}(X) = 0$ for every $p \geq 1$. Then $\CH_0(X) \cong \mathbb{Z}$.
    \end{conjecture}
Assume in addition that the affine variety $X$ is topologically contractible. Then $\bar{X}$ is simply connected (Theorem \ref{simply connected}). Therefore, $\CH_0(\bar{X}) \cong \mathbb{Z}$ is equivalent to the fact that $\CH_0(\bar{X})$ is supported on a curve (Corollary \ref{main corollary}). The main theorem of this article is the following:
\begin{theorem} (Theorem \ref{main theorem})
    Let $X$ be a topologically contractible smooth affine complex threefold that admits a smooth projective compactification $\bar{X}$ such that $D=\bar{X} \setminus X$ is a divisor with simple normal crossings. Assume also that $\CH_0(\bar{X})$ is isomorphic to $\mathbb{Z}$. Then $\CH_1(X)$
    is trivial.
    \end{theorem}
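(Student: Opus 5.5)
The plan is to use the localization sequence for the open immersion $X \subset \bar{X}$ with closed complement $D$,
$$\CH_1(D) \xrightarrow{i_*} \CH_1(\bar{X}) \to \CH_1(X) \to 0,$$
and to reduce the statement to showing that $i_*$ is surjective, i.e. that every $1$-cycle on $\bar{X}$ is rationally equivalent to a cycle supported on $D$. I would split $\CH_1(\bar{X}) = \CH^2(\bar{X})$ along the filtration by homological and algebraic equivalence and treat each graded piece separately.

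\emph{Step 1 (topology).} Since $X$ is topologically contractible, Poincaré duality gives $H^i_c(X,\ZZ)=0$ for $i\neq 6$. The long exact sequence $\cdots\to H^i_c(X)\to H^i(\bar{X})\to H^i(D)\to\cdots$ then yields isomorphisms $H^i(\bar{X},\ZZ)\cong H^i(D,\ZZ)$ for $i\le 4$. Dually, $H_2(D,\ZZ)\to H_2(\bar{X},\ZZ)$ and $H_3(D,\ZZ)\to H_3(\bar{X},\ZZ)$ are isomorphisms. Since $D$ is an snc surface, $H^4(D,\ZZ)=\bigoplus_j \ZZ[D_j]$. Consequently the cycle class of any curve on $\bar{X}$ equals the class of an integral combination of curves lying on the components $D_j$, for instance the curves dual to the $D_j$ under the intersection pairing. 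This reduces the problem to homologically trivial cycles: it suffices to show that $\CH^2(\bar{X})_{\mathrm{hom}}$ lies in the image of $i_*$.

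\emph{Step 2 (decomposition of the diagonal).} Since $\CH_0(\bar{X})\cong\ZZ$, Bloch--Srinivas gives a decomposition $N\Delta_{\bar{X}} = N(\bar{X}\times x) + Z$ in $\CH^3(\bar{X}\times\bar{X})$, where $Z$ is supported on $\bar{X}\times W$ for some proper closed subset $W$. From this I would extract three consequences for codimension $2$:
\begin{itemize}
\item homological and algebraic equivalence agree, so the Griffiths group vanishes;
\item $\CH^2(\bar{X})_{\mathrm{alg}}$ is weakly representable;
\item the Abel--Jacobi map $\CH^2(\bar{X})_{\mathrm{alg}}\to J^2(\bar{X})$ is an isomorphism onto an abelian variety. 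Injectivity on torsion comes from Merkurjev--Suslin and Bloch's result that the Abel--Jacobi map is injective on torsion in codimension $2$.
\end{itemize}
For the Griffiths group to vanish integrally, I would also use that $\bar{X}$ is simply connected (Theorem \ref{simply connected}), so that $\Br(\bar{X})=0$, and I would exploit the structure of $H^3(\bar{X},\ZZ)\cong H^3(D,\ZZ)$ to control the unramified $H^3$ obstruction to integral statements. This integral control is where I expect the \textbf{main obstacle} to lie. Bloch--Srinivas naturally works only up to the factor $N$, and killing torsion in $\mathrm{Griff}^2$ requires an integral Hodge/unramified-cohomology input. My first attempt would be to use $\CH_0(\bar{X})\cong\ZZ$ integrally, i.e. universal $\CH_0$-triviality over $\CC(\bar{X})$ via the simple connectivity. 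If that only holds after passing to finite-index subgroups, I would fall back on the degree-$4$ isomorphism from Step 1 to show that torsion classes already come from $D$.

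\emph{Step 3 (surjectivity onto $J^2$).} It remains to show that $\CH_1(D)_{\mathrm{alg}}\to J^2(\bar{X})$ is surjective. Since $h^{3,0}(\bar{X})=0$ (Theorem \ref{regular q-forms of contractible}), $J^2(\bar{X})$ is the algebraic intermediate Jacobian, and its lattice is $H_3(\bar{X},\ZZ)$. By Step 1 this lattice is the image of $H_3(D,\ZZ)$, which comes from $H^1$ of the components $D_j$ and the graph of the snc configuration. The Abel--Jacobi map restricted to cycles on the $D_j$ factors through $\Pic^0(\widetilde{D_j})\to J^2(\bar{X})$, and the induced map on $H_1$-lattices is the Gysin map, which is onto by Step 1. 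Hence the image is the whole abelian variety $J^2(\bar{X})$.

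Combining the three steps, every class in $\CH_1(\bar{X})$ is rationally equivalent to a cycle supported on $D$. By the localization sequence, $\CH_1(X)=\CH^2(X)=0$.
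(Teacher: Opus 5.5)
Your strategy is viable. For the homologically trivial part it is the paper's own argument (Lemma~\ref{homologically trivial}): when $\CH_0(\bar X)\cong\ZZ$, Abel--Jacobi gives $\CH^2(\bar X)_{\mathrm{hom}}\cong J^3(\bar X)$ integrally, and $\Pic^0$ of the components surjects onto $J^3(\bar X)$.

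The difference is on the cycle-class side. You want integral surjectivity of $\CH_1(D)$ onto the algebraic classes in $H_2(\bar X,\ZZ)$, which gives $\CH_1(X)=0$ directly. The paper proves only a rational version, using Lefschetz $(1,1)$ and a Hodge-theoretic section of $H^2(\widetilde D,\QQ)\to H^4(\bar X,\QQ)$. It deduces that $\CH_1(X)$ is torsion and finitely generated, because $\CH_1(\bar X)_{\mathrm{hom}}\subseteq\operatorname{Im}j_*$ and $\CH_1(\bar X)/\CH_1(\bar X)_{\mathrm{hom}}$ embeds in $H^4(\bar X,\ZZ)$. It then kills $\CH_1(X)$ with Srinivas's theorem (Theorem~\ref{srinivas}), using $H^3(X,\QQ)=0=H^4(X,\ZZ)$.

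\textbf{The gap is in Step 1.} This is exactly where your route needs integrality, and your argument does not establish it. The facts $H^4(D,\ZZ)\cong\bigoplus_j\ZZ$ and $H_2(D,\ZZ)\cong H_2(\bar X,\ZZ)$ say nothing about whether classes in $H_2(D,\ZZ)$ are represented by $1$-cycles on $D$. The existence of ``curves dual to the $D_j$'' is precisely what has to be proved; a priori some $D_j$ could carry transcendental classes. Here is one way to repair it.
\begin{enumerate}
\item In the Mayer--Vietoris spectral sequence for $D=\bigcup_j D_j$, the contributions to $H_2(D,\ZZ)$ from $H_1$ of the double curves and from $H_0$ of the triple points are torsion-free, since there are no quadruple points.
\item Rationally these contributions are $\mathrm{Gr}^W_{-1}$ and $\mathrm{Gr}^W_{0}$ of $H_2(D,\QQ)$. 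They vanish because $H_2(D,\QQ)\cong H_2(\bar X,\QQ)$ is pure.
\item Hence $\bigoplus_j H_2(D_j,\ZZ)\to H_2(\bar X,\ZZ)$ is onto, with kernel generated by the signed classes of the double curves.
\item This kernel is of type $(-1,-1)$, so every lift of an integral Hodge class is again a Hodge class. By Lefschetz $(1,1)$ on the smooth surfaces $D_j$, the lift is algebraic.
\end{enumerate}

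\textbf{Step 3 has the same omission in degree $3$.} You must kill the ``graph'' part of $H_3(D)$, which comes from $H_2$ of the double curves and has weight $-2$. Purity again does this and gives $\bigoplus_j H^1(D_j,\QQ)\twoheadrightarrow H^3(\bar X,\QQ)$; the paper gets this from Deligne's weight lemma.

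\textbf{The ``main obstacle'' you flag in Step 2 is not one.} The integral statements there are standard consequences of the decomposition of the diagonal combined with Merkurjev--Suslin (injectivity of Abel--Jacobi on torsion). In fact you only need injectivity of Abel--Jacobi on $\CH^2(\bar X)_{\mathrm{hom}}$. No unramified $H^3$ input is needed once Step 1 is carried out on the surfaces $D_j$.

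With these repairs your proof is complete and avoids Srinivas's theorem. The paper's route, in exchange, needs only rational Hodge theory on the cycle-class side.
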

    Therefore, using the Chow-theoretic criteria for the triviality of vector bundles, we obtain the following partial answer to the generalized Serre question in dimension $3$:

\begin{corollary}(Corollary \ref{main corollary})
    Let $X$ be a topologically contractible smooth affine complex threefold that admits a smooth projective compactification $\bar{X}$ such that $D=\bar{X} \setminus X$ is a divisor with simple normal crossings. Assume also that $X$ satisfies one of the following properties:
    \begin{enumerate}
    \item $\CH_0(\bar{X})$ is supported on a curve (or equivalently, $\CH_0(\bar{X}) \cong \mathbb{Z}$).
      \item $\bar{X}$ is rationally connected (for example, $\bar{X}$ is a Fano variety).
 \end{enumerate}       
        Then every algebraic vector bundle over $X$ is trivial.
         \end{corollary}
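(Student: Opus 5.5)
The corollary follows from Theorem \ref{threefold}: it suffices to show $\CH^2(X)=\CH_1(X)=0$ and $\CH^3(X)=\CH_0(X)=0$. Triviality of $\CH^1(X)=\Pic(X)$ is already known for topologically contractible varieties. The plan is therefore to reduce both hypotheses (1) and (2) to the single condition $\CH_0(\bar{X})\cong\mathbb{Z}$, and then to invoke the main theorem for $\CH_1(X)$ and a direct localization argument for $\CH_0(X)$.

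\textbf{Reduction to $\CH_0(\bar X)\cong\ZZ$.} If $\bar X$ is rationally connected, any two points lie on a chain of rational curves, so all points are rationally equivalent and $\CH_0(\bar X)\cong\ZZ$ via the degree. Fano varieties are rationally connected by Campana and Koll\'ar--Miyaoka--Mori, so (2) implies (1).

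For (1), suppose $\CH_0(\bar X)$ is supported on a curve $C$. By Bloch--Srinivas decomposition of the diagonal, this forces $h^{p,0}$-type constraints, and $\CH_0(\bar X)$ is a quotient of $\CH_0(\tilde C)$ for the normalization $\tilde C$. Its degree-zero part is then a quotient of the abelian varieties $J(\tilde C_i)$ and is representable by the Albanese. Since $\bar X$ is simply connected (Theorem \ref{simply connected}), $H^1(\bar X)=0$ and $\mathrm{Alb}(\bar X)=0$. Roitman's theorem then gives that the torsion of $\CH_0(\bar X)_{\deg 0}$ vanishes, and the divisible group $\CH_0(\bar X)_{\deg 0}$, being a quotient of an abelian variety that maps trivially to the Albanese, is zero. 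This is precisely the equivalence the paper asserts in Corollary \ref{main corollary}, and I will use that equivalence.

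\textbf{$\CH_0(X)=0$.} Use the localization sequence
$$\CH_0(D)\to\CH_0(\bar X)\to\CH_0(X)\to 0.$$
Since $\CH_0(\bar X)\cong\ZZ$ is generated by the class of any point, and $D$ is nonempty because $X$ is affine and not proper, the first map is surjective. Hence $\CH_0(X)=0$.

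\textbf{$\CH_1(X)=0$.} Apply Theorem \ref{main theorem}, whose hypotheses are exactly those now established: $X$ is topologically contractible and affine, $\bar X$ is a smooth projective compactification with simple normal crossing boundary, and $\CH_0(\bar X)\cong\ZZ$.

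Combining these, Theorem \ref{threefold} shows that every vector bundle on $X$ is trivial. The main obstacle is the equivalence in (1) between support on a curve and $\CH_0(\bar X)\cong\ZZ$, and within it the vanishing of the degree-zero part. The argument above relies on Roitman's theorem together with the vanishing of the Albanese, and I expect this step to need the most care.
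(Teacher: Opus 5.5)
Your overall architecture matches the paper's: reduce to $\CH_0(\bar X)\cong\ZZ$, get $\CH_0(X)=0$ by localization, get $\CH_1(X)=0$ from Theorem \ref{main theorem}, and conclude with Theorem \ref{threefold}. Under hypothesis (2) this is complete.

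The gap is in the implication from (1) to $\CH_0(\bar X)\cong\ZZ$, which you need for $\CH_0(X)=0$. From support on a (connected) curve $C$ you correctly get that $\CH_0(\bar X)_{\deg 0}$ is an image of $\bigoplus_i J(\tilde C_i)$, hence divisible. From $\mathrm{Alb}(\bar X)=0$ and Roitman you correctly get that it is torsion-free. But a divisible torsion-free group is merely a $\QQ$-vector space, and being a quotient of an abelian variety does not make it vanish: $A/A_{\mathrm{tors}}$ is a nonzero divisible torsion-free quotient of any positive-dimensional abelian variety $A$. The universal property of the Albanese concerns morphisms out of $\bar X$, not homomorphisms from $J(\tilde C)$ onto $\CH_0(\bar X)$. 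So ``maps trivially to the Albanese'' adds nothing here. Your appeal to Bloch--Srinivas for ``$h^{p,0}$-type constraints'' does not supply the missing step. Falling back on ``the equivalence the paper asserts'' is circular, since that equivalence is part of the statement you are proving.

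What is missing is that the Albanese kernel is torsion, which is how the paper argues. Bloch--Srinivas give $N\Delta_{\bar X}=Z_1+Z_2$ in $\CH^3(\bar X\times\bar X)$, with $Z_1$ supported on $\bar X\times C$ and $Z_2$ supported on $W\times\bar X$ for some proper closed $W\subset\bar X$. For a $0$-cycle $z$ of degree $0$ moved off $W$, one gets $Nz=Z_{1*}z$. Here $Z_{1*}$ factors as $\CH_0(\bar X)_{\deg 0}\to J(\tilde C)\to\CH_0(\bar X)_{\deg 0}$, and the first arrow is induced by a correspondence, so it factors through $\mathrm{Alb}(\bar X)=0$. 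Hence $N\cdot\CH_0(\bar X)_{\deg 0}=0$, and only then does Roitman's theorem give $\CH_0(\bar X)_{\deg 0}=0$.

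Alternatively, you could cite Roitman's theorem that a representable (finite-dimensional) $\CH_0$ maps isomorphically onto the Albanese, which finishes at once. If that is what ``representable by the Albanese'' was meant to say, it must be stated and cited as such, and your torsion/divisibility argument is then unnecessary.
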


The following corollaries link the generalized Serre question (Question \ref{serre question}) to the generalized van de Ven question (Question \ref{van question}) and Bloch's conjecture (Conjecture \ref{bloch}):
\begin{corollary}(Corollary \ref{vdVimpliesSerre})
   An affirmative answer to the generalized van de Ven question implies an affirmative answer to the generalized Serre question in dimension $3$, i.e. 
   \[
\text{generalized van de Ven in dimension }3
\quad\Longrightarrow\quad
\text{generalized Serre in dimension }3;
\]
\end{corollary}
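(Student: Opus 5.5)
The plan is to reduce the corollary to Theorem \ref{threefold} by showing that $\CH^2(X)=0$ and $\CH^3(X)=0$ for every topologically contractible smooth affine threefold $X$, assuming the generalized van de Ven question holds, i.e.\ that $X$ is rational. Since $X$ is affine, Hironaka's resolution of singularities gives a smooth projective compactification $\bar{X}$ with $D=\bar{X}\setminus X$ a divisor with simple normal crossings. Rationality of $X$ gives $\bar{X}$ rational, so $\bar{X}$ is rationally connected and $\CH_0(\bar{X})\cong\mathbb{Z}$. (Any two points of a smooth projective rational variety are rationally equivalent as $0$-cycles; this is a birational invariant for smooth projective varieties.)

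For codimension $2$: condition (2) of Corollary \ref{main corollary} holds, so every algebraic vector bundle on $X$ is trivial. This is enough, but for the underlying mechanism, Theorem \ref{main theorem} gives $\CH_1(X)=\CH^2(X)=0$ directly.

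For codimension $3$, if one wants it separately: the localization sequence
\[
\CH_0(D)\to\CH_0(\bar{X})\to\CH_0(X)\to 0
\]
shows that $\CH_0(X)=\CH^3(X)$ is a quotient of $\CH_0(\bar{X})\cong\mathbb{Z}$. The class of a point of $D$ maps onto the generator, since all points of $\bar{X}$ are rationally equivalent. Hence $\CH^3(X)=0$.

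Theorem \ref{threefold} then yields triviality of all algebraic vector bundles on $X$, which is exactly the generalized Serre question in dimension $3$. There is no real obstacle: all the substance lies in Theorem \ref{main theorem}. The only points to check are that rationality of the open variety $X$ transfers to the chosen snc compactification, which is immediate because birational type is unchanged by compactification, and that $D\neq\emptyset$, which holds because $X$ is affine of positive dimension and so not proper.
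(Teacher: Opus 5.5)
Your proposal is correct and follows the paper's route. Rationality of $X$ makes the snc compactification $\bar X$ (which exists by Hironaka) rational, so $\CH_0(\bar X)\cong\mathbb{Z}$; Theorem \ref{main theorem} and the localization sequence then give $\CH^2(X)=\CH^3(X)=0$, and Theorem \ref{threefold} concludes. This is exactly the paper's chain Corollary \ref{main corollary} $\Rightarrow$ Corollary \ref{contractible rational vector bundles} $\Rightarrow$ Corollary \ref{vdVimpliesSerre}.
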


\begin{corollary}(Corollary \ref{BlochimpliesSerre})
    Let $X$ be a topologically contractible smooth affine complex threefold that admits a smooth projective compactification $\bar{X}$ such that $D=\bar{X} \setminus X$ is a divisor with simple normal crossings. Then an affirmative answer to Bloch's conjecture for threefolds implies an affirmative answer to the generalized Serre question, i.e. \[
\text{generalized Bloch for the compactification}
\quad\Longrightarrow\quad
\text{generalized Serre in dimension }3;
\]
\end{corollary}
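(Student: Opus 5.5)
The statement is Corollary \ref{BlochimpliesSerre}: Bloch's conjecture for the compactification implies that every algebraic vector bundle on $X$ is trivial. The plan is to reduce it to Corollary \ref{main corollary}(1). For that we need $\CH_0(\bar{X})\cong\mathbb{Z}$, and Bloch's conjecture will supply this once we know $h^{p,0}(\bar{X})=0$ for $p=1,2,3$.

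\textbf{Step 1: the Hodge hypothesis.} Since $X$ is topologically contractible and $D=\bar X\setminus X$ is a simple normal crossings divisor, Theorem \ref{regular q-forms of contractible} gives $h^{p,0}(\bar X)=0$ for all $p\ge 1$. The idea behind that theorem is worth recording. By Deligne's theory, $H^0(\bar X,\Omega^p_{\bar X})$ injects into $F^pW_pH^p(X,\mathbb{C})$, since holomorphic forms are closed log forms without poles. Because $X$ is contractible, $H^p(X,\mathbb{C})=0$ for $p\ge1$, so these forms vanish.

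\textbf{Step 2: apply Bloch.} The compactification $\bar X$ is smooth projective with $h^{p,0}(\bar X)=0$ for every $p\geq 1$. Bloch's conjecture (Conjecture \ref{bloch}) for $\bar X$ then gives $\CH_0(\bar X)\cong\mathbb{Z}$. This is exactly hypothesis (1) of Corollary \ref{main corollary}. Equivalently, it is the hypothesis of the main theorem, which yields $\CH_1(X)=\CH^2(X)=0$.

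\textbf{Step 3: $\CH^3(X)=0$.} The restriction $\CH_0(\bar X)\to \CH_0(X)$ is surjective. Its image is generated by the class of a single point, which can be moved into $D$, so it restricts to zero. Hence $\CH^3(X)=0$. Theorem \ref{threefold} then shows that all vector bundles on $X$ are trivial. Together with $\CH^1(X)=\Pic(X)=0$, this is already packaged in Corollary \ref{main corollary}.

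\textbf{Main obstacle.} The only substantive input is the main theorem ($\CH_0(\bar X)=\mathbb{Z}\Rightarrow \CH^2(X)=0$), which we take as given. With that, the remaining care is only in checking that the Hodge vanishing applies to the specific snc compactification in the hypotheses, so that Bloch's conjecture is invoked on the same $\bar X$ as Corollary \ref{main corollary}.
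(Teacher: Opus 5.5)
Your proposal is correct and follows the route the paper intends. Theorem \ref{regular q-forms of contractible} gives $h^{0,p}(\bar X)=0$, hence $h^{p,0}(\bar X)=0$ by Hodge symmetry. Bloch's conjecture for $\bar X$ then gives $\CH_0(\bar X)\cong\mathbb Z$, and Corollary \ref{main corollary} yields triviality of all vector bundles; your Step 3 just reproduces the $\CH_0(X)=0$ argument already packaged there.

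Your log-form sketch of the Hodge vanishing differs from the paper's strictness argument, but since you invoke the theorem itself, nothing in the deduction changes.
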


It is worth mentioning that all classes of topologically contractible affine threefolds
considered in this article are rational. Therefore, it is natural to ask the following (see also Question \ref{van unramified question} for its relation to the van de Ven question):
\begin{question} \label{version serre question}
    Let $X$ be a topologically contractible smooth affine complex threefold that admits a smooth projective compactification $\bar{X}$ such that $D=\bar{X} \setminus X$ is a divisor with simple normal crossings. Is $\bar{X}$ rationally connected? Note that an affirmative answer to this question implies an affirmative answer to the generalized Serre question.
\end{question}

Affine modification (\cite{Zaidenbergmodification}, \cite{DPO}) is an extremely useful technique for constructing a topologically contractible variety from affine space. In particular, the main theorem implies that every algebraic vector bundle over such a threefold is trivial.
\begin{corollary} (see also Section \ref{appendix})
    Let $X$ be a smooth affine topologically contractible threefold over $\mathbb{C}$ that is also rational. Suppose that $\Tilde{X}$ is an affine modification of $X$ along $C \subset D$ such that both $C$ and $D$ are contractible ($C$ is contained in the smooth locus of $D$; $D$ is not necessarily smooth, as in \cite{Zaidenbergmodification} or \cite{DPO}). Then every algebraic vector bundle over $\Tilde{X}$ is trivial.
\end{corollary}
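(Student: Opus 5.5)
The plan is to reduce everything to the main theorem (Theorem \ref{main theorem}), which gives $\CH_1(X)=\CH^2(X)=0$, and to the criterion of Theorem \ref{threefold}. I would proceed in three steps.

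\textbf{Step 1 (the Chern class criterion).} Let $\Tilde{X}$ be the affine modification of $X$ along $C\subset D$. The modification is taken so that $\Tilde{X}$ is again a smooth affine topologically contractible threefold; this is the purpose of requiring $C$ and $D$ contractible, as in \cite{Zaidenbergmodification} and \cite{DPO}. By Theorem \ref{threefold} it then suffices to show $\CH^2(\Tilde{X})=0$ and $\CH^3(\Tilde{X})=0$.

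\textbf{Step 2 (rationality and the compactification hypothesis).} The modification map $\Tilde{X}\to X$ is birational, since it is an isomorphism off the exceptional divisor. Hence $\Tilde{X}$ is rational. I would choose a smooth projective compactification $\bar{\Tilde{X}}$ with simple normal crossing boundary, which exists by Hironaka. Since $\bar{\Tilde{X}}$ is a smooth projective rational variety, it is rationally connected, so $\CH_0(\bar{\Tilde{X}})\cong\ZZ$. Corollary \ref{main corollary}, alternative (2), then applies directly. Concretely, Theorem \ref{main theorem} gives $\CH^2(\Tilde{X})=\CH_1(\Tilde{X})=0$.

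\textbf{Step 3 (vanishing of $\CH^3$).} Pushforward along the open immersion into $\bar{\Tilde{X}}$ gives a surjection
\[
\ZZ\cong\CH_0(\bar{\Tilde{X}})\twoheadrightarrow \CH_0(\Tilde{X}).
\]
So $\CH_0(\Tilde{X})$ is generated by the class of a single point. This class also comes from a point of the boundary, whose restriction to $\Tilde{X}$ is zero, so $\CH^3(\Tilde{X})=0$. More intrinsically, Murthy's theorem says $\CH_0$ of a smooth affine variety of dimension at least $2$ over $\CC$ is torsion-free and divisible. A cyclic group with that property is zero.

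\textbf{Main obstacle.} The genuinely delicate point is Step 1: verifying that the affine modification is smooth, affine and topologically contractible under exactly the stated hypotheses, including when $D$ is singular but $C$ lies in its smooth locus. I would cite the criterion of Kaliman--Zaidenberg \cite{Zaidenbergmodification}, which gives contractibility from acyclicity of $C$ and $D$ together with simple connectivity of the result. I would check smoothness locally near $C$, where the modification is the usual blowup-type chart along a smooth center inside a smooth hypersurface germ. Everything after that is formal from the main theorem.
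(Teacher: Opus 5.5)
Your proposal is correct and is essentially the paper's argument: the modification is birational, so $\widetilde X$ is rational, and the paper then applies Corollary \ref{contractible rational vector bundles} (that is, Corollary \ref{main corollary} for a rationally connected compactification, the localization argument killing $\CH_0$, and Theorem \ref{threefold}), while taking the smoothness and topological contractibility of $\widetilde X$ from the Kaliman--Zaidenberg setup exactly as you propose. One small correction: the surjection $\CH_0(\overline{\widetilde X})\to\CH_0(\widetilde X)$ in your Step 3 is restriction (flat pullback) along the open immersion, not pushforward, and the torsion-freeness of $\CH_0$ of a smooth affine variety of dimension at least $2$ is usually attributed to Roitman rather than Murthy.
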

We discuss some examples of affine modifications in this context in Section \ref{appendix}. Following \cite[Corollary 2.6]{kaliman2002bfcactionscontractiblethreefolds}, we obtain the following class of examples covered by the main theorem:
\begin{corollary}(Corollary \ref{Cplusaction})
    Let $X$ be a smooth affine topologically contractible threefold over $\mathbb{C}$ with a nontrivial algebraic $\CC^+$-action. Then every algebraic vector bundle over $X$ is trivial.
\end{corollary}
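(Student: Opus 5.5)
The plan is to reduce the statement to the rational case, Corollary \ref{vdVimpliesSerre}, by showing that $X$ is rational, and then to check that the Chow-theoretic criterion of Theorem \ref{threefold} is satisfied.

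\textbf{Step 1: $X$ is rational.} Let $G=\mathbb{G}_a=\CC^+$ act nontrivially on $X$. The ring of invariants $\CC[X]^G$ is finitely generated; I take this from Zariski's theorem in dimension $3$ (or from \cite{kaliman2002bfcactionscontractiblethreefolds}). Kaliman's result \cite[Corollary 2.6]{kaliman2002bfcactionscontractiblethreefolds} says that the algebraic quotient $S=X/\!/G=\Spec\,\CC[X]^G$ is a smooth topologically contractible affine surface. By Gurjar--Shastri \cite{GSa, GSb}, $S$ is therefore rational. On the function-field side, the action is generically free, since a nontrivial $\mathbb{G}_a$-action has trivial generic stabilizer. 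Rosenlicht's theorem then gives $\mathrm{trdeg}\,\CC(X)^G=2$ and $\CC(X)^G=\mathrm{Frac}(\CC[X]^G)=\CC(S)$. Choose a local slice $f\in\CC[X]$ with $\partial^2 f=0\neq\partial f$ for the associated locally nilpotent derivation $\partial$. Then the standard slice argument gives $\CC[X]_{\partial f}=\CC[X]^G_{\partial f}[f]$, so $\CC(X)=\CC(S)(f)$ is purely transcendental over $\CC(S)$. Hence $X$ is birational to $S\times\AA^1$, and so to $\PP^3$.

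\textbf{Step 2: $\CH^2(X)=0$.} By Hironaka, $X$ admits a smooth projective compactification $\bar X$ whose boundary $D=\bar X\setminus X$ is a simple normal crossings divisor. Since $\bar X$ is rational, it is rationally connected, so $\CH_0(\bar X)\cong\ZZ$. The main theorem (Theorem \ref{main theorem}) then gives $\CH_1(X)=\CH^2(X)=0$.

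\textbf{Step 3: $\CH^3(X)=0$, and conclusion.} Localization gives a surjection $\CH_0(\bar X)\to\CH_0(X)$ whose kernel contains the image of $\CH_0(D)$. Since $D\neq\emptyset$ (as $X$ is affine and not proper), $\CH_0(D)$ contains the class of a point. Because $\CH_0(\bar X)\cong\ZZ$ is generated by the class of any point, that class generates, and therefore $\CH^3(X)=\CH_0(X)=0$. Also $\CH^1(X)=\Pic(X)=0$ because $X$ is contractible. Theorem \ref{threefold} now shows that every algebraic vector bundle on $X$ is trivial.

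The main obstacle is Step 1: it requires extracting from \cite{kaliman2002bfcactionscontractiblethreefolds} that the quotient surface is smooth and contractible, and not merely acyclic or singular. Once this input is granted, the remaining steps are formal consequences of the results stated earlier in the paper.
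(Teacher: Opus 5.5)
Your proposal is correct and follows the paper's route: first show that $X$ is rational via Kaliman's work, then apply the rational case (Corollary~\ref{contractible rational vector bundles}, i.e.\ Theorem~\ref{main theorem} together with localization for $\CH_0$, $\Pic(X)=0$, and Theorem~\ref{threefold}). The only difference is in how rationality is obtained: the paper quotes \cite[Corollary 2.6]{kaliman2002bfcactionscontractiblethreefolds} directly, whereas you rederive it from the smoothness and contractibility of the quotient $X/\!/\CC^+$, using Gurjar--Shastri \cite{GSa, GSb} and a local slice, which is a valid unpacking of that citation.
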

Koras--Russell threefolds form an important family of smooth
topologically contractible affine complex threefolds that are not
isomorphic to $\mathbb A^3_{\mathbb C}$. We prove that every
Koras--Russell threefold is rational
(Theorem \ref{all KR rational}), which also provides another evidence for the generalized van de Ven question for threefolds. Therefore, the main theorem implies the following:
\begin{corollary}(Corollary \ref{korasrussellthird})
   Every algebraic vector bundle over a Koras–Russell threefold is trivial.
\end{corollary}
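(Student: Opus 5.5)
The plan is to reduce the statement to the criterion of Theorem \ref{threefold}: a topologically contractible smooth affine threefold has only trivial algebraic vector bundles as soon as $\CH^2(X)$ and $\CH^3(X)$ vanish. A Koras--Russell threefold $X$ is, by definition, a smooth affine complex threefold, and it is well known to be topologically contractible. So it is enough to show $\CH^2(X)=\CH_1(X)=0$ and $\CH^3(X)=\CH_0(X)=0$. The vanishing of $\CH^1(X)=\Pic(X)$ needed for the rank-$2$ and rank-$3$ Chern class bijections is automatic for contractible varieties.

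\emph{Step 1 (compactification).} By Hironaka's resolution of singularities, embed $X$ as an open subset of a smooth projective threefold $\bar X$ such that $D=\bar X\setminus X$ is a simple normal crossings divisor. By Theorem \ref{all KR rational}, $X$ is rational, hence so is $\bar X$. In particular $\bar X$ is rationally connected, so any two points of $\bar X$ are rationally equivalent and $\CH_0(\bar X)\cong\ZZ$.

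\emph{Step 2 ($\CH^2$).} With the hypotheses of Step 1, Theorem \ref{main theorem} (equivalently, case (2) of Corollary \ref{main corollary}) gives $\CH_1(X)=\CH^2(X)=0$. This is where all the real work lies, and it is already done in the main theorem. The only point to check is that the compactification hypotheses are met, which Step 1 provides. Note that no independent argument about the specific Koras--Russell equations is needed beyond rationality.

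\emph{Step 3 ($\CH^3$).} Use the localization sequence $\CH_0(D)\to\CH_0(\bar X)\to\CH_0(X)\to 0$. Since $\CH_0(\bar X)\cong\ZZ$ is generated by the class of any point, we may take a point $p\in D$ ($D\neq\emptyset$ because $X$ is affine of positive dimension). Then $[p]$ lies in the image of $\CH_0(D)$, so the restriction map $\CH_0(\bar X)\to \CH_0(X)$ is zero. Because this map is surjective, $\CH_0(X)=\CH^3(X)=0$.

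Combining Steps 2 and 3 with Theorem \ref{threefold} shows that every algebraic vector bundle over $X$ is trivial. The only substantive ingredient beyond the main theorem is the rationality of Koras--Russell threefolds (Theorem \ref{all KR rational}), which I expect to be the main obstacle in the overall argument; the deduction itself is formal.
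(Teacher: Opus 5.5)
Your proposal is correct and follows the paper's route. The paper's proof simply combines Theorem \ref{all KR rational} (rationality) with Corollary \ref{contractible rational vector bundles}, which is Corollary \ref{main corollary} applied to a rational, hence rationally connected, compactification with $\CH_0(\bar X)\cong\ZZ$; unpacked, this is exactly your Steps 1--3 (Theorem \ref{main theorem} for $\CH^2$, localization for $\CH^3$) feeding into Theorem \ref{threefold}.
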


Corollary \ref{korasrussellthird} give a complete answer to \cite[Question 7.12]{KorasRussell}. For Koras--Russell threefolds of the first and second kinds, this triviality was already known from
\cite[Corollary 3.8]{Murthy02} and
\cite[Corollary 3.7]{HKO}. For the displayed family
$Y(d,\alpha_1,\alpha_2,\alpha_3)$, an independent proof was
recently obtained in
\cite{syed2026vectorbundlescertainkorasrussell}. For sufficiently large exponents, the third-kind family contains threefolds of non-negative logarithmic Kodaira dimension and hence without a non-trivial $\mathbb G_a$-action. 

Suppose that the contractible threefold $X$ admits a smooth projective compactification $\bar{X}$ such that
the boundary $D = \bar{X} \setminus X$ 
    is a divisor with simple normal crossings. In the Picard-rank-one Fano setting considered in \cite[Corollary 2.1]{Kishimoto2005}, the one-component case forces $X$ to be isomorphic to $\mathbb{A}^3$. There is also a complete classification of the pairs $(\bar{X},D)$ satisfying the additional hypotheses of \cite[Main Theorem]{MS1990} or \cite[Theorem 1.1]{Kishimoto2005}, where $D$ consists of two smooth irreducible components. In the cases of \cite{MS1990}, the open threefold $\bar{X}\setminus D$ is biholomorphic to $\mathbb{C}^3$. We show that all varieties appearing in \cite[Table 1]{MS1990} or \cite[Table 1]{Kishimoto2005} have trivial Voevodsky motive. Therefore, the generalized Serre question has an affirmative answer for these classified families.
\begin{theorem}(Theorems \ref{trivial motive muller} and \ref{trivial motive Kishimoto})
   Suppose that $X, \bar{X}, D$ are as in the previous paragraph (as in the list \cite[Table 1]{MS1990} or \cite[Table 1]{Kishimoto2005}); in particular, the boundary $D= \bar{X} \setminus X$ consists of two smooth projective surfaces.
   Then $M(X)$ is isomorphic to $\mathbb{Z}$ in $\textbf{DM}_{gm}(\mathbb{C}, \mathbb{Z})$, and hence in this case every vector bundle over $X$ is trivial.
\end{theorem}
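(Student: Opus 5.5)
The strategy is to compute $M(X)$ with Gysin triangles in $\mathbf{DM}_{gm}(\mathbb{C},\mathbb{Z})$. We show it is a pure Tate motive, and then use topological contractibility to force it to be $\mathbb{Z}$. Write $D=D_1\cup D_2$ with $D_1,D_2$ smooth projective surfaces meeting transversally along $C=D_1\cap D_2$; $C$ is a smooth projective curve, possibly empty or disconnected. Set $U=\bar X\setminus D_1$. Since $D_2\cap U=D_2\setminus C$ is smooth and closed of codimension one in $U$, there are two Gysin triangles:
\[
M(U)\to M(\bar X)\to M(D_1)(1)[2]\to M(U)[1],
\qquad
M(X)\to M(U)\to M(D_2\setminus C)(1)[2]\to M(X)[1].
\]
A third Gysin triangle, for $C\subset D_2$, expresses $M(D_2\setminus C)$ through $M(D_2)$ and $M(C)(1)[2]$.

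\emph{Step 1: each pair in the tables is built from pure Tate pieces.} Go through the entries of \cite[Table 1]{MS1990} and \cite[Table 1]{Kishimoto2005}. In each case $\bar X$ is a Fano threefold of Picard rank one from an explicit list ($\mathbb{P}^3$, the quadric $Q^3$, $V_5$, $V_{22}$ and similar). Each of these has a cellular decomposition or a full exceptional collection, so $M(\bar X)\cong\bigoplus_i \mathbb{Z}(i)[2i]^{\oplus b_{2i}}$. The boundary components $D_i$ are explicit rational surfaces (planes, quadrics, cones, hyperplane sections, and so on). For singular cones I would use a resolution or blow-up triangle, and again obtain pure Tate motives. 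The curve $C$ must be a disjoint union of $\mathbb{P}^1$'s. This should follow from the tables; alternatively, contractibility of $X$ together with the long exact cohomology sequences of the pair $(\bar X,D)$ forces $H^1(C,\mathbb{Q})=0$, because $H^1(\bar X)=H^1(D_i)=0$. Consequently $M(C)$ is pure Tate. By the triangles above, $M(X)$ lies in the thick subcategory $\mathcal{T}$ generated by the objects $\mathbb{Z}(i)[2i]$ for $i\ge 0$.

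\emph{Step 2: the Hom computation.} For $i,j\geq 0$ we have
\[
\operatorname{Hom}\bigl(\mathbb{Z}(i)[2i],\mathbb{Z}(j)[2j+n]\bigr)=H^{2(j-i)+n,\,j-i}_{\mathcal M}(\Spec\mathbb{C},\mathbb{Z}).
\]
For $n=0$ this is $\mathbb{Z}$ when $i=j$ and $0$ otherwise. For $n=1$ it vanishes: $p=2q+1>2q$ when $q\ge 0$, and the group is zero for $q<0$. Hence every map between finite sums of the $\mathbb{Z}(i)[2i]$ is a block-diagonal integer matrix, and there are no extensions between them. Inductively, each cone appearing in the triangles is a direct sum of a pure Tate motive and shifts $\mathbb{Z}(i)[2i+1]$ coming from cokernels and kernels. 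Torsion cokernels give objects of the form $\mathbb{Z}/m(i)[2i]$. All such pieces are detected faithfully by the Betti realization, through $H^*(-,\mathbb{Z})$ together with the weight grading. Therefore $M(X)\cong\mathbb{Z}$ if and only if the realization of $M(X)$ is the integral homology of a point, and that holds because $X$ is contractible. Equivalently, the integer matrices in the triangles are the Gysin maps on $H^*(-,\mathbb{Z})$, and exactness of the topological sequences for contractible $X$ forces them to be isomorphisms in all positive weights.

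\emph{Step 3: conclusion.} From $M(X)\cong\mathbb{Z}$ we get $\CH^i(X)=\operatorname{Hom}(M(X),\mathbb{Z}(i)[2i])=0$ for $i\geq 1$. Theorem \ref{threefold} then shows that every vector bundle on $X$ is trivial.

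The main obstacle is Step 1: checking case by case that every $\bar X$, $D_i$ and $C$ in both tables has a pure Tate motive. This is delicate for the singular boundary components and for the less standard Fano threefolds in Kishimoto's list. For those I would use known Chow–Künneth decompositions of Fano threefolds with $h^{1,2}=0$ (trivial intermediate Jacobian), which gives pure Tate motives integrally via their exceptional collections.
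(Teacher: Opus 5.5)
\textbf{The proof has a genuine gap: Step 1 is false for the pairs in question, and Step 2 does not justify the Betti-realization conclusion.}

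\emph{Step 1.} The factual claims fail.

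First, $\bar X$ never has Picard rank one here. By Corollary~\ref{compactification picard}, $\Pic(\bar X)\cong H^2(\bar X,\mathbb{Z})\cong\mathbb{Z}^{2}$. So $\mathbb{P}^3,\mathbb Q^3,\mathbb V_5,\mathbb A_{22}$ appear only as bases or blow-down targets.

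More seriously, $C=D_1\cap D_2$ need not be rational, and ``$H^1(D_i)=0$'' does not follow from contractibility. We have $H^1(D)\cong H^1(\bar X)=0$, and $H^2(D)\cong H^2(\bar X)$ is pure of weight $2$. From these, Mayer--Vietoris gives only $H^1(D_1,\mathbb{Q})\oplus H^1(D_2,\mathbb{Q})\cong H^1(C,\mathbb{Q})$.

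Here is a concrete counterexample. Let $S$ be a smooth plane cubic in a plane $H\subset\mathbb{P}^3$, let $\bar X=\mathrm{Bl}_S\mathbb{P}^3$ (even Fano, with $h^{1,2}=1$, so no full exceptional collection), and let $D=\widetilde H\cup E$.
\begin{itemize}
\item Then $X\cong\mathbb{P}^3\setminus H\cong\mathbb{A}^3$, and $D$ consists of two smooth surfaces meeting transversally along $C\cong S$.
\item Neither $M(E)\cong M(S)\oplus M(S)(1)[2]$ nor $M(\bar X)\cong M(\mathbb{P}^3)\oplus M(S)(1)[2]$ is pure Tate.
\item The maps in your Gysin triangles have components in groups such as $\operatorname{Hom}(M(S),\mathbb{Z}(1)[2])=\Pic(S)$. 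The $\Pic^0$ part of these is invisible to Betti realization.
\end{itemize}

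This is exactly the paper's Case~3b, where the blown-up curve may have positive genus. The paper gets past it with geometric input your plan lacks:
\begin{itemize}
\item $S\hookrightarrow A_2$ is a section of the $\mathbb{P}^1$-bundle $A_2\to S$, so the Mayer--Vietoris morphism splits.
\item The $M(S)(1)[2]$ summand of the blow-up decomposition of $M(\bar X)$ is matched with the one coming from the exceptional divisor, and it is cancelled before Lemma~\ref{Tate splitting lemma} is applied.
\end{itemize}

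\emph{Step 2.} This has a gap even when every input is pure Tate.

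You computed $\operatorname{Hom}(\mathbb{Z}(i)[2i],\mathbb{Z}(j)[2j+n])$ only for $n=0,1$. After your first Gysin triangle, cokernels contribute summands $\mathbb{Z}(i)[2i-1]$ to $M(U)$. The next morphism therefore needs the case $n=-1$, where
$\operatorname{Hom}(\mathbb{Z}(i)[2i],\mathbb{Z}(j)[2j-1])=H^{2q-1,q}(\Spec\mathbb{C},\mathbb{Z})$ with $q=j-i$. This group is $\mathbb{C}^\times$ for $q=1$ and nonzero for $q=2$.

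Such components vanish under Betti realization. Their cones are non-split Kummer-type extensions with the same realization as the split ones. So Betti realization is not faithful on the thick subcategory generated by the $\mathbb{Z}(i)[2i]$, and ``realization of a point implies $M(X)\cong\mathbb{Z}$'' remains unproved.

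The paper avoids this in two ways. It applies its splitting lemma only to morphisms between pure Tate motives, after first showing that $M(\partial\bar X)$ is pure Tate. In Type XIII it removes the Betti-invisible entry $c\in\operatorname{Hom}(\mathbb{Z},\mathbb{Z}(2)[3])=H^{3,2}(\Spec\mathbb{C},\mathbb{Z})$ by a change of basis that uses $n=\pm1$.

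Finally, for Kishimoto's possibly exotic Types XII, XIII and XXII, the paper does not use boundary data at all; it works with their affine-modification descriptions. In Type XIII it reaches $M(X)\cong\mathbb{Z}$ only when the modification divisor in $SL(2,\mathbb{C})$ is smooth. A uniform ``go through the entries'' argument would have to supply something the paper itself does not.
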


Morel and Voevodsky \cite{MV99} established $\mathbb{A}^1$-homotopy theory, synthesizing algebraic geometry with homotopy-theoretic methods. They considered the Nisnevich topology on $\mathbf{Sm}/k$ and the category of simplicial presheaves $\Delta^{\mathrm{op}}\mathbf{PSh}(\mathbf{Sm}/k)$ on $\mathbf{Sm}/k$, which carries the projective model structure. Using model category techniques, they constructed the unstable $\mathbb{A}^1$-homotopy category $\mathbf{H}(k)$ by inverting the Nisnevich local weak equivalences and the projection maps $X \times_{k} \mathbb{A}^1_{k} \to X$ for $X \in \mathbf{Sm}/k$. By construction, $X$ and $X \times_k \mathbb{A}^1_k$ are isomorphic in $\mathbf{H}(k)$. A variety $X \in \mathbf{Sm}/k$ is called $\mathbb{A}^1$-contractible if the structure morphism $X \to \operatorname{Spec} k$ is an isomorphism in $\mathbf{H}(k)$. An $\mathbb{A}^1$-contractible complex variety is also topologically contractible. However, topological contractibility is weaker than $\mathbb{A}^1$-contractibility \cite[Corollary 1.3]{ChoudhuryRoy+2024+55+80}. By construction, the affine spaces $\mathbb{A}^n_k$ are $\mathbb{A}^1$-contractible. The isomorphisms in $\mathbf{H}(k)$ are called $\mathbb{A}^1$-weak equivalences. For example, if $E \to X$ is an algebraic vector bundle, then it is an $\mathbb{A}^1$-weak equivalence.

As in the topological setting,
the representability theorem for vector bundles holds in $\mathbf{H}(k)$ when restricted to affine varieties. The functor on smooth affine $k$-varieties associating $X$ with $\mathcal{V}_r(X)$ is represented in $\mathbf{H}(k)$ by the infinite Grassmannian $\operatorname{Gr}_r$ \cite[Theorem 5.2.4.8]{Asok2021}:
$$
\mathcal{V}_r(X) \xrightarrow{\sim} \operatorname{Hom}_{\mathbf{H}(k)}(X, \operatorname{Gr}_r).
$$

In particular, if $X$ is a smooth affine variety over $k$ that is $\mathbb{A}^1$-contractible, then every algebraic vector bundle over $X$ is trivial. However, outside the affine case, this fails: there exists an $\mathbb{A}^1$-contractible smooth strictly quasi-affine complex fourfold admitting a nontrivial rank-$2$ vector bundle \cite[Example 5.3.5.4]{Asok2021}. Thus, there exists a topologically contractible complex fourfold admitting a nontrivial algebraic vector bundle; however, this fourfold is not affine.


From the perspective of model categories, there is an $\mathbb{A}^1$-connected component sheaf of $X$, denoted $\pi_0^{\mathbb{A}^1}(X)$. This is the Nisnevich sheaf associated with the presheaf
$$
U \in \mathbf{Sm}/k \mapsto \operatorname{Hom}_{\mathbf{H}(k)}(U, X).
$$
A variety $X$ is called $\mathbb{A}^1$-connected if $\pi_0^{\mathbb{A}^1}(X)$ is isomorphic to the trivial sheaf $\operatorname{Spec} k$. Beyond this abstract definition, understanding the geometry of $\pi_0^{\mathbb{A}^1}(X)$ is crucial and this was initiated by Asok and Morel. They \cite{AsokMorel} (see also \cite[Corollary 3.10]{Balwe2015}) proved that a smooth proper variety $X$ over $k$ is $\mathbb{A}^1$-connected if and only if it is connected in the naive sense. Specifically, for every finitely generated separable field extension $L/k$ and any two points $x, y \in X(L)$, there exists a chain of affine lines $\gamma_1, \dots, \gamma_n: \mathbb{A}^1_L \to X$ (this means $\gamma_i(1) = \gamma_{i+1}(0)$ for all $i$) such that $\gamma_1(0) = x$, $\gamma_n(1) = y$. Thus, if $X$ is an $\mathbb{A}^1$-connected proper variety, then $X$ is rationally connected and also, $\CH_0(X)$ is universally trivial. An $\mathbb{A}^1$-connected smooth variety $X$ (not necessarily proper) is $\mathbb{A}^1$-uniruled, that is $X$ admits a dominant morphism
$$\mathbb{A}^1 \times W \to X, \text{ with } \dim(W) = \dim(X) - 1;$$
therefore $X$ has negative logarithmic Kodaira dimension \cite[Theorem 2.11, Corollary 2.12]{CHOUDHURY2026}. 

The property of being $\mathbb{A}^1$-connected is a birational invariant in the category of smooth proper varieties over an algebraically closed field of characteristic zero \cite[Corollary 2.4.6]{AsokMorel}. Every
smooth proper $\mathbb{A}^1$-connected surface is rational \cite[Corollary 2.4.7]{AsokMorel}. Conversely, a rational smooth proper variety (or more generally, a retract rational smooth proper variety) is $\mathbb{A}^1$-connected. 

Unramified cohomology groups, introduced by Colliot-Thélène and M. Ojanguren \cite{Colliot1989}, are useful birational invariants of smooth proper varieties (Section \ref{unramified section}). If $X$ is a stably rational proper variety, then all the unramified cohomology groups $H^p_{\mathrm{nr}}(k(X)/k, \mu_l^{\otimes q})$ are trivial for $p \geq 1$ \cite[Corollary 4.6]{Schreieder2021UnramifiedCA}. The unramified cohomology groups are related to the Picard group and the Brauer group; the Artin--Mumford threefold \cite{ArtinMumford} has nontrivial unramified cohomology. If $X$ is a topologically contractible variety over $\mathbb{C}$, then $H^i_{\mathrm{nr}}(\mathbb{C}(X), \mu_n^{\otimes l})$ vanishes for every $l,n$ and $i=1,2$ (Lemma \ref{contractible vanishing unramified cohomology}). Moreover, if the compactification $\bar{X}$ is a uniruled threefold, then $H^i_{\mathrm{nr}}(\mathbb{C}(X), \mu_n^{\otimes l})$ vanishes for every $l,n$ and $i \geq 1$ (Corollary \ref{contractible vanishing unramified cohomology threefold}). Asok showed that if $X/k$ is $\mathbb{A}^1$-connected, then all the unramified cohomology groups $H^p_{\mathrm{nr}}(k(X)/k, \mu_l^{\otimes q})$ vanish for any integers $p,q > 0$ \cite[Lemma 4.7]{Asok+2013+39+64}. However, in dimensions $3$ and higher, it is not known whether $\mathbb{A}^1$-connected varieties are rational. Since $\mathbb{A}^1$-connected proper varieties are rationally connected, the main theorem implies the following:
\begin{corollary}(Corollary \ref{main corollary}(iii))
    Let $X$ be a topologically contractible smooth affine complex threefold that admits a smooth projective compactification $\bar{X}$ such that $D=\bar{X} \setminus X$ is a divisor with simple normal crossings. If $X$ is also $\mathbb{A}^1$-connected, then every algebraic vector bundle over $X$ is trivial.
\end{corollary}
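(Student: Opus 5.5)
The plan is to reduce to case (i) of Corollary \ref{main corollary}. It suffices to show that $\mathbb{A}^1$-connectedness of the open variety $X$ forces $\CH_0(\bar{X})\cong\mathbb{Z}$. The statement of case (ii) suggests showing that $\bar{X}$ is rationally connected, but the $\mathbb{A}^1$-connectedness of $\bar{X}$ itself is not immediate: a chain of affine lines in $\bar X$ joining two points need not come from $X$. So I would work directly with the generic point and the birational description of $\pi_0^{\mathbb{A}^1}$ of smooth proper varieties due to Asok--Morel.

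\textbf{Step 1.} The open immersion $j\colon X\hookrightarrow \bar{X}$ induces a morphism of Nisnevich sheaves $\pi_0^{\mathbb{A}^1}(X)\to\pi_0^{\mathbb{A}^1}(\bar{X})$. Since $X$ is $\mathbb{A}^1$-connected, the source is the point sheaf. Evaluating on the finitely generated field $F=\CC(X)$, with sheaf sections over a field computed as the colimit over smooth models, every element of the image is the class of a $\CC$-point. In particular, the generic point $\eta\in X(F)\subset\bar{X}(F)$ has the same class in $\pi_0^{\mathbb{A}^1}(\bar X)(F)$ as a constant point $x_0\in \bar X(\CC)\subset\bar{X}(F)$.

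\textbf{Step 2.} For smooth proper $\bar{X}$, Asok--Morel (see also \cite[Corollary 3.10]{Balwe2015}) identify $\pi_0^{\mathbb{A}^1}(\bar X)(F)$ with the set of naive $\mathbb{A}^1$-chain equivalence classes $\bar X(F)/\!\sim$ for finitely generated separable $F/\CC$. Hence $\eta$ and $x_0$ are joined by a chain of affine lines $\mathbb{A}^1_F\to\bar{X}$. Each such line gives a rational equivalence of $0$-cycles on $\bar{X}_F$, so $[\eta]=[x_0]$ in $\CH_0(\bar{X}_{F})$. By the standard Bloch--Srinivas spreading argument, this yields a decomposition of the diagonal
\[
\Delta_{\bar X}= \bar X\times x_0 + Z \quad\text{in } \CH^3(\bar X\times\bar X),
\]
with $Z$ supported on $D'\times \bar X$ for a proper closed $D'\subsetneq\bar X$. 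Letting correspondences act on $0$-cycles, any $0$-cycle may be moved off $D'$, so every point is rationally equivalent to $x_0$. Thus $\CH_0(\bar{X})\cong\mathbb{Z}$; in fact it is universally trivial. Alternatively, one can observe that the chain shows that general points of $\bar X$ are connected by rational curves, so $\bar X$ is rationally connected and case (ii) applies directly.

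\textbf{Step 3.} With $\CH_0(\bar{X})\cong\mathbb{Z}$, Theorem \ref{main theorem} gives $\CH^2(X)=\CH_1(X)=0$. Because $X$ is affine, $\CH^3(X)=\CH_0(X)$ is a quotient of $\CH_0(\bar X)\cong\mathbb{Z}$ killing the degree (any point lies on a curve meeting the boundary $D$), so it vanishes. Theorem \ref{threefold} then yields triviality of all vector bundles, exactly as in Corollary \ref{main corollary}.

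The main obstacle is Step 1. One must make sure the identification of $\pi_0^{\mathbb{A}^1}(\bar X)$ on fields with naive chain classes applies to the point $\eta$, which is defined over a non-algebraically-closed field. One must also check that triviality of the source sheaf really forces $[\eta]$ to be a constant class, rather than only trivial after Nisnevich localization. I would handle this by noting that Nisnevich stalks at the generic point are just sections over $F$, so sheafification does not change the value on $\Spec F$.
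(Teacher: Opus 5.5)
Your proof is correct. It differs from the paper only in how you get $\CH_0(\bar X)\cong\mathbb Z$; Step 3 (localization for $\CH_0(X)$, then Theorem \ref{main theorem} and Theorem \ref{threefold}) is the same as the paper's.

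\textbf{The paper's route.} It cites \cite[Proposition 5.4.2.7]{Asok2021} for the fact that the compactification $\bar X$ of an $\mathbb{A}^1$-connected $X$ is itself $\mathbb{A}^1$-connected. Asok--Morel then give that $\bar X$ is rationally connected, so $\CH_0(\bar X)\cong\mathbb Z$, and case (3) of Corollary \ref{main corollary} applies. Your remark that $\mathbb{A}^1$-connectedness of $\bar X$ is ``not immediate'' is therefore off: it is a standard statement. As far as I recall, it rests on Morel's result that $\pi_0^{\mathbb{A}^1}$ of a dense open subset of a smooth variety surjects onto $\pi_0^{\mathbb{A}^1}$ of the whole variety. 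So the detour was not needed, though it does no harm.

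\textbf{Your route.} You use the naturality of $\pi_0^{\mathbb{A}^1}(X)\to\pi_0^{\mathbb{A}^1}(\bar X)$ only at the generic point $\eta$. You then apply the Asok--Morel identification of $\pi_0^{\mathbb{A}^1}(\bar X)(F)$ with naive chain classes for smooth proper $\bar X$, and finish with a Bloch--Srinivas decomposition of the diagonal.

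\textbf{Comparison.}
\begin{itemize}
\item Your argument needs less input from $\mathbb{A}^1$-homotopy theory. It only uses that $\eta$ and one $\CC$-point of $X$ have the same class in $\pi_0^{\mathbb{A}^1}(X)(\CC(X))$, and needs no epimorphism lemma.
\item It yields universal $\CH_0$-triviality of $\bar X$ directly.
\item The paper's citation-based route is shorter and gives the stronger structural conclusion that $\bar X$ is $\mathbb{A}^1$-connected.
\end{itemize}

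One small correction: take $x_0\in X(\CC)$, not merely in $\bar X(\CC)$. Your naturality square only shows that images of points of $X(F)$ share a class in $\pi_0^{\mathbb{A}^1}(\bar X)(F)$.
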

In higher dimensions, using \cite[Lemma 4.7]{Asok+2013+39+64} and an analysis of Bloch-Ogus spectral sequence implies that if $X$ is a topologically contractible, smooth variety over $\mathbb{C}$ and also $H^0(X,\mathcal H_{\acute et}^3(\mu_l^{\otimes2}))=0$, then $\CH^2(X)$ is a divisible group (Theorem \ref{connectedcontractible}).

In addition to threefolds, we consider the triviality of algebraic vector bundles over topologically contractible smooth affine complex varieties of dimension $d = 4$. Under the assumption of $\AA^1$-connectedness, the criterion in \cite[Corollary 2.27]{syed2024motiviccohomologycycliccoverings} can be stated as follows:
\begin{theorem}(Corollary \ref{fourfold})
    If $X$ is a topologically contractible and $\AA^1$-connected smooth affine complex variety of dimension $d = 4$, then all algebraic vector bundles on $X$ are trivial if $\CH^2(X)$ and $\CH^3(X)$ are trivial.
\end{theorem}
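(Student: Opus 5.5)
The statement is a direct combination of the Chow-theoretic criterion for fourfolds cited from \cite[Corollary 2.27]{syed2024motiviccohomologycycliccoverings} with the facts about topologically contractible varieties recalled above. Since $X$ is affine of dimension $4$, Serre's splitting principle lets us restrict to bundles of rank at most $4$. Because $X$ is topologically contractible, $\Pic(X)=\CH^1(X)=0$ by \cite[Theorem 5.5.2.7]{Asok2021}. Together with the hypotheses $\CH^2(X)=\CH^3(X)=0$, this leaves two groups to control: the top Chow group $\CH^4(X)$, and the obstruction groups in the Asok--Fasel classification that are not Chow groups.

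\textbf{Step 1 (top Chow group).} I would first show that $\CH^4(X)=\CH_0(X)$ vanishes. For a smooth affine complex variety of dimension $d$, Roitman's theorem (via Srinivas/Murthy) shows that $\CH_0(X)$ is uniquely divisible, i.e.\ a $\QQ$-vector space. I would then use $\AA^1$-connectedness: $X$ is $\AA^1$-uniruled, and any point can be joined to a fixed point by chains of $\AA^1$'s over field extensions. Since $\CH_0$ is $\AA^1$-invariant and the class of a point transported along an $\AA^1$ does not change, every degree-$0$ zero-cycle is zero, so $\CH_0(X)$ is generated by one point class. That class, however, can be moved to infinity along an affine line, because $\CH_0(\AA^1)=0$. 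Hence $\CH_0(X)=0$.

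\textbf{Step 2 (apply the criterion).} Next I would check the hypotheses of \cite[Corollary 2.27]{syed2024motiviccohomologycycliccoverings}. As I understand it, that corollary says that for a smooth affine fourfold with $\CH^i(X)=0$ for all $i\geq1$ and suitable vanishing of the mod-$2$ and Witt-type groups $H^i(X,\mathbf{K}^{MW}_j)$ and $H^3(X,\mathbf{K}^M_4/3)$, every vector bundle is trivial. To kill these groups I would use the Bloch--Ogus/Gersten spectral sequences together with vanishing of unramified cohomology. By \cite[Lemma 4.7]{Asok+2013+39+64}, $\AA^1$-connectedness gives $H^0(X,\mathcal H^p_{\acute et}(\mu_l^{\otimes q}))=0$ for $p>0$. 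Topological contractibility gives $H^*_{\acute et}(X,\mu_l^{\otimes q})$ equal to that of a point. Combined with $\CH^2=\CH^3=0$, the coniveau spectral sequence should force the remaining groups $H^i(X,\mathcal H^j)$ to vanish, and hence the mod-$n$ Milnor $K$-cohomology groups entering the obstructions to vanish as well.

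\textbf{Main obstacle.} The hardest part is the bookkeeping of the Asok--Fasel obstruction groups in ranks $2$ and $3$, particularly the Witt-sheaf cohomology $H^i(X,\mathbf{I}^j)$ for a fourfold. I would handle it through the fiber sequences $\mathbf{I}^{j+1}\to\mathbf{I}^j\to\mathbf{K}^M_j/2$ and Milnor's conjecture, which identify $\mathbf{K}^M_j/2$ with $\mathcal H^j(\mu_2)$. Over $\CC$ the real-cohomology contributions vanish because $\mathbf{I}^j$ is eventually $0$ (we have cohomological dimension $\le 4$ for function fields of fourfolds). Descending induction on $j$ therefore reduces everything to the vanishing of $\mathcal H^j(\mu_2)$-cohomology established in Step 2. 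In practice I expect the paper's \emph{Corollary \ref{fourfold}} to simply quote that these hypotheses are packaged in \cite[Corollary 2.27]{syed2024motiviccohomologycycliccoverings} under $\AA^1$-connectedness, so the proof mostly verifies Step 1 and cites the criterion.
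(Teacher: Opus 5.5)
Your overall decomposition is the paper's: invoke \cite[Corollary 2.27]{syed2024motiviccohomologycycliccoverings}, show $\CH^4(X)=0$, and kill the one remaining non-Chow obstruction using \cite[Lemma 4.7]{Asok+2013+39+64}. Two steps, however, do not hold up as written.

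\textbf{Step 1.} You assert that every point can be joined to a fixed point by chains of affine lines. This is the naive chain description of $\mathbb{A}^1$-connectedness, which Asok--Morel establish only for \emph{proper} varieties, so you cannot assume it for the affine $X$ here. Also, ``degree-$0$ zero-cycles'' have no meaning in $\CH_0$ of a non-proper variety.

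The paper avoids this via Lemma \ref{highest chow group trivial}. Since $\bar X$ is $\mathbb{A}^1$-connected, it is rationally connected, so $\CH_0(\bar X)\cong\ZZ$. Hence $\CH_0(D)\to\CH_0(\bar X)$ is onto, and the localization sequence gives $\CH_0(X)=0$.

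Your ``move the point to infinity'' idea can also be made rigorous without chains. $\mathbb{A}^1$-uniruledness \cite{CHOUDHURY2026} gives a dense open $U\subset X$ each of whose closed points $x$ lies on the image of a non-constant $\gamma:\mathbb{A}^1\to X$. Since $X$ is affine, such a $\gamma$ is finite, so $[x]=\gamma_*[a]=0$ because $\CH_0(\mathbb{A}^1)=0$. The moving lemma for $0$-cycles then gives $\CH_0(X)=0$. The unique divisibility of $\CH_0(X)$ plays no role.

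\textbf{Step 2.} You have misremembered the criterion. As used in the paper, the hypotheses are $\CH^i(X)=0$ for $2\le i\le 4$ together with the single extra condition $\HH^2_{\mathrm{Nis}}(X,\mathbf I^3)=0$. This matters: integral groups such as $H^i(X,\mathbf K^{MW}_j)$ are not reachable by mod-$l$ coniveau arguments, which would at best give divisibility. So with your list of hypotheses, Step 2 would not close.

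For $\mathbf I^3$, the paper quotes an epimorphism $\HH^0(X,\cH^4_{\acute{e}t}(\mu_2^{\otimes 4}))\twoheadrightarrow\HH^2_{\mathrm{Nis}}(X,\mathbf I^3)$ from \cite[Proposition 3.1(c)]{brazelton2025algebraicvectorbundlesrank} and then applies \cite[Lemma 4.7]{Asok+2013+39+64}.

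Your descending induction proves the same vanishing directly. On $X$ one has $\mathbf I^5=0$ and $\mathbf I^4\cong\mathbf K^M_4/2$. In the Bloch--Ogus spectral sequence:
\begin{itemize}
\item $E_2^{2,4}=\HH^2(X,\cH^4)$ is neither the source nor the target of a nonzero differential, so it is a subquotient of $H^6_{\acute{e}t}(X,\mu_2)=0$.
\item $E_2^{2,3}=\HH^2(X,\cH^3)$ has only zero outgoing differentials. It can only be hit by $d_2$ from $E_2^{0,4}=\HH^0(X,\cH^4)$, which vanishes by $\mathbb{A}^1$-connectedness. So it is a subquotient of $H^5_{\acute{e}t}(X,\mu_2)=0$.
\end{itemize}
The exact sequence $\HH^2(X,\mathbf I^4)\to\HH^2(X,\mathbf I^3)\to\HH^2(X,\mathbf K^M_3/2)\cong\HH^2(X,\cH^3)$ then gives $\HH^2_{\mathrm{Nis}}(X,\mathbf I^3)=0$. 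This is a self-contained substitute for the cited epimorphism, and it uses neither $\CH^2(X)=0$ nor $\CH^3(X)=0$.
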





Based on these investigations, it is natural to ask the following questions (see also Question \ref{general motive question} and Question \ref{rational motive question}):
\begin{question} \label{question on trivial motive}
\begin{enumerate}
    \item Let $X$ be a topologically contractible smooth affine complex threefold that admits a smooth projective compactification $\bar{X}$ such that $D=\bar{X} \setminus X$ is a divisor with simple normal crossings. Assume also that $X$ is either $\mathbb{A}^1$-connected or rationally connected. Is the Voevodsky motive $M(X)$ isomorphic to $\mathbb{Z}$ in $\textbf{DM}_{gm}(\mathbb{C}, \mathbb{Z})$?
    \item Does there exist a topologically contractible and $\mathbb{A}^1$-connected smooth affine variety over $\mathbb{C}$ that is not $\mathbb{A}^1$-contractible?
\end{enumerate}
    \end{question}
\subsection{Outline}
In Section \ref{properties of contractible}, we discuss the basic properties of topologically contractible threefolds, including their Brauer group and unramified cohomology. In Section \ref{Chow group of contractible}, we discuss some results on the Chow groups of topologically contractible varieties and their relations with unramified cohomology via the Bloch--Ogus spectral sequence. In Section \ref{vector bundles on contractible}, we present the main result of this article (Theorem \ref{main theorem}), which gives a conditional answer to the generalized Serre question for threefolds, and we further discuss the implications of Bloch's conjecture and the generalized van de Ven question for the generalized Serre question. In Section \ref{SecKR}, we study rationality of Koras--Russell threefolds and specifically discuss certain Koras--Russell threefolds of the third kind.
In Section \ref{motive computation}, based on specific classifications of compactifications, we investigate topologically contractible threefolds at the level of Voevodsky motives.
\subsection{Acknowledgments}
The authors would first like to thank Aravind Asok for his continuous encouragement, discussions, and advice
throughout this work and for reading several earlier drafts. The authors are also grateful to Burt Totaro for his detailed and invaluable comments on an earlier draft, and to Adrien Dubouloz, Jean Fasel, Amalendu Krishna, Yeqin Liu, Thomas Peternell, Stefan Schreieder and Wanchun Shen for their interest and helpful discussions. The first author would also like to thank BIMSA for organizing the 2023 workshop ``Chow--Witt Rings: Computations and Applications,'' where this problem was first brought to his attention. The second author would like to thank Utsav Choudhury and Aritra Mandal for discussions during this work. The second author would also like to thank the University of Southern California for providing resources during this work.

\section{Properties of topologically contractible varieties}\label{properties of contractible}
Let $X$ be a topologically contractible smooth affine variety over $\mathbb{C}$ of dimension $n$
that admits a smooth projective compactification $\bar{X}$ such that $D=\bar{X} \setminus X$ is a divisor with simple normal crossings. Let $D_1, \dots , D_r$ be the irreducible components of $D$.
Then each $D_i$ is a smooth projective variety of dimension $n-1$, and the components meet transversely; however, $D$ need not be smooth along their intersections. The long exact sequence for the pair $(\bar{X},D)$ contains
$$\dots \to H^i(\bar{X}, D;\mathbb Z) \to H^i(\bar{X};\mathbb Z) \to H^i(D;\mathbb Z) \to H^{i+1}(\bar{X}, D;\mathbb Z) \to \dots.$$
Since $X$ is topologically contractible, Poincar\'e duality gives
$$H^i(\bar{X}, D;\mathbb Z) \cong H^i_c(X;\mathbb Z) \cong H_{2n-i}(X;\mathbb Z)= 0 \quad \text{for }0 \leq i \leq 2n-1.$$
Therefore we have the following immediate topological properties of $\bar{X}$ (see also \cite[Section 3]{MS1990}):
\begin{enumerate}
    \item $H^i(\bar{X}, \mathbb{Z}) \cong H^i(D, \mathbb{Z})$ for $0 \leq i \leq 2n-2$.
    \item The boundary $D$ is connected, and since $X$ is affine, $D$ has pure codimension $1$ in $\bar{X}$.
    \item $H^{2n-1}(\bar{X}, \mathbb{Z}) \cong H_1(\bar{X}, \mathbb{Z}) =0$. This implies that the Hodge numbers
$$h^{1,0}(\bar{X}) = h^{0,1}(\bar{X}) = 0.$$
In fact, $\bar{X}$ is simply connected (Theorem \ref{simply connected}). Also observe that, if $\bar{X}$ is rationally connected, then it is always simply connected.
\item $H^2(\bar{X}, \mathbb{Z})$ is torsion-free. The dimension of the complex vector space $H^2(\bar{X}, \mathbb{C})$ equals the number of irreducible components of $D$; that is, $\dim_\mathbb{C}H^2(\bar{X}, \mathbb{C}) = r$. Thus, $H^2(\bar{X}, \mathbb{Z})$ is isomorphic to $\mathbb Z^{\oplus r}$.
\end{enumerate}
The following lemma is well known. 
\begin{lemma}\label{simply connected}
    Let $Y$ be a
    variety over $\mathbb{C}$ and let $U$ be a Zariski open subset of $Y$. 
    Then the inclusion $U \subset Y$ induces a surjection $\pi_1(U) \to \pi_1(Y)$. In particular, if $X$ and $\bar{X}$ are as above and $X$ is simply connected, then $\bar{X}$ is also simply connected \cite[Lemma 4.19, Chapter 4]{debarre}.
\end{lemma}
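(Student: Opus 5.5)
The plan is to prove the surjectivity of $\pi_1(U)\to\pi_1(Y)$ by reducing to the normal case and then using that removing a proper closed subset from a normal variety cannot create new homotopy classes of loops that are non-trivial on the complement. The second assertion then follows immediately: apply the first part with $Y=\bar{X}$ and $U=X$. Since $X$ is topologically contractible, $\pi_1(X)=1$, so surjectivity forces $\pi_1(\bar{X})=1$.

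For the first part I will assume $Y$ is irreducible, which is implicit since otherwise $U$ could miss a component and the claim fails; in our application $Y=\bar{X}$ is smooth and connected. I will also assume $U$ is non-empty, so that $U$ is dense in $Y$. The key steps are as follows.

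First, when $Y$ is smooth (the only case needed here), $Z=Y\setminus U$ is a closed analytic subset of real codimension at least $2$ in a connected complex manifold. A standard transversality argument then applies. Any loop in $Y$ based at a point of $U$ can be homotoped, rel basepoint, to a smooth loop transverse to the smooth strata of $Z$. Since each stratum has real codimension $\geq 2$ and the loop has dimension $1$, transversality means the loop avoids $Z$ entirely. Hence every class in $\pi_1(Y)$ comes from $\pi_1(U)$.

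Second, for general $Y$, I will pass to the normalization $\nu:Y^{n}\to Y$ and argue as in the cited \cite[Lemma 4.19]{debarre}. The local analytic branches of a normal variety are irreducible, so small punctured neighbourhoods of points of $Z$ meet $U$ in connected sets. A covering-space argument then applies: a connected covering of $Y$ restricts to a connected covering of $U$, because $U$ is dense and locally connected-through $Z$ in the normal case. This gives surjectivity via the Galois correspondence for connected coverings. The non-normal case only enlarges $\pi_1(Y)$ by identifications, and this is handled by the surjection $\pi_1(Y^n)\to\pi_1(Y)$ that holds for a finite birational map. Only the smooth case is needed for the corollary, so I would state that case carefully and cite Debarre for the rest.

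The main obstacle is the general-position step, specifically making precise that a loop can be pushed off a possibly singular subvariety $Z$. My approach is to use a Whitney stratification of $Z$ into complex submanifolds, each of real codimension $\geq 2$ in $Y$. I would then apply the transversality theorem stratum by stratum, perturbing the loop by a small homotopy fixing the basepoint.
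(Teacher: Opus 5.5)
Your transversality argument for smooth $Y$ is correct, and it is all that the ``in particular'' needs: $\bar X$ is smooth and connected, and $\pi_1(X)=1$. The paper gives no proof of its own and only cites Debarre. The result being cited is the case of a \emph{normal} variety. It is proved by the covering-space argument you sketch in your second step: a connected topological covering $Y'\to Y$ of a normal variety is a connected, locally irreducible analytic space, hence irreducible, so the preimage of the dense Zariski open set $U$ stays connected. For smooth $Y$ your general-position argument is a valid alternative. It is more hands-on, while the covering argument is shorter and covers every normal $Y$.

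The step that fails is your treatment of non-normal $Y$. A finite birational map does not in general induce a surjection on fundamental groups, because identifying points is exactly what creates new loops. Take the nodal plane cubic $Y$. Its normalization is $\PP^1$, which is simply connected, whereas $Y$ is homotopy equivalent to $S^2\vee S^1$, so $\pi_1(Y)\cong\ZZ$.

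The same example shows that the first assertion of the lemma is false as literally stated. Let $U=Y\setminus\{\mathrm{node}\}\cong\CC^{*}$. The inclusion $U\hookrightarrow Y$ factors through $\PP^1$, so $\pi_1(U)\to\pi_1(Y)$ is the zero map, although $\pi_1(Y)\cong\ZZ$. Hence no argument can prove the statement for an arbitrary variety, and appealing to Debarre ``for the rest'' cannot rescue the non-normal case.

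You should add the hypothesis that $Y$ is normal, with $U$ nonempty; a connected normal variety is automatically irreducible. Then drop the normalization step and note that $Y=\bar X$ satisfies this hypothesis. With that correction, either of your two arguments (transversality in the smooth case, or local irreducibility plus coverings in the normal case) proves what the paper uses.
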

The generalized Mumford theorem implies that if $Y$ is a smooth projective variety with $\CH_0(Y) \cong \mathbb{Z}$---in particular, if $Y$ is rationally connected---then $h^{p,0}(Y) = 0$ for every $p \geq 1$ \cite[Theorem 3.13]{voisinannals}. Mixed Hodge theory gives the same vanishing for compactifications of topologically contractible varieties.

\begin{theorem} \label{regular q-forms of contractible}
    Let $X$ be a smooth, topologically contractible complex algebraic variety. Let $\bar{X}$ be a smooth projective compactification of $X$ such that the boundary $D = \bar{X} \setminus X$ is a divisor with simple normal crossings. Then the cohomology of the structure sheaf of $\bar{X}$ vanishes in all positive degrees:
    $$H^i(\bar{X}, \mathcal{O}_{\bar{X}}) = 0 \quad \text{for all } i > 0.$$
\end{theorem}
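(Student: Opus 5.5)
By Hodge symmetry, $H^i(\bar{X},\mathcal{O}_{\bar{X}})\cong \overline{H^{i,0}(\bar{X})}$, so $h^{0,i}(\bar{X})=h^{i,0}(\bar{X})=\dim_\CC H^0(\bar{X},\Omega^i_{\bar{X}})$. It therefore suffices to show that $\bar{X}$ carries no nonzero holomorphic $i$-forms for $i\geq 1$. We do this with Deligne's mixed Hodge theory on the open variety $X$, using the weight filtration on $H^i(X,\CC)$.

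Let $n=\dim X$. By Deligne, for $X=\bar{X}\setminus D$ with $D$ a simple normal crossings divisor, $H^i(X,\CC)$ is computed by the log de Rham complex $\Omega^\bullet_{\bar{X}}(\log D)$. The Hodge-to-de Rham spectral sequence $E_1^{p,q}=H^q(\bar{X},\Omega^p_{\bar{X}}(\log D))\Rightarrow H^{p+q}(X,\CC)$ degenerates at $E_1$. The lowest weight piece is
$$W_iH^i(X,\CC)=\operatorname{Im}\bigl(H^i(\bar{X},\CC)\to H^i(X,\CC)\bigr),$$
and $F^i\cap W_i$ on it is the image of $H^0(\bar{X},\Omega^i_{\bar{X}})$. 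Concretely, by degeneration, $F^iH^i(X,\CC)=H^0(\bar{X},\Omega^i_{\bar{X}}(\log D))$ injects into $H^i(X,\CC)$; since every global section is closed, it is a subspace of $H^i(X,\CC)$. The inclusion $\Omega^i_{\bar{X}}\subset \Omega^i_{\bar{X}}(\log D)$ gives $H^0(\bar{X},\Omega^i_{\bar{X}})\subset H^0(\bar{X},\Omega^i_{\bar{X}}(\log D))$. Hence restriction to $X$ gives an injection
$$H^0(\bar{X},\Omega^i_{\bar{X}})\hookrightarrow H^i(X,\CC).$$
Equivalently, a nonzero holomorphic $i$-form $\omega$ on $\bar{X}$ has nonzero class $[\omega]\in H^{i,0}(\bar{X})$. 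That class restricts to a nonzero class on $X$, since its image lies in $\operatorname{Gr}^W_i\operatorname{Gr}_F^i$ and $F^i$ embeds.

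Since $X$ is topologically contractible, $H^i(X,\CC)=0$ for all $i\geq 1$. The injection then forces $H^0(\bar{X},\Omega^i_{\bar{X}})=0$ for all $i\geq 1$. By Hodge symmetry, $H^i(\bar{X},\mathcal{O}_{\bar{X}})=0$ for all $i>0$.

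The main step to justify carefully is the injectivity of $H^0(\bar{X},\Omega^i_{\bar{X}})\to H^i(X,\CC)$. I would cite Deligne's $E_1$-degeneration of the logarithmic Hodge spectral sequence \cite{PMIHES_1971__40__5_0}, which gives $F^iH^i(X,\CC)\cong H^0(\bar{X},\Omega^i_{\bar{X}}(\log D))$. Injectivity then follows because log forms are closed and nothing in the spectral sequence maps into $E_1^{i,0}$ nontrivially. As an alternative route for $i=1$, one can also use $h^{0,1}=0$ from property (3) above. The same argument gives $h^{p,0}(\bar{X})=0$ for all $p\ge1$, which is the vanishing used later.
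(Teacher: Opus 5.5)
Your argument is correct, but it takes a different route from the paper.

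The paper works at the bottom of the Hodge filtration. It uses three ingredients:
\begin{itemize}
\item strictness of $j^*\colon H^i(\bar X,\CC)\to H^i(X,\CC)$ with respect to $F$;
\item the identification of $\ker(j^*)$ with the image of the Gysin map from a resolution $\widetilde D\to D$;
\item the fact that this Gysin map has type $(1,1)$, so $\ker(j^*)\subset F^1$.
\end{itemize}
Together these give an injection of $\operatorname{Gr}^0_F H^i(\bar X,\CC)=H^i(\bar X,\mathcal O_{\bar X})$ into $\operatorname{Gr}^0_F H^i(X,\CC)=0$.

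You work at the top of the filtration instead. The $E_1$-degeneration of the logarithmic Hodge--de Rham spectral sequence identifies $F^iH^i(X,\CC)$ with $H^0(\bar X,\Omega^i_{\bar X}(\log D))$ as a subspace of $H^i(X,\CC)$. Holomorphic forms are logarithmic forms, so $H^0(\bar X,\Omega^i_{\bar X})\to H^i(X,\CC)$ is injective. Hodge symmetry on $\bar X$ then turns $h^{i,0}=0$ into $h^{0,i}=0$.

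One step should be stated explicitly: the inclusion $H^0(\bar X,\Omega^i_{\bar X})\subset H^0(\bar X,\Omega^i_{\bar X}(\log D))$ is the restriction map $j^*$ composed with $H^0(\bar X,\Omega^i_{\bar X})\hookrightarrow H^i(\bar X,\CC)$. This follows from functoriality of the spectral sequences along the filtered morphism $\Omega^\bullet_{\bar X}\to\Omega^\bullet_{\bar X}(\log D)$, which induces $j^*$ on hypercohomology.

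Your route avoids both the Gysin description of $\ker(j^*)$ and the strictness argument. It also proves something stronger: $H^0(\bar X,\Omega^i_{\bar X}(\log D))=0$ for all $i\ge 1$, so every logarithmic form vanishes, not just the holomorphic ones. The cost is an extra appeal to Hodge symmetry. The paper's route reaches $H^{0,i}$ directly. Its key input, that classes supported on the boundary lie in $F^1$, is a structural statement about $\ker(j^*)$ that could be reused elsewhere.
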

\begin{proof}
    Let $j:X\hookrightarrow\bar X$ be the open immersion. By Deligne's theory, the singular cohomology groups of both spaces carry natural mixed Hodge structures, and the structure on $H^i(\bar X,\mathbb C)$ is pure. The inclusion induces
    $$j^*: H^i(\bar{X}, \mathbb{C}) \to H^i(X, \mathbb{C}).$$
    This is a morphism of mixed Hodge structures and is strict for the Hodge filtration \cite[Th\'eor\`em 2.3.5]{PMIHES_1971__40__5_0}. The localization sequence gives
    $$\dots \to H^i_D(\bar{X}, \mathbb{C}) \xrightarrow{\gamma} H^i(\bar{X}, \mathbb{C}) \xrightarrow{j^*} H^i(X, \mathbb{C}) \to \dots.$$
    Thus $\ker(j^*)=\operatorname{Im}(\gamma)$. Let $\widetilde D\to D$ be a resolution of singularities. By \cite[Corollary 7.3]{hodge},
    $$\operatorname{Im}\bigl(H^i_D(\bar{X}, \mathbb{C}) \xrightarrow{\gamma} H^i(\bar{X}, \mathbb{C})\bigr)
    =\operatorname{Im}\bigl(H^{i-2}(\widetilde D, \mathbb{C}) \to H^i(\bar{X}, \mathbb{C})\bigr).$$
    The Gysin map has Hodge type $(1,1)$, and therefore
    $$\ker(j^*) = \operatorname{Im}(\gamma) \subset F^1 H^i(\bar{X}, \mathbb{C}).$$
    Strictness now gives an injection on the zeroth graded pieces:
    $$\operatorname{Gr}_F^0(j^*): \operatorname{Gr}_F^0 H^i(\bar{X}, \mathbb{C}) \hookrightarrow \operatorname{Gr}_F^0 H^i(X, \mathbb{C}).$$
    By classical Hodge theory on the smooth projective variety $\bar{X}$,
    $$\operatorname{Gr}_F^0 H^i(\bar{X}, \mathbb{C}) \cong H^{0,i}(\bar{X}) \cong H^i(\bar{X}, \mathcal{O}_{\bar{X}}).$$
    By hypothesis, $X$ is topologically contractible, so $H^i(X,\mathbb C)=0$ for $i>0$. Consequently,
    $$\operatorname{Gr}_F^0 H^i(X, \mathbb{C}) = 0.$$
    The injectivity above therefore forces $H^i(\bar{X}, \mathcal{O}_{\bar{X}})=0$ for all $i>0$, completing the proof.
\end{proof}
The exponential exact sequence and GAGA give the following corollary.
\begin{corollary} \label{compactification picard}
    The first Chern class map 
    $$\Pic(\bar{X}) \to H^2(\bar{X}, \mathbb{Z})$$
    is an isomorphism that maps each class $[D_i]$ to a generator. Thus, $\Pic(\bar{X})$ is isomorphic to $\mathbb Z^{\oplus r}$ and is generated by the line bundles associated with the irreducible components of $D$.
\end{corollary}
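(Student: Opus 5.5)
The plan is to combine the exponential sequence with GAGA and the topological properties listed above. On $\bar{X}$, viewed as a compact complex manifold, the exponential sequence $0 \to \mathbb{Z} \to \mathcal{O}_{\bar{X}}^{\mathrm{an}} \to \mathcal{O}_{\bar{X}}^{\mathrm{an},*} \to 0$ gives the long exact sequence
$$H^1(\bar{X}, \mathcal{O}^{\mathrm{an}}_{\bar{X}}) \to H^1(\bar{X}, \mathcal{O}^{\mathrm{an},*}_{\bar{X}}) \xrightarrow{c_1} H^2(\bar{X}, \mathbb{Z}) \to H^2(\bar{X}, \mathcal{O}^{\mathrm{an}}_{\bar{X}}).$$
Since $\bar{X}$ is projective, GAGA identifies $H^i(\bar{X}, \mathcal{O}^{\mathrm{an}}_{\bar{X}})$ with $H^i(\bar{X}, \mathcal{O}_{\bar{X}})$, and identifies analytic line bundles with algebraic ones, so $H^1(\bar{X}, \mathcal{O}^{\mathrm{an},*}_{\bar{X}}) \cong \Pic(\bar{X})$. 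Theorem \ref{regular q-forms of contractible} says both outer terms vanish for $i=1,2$. Hence $c_1 \colon \Pic(\bar{X}) \to H^2(\bar{X}, \mathbb{Z})$ is an isomorphism. By property (4), $H^2(\bar{X},\mathbb{Z}) \cong \mathbb{Z}^{\oplus r}$, so $\Pic(\bar{X}) \cong \mathbb{Z}^{\oplus r}$.

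It remains to show that $\Pic(\bar{X})$ is generated by the classes $[D_i]$. I would argue algebraically, using the localization sequence for divisor class groups on the smooth variety $\bar{X}$:
$$\bigoplus_{i=1}^r \mathbb{Z}[D_i] \to \Pic(\bar{X}) \to \Pic(X) \to 0.$$
Since $X$ is topologically contractible, $\Pic(X) = 0$; this is \cite[Theorem 5.5.2.7]{Asok2021}, quoted in the introduction. So the $[D_i]$ generate $\Pic(\bar{X}) \cong \mathbb{Z}^{\oplus r}$. A surjection from $\mathbb{Z}^{\oplus r}$ onto $\mathbb{Z}^{\oplus r}$ is an isomorphism, because $\mathbb{Z}^{\oplus r}$ is Hopfian. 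Therefore the $[D_i]$ form a basis of $\Pic(\bar{X})$, and their images $c_1(\mathcal{O}(D_i))$ form a basis of $H^2(\bar{X},\mathbb{Z})$.

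If one prefers not to cite $\Pic(X)=0$, the same conclusion follows from topology. The isomorphism $H^2(\bar{X},\mathbb{Z}) \cong H^2(D,\mathbb{Z})$, together with $H^2_D(\bar{X},\mathbb{Z}) \cong H_{2n-2}(D,\mathbb{Z}) \cong \mathbb{Z}^{\oplus r}$ (Thom isomorphism / Alexander duality), gives surjectivity of the Gysin map $H^2_D(\bar{X}) \to H^2(\bar{X})$. This uses $H^2(X,\mathbb{Z})=0$, and the Gysin map sends the fundamental class of $D_i$ to $c_1(\mathcal{O}(D_i))$. The only step needing care is checking that the Gysin image of $[D_i]$ coincides with the first Chern class of $\mathcal{O}(D_i)$, which is the standard compatibility of cycle classes. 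I expect no real obstacle; the main input, the vanishing of $H^1$ and $H^2$ of $\mathcal{O}_{\bar{X}}$, is already provided by Theorem \ref{regular q-forms of contractible}.
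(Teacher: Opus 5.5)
Your proposal is correct and follows the paper's route: the paper's entire justification is ``the exponential exact sequence and GAGA,'' with Theorem \ref{regular q-forms of contractible} killing $H^1$ and $H^2$ of $\mathcal{O}_{\bar X}$, and with property (4) giving $\Pic(\bar X)\cong\mathbb{Z}^{\oplus r}$. The paper leaves the claim about the classes $[D_i]$ implicit; your localization argument with $\Pic(X)=0$ and the Hopfian step fills this in correctly, as does your Gysin argument, which uses $H^1(X,\mathbb{Z})=H^2(X,\mathbb{Z})=0$ to show directly that $H^2_D(\bar X,\mathbb{Z})\to H^2(\bar X,\mathbb{Z})$ is an isomorphism.
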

In view of Bloch's conjecture, it is natural to ask the following:
\begin{question}
   Let $X$ be a topologically contractible smooth affine variety over $\mathbb{C}$
that admits a smooth projective compactification $\bar{X}$ such that $D=\bar{X} \setminus X$ is a divisor with simple normal crossings. Is $\CH_0(\bar{X}) \cong \mathbb{Z}$ (or equivalently, is $\CH_0(\bar{X})$ supported on a curve, by Corollary \ref{main corollary})? An affirmative answer implies an affirmative answer to the generalized Serre question in dimension $3$; see also Question \ref{version serre question}.
\end{question}
\subsection{Brauer group of a topologically contractible variety}
The classical examples of Artin and Mumford \cite{ArtinMumford} are unirational, and hence rationally connected, smooth proper complex varieties that are not rational and have non-trivial Brauer group. In particular, these varieties are not $\AA^1$-connected; cf. Remark \ref{KRremark}(v). It is natural to ask whether any of them can be a compactification of a topologically contractible affine threefold.

Let $X$ be a smooth variety over $\mathbb{C}$. Its analytification, denoted by $X^{\mathrm{an}}$, has underlying set $X(\mathbb C)$. There is a canonical morphism
$$X^{\mathrm{an}} \to X.$$
If $X$ is affine, then $X^{\mathrm{an}}$ is a Stein space.
\begin{lemma}
Let $X$ be a topologically contractible smooth complex affine variety, and let $E$ be an algebraic, topological, or holomorphic line bundle on $X$. Then $E$ is trivial. In particular,
$$\Pic(X)=H^1(X^{\mathrm{an}},\mathcal O^*)=H^2(X,\mathbb Z)=0.$$
\end{lemma}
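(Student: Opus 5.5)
The plan is to treat the three kinds of line bundle separately and tie them together through the exponential sequence on the Stein space $X^{\mathrm{an}}$. We start with the topological statement, which needs no algebraic input. Complex line bundles on the CW complex $X(\mathbb C)$ are classified by $[X(\mathbb C),\mathbb{CP}^\infty]\cong H^2(X,\mathbb Z)$ via the first Chern class. Since $X$ is topologically contractible, $H^2(X,\mathbb Z)=0$, so every topological line bundle is trivial. The same conclusion also follows directly from the introduction's remark that all bundles over a contractible paracompact space are trivial.

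For holomorphic line bundles we use the exponential sequence $0\to\mathbb Z\to\mathcal O\to\mathcal O^*\to 0$ on $X^{\mathrm{an}}$. Its long exact sequence contains
$$H^1(X^{\mathrm{an}},\mathcal O)\to H^1(X^{\mathrm{an}},\mathcal O^*)\to H^2(X^{\mathrm{an}},\mathbb Z)\to H^2(X^{\mathrm{an}},\mathcal O).$$
Because $X$ is affine, $X^{\mathrm{an}}$ is Stein, so Cartan's Theorem B gives $H^i(X^{\mathrm{an}},\mathcal O)=0$ for $i\ge 1$. Hence $c_1\colon H^1(X^{\mathrm{an}},\mathcal O^*)\to H^2(X,\mathbb Z)$ is an isomorphism (the Oka principle in degree one). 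Since the target vanishes, $H^1(X^{\mathrm{an}},\mathcal O^*)=0$.

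For algebraic line bundles, the analytification map $\Pic(X)\to H^1(X^{\mathrm{an}},\mathcal O^*)$ already lands in the zero group, so it suffices to show this map is injective. This is the one step that is not formal, because GAGA does not apply to the non-proper $X$. Injectivity means: if $\mathcal O_X(\Delta)$ is holomorphically trivial for a divisor $\Delta$, then $\Delta$ is principal. We argue as follows.

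Choose a smooth projective compactification $\bar X$ with boundary $D$ and extend $\Delta$ to its closure $\bar\Delta$ on $\bar X$. By Chow's theorem/GAGA, algebraic and analytic line bundles on $\bar X$ agree. The class $c_1(\mathcal O(\bar\Delta))\in H^2(\bar X,\mathbb Z)$ restricts to $c_1(\mathcal O(\Delta))=0$ in $H^2(X,\mathbb Z)$, so by the localization sequence it comes from classes supported on $D$. If $H^{2}(\bar X,\mathcal O)=H^1(\bar X,\mathcal O)=0$, as in the simple normal crossings setting of Theorem \ref{regular q-forms of contractible}, then $\Pic(\bar X)\cong H^2(\bar X,\mathbb Z)$ (Corollary \ref{compactification picard}). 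In that case $\bar\Delta$ is linearly equivalent to a combination of the $D_i$, so $\Delta$ is principal on $X$.

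To avoid depending on the snc hypothesis, I would instead cite the standard comparison: for a smooth variety, the kernel of $\Pic(X)\to H^2(X,\mathbb Z)$ is a quotient of $\Pic^0(\bar X)$, and it vanishes once $H^1(\bar X,\mathcal O)=0$. That vanishing holds here because $H^1(\bar X,\mathbb C)=0$, which follows from $H_1(\bar X)=0$ and the surjection $\pi_1(X)\to\pi_1(\bar X)$ in Lemma \ref{simply connected}. Alternatively, Asok's cited result $\Pic(X)=0$ \cite[Theorem 5.5.2.7]{Asok2021} can be quoted directly. I expect this comparison between algebraic and analytic Picard groups to be the main obstacle.
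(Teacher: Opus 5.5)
Your proposal is correct. For topological and holomorphic line bundles it is the paper's argument: the exponential sequence plus Cartan's Theorem B on the Stein space $X^{\mathrm{an}}$ give $H^1(X^{\mathrm{an}},\mathcal O^*)\cong H^2(X,\mathbb Z)=0$.

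The routes differ only for algebraic bundles. The paper does not prove injectivity of $\Pic(X)\to H^1(X^{\mathrm{an}},\mathcal O^*)$; it simply invokes Gurjar's result \cite{Gurjar1980}. You instead give the compactification argument, which runs as follows:
\begin{enumerate}
\item $\Pic(\bar X)\to\Pic(X)$ is surjective.
\item The kernel of $H^2(\bar X,\mathbb Z)\to H^2(X,\mathbb Z)$ is spanned by the classes $[D_i]$ of the divisorial boundary components, via $H^2_D(\bar X,\mathbb Z)\cong H^{\mathrm{BM}}_{2d-2}(D,\mathbb Z)$ with $d=\dim X$.
\item Hence $\ker\bigl(\Pic(X)\to H^2(X,\mathbb Z)\bigr)$ is a quotient of $\Pic^0(\bar X)$.
\item $\Pic^0(\bar X)=0$, because $\pi_1(\bar X)$ is a quotient of $\pi_1(X)=1$, so $h^{0,1}(\bar X)=0$.
\end{enumerate}
This makes the lemma self-contained. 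It uses only a smooth projective compactification, Lemma \ref{simply connected} (valid since $\bar X$ is smooth, hence normal), and classical Hodge theory on $\bar X$. It also proves $\Pic(X)\hookrightarrow H^2(X,\mathbb Z)$ directly, so the analytic Picard group is not needed for the algebraic statement. The paper's route is shorter but depends entirely on the external citation.

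Two small remarks. Your first variant is not actually conditional: by Hironaka you can always choose $\bar X$ with simple normal crossing boundary, so Theorem \ref{regular q-forms of contractible} applies. Also, that variant only needs injectivity of $c_1$ on $\Pic(\bar X)$, i.e.\ $H^1(\bar X,\mathcal O)=0$, not $H^2(\bar X,\mathcal O)=0$. This is why your second variant is the cleaner one: it gets that single vanishing from simple connectivity without mixed Hodge theory. You should write out step 2 (the Borel--Moore step) rather than calling it a ``standard comparison,'' but it is routine.
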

\begin{proof}
    The exponential map $\exp: \mathcal{O} \to \mathcal{O}^*$ induces the short exact sequence of analytic sheaves
    $$0 \to \underline{\mathbb{Z}} \to \mathcal{O} \xrightarrow{\exp(2\pi i\,\cdot)} \mathcal{O}^* \to 0,$$
    where $\underline{\mathbb{Z}}$ is the constant sheaf given by $\mathbb{Z}$.
    This induces the long exact sequence
    $$\dots \to H^1(X^{\mathrm{an}}, \underline{\mathbb{Z}}) \to H^1(X^{\mathrm{an}}, \mathcal{O}) \to H^1(X^{\mathrm{an}}, \mathcal{O}^*) \to H^2(X^{\mathrm{an}}, \underline{\mathbb{Z}}) \to H^2(X^{\mathrm{an}}, \mathcal{O}) \to \dots .$$
    By Cartan's theorem for Stein spaces, 
    $$H^1(X^{\mathrm{an}}, \mathcal{O}) \cong H^2(X^{\mathrm{an}}, \mathcal{O}) \cong 0.$$
    Therefore, for a smooth affine complex variety,
    $$H^1(X^{\mathrm{an}}, \mathcal{O}^*) \cong H^2(X^{\mathrm{an}}, \underline{\mathbb{Z}})\cong H^2(X, \mathbb{Z}),$$
    where $H^2(X, \mathbb{Z})$ denotes singular cohomology, and the second isomorphism follows from the comparison between singular cohomology and sheaf cohomology with constant coefficients. Therefore, topological and holomorphic line bundles on a smooth affine complex variety agree up to isomorphism.
    
    If $X$ is topologically contractible, every topological or holomorphic line bundle on $X$ is therefore trivial. A result of Gurjar gives the same conclusion for algebraic line bundles \cite{Gurjar1980}.
    
\end{proof}
\begin{lemma} \label{algebraic analytic injection}
  Let $X$ be a topologically contractible smooth complex affine variety. Then the comparison map $\Phi:X^{\mathrm{an}} \to X$ induces an injection
  $$H^2_{\text{\'et}}(X, \mathbb{G}_m) \to H^2(X^{\mathrm{an}}, \mathcal{O}^*).$$
  Thus the algebraic Brauer group in the \'etale topology injects into the analytic Brauer group.
\end{lemma}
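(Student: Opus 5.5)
We compare the étale cohomology of $\mathbb{G}_m$ with analytic $\mathcal{O}^*$-cohomology through the Kummer sequences, and use topological contractibility to kill the terms that could obstruct injectivity. Since $\Br(X)=H^2_{\acute{e}t}(X,\mathbb{G}_m)$ is torsion for smooth $X$ (Grothendieck), it suffices to show that for every $n\geq 1$ the map on $n$-torsion
$$H^2_{\acute{e}t}(X,\mathbb{G}_m)[n]\to H^2(X^{\mathrm{an}},\mathcal{O}^*)[n]$$
is injective. Any class in the kernel lies in some $[n]$-part, so this gives the full statement.

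On the algebraic side, the Kummer sequence $1\to\mu_n\to\mathbb{G}_m\xrightarrow{n}\mathbb{G}_m\to 1$ on $X_{\acute{e}t}$ gives
$$0\to \Pic(X)/n\to H^2_{\acute{e}t}(X,\mu_n)\to H^2_{\acute{e}t}(X,\mathbb{G}_m)[n]\to 0 .$$
On the analytic side, $1\to\mu_n\to\mathcal{O}^*\xrightarrow{n}\mathcal{O}^*\to 1$ is exact on $X^{\mathrm{an}}$, because $n$-th roots of nowhere vanishing holomorphic functions exist locally. It gives
$$0\to H^1(X^{\mathrm{an}},\mathcal{O}^*)/n\to H^2(X^{\mathrm{an}},\mu_n)\to H^2(X^{\mathrm{an}},\mathcal{O}^*)[n]\to 0 .$$
The comparison morphism $\Phi$ maps the first sequence to the second compatibly, since pullback along $\Phi$ sends $\mathbb{G}_m$ to $\mathcal{O}^*$, $\mu_n$ to $\mu_n$, and respects the $n$-th power maps.

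By the preceding lemma, $\Pic(X)=0$ and $H^1(X^{\mathrm{an}},\mathcal{O}^*)=0$. Hence both left-hand terms vanish, and both right-hand maps are isomorphisms. By Artin's comparison theorem, the middle vertical map $H^2_{\acute{e}t}(X,\mu_n)\to H^2(X^{\mathrm{an}},\mu_n)$ is an isomorphism. It follows that the $n$-torsion map is an isomorphism, in particular injective.

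Contractibility actually gives more: $H^2(X^{\mathrm{an}},\mu_n)=H^2(X,\mathbb{Z}/n)=0$. So both $n$-torsion groups vanish, and $\Br(X)=0$. This is presumably what the paper exploits later.

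The only step needing care is naturality: the Kummer sequences must be compatible under $\Phi^*$, and the étale-to-analytic map on $H^2(\mathbb{G}_m)$ must factor as claimed. This follows from the morphism of sites $X^{\mathrm{an}}\to X_{\acute{e}t}$ together with functoriality of the connecting homomorphisms. Beyond this, the argument uses only that $\Br(X)$ is torsion, which holds for regular schemes.
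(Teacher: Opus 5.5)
Your proof is correct and is essentially the paper's argument: compare the étale and analytic Kummer sequences, use Artin's comparison isomorphism for $\mu_n$, and use that $\Br(X)$ is torsion and that $H^1(X^{\mathrm{an}},\mathcal{O}^*)=0$ by the preceding lemma. The paper needs only this analytic vanishing for injectivity, whereas you also use $\Pic(X)=0$ to get an isomorphism on $n$-torsion; your closing observation that $H^2_{\text{\'et}}(X,\mu_n)\cong H^2(X,\mathbb{Z}/n)=0$ already forces $\Br(X)=0$ is correct, and would let one bypass the exponential-sequence/Cartan argument the paper uses in the following corollary.
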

\begin{proof}
    There is a Kummer sequence in the \'etale topology
    $$0 \to \mu_n \to \mathbb{G}_m \xrightarrow{(\cdot)^n} \mathbb{G}_m \to 0$$
    and, similarly, in the complex analytic topology,
    $$0 \to \mu_n \to \mathcal{O}^* \xrightarrow{(\cdot)^n} \mathcal{O}^* \to 0.$$
    The comparison map $X^{\mathrm{an}} \to X$ induces the following commutative diagram between the long exact sequences in cohomology:
   \[
    \begin{tikzcd}[column sep=small]
    \dots H^1_{\text{\'et}}(X, \mathbb{G}_m) \arrow[r] \arrow[d] 
    & H^1_{\text{\'et}}(X, \mathbb{G}_m) \arrow[r] \arrow[d] 
    & H^2_{\text{\'et}}(X, \mu_n) \arrow[r] \arrow[d, "\Phi^*"]
    & H^2_{\text{\'et}}(X, \mathbb{G}_m) \arrow[r] \arrow[d]  
    & H^2_{\text{\'et}}(X, \mathbb{G}_m) \arrow[d] \dots
    \\
     \dots H^1(X^{\mathrm{an}}, \mathcal{O}^*) \arrow[r]
    & H^1(X^{\mathrm{an}}, \mathcal{O}^*) \arrow[r]
    & H^2(X^{\mathrm{an}}, \mu_n) \arrow[r]
    & H^2(X^{\mathrm{an}}, \mathcal{O}^*) \arrow[r]
    & H^2(X^{\mathrm{an}}, \mathcal{O}^*)
    \dots 
    \end{tikzcd}
    \] 
    From the long exact sequences, the images of the maps 
    $$H^2_{\text{\'et}}(X, \mu_n) \to H^2_{\text{\'et}}(X, \mathbb{G}_m) \quad \text{and} \quad H^2(X^{\mathrm{an}}, \mu_n) \to H^2(X^{\mathrm{an}}, \mathcal{O}^*)$$
    are precisely the $n$-torsion subgroups of the respective groups.

    Now suppose that $\alpha \in \Br(X) \cong H^2_{\text{\'et}}(X, \mathbb{G}_m) = H^2_{\text{\'et}}(X, \mathbb{G}_m)_{\mathrm{tors}}$ maps to $0$ in $H^2(X^{\mathrm{an}}, \mathcal{O}^*)$. Then $\alpha$ is an $n$-torsion element of $H^2_{\text{\'et}}(X, \mathbb{G}_m)$ for some $n$. The Kummer sequence induces a surjection
    $$H^2_{\text{\'et}}(X, \mu_n) \to \Br(X)[n],$$
    where $\Br(X)[n]$ denotes the $n$-torsion subgroup of $\Br(X)$. Thus there is some $\beta \in H^2_{\text{\'et}}(X, \mu_n)$ that maps to $\alpha$. Its image $\Phi^*(\beta)$ lies in the kernel of
    $$H^2(X^{\mathrm{an}}, \mu_n) \to H^2(X^{\mathrm{an}}, \mathcal{O}^*).$$
    But $H^1(X^{\mathrm{an}}, \mathcal{O}^*)$ is trivial, since $X$ is contractible. So $\Phi^*(\beta) = 0$. Thus, $\beta = 0$, which follows from the comparison isomorphism
    $$H^2_{\text{\'et}}(X, \mu_n) \xrightarrow{\cong} H^2(X^{\mathrm{an}}, \mu_n).$$
    Hence $\alpha = 0$. This completes the proof.
\end{proof}
\begin{corollary} \label{contractible trivial Brauer group}
    Let $X$ be a topologically contractible smooth complex affine variety. Then the algebraic Brauer group $\Br(X)$ is trivial.
\end{corollary}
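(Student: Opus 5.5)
Corollary \ref{contractible trivial Brauer group} follows from combining Lemma \ref{algebraic analytic injection} with a direct analysis of the analytic Brauer group $H^2(X^{\mathrm{an}},\mathcal O^*)$. By Lemma \ref{algebraic analytic injection}, the map $\Br(X)=H^2_{\text{\'et}}(X,\mathbb G_m)\to H^2(X^{\mathrm{an}},\mathcal O^*)$ is injective. It therefore suffices to show that the target vanishes, or at least that its torsion part vanishes, since $\Br(X)$ is torsion and injects into the torsion.

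To compute $H^2(X^{\mathrm{an}},\mathcal O^*)$, I use the exponential sequence on the Stein space $X^{\mathrm{an}}$. It gives the exact segment
$$H^2(X^{\mathrm{an}},\mathcal O)\to H^2(X^{\mathrm{an}},\mathcal O^*)\to H^3(X^{\mathrm{an}},\underline{\mathbb Z})\to H^3(X^{\mathrm{an}},\mathcal O).$$
By Cartan's Theorem B, $H^2(X^{\mathrm{an}},\mathcal O)=H^3(X^{\mathrm{an}},\mathcal O)=0$, so $H^2(X^{\mathrm{an}},\mathcal O^*)\cong H^3(X,\mathbb Z)$. Since $X$ is topologically contractible, $H^3(X,\mathbb Z)=0$. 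Hence $H^2(X^{\mathrm{an}},\mathcal O^*)=0$, and by the injection, $\Br(X)=0$.

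Alternatively, one can avoid the analytic target and argue directly with the Kummer sequence. The surjection $H^2_{\text{\'et}}(X,\mu_n)\to\Br(X)[n]$ holds, and by the comparison theorem $H^2_{\text{\'et}}(X,\mu_n)\cong H^2(X,\mathbb Z/n)=0$ for a contractible $X$. Thus $\Br(X)[n]=0$ for all $n$, and since $\Br(X)$ is torsion for smooth $X$ (Gabber/Grothendieck), $\Br(X)=0$. I would present this second argument as the main proof, since it is self-contained, and mention the first as a consistency check.

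The only point requiring care is the identification $\Br(X)=H^2_{\text{\'et}}(X,\mathbb G_m)$ together with its torsionness for smooth varieties. This is standard and already used in the paper's definition of $\Br$, so I expect no real obstacle.
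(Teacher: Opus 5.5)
Your first argument is exactly the paper's proof: the exponential sequence on the Stein space $X^{\mathrm{an}}$ and Cartan's Theorem~B give $H^2(X^{\mathrm{an}},\mathcal O^*)\cong H^3(X,\mathbb Z)=0$, and Lemma~\ref{algebraic analytic injection} transfers this vanishing to $\Br(X)$.

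Your second argument, which you propose as the main one, is also correct and takes a genuinely different route. It uses only three inputs:
\begin{enumerate}
\item the Kummer sequence, giving a surjection $H^2_{\text{\'et}}(X,\mu_n)\to\Br(X)[n]$;
\item Artin's comparison theorem, giving $H^2_{\text{\'et}}(X,\mu_n)\cong H^2(X(\mathbb C),\mathbb Z/n)=0$;
\item the fact that $H^2_{\text{\'et}}(X,\mathbb G_m)$ is torsion for regular integral $X$.
\end{enumerate}
The third fact is Grothendieck's: $H^2_{\text{\'et}}(X,\mathbb G_m)$ injects into $\Br(\mathbb C(X))$. Gabber's theorem concerns $\Br=\Br'$ and is not needed here.

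This route bypasses analytic sheaves, Theorem~B and the injectivity lemma altogether. In fact, the proof of Lemma~\ref{algebraic analytic injection} already lifts $\alpha$ to a class $\beta\in H^2_{\text{\'et}}(X,\mu_n)$, and that group vanishes outright by comparison. So your argument isolates the step of the paper's proof that actually does the work.

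Your route also never uses affineness. It therefore proves $\Br(X)=0$ for any smooth complex variety with $H^2(X(\mathbb C),\mathbb Z/n)=0$ for all $n$, for instance any topologically contractible smooth variety, affine or not.

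In exchange, the paper's route yields a stronger statement that your argument does not give: the whole analytic group $H^2(X^{\mathrm{an}},\mathcal O^*)$ vanishes, including any non-torsion part. That is where the Stein hypothesis is genuinely used.
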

\begin{proof}
    Consider the exponential short exact sequence among the complex analytic sheaves
    $$0 \to \underline{\mathbb{Z}} \to \mathcal{O} \xrightarrow{\exp(2\pi i\,\cdot)} \mathcal{O}^* \to 0,$$
    where $\underline{\mathbb{Z}}$ is the constant sheaf given by $\mathbb{Z}$.
    This induces the long exact sequence
    $$\dots \to H^2(X^{\mathrm{an}}, \mathcal{O}) \to H^2(X^{\mathrm{an}}, \mathcal{O}^*) \to H^3(X^{\mathrm{an}}, \underline{\mathbb Z}) \to H^3(X^{\mathrm{an}}, \mathcal{O}) \to \dots.$$
    Since $X$ is a smooth affine variety over $\mathbb{C}$, $X^{\mathrm{an}}$ is Stein. Thus, by Cartan's theorem,
    $$H^i(X^{\mathrm{an}}, \mathcal{O}) =0 \quad \text{for every } i \geq 1.$$
    The comparison between singular cohomology and analytic sheaf cohomology with constant coefficients gives
    $$H^i(X^{\mathrm{an}}, \underline{\mathbb{Z}}) \cong H^i(X(\mathbb{C}), \mathbb{Z}) =0,$$
since $X$ is contractible.
Thus the long exact sequence shows that $H^2(X^{\mathrm{an}}, \mathcal{O}^*)$ is trivial. Lemma \ref{algebraic analytic injection} then implies that the algebraic Brauer group $\Br(X) \cong H^2_{\text{\'et}}(X, \mathbb{G}_m)$ is trivial.

\end{proof}
\begin{theorem} \label{trivial brauer group contractible proper}
    Let $X$ be a topologically contractible affine variety over $\mathbb{C}$ admitting an open immersion $X \hookrightarrow \bar{X}$ into a smooth variety $\bar{X}$. Then $\Br(\bar{X})$ is trivial.
\end{theorem}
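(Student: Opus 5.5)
The plan is to reduce the statement to Corollary \ref{contractible trivial Brauer group} via the injectivity of restriction maps on Brauer groups of regular integral schemes. Throughout, I read $\Br(\bar{X})$ as $H^2_{\text{\'et}}(\bar{X},\mathbb{G}_m)$.

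\emph{Step 1: density of $X$.} Since $\bar{X}$ is a smooth variety, it is an integral regular noetherian scheme. The open immersion $X \hookrightarrow \bar{X}$ identifies $X$ with a nonempty open subscheme. So $X$ is dense in $\bar{X}$, and the two have the same function field $K=\mathbb{C}(X)=\mathbb{C}(\bar{X})$. If one does not want to assume that varieties are irreducible, first replace $\bar{X}$ by the connected component containing $X$. This is harmless here: $X$ is connected, and the other components play no role in the argument.

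\emph{Step 2: Grothendieck's injectivity.} For a regular integral noetherian scheme $Y$ with generic point $\eta$, restriction to $\eta$ gives an injection
$$H^2_{\text{\'et}}(Y,\mathbb{G}_m)\hookrightarrow \Br(K).$$
This is Grothendieck, \emph{Le groupe de Brauer II}, Cor.~1.8 and \emph{III}. It follows from the exact sequence $0\to\mathbb{G}_m\to j_*\mathbb{G}_{m,K}\to\bigoplus_{y\in Y^{(1)}} i_{y*}\mathbb{Z}\to 0$, which holds on a regular scheme because local rings are UFDs, together with the vanishing $H^1(k(y),\mathbb{Z})=0$. Restriction is functorial, so the map $\Br(\bar{X})\to\Br(K)$ factors as
$$\Br(\bar{X})\longrightarrow \Br(X)\longrightarrow \Br(K).$$
Applying injectivity with $Y=\bar{X}$ shows that the composite is injective. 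Hence the first arrow $\Br(\bar{X})\to\Br(X)$ is injective.

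\emph{Step 3: conclude.} We only need the affine open $X$ to be topologically contractible, smooth and affine; smoothness is automatic as an open subscheme of a smooth variety. Then Corollary \ref{contractible trivial Brauer group} gives $\Br(X)=0$. Injectivity from Step 2 forces $\Br(\bar{X})=0$.

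The only real content is Step 2. The main point to verify is that Grothendieck's injectivity statement applies to the full cohomological group $H^2_{\text{\'et}}(\bar{X},\mathbb{G}_m)$, not merely to the Azumaya Brauer group. For regular integral schemes this group is torsion and injects into $\Br(K)$, so either definition of $\Br$ gives the same conclusion. I would cite this result rather than reprove it.
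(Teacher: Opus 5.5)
This is essentially the paper's proof. The paper cites the injectivity of $\Br(\bar X)\to\Br(X)$ for a dense open subset of a regular integral scheme directly from \cite[Theorem 3.5.7]{CT}, where you derive it by factoring through Grothendieck's injection into $\Br(K)$; both then conclude with Corollary \ref{contractible trivial Brauer group}. One correction to your aside in Step 1: if $\bar X$ had other connected components, their Brauer groups would be direct summands of $\Br(\bar X)$ and need not vanish. So passing to the component containing $X$ is not harmless; the statement genuinely needs $\bar X$ irreducible, which is the standard convention.
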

\begin{proof}
   By \cite[Theorem 3.5.7]{CT}, the open immersion $X \hookrightarrow \bar{X}$ induces an injective map between the Brauer groups
    $$\Br(\bar{X}) \to \Br(X).$$
    The assertion now follows from Corollary \ref{contractible trivial Brauer group}.
\end{proof}
\begin{corollary}
    Let $Y/S$ be a standard conic bundle over a rational surface $S$ with disconnected discriminant locus $C$, and let $X \subset Y$ be an affine open subvariety. Then $X$ cannot be topologically contractible. In particular, the Artin--Mumford example contains no topologically contractible affine open subvariety \cite{ArtinMumford}.
\end{corollary}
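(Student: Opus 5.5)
The plan is to argue by contradiction. The Brauer group is a birational invariant of smooth projective varieties, so Theorem \ref{trivial brauer group contractible proper} reduces the claim to showing that $\Br(Y)\neq 0$. Suppose $X\subset Y$ is a topologically contractible affine open subvariety. Resolution of singularities gives a smooth projective model $\tilde Y$ that contains $X$ as an open subset, since we only blow up outside $X$ (or over the smooth locus of $Y$ away from $X$). The contractible variety $X$ then embeds as an open subset of the smooth variety $\tilde Y$, and Theorem \ref{trivial brauer group contractible proper} gives $\Br(\tilde Y)=0$. If $Y$ is itself smooth, as it is for a standard conic bundle, this step is unnecessary and we get $\Br(Y)=0$ directly.

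Next I compute $\Br(Y)$ for a standard conic bundle $\pi\colon Y\to S$ over a rational surface $S$ with discriminant curve $C$. For this I use the Artin--Mumford / Colliot-Thélène description of the unramified Brauer group of the function field $\mathbb{C}(S)(Y_\eta)$. Let $\alpha\in \Br(\mathbb{C}(S))[2]$ be the quaternion class of the generic conic. The kernel of $\Br(\mathbb{C}(S))\to\Br(\mathbb{C}(Y))$ is generated by $\alpha$. Classes in $\Br(Y)$ come from classes $\beta\in\Br(\mathbb{C}(S))$ whose residues along the curves of $S$ are either $0$ or agree with the residues of $\alpha$ along components of $C$. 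Since $\Br(S)=0$ for rational $S$, the nonzero candidates correspond, roughly, to subsets of the connected components of $C$. Taking $\beta$ with the residues of $\alpha$ along exactly one connected component $C_1$ and zero elsewhere, the Artin--Mumford construction yields a class in $\Br(Y)$. This class is nonzero modulo $\alpha$ precisely because $C$ is disconnected, so the residue pattern of $\beta$ differs from both $0$ and that of $\alpha$. The existence of such a $\beta$ on $S$ follows from the Bloch--Ogus / Gersten exact sequence
\[
0\to \Br(S)\to \Br(\mathbb{C}(S))\to \bigoplus_{x\in S^{(1)}} H^1(\mathbb{C}(x),\QQ/\ZZ)\to \bigoplus_{p\in S^{(2)}}\QQ/\ZZ,
\]
together with the fact that the residues of $\alpha$ restricted to $C_1$ already satisfy the reciprocity condition at points. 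The points of $C_1$ do not meet the other components, so the compatibility conditions can be checked locally on $C_1$. This gives $\Br(Y)\neq 0$, contradicting the previous paragraph.

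For the Artin--Mumford example itself, one can simply cite their computation $H^3(Y,\ZZ)_{\mathrm{tors}}\cong \ZZ/2$. Since $h^{0,2}(Y)=0$ for this rationally connected variety, $\Br(Y)\cong H^3(Y,\ZZ)_{\mathrm{tors}}\neq 0$, which directly contradicts Theorem \ref{trivial brauer group contractible proper}.

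The main obstacle is the middle step: producing a genuinely nonzero unramified class from the disconnectedness of $C$ over a general rational surface. This requires the residue/reciprocity bookkeeping, including a check that the standard conic bundle hypothesis (irreducible singular fibres over the generic point of each component of $C$, with nontrivial double covers) makes the residue of $\alpha$ nonzero along each component. I would rely on the known result, due to Artin--Mumford and Colliot-Thélène, that for standard conic bundles over rational surfaces $\Br(Y)$ is $(\ZZ/2)^{m-1}$ when the double cover of $C$ splits into $m$ connected components suitably. I would state it with a citation rather than reprove it.
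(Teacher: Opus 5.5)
Your proposal is correct and follows the same route as the paper. You show $\Br(Y)\neq 0$ for the standard conic bundle and then apply Theorem \ref{trivial brauer group contractible proper} to the smooth variety $Y$. The paper simply cites \cite[Theorem 8.3.3]{AG5} for the nonvanishing of $\Br(Y)$, whereas you sketch the residue argument behind that citation, and you treat the Artin--Mumford threefold directly via $\Br(Y)\cong H^3(Y,\ZZ)_{\mathrm{tors}}\cong\ZZ/2$.
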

\begin{proof}
    The proof follows from \cite[Theorem 8.3.3]{AG5} and Theorem \ref{trivial brauer group contractible proper}.
\end{proof}
\subsection{Unramified cohomology of a topologically contractible variety} \label{unramified section}
Let $X$ be a topologically contractible variety over $\mathbb{C}$ admitting a smooth projective compactification $\bar{X}$ such that $D=\bar{X} \setminus X$ is a divisor with simple normal crossings. We first recall some definitions and properties of unramified cohomology, introduced by Colliot-Th\'el\`ene and Ojanguren \cite{Colliot1989}. Since the base field is $\mathbb{C}$, the \'etale sheaf $\mu_n$ of $n$-th roots of unity is isomorphic to the constant sheaf $\mathbb{Z}/n\mathbb{Z}$; thus all tensor powers $\mu_n^{\otimes l}$ are non-canonically isomorphic \cite[Section 2]{Schreieder2021UnramifiedCA}.
\begin{definition}
Let $\mathrm{Sm}/\mathbb{C}$ denote the category of smooth $\mathbb{C}$-schemes of finite type. Let $M$ be a strictly $\mathbb{A}^1$-invariant Nisnevich sheaf on $\mathrm{Sm}/\mathbb{C}$. Taking colimits extends $M$ to the category of essentially smooth $\mathbb C$-schemes. An open immersion $U \to X$ induces an injection $M(X) \to M(U)$ \cite[Definition 4.1]{Asok+2013+39+64}.

Let $X \in \mathrm{Sm}/\mathbb{C}$ be irreducible. For $x \in X^{(1)}$, let $v_x$ be the associated discrete valuation of $\mathbb{C}(X)/\mathbb{C}$, where $\mathbb{C}(X)$ denotes the function field of $X$. Then $M(\Spec \mathcal{O}_{v_x})$ is contained in $M(\Spec \mathbb{C}(X))$, since the generic point $\Spec \mathbb{C}(X) \to \Spec \mathcal{O}_{v_x}$ is an open immersion. The map $M(X) \to M(\Spec \mathcal{O}_{v_x})$ is also injective. Define
$$M^{\mathrm{nr}}(X):= \bigcap_{x \in X^{(1)}} M(\Spec \mathcal{O}_{v_x}),$$
where the intersection is taken inside $M(\Spec \mathbb{C}(X))$.
\end{definition}
\begin{theorem}
    For every irreducible $X \in \mathrm{Sm}/\mathbb{C}$, the induced injection $M(X) \to M^{\mathrm{nr}}(X)$ is an isomorphism \cite[Lemma 4.2]{Asok+2013+39+64}.
\end{theorem}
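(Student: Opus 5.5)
The plan is to use the injectivity already recorded in the definition and prove surjectivity by extending an unramified class step by step over larger and larger open subsets of $X$. Fix $\alpha \in M^{\mathrm{nr}}(X) \subset M(\Spec \mathbb{C}(X))$. Since $M$ is extended to essentially smooth schemes by colimits, $M(\Spec \mathbb{C}(X))$ is the colimit of $M(U)$ over nonempty opens $U\subset X$. Hence $\alpha$ comes from some $\alpha_U \in M(U)$. All restriction maps between nonempty opens of the irreducible $X$ are injective, since they embed into $M(\Spec\mathbb{C}(X))$. So any two extensions of $\alpha$ agree on overlaps, and there is a unique maximal open $U\subset X$ to which $\alpha$ extends; this uses the Zariski sheaf property of $M$, which holds because $M$ is a Nisnevich sheaf.

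\emph{Step 1: the complement $Z = X\setminus U$ has codimension $\geq 2$.} Suppose instead that $Z$ has an irreducible component of codimension $1$, with generic point $x\in X^{(1)}$. By hypothesis $\alpha \in M(\Spec\mathcal{O}_{v_x}) = M(\Spec \mathcal{O}_{X,x})$, and this group is the colimit of $M(V)$ over open neighbourhoods $V$ of $x$. So $\alpha$ extends to some $\alpha_V\in M(V)$ with $x\in V$. The sections $\alpha_U$ and $\alpha_V$ have the same image in $M(\Spec\mathbb{C}(X))$, so by injectivity they agree on $U\cap V$. They therefore glue to a section on $U\cup V$, which is strictly larger than $U$. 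This contradicts maximality.

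\emph{Step 2 (purity).} It remains to show that for a closed subset $Z\subset X$ of codimension $\geq 2$, the restriction $M(X)\to M(X\setminus Z)$ is surjective. Equivalently, $H^1_Z(X,M)=0$, and more precisely $H^i_Z(X,M)=0$ for $i\le 1$. I expect this to be the main obstacle, because it is exactly where strict $\mathbb{A}^1$-invariance is needed rather than mere $\mathbb{A}^1$-invariance.

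I would argue by induction on $\dim X$ and work Nisnevich-locally around a point $z\in Z$. By Gabber's presentation lemma (in Morel's form, valid over the infinite field $\mathbb{C}$), after shrinking there is an étale map $X\to \mathbb{A}^1_V$ that restricts to a closed immersion on $Z$ and satisfies Nisnevich excision. This identifies $H^*_Z(X,M)$ with $H^*_{Z'}(\mathbb{A}^1_V,M)$, where $Z'\subset \mathbb{A}^1_V$ is finite over $V$. Since $Z$ has codimension $\ge 2$ in $X$, the image $W$ of $Z'$ in $V$ has codimension $\ge 1$ in $V$.

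Next I would compare with the closed set $\mathbb{A}^1_W$, which contains $Z'$. By strict $\mathbb{A}^1$-invariance, $H^*(\mathbb{A}^1_V,M)=H^*(V,M)$ and likewise for the open $V\setminus W$. Hence the cohomology with supports satisfies $H^*_{\mathbb{A}^1_W}(\mathbb{A}^1_V,M)\cong H^*_W(V,M)$. A section over $\mathbb{A}^1_V\setminus Z'$ is first extended across $\mathbb{A}^1_W\setminus Z'$ by the codimension-$1$ argument of Step 1, applied on local rings at generic points of $\mathbb{A}^1_W$. It is then extended across all of $\mathbb{A}^1_W$ using the induction hypothesis on $V$ together with homotopy invariance.

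With purity established, $\alpha_U$ extends to all of $X$. This proves $M(X)\cong M^{\mathrm{nr}}(X)$.
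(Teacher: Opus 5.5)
Your opening reduction (the maximal open set to which $\alpha$ extends) and Step 1 are correct. Reducing to purity in codimension $\geq 2$ via Gabber's presentation lemma is also the standard route behind the cited result; the paper gives no argument of its own, only the reference to Asok's Lemma 4.2, i.e.\ Morel's theorem that strictly $\mathbb{A}^1$-invariant sheaves are unramified.

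The gap is in the last paragraph of Step 2, where all the content lies.

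\begin{itemize}
\item ``Extending across $\mathbb{A}^1_W\setminus Z'$'' is vacuous: your section $\beta$ already lives on $\mathbb{A}^1_V\setminus Z'\supseteq\mathbb{A}^1_W\setminus Z'$.
\item ``Extending across $\mathbb{A}^1_W$ by the induction hypothesis plus homotopy invariance'' does not work. Writing $\beta|_{\mathbb{A}^1_{V\setminus W}}=p^*\gamma$, you must extend $\gamma\in M(V\setminus W)$ to $V$. But $W$ has codimension-one components whenever $Z$ has a component of codimension exactly $2$ (because $Z'\to W$ is finite), and purity on $V$ says nothing there.
\item For $\dim X=2$ the induction hypothesis is empty: $W$ is a finite set of points on a curve.
\end{itemize}

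In cohomological terms, the isomorphism $H^1_{\mathbb{A}^1_W}(\mathbb{A}^1_V,M)\cong H^1_W(V,M)$ only moves the obstruction into a group that is usually nonzero, e.g.\ $H^1_{\{0\}}(\mathbb{A}^1,\mathbb{G}_m)\cong\mathbb{Z}$. The hypothesis that has to be used is that $Z'$ is finite over $V$. Your sketch uses it only to bound $\dim W$.

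The missing step goes as follows.

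\begin{itemize}
\item Shrink $V$ around the image $v_0$ of a closed point $z\in Z$. Finiteness gives a monic $f\in\mathcal{O}(V)[t]$ vanishing on $Z'$. Choose $c\in\mathbb{C}$ with $f(v_0,c)\neq0$, and shrink $V$ further so that the section $s\colon V\to\mathbb{A}^1_V$, $t\mapsto c$, misses $Z'$.
\item Since $s^*p^*=\mathrm{id}$ and $p^*$ is an isomorphism, $s^*\colon H^1_{\mathbb{A}^1_W}(\mathbb{A}^1_V,M)\to H^1_W(V,M)$ is an isomorphism. On classes supported on $Z'$ it factors through $H^1_{s^{-1}(Z')}(V,M)=0$.
\item Hence $H^1_{Z'}(\mathbb{A}^1_V,M)\to H^1_{\mathbb{A}^1_W}(\mathbb{A}^1_V,M)$ is zero.
\item This map is also injective. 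The preceding term in the sequence for $Z'\subset\mathbb{A}^1_W\subset\mathbb{A}^1_V$ is $H^0_{\mathbb{A}^1_W\setminus Z'}(\mathbb{A}^1_V\setminus Z',M)=0$. This is where $W\neq V$ (i.e.\ $\operatorname{codim}Z\geq2$) and injectivity of restriction enter.
\item Therefore $H^1_{Z'}(\mathbb{A}^1_V,M)=0$, and no induction is needed.
\end{itemize}

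You need this vanishing, not just surjectivity of $M(\mathbb{A}^1_V)\to M(\mathbb{A}^1_V\setminus Z')$. That surjectivity does follow, since $p^*s^*\beta$ extends $\beta$. But Nisnevich excision transports cohomology with supports, not sections over complements. So surjectivity on $\mathbb{A}^1_V$ tells you nothing about $H^1_Z(\Omega,M)\to H^1(\Omega,M)$ on the Gabber neighbourhood $\Omega$. Once $H^1_{Z\cap\Omega}(\Omega,M)=0$, the map $M(\Omega)\to M(\Omega\setminus Z)$ is surjective, and the local extensions glue by injectivity as in your opening reduction.
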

\begin{definition} \label{unramified definition}
    Consider the presheaf on $\mathrm{Sm}/\mathbb{C}$
    $$U \mapsto H^p_{\text{\'et}}(U, \mu_n^{\otimes q})$$
where $\mu_n$ is the \'etale sheaf of $n$-th roots of unity. Let $\mathcal{H}^p_{\text{\'et}}(\mu_n^{\otimes q})$ be the associated Zariski or Nisnevich sheaf, which is strictly $\mathbb{A}^1$-invariant. The group $\HH^0_{\mathrm{Nis}}(X, \mathcal{H}^p_{\text{\'et}}(\mu_n^{\otimes q}))$ consists of the classes unramified on $X$. If $X$ is proper, this group equals the classical unramified cohomology group $\HH^p_{\mathrm{nr}}(\mathbb{C}(X)/\mathbb C, \mu_n^{\otimes q})$ introduced by Colliot-Th\'el\`ene and Ojanguren \cite{Colliot1989}.
\end{definition}

\begin{remark} \label{unramified clarification}
    Let $K = \mathbb{C}(X)$ be the function field of a smooth variety $X$. The group
    $$\HH^0_{\mathrm{Nis}}(X, \mathcal{H}^p_{\text{\'et}}(\mu_n^{\otimes q}))$$
    consists of the classes in $\HH^p_{\text{\'et}}(K, \mu_n^{\otimes q})$ whose residues vanish at the codimension-one points of $X$:
    \begin{align*}
    H^p_{\mathrm{nr}}(X, \mu_n^{\otimes q})
    &\coloneqq \bigcap_{x \in X^{(1)}} \ker(\partial_x),\\
    \partial_x:\ H^p_{\text{\'et}}(K, \mu_n^{\otimes q})
    &\longrightarrow H^{p-1}_{\text{\'et}}(\kappa(x), \mu_n^{\otimes (q-1)}).
    \end{align*}
    By contrast, the classical birational unramified cohomology is defined using all geometric rank-one discrete valuations of $K/\mathbb C$:
    $$ H^p_{\mathrm{nr}}(K/\mathbb{C}, \mu_n^{\otimes q}) \coloneqq \bigcap_v \ker \left( \partial_v: H^p_{\text{\'et}}(K, \mu_n^{\otimes q}) \to H^{p-1}_{\text{\'et}}(\kappa(v), \mu_n^{\otimes (q-1)}) \right).$$
    This is a stable birational invariant of $K$. If $X$ is affine, divisorial valuations at infinity are not visible among the points of $X^{(1)}$. Hence one has only an inclusion
    $$ H^p_{\mathrm{nr}}(K/\mathbb{C}, \mu_n^{\otimes q}) \subseteq \HH^0_{\mathrm{Nis}}(X, \mathcal{H}^p_{\text{\'et}}(\mu_n^{\otimes q})).$$
    A class may therefore be unramified on $X$ but ramified along a divisor at infinity. If $X$ is proper, the two groups agree:
    $$\HH^0_{\mathrm{Nis}}(X, \mathcal{H}^p_{\text{\'et}}(\mu_n^{\otimes q})) = H^p_{\mathrm{nr}}(K/\mathbb{C}, \mu_n^{\otimes q}).$$
\end{remark}

\begin{lemma} \label{contractible vanishing unramified cohomology}
    Let $X$ be a topologically contractible smooth affine variety over $\mathbb{C}$.
    Then the unramified cohomology groups $H^1_{\mathrm{nr}}(\mathbb{C}(X)/\mathbb C, \mu_n^{\otimes l})$ and $H^2_{\mathrm{nr}}(\mathbb{C}(X)/\mathbb C, \mu_n^{\otimes l})$ vanish for every $l$ and $n$.
\end{lemma}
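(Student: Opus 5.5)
The plan is to reduce both groups to invariants of $X$ that have already been shown to vanish: the Picard group and torsion in $H^1$ for degree $1$, and the Brauer group for degree $2$. Throughout we use the inclusion from Remark \ref{unramified clarification},
$$H^p_{\mathrm{nr}}(\mathbb{C}(X)/\mathbb{C},\mu_n^{\otimes l}) \subseteq \HH^0_{\mathrm{Nis}}(X,\mathcal{H}^p_{\text{\'et}}(\mu_n^{\otimes l})).$$
Since $\mu_n^{\otimes l}\cong \mathbb{Z}/n$ over $\mathbb{C}$, the twist plays no role. It therefore suffices to show that $\HH^0(X,\mathcal{H}^p_{\text{\'et}}(\mathbb{Z}/n))=0$ for $p=1,2$. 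This is a stronger statement, which uses only the affine variety $X$ and does not need a compactification.

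\emph{Degree $1$.} We use the Bloch--Ogus (coniveau) spectral sequence
$$E_2^{a,b}=\HH^a_{\mathrm{Zar}}(X,\mathcal{H}^b_{\text{\'et}}(\mathbb{Z}/n))\Rightarrow H^{a+b}_{\text{\'et}}(X,\mathbb{Z}/n).$$
Its rows vanish below the diagonal, since $E_2^{a,b}=0$ for $a>b$. This gives an isomorphism
$$H^1_{\text{\'et}}(X,\mathbb{Z}/n)\cong \HH^0(X,\mathcal{H}^1).$$
By the comparison theorem, $H^1_{\text{\'et}}(X,\mathbb{Z}/n)\cong H^1(X(\mathbb{C}),\mathbb{Z}/n)=0$, because $X$ is contractible. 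Alternatively, one can argue with Kummer theory: an unramified class in $\mathbb{C}(X)^*/n$ gives a function $f$ whose divisor is divisible by $n$. Since $\Pic(X)=0$, we may write $\mathrm{div}(f)=n\,\mathrm{div}(g)$, so $f/g^n$ is a unit. Units on $X$ are constants, because $H^1(X,\mathbb{Z})=0$ forces every unit to admit logarithms, and then the unit is constant up to $n$-th powers. So the class of $f$ is trivial.

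\emph{Degree $2$.} The low-degree exact sequence of the same spectral sequence reads
$$H^2_{\text{\'et}}(X,\mathbb{Z}/n)\to \HH^0(X,\mathcal{H}^2)\to \HH^2(X,\mathcal{H}^1)\to H^3_{\text{\'et}}(X,\mathbb{Z}/n).$$
Here $\HH^1(X,\mathcal{H}^1)\cong \CH^1(X)/n$, and $E_2^{2,1}=\HH^2(X,\mathcal{H}^1)=0$, since Bloch--Ogus rows satisfy $E_2^{a,b}=0$ for $a>b$. Hence
$$\HH^0(X,\mathcal{H}^2)=\operatorname{coker}\bigl(\HH^1(\mathcal{H}^1)\to H^2\bigr)\ \text{ modulo the } d_2 \text{ differential},$$
and in any case it is a quotient of a subgroup of $H^2_{\text{\'et}}(X,\mathbb{Z}/n)$, namely $H^2/N^1H^2$. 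But
$$H^2_{\text{\'et}}(X,\mathbb{Z}/n)\cong H^2(X(\mathbb{C}),\mathbb{Z}/n)=0$$
by contractibility, so $\HH^0(X,\mathcal{H}^2)=0$. As a cross-check, there is the standard exact sequence
$$0\to \Pic(X)/n\to H^2_{\text{\'et}}(X,\mu_n)\to \Br(X)[n]\to 0,$$
together with the identification $\HH^0(X,\mathcal{H}^2(\mu_n))\cong \Br(X)[n]$ for smooth $X$ (purity for the Brauer group in codimension one). Combined with Corollary \ref{contractible trivial Brauer group}, this gives the vanishing directly.

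\emph{Main obstacle.} The only delicate point is justifying that the spectral sequence edge maps are surjective onto $E_\infty^{0,p}$, so that $\HH^0(\mathcal{H}^p)$ is controlled by $H^p_{\text{\'et}}(X)$. For $p=1$ and $p=2$, the only differentials leaving $E_2^{0,p}$ land in $E_2^{2,p-1}$ and $E_2^{3,p-2}$. Both vanish, the first by the vanishing $E_2^{a,b}=0$ for $a>b$ and the second because $\mathcal{H}^0=\mathbb{Z}/n$ is constant with zero higher Zariski cohomology on irreducible $X$. Hence $E_\infty^{0,p}=E_2^{0,p}$, and the edge map $H^p\to E_2^{0,p}$ is surjective. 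This completes the argument.
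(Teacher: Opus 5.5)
Your proof is correct, but it takes a different route from the paper.

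\textbf{The paper's route.} The paper works on the compactification $\bar X$. It identifies $H^1_{\mathrm{nr}}$ with $\Pic(\bar X)[n]$, which vanishes by Corollary \ref{compactification picard}; that corollary rests on the Hodge-theoretic vanishing of $H^1(\bar X,\mathcal O_{\bar X})$ and on the topology of the pair $(\bar X,D)$. It identifies $H^2_{\mathrm{nr}}$ with $\Br(\bar X)[n]$, which vanishes because $\Br(\bar X)\hookrightarrow\Br(X)$ and $\Br(X)=0$ by the analytic argument of Corollary \ref{contractible trivial Brauer group}.

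\textbf{Your route.} You stay on $X$ itself. The birational group sits inside $\HH^0(X,\cH^p_{\acute{e}t}(\mu_n^{\otimes l}))$. Since $E_2^{a,b}=0$ for $a>b$, every differential leaving $E_2^{0,p}$ with $p\le 2$ lands in a zero group. Hence $\HH^0(X,\cH^p)$ is a quotient of $H^p_{\acute{e}t}(X,\ZZ/n)\cong H^p(X(\CC),\ZZ/n)=0$.

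\textbf{What each buys.} Your argument proves a stronger statement: it kills the a priori larger group of classes unramified only along $X$. It uses neither affineness nor an SNC compactification, and it bypasses the Hodge theory of $\bar X$. Combined with the Kummer sequence, it even recovers $\Br(X)=0$ without the analytic detour. So your Brauer-group cross-check and the Kummer-theoretic aside in degree $1$ are superfluous.

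It also shows exactly where the method stops. For $p=3$ the differential $d_2\colon E_2^{0,3}\to E_2^{2,2}\cong\CH^2(X)/n$ need not vanish, and this is the phenomenon exploited in Theorem \ref{connectedcontractible}.

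The paper's route, in exchange, identifies the unramified groups with the classical invariants $\Pic(\bar X)_{\mathrm{tors}}$ and $\Br(\bar X)$ of the compactification. This ties the lemma to the Artin--Mumford type obstructions on $\bar X$ discussed in the same section.

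\textbf{Expository point.} The phrase ``cokernel modulo the $d_2$ differential'' in your degree-$2$ paragraph is muddled. The spectral sequence simply gives the short exact sequence $0\to\CH^1(X)/n\to H^2_{\acute{e}t}(X,\ZZ/n)\to\HH^0(X,\cH^2)\to 0$. Your final paragraph already states the correct reason.
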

\begin{proof}
Since $X$ is contractible, $\bar{X} \setminus X$ is connected, and the Picard group $\Pic(\bar{X}) \cong H^2(\bar{X}, \mathbb{Z})$ is torsion-free of rank equal to the number of irreducible components of $\bar{X} \setminus X$; see Corollary \ref{compactification picard}.
Thus, by \cite[Proposition 4.2.1]{CTnotes}, the degree-one unramified cohomology groups $H^1_{\mathrm{nr}}(\mathbb{C}(X)/\mathbb C, \mu_n^{\otimes l})$ vanish for every $l$ and $n$.

Theorem \ref{trivial brauer group contractible proper} gives $\Br(\bar{X})=0$. Therefore, by \cite[Proposition 4.2.3]{CTnotes}, the degree-two unramified cohomology groups $H^2_{\mathrm{nr}}(\mathbb{C}(X)/\mathbb C, \mu_n^{\otimes l})$ vanish for every $l$ and $n$.
\end{proof}
\begin{remark}
    If $X \in \mathrm{Sm}/k$ is $\mathbb{A}^1$-connected, but not necessarily topologically contractible, then $\Br(X)$ is trivial \cite[Proposition 4.1.2]{AsokMorel}. There are, however, topologically contractible affine surfaces that are not $\mathbb{A}^1$-connected; the Ramanujam surface is one example \cite{Ramanujam71,ChoudhuryRoy+2024+55+80}. Such surfaces have trivial Brauer group by Corollary \ref{contractible trivial Brauer group}. If $X$ is a topologically contractible surface, then $H^i_{\mathrm{nr}}(\mathbb{C}(X)/\mathbb C, \mu_n^{\otimes l})$ vanishes for every $i>0$ and all $n$ and $l$. Thus it is important to understand the unramified cohomology groups $H^0_{\text{Nis}}(X, \mathcal{H}^i_{\text{\'et}}(\mu_n^{\otimes l}))$ ($\supseteq H^i_{\mathrm{nr}}(\mathbb{C}(X), \mu_n^{\otimes l})$) of such surfaces $X$; see Definition \ref{unramified definition} and Remark \ref{unramified clarification}.
\end{remark}
\begin{corollary} \label{contractible vanishing unramified cohomology threefold}
     Let $X$ be a topologically contractible smooth affine threefold over $\mathbb{C}$ admitting a smooth projective compactification $\bar{X}$ that is uniruled. Then all the unramified cohomology groups $H^i_{\mathrm{nr}}(\mathbb{C}(X)/\mathbb C, \mu_n^{\otimes l})$ vanish for every $i>0$.
\end{corollary}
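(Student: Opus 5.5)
Degrees $i=1,2$ are already handled: Lemma \ref{contractible vanishing unramified cohomology} gives $H^1_{\mathrm{nr}}$ and $H^2_{\mathrm{nr}}$ zero for any topologically contractible smooth affine $X$. Since $\bar X$ has dimension $3$ and unramified cohomology of a function field of transcendence degree $3$ over $\mathbb{C}$ vanishes in degrees $i>3$ (the cohomological dimension of $\mathbb{C}(X)$ is $3$), only the case $i=3$ remains. Here we use the hypothesis that $\bar X$ is uniruled.

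For $i=3$ we use the known description of degree-three unramified cohomology of threefolds. By Colliot-Th\'el\`ene--Voisin (via the Bloch--Kato conjecture), for a smooth projective complex variety $Y$ there is an isomorphism
\[
H^3_{\mathrm{nr}}(\mathbb{C}(Y)/\mathbb{C},\mathbb{Q}/\mathbb{Z}(2)) \cong Z^4(Y)_{\mathrm{tors}},
\]
provided $\CH_0(Y)$ is supported on a surface. Here $Z^4(Y)=\mathrm{Hdg}^4(Y,\mathbb{Z})/H^4_{\mathrm{alg}}(Y,\mathbb{Z})$ measures the failure of the integral Hodge conjecture in degree $4$. A uniruled variety has $\CH_0$ supported on a divisor: a dominant map $\mathbb{P}^1\times Y'\dashrightarrow\bar X$ moves every point along a rational curve into a fixed divisor. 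So the formula applies to $\bar X$. Voisin proved that the integral Hodge conjecture holds for degree-$4$ classes (curve classes) on uniruled threefolds, and Totaro extended this to all threefolds with $K_X$ not pseudoeffective. Hence $Z^4(\bar X)=0$, so $H^3_{\mathrm{nr}}(\mathbb{C}(X)/\mathbb{C},\mathbb{Q}/\mathbb{Z}(2))=0$.

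Finally we pass from $\mathbb{Q}/\mathbb{Z}(2)$ to the finite coefficients $\mu_n^{\otimes l}$. Over $\mathbb{C}$ all twists are non-canonically isomorphic, so it suffices to treat $\mu_n^{\otimes 2}$. The Bockstein/Kummer sequence gives an exact sequence
\[
H^2_{\mathrm{nr}}(\mathbb{Q}/\mathbb{Z}(2)) \to H^3_{\mathrm{nr}}(\mu_n^{\otimes 2}) \to H^3_{\mathrm{nr}}(\mathbb{Q}/\mathbb{Z}(2))[n].
\]
Colliot-Th\'el\`ene--Voisin show that $H^3_{\mathrm{nr}}(\mu_n^{\otimes2})\to H^3_{\mathrm{nr}}(\mathbb{Q}/\mathbb{Z}(2))$ has kernel coming from $H^2_{\mathrm{nr}}$, which is a quotient related to $\Br(\bar X)$. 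This left term vanishes because $\Br(\bar X)=0$ (Theorem \ref{trivial brauer group contractible proper}) and $H^2_{\mathrm{nr}}(\mathbb{Q}/\mathbb{Z}(2))=\varinjlim H^2_{\mathrm{nr}}(\mu_m^{\otimes 2})=0$ by Lemma \ref{contractible vanishing unramified cohomology}. Together with the vanishing of the right term, this gives $H^3_{\mathrm{nr}}(\mu_n^{\otimes l})=0$.

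The main obstacle is this last step: showing that the comparison map with $\mathbb{Q}/\mathbb{Z}$ coefficients is injective modulo $H^2_{\mathrm{nr}}$. This needs the exactness of the unramified Bockstein sequence in degree $3$, which is where Bloch--Kato and the Bloch--Ogus theory are used. If that route is delicate, the fallback is to argue directly with the Bloch--Ogus spectral sequence for $\bar X$, using $H^3(\bar X,\mathbb{Z})_{\mathrm{tors}}=0$, since by property (1) $H^3(\bar X)\cong H^3(D)$ and that group is controlled.
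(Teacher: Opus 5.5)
Your proof is correct and follows essentially the paper's route. Degrees $1,2$ come from Lemma~\ref{contractible vanishing unramified cohomology}, degrees above $3$ vanish by cohomological dimension, and degree $3$ comes from Colliot-Th\'el\`ene--Voisin. The paper simply cites their Corollaire 6.2, whose content is the combination you unpack: $H^3_{\mathrm{nr}}(\mathbb{Q}/\mathbb{Z}(2))\cong Z^4(\bar X)$ when $\CH_0$ is supported on a surface, together with Voisin's integral Hodge conjecture for curve classes on uniruled threefolds.

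The step you flag as the main obstacle is easier than you think and needs neither $H^2_{\mathrm{nr}}$ nor $\Br(\bar X)$. By Merkurjev--Suslin, $H^2(K,\mathbb{Q}/\mathbb{Z}(2))\cong K_2(K)\otimes\mathbb{Q}/\mathbb{Z}$ is divisible for every field $K\supseteq\mathbb{C}$. Hence $H^3(K,\mu_n^{\otimes 2})\to H^3(K,\mathbb{Q}/\mathbb{Z}(2))$ is injective already on function fields, so $H^3_{\mathrm{nr}}(\mu_n^{\otimes 2})$ injects into $H^3_{\mathrm{nr}}(\mathbb{Q}/\mathbb{Z}(2))$. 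Your fallback via $H^3(\bar X,\mathbb{Z})_{\mathrm{tors}}$ would not work, since that group governs $H^2_{\mathrm{nr}}$ (the Brauer group), not $H^3_{\mathrm{nr}}$.
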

\begin{proof}
    The groups in degrees $1$ and $2$ vanish by Lemma \ref{contractible vanishing unramified cohomology}. Since $X$ is a threefold, the groups in degrees greater than $3$ vanish by cohomological dimension \cite[Section 2.1]{Schreieder2021UnramifiedCA}. Finally, since $\bar{X}$ is uniruled, the degree-three groups vanish by \cite[Corollaire 6.2]{CTViosin}.
\end{proof}
\begin{remark}\label{biholomorphic}
    If $X$ has dimension $n \geq 3$, then the Dimca--Ramanujam theorem implies that $X$ is diffeomorphic to $\mathbb{C}^n$ \cite[Theorem 3.2]{zaidenberglecture}, but it need not be biholomorphic to $\mathbb{C}^n$. For example, let $S$ be the Ramanujam surface. It is topologically contractible but not simply connected at infinity \cite{Ramanujam71}. Zaidenberg's strong analytic cancellation theorem for smooth varieties of log-general type implies that $S \times \mathbb{C}$ is not biholomorphic to $\mathbb{C}^3$ \cite[Theorem 1.10]{zaidenberg1993}, although it is diffeomorphic to $\mathbb{C}^3$. Every algebraic vector bundle on $S\times_\mathbb C\mathbb A^1_\mathbb C$ is trivial, while $S$ is not $\mathbb A^1$-contractible \cite[Theorem 5.1]{ChoudhuryRoy+2024+55+80}.
    
    If $X$ is a threefold biholomorphic to $\mathbb{C}^3$, then $\bar{X}$ has negative Kodaira dimension \cite[Section 3]{MS1990}. For smooth projective threefolds, three-dimensional minimal model theory identifies negative Kodaira dimension with uniruledness; see \cite[Section 1, Chapter IV]{Kollar}. Thus $\bar{X}$ is uniruled. If $\bar{X}$ is uniruled, an affirmative answer to Bloch's conjecture for surfaces implies an affirmative answer to the generalized Serre question; see Corollary \ref{surface bloch implies serre}. Compactifications of such threefolds have been studied in, for example, \cite{MS1990,Kishimoto2005}.
    It is not known whether a projective compactification of a topologically contractible threefold is always uniruled. This leads to the following question.
    \end{remark}
    \begin{question}
        Let $\bar{X}$ be a projective compactification of a topologically contractible smooth affine threefold $X$ over $\mathbb{C}$. Is $\bar{X}$ uniruled or, equivalently, does $\bar{X}$ have negative Kodaira dimension? Also by \cite[Corollary 0.3]{BDPP}, is $K_{\bar X}$ not pseudo-effective?
    \end{question}
   If $X$ is a topologically contractible affine variety, then $H^i_{\mathrm{nr}}(\mathbb{C}(X)/\mathbb C, \mu_n^{\otimes l})$ vanishes for $i=1,2$ by Lemma \ref{contractible vanishing unramified cohomology}. If, moreover, $X$ is a threefold and $\bar{X}$ is uniruled, then these groups vanish in every positive degree $i>0$ by Corollary \ref{contractible vanishing unramified cohomology threefold}. Their vanishing is an important obstruction in rationality questions.
We end the section with the following question:
\begin{question} \label{van unramified question}

    If $X$ is a topologically contractible smooth affine threefold over $\mathbb{C}$, does $H^i_{\mathrm{nr}}(\mathbb{C}(X)/\mathbb C, \mu_n^{\otimes l})$ vanish for every $i>0$?
\end{question}

\section{Chow groups of topologically contractible varieties}\label{Chow group of contractible}
In this section, we discuss some results on the Chow groups of topologically contractible affine varieties.
Let $X$ be a topologically contractible variety over $\mathbb{C}$ admitting a smooth projective compactification $\bar{X}$ such that $D=\bar{X} \setminus X$ is a divisor with simple normal crossings. If $\bar{X}$ is rationally connected, then $\CH_0(\bar{X})\cong\mathbb{Z}$ via the degree map
$$\operatorname{deg}: \CH_0(\bar{X}) \to \mathbb{Z}.$$
Therefore, the proper pushforward
$$\CH_0(D) \to \CH_0(\bar{X}),$$
induced by the inclusion $D \subset \bar{X}$, is surjective. The localization sequence for Chow groups then gives $\CH_0(X)=0$. If $X$ is $\mathbb{A}^1$-connected, then its projective compactification
$\bar{X}$ is also $\mathbb{A}^1$-connected by \cite[Proposition 5.4.2.7]{Asok2021}. Thus $\bar{X}$ is rationally connected \cite[Theorem 2.4.3 and Corollary 2.4.4]{AsokMorel}; see also \cite[Corollary 3.10]{Balwe2015}.

\begin{lemma} \label{highest chow group trivial}
    Suppose that $X$ is $\mathbb{A}^1$-connected. Then $\CH_0(\bar{X})\cong\mathbb{Z}$, and hence $\CH_0(X)=0$.
\end{lemma}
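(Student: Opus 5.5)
The plan is to pass from $X$ to its compactification $\bar{X}$ and then use the localization sequence for Chow groups, which is exactly the argument sketched in the paragraph preceding the lemma.

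\emph{Step 1: from $X$ to $\bar{X}$.} Since $X$ is $\mathbb{A}^1$-connected and smooth, and $\bar{X}$ is a smooth projective compactification, I would invoke \cite[Proposition 5.4.2.7]{Asok2021} to conclude that $\bar{X}$ is again $\mathbb{A}^1$-connected. The underlying reason is that $\pi_0^{\mathbb{A}^1}$ of a smooth variety is controlled by a dense open subset after birational modifications of the proper model. This is the only step with real content, and I expect it to be the main point to check: the cited result must apply to an arbitrary smooth proper compactification, not merely to some compactification. Birational invariance of $\mathbb{A}^1$-connectedness among smooth proper varieties \cite[Corollary 2.4.6]{AsokMorel} removes any dependence on that choice.

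\emph{Step 2: $\CH_0(\bar{X}) \cong \mathbb{Z}$.} For a smooth proper $\mathbb{A}^1$-connected variety over $\mathbb{C}$, \cite[Theorem 2.4.3, Corollary 2.4.4]{AsokMorel} (see also \cite[Corollary 3.10]{Balwe2015}) shows that any two $L$-points can be joined by chains of affine lines $\mathbb{A}^1_L$. In particular, any two closed points of $\bar{X}$ are joined by a chain of rational curves, so they are rationally equivalent. Hence the degree map $\deg\colon \CH_0(\bar{X}) \to \mathbb{Z}$ is injective. It is surjective because $\bar{X}$ has a closed point, so it is an isomorphism. Alternatively, one can pass through rational connectedness of $\bar{X}$ and use the standard fact that rationally connected varieties have $\CH_0 \cong \mathbb{Z}$.

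\emph{Step 3: $\CH_0(X) = 0$.} Use the localization exact sequence
\[
\CH_0(D) \longrightarrow \CH_0(\bar{X}) \longrightarrow \CH_0(X) \longrightarrow 0 .
\]
The divisor $D$ is nonempty because $X$ is affine of positive dimension, so it contains a closed point $p$. The class $[p] \in \CH_0(\bar{X})$ has degree $1$ and therefore generates $\CH_0(\bar{X}) \cong \mathbb{Z}$. Thus the pushforward $\CH_0(D) \to \CH_0(\bar{X})$ is surjective, and exactness gives $\CH_0(X) = 0$.
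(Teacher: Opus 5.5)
Your proposal is correct and follows essentially the same route as the paper: pass to $\bar{X}$ via \cite[Proposition 5.4.2.7]{Asok2021}, use the Asok--Morel results \cite{AsokMorel} to obtain $\CH_0(\bar{X})\cong\mathbb{Z}$, and then apply the localization sequence with the surjection $\CH_0(D)\to\CH_0(\bar{X})$. The only difference is cosmetic. You deduce rational equivalence of closed points directly from chains of affine lines (which extend to rational curves because $\bar{X}$ is proper), whereas the paper states this step as rational connectedness of $\bar{X}$.
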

\begin{remark}\label{zerocycleremark}
\begin{enumerate}
    \item By the proof of \cite[Proposition 5.4.2.7]{Asok2021}, Lemma \ref{highest chow group trivial} also holds under the assumption that the zeroth $\mathbb{A}^1$-homology sheaf $\mathbf{H}_0^{\mathbb{A}^1}(X)$ is isomorphic to the constant sheaf $\mathbb{Z}$.
    \item If $X \in \mathrm{Sm}/k$ is $\mathbb{A}^1$-connected, then $\mathbf{H}_0^{\mathbb{A}^1}(X)\cong\mathbb{Z}$ \cite[Theorem 1]{Asok+2013+39+64}; the converse holds when $X$ is proper \cite[Theorem 5]{Asok+2013+39+64}. For a general smooth variety, it is not known whether these conditions are equivalent.
    \item If $X$ is a smooth affine variety of dimension $d \geq 3$, then 
    $\CH_0(X)$ is uniquely divisible \cite{SRINIVAS1989428}.
    \end{enumerate}
\end{remark}

Bloch and Ogus construct the coniveau spectral sequence \cite{ASENS_1974_4_7_2_181_0}
$$E^{p,q}_2=\HH^p(X,\cH_{\acute{e}t}^q(\mu_l^{\otimes n}))\Rightarrow H_{\acute{e}t}^{p+q}(X, \mu_l^{\otimes n}).$$
We use it to obtain the following statement.

\begin{theorem}\label{connectedcontractible}
    Let $X$ be a topologically contractible complex
    smooth affine variety. Assume that, for every integer $l>1$,
    $$\HH^0(X,\cH_{\acute{e}t}^3(\mu_l^{\otimes 2})))=0.$$
    Then $\CH^2(X)$ is divisible.
\end{theorem}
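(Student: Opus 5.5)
We use the Bloch--Ogus coniveau spectral sequence with coefficients $\mu_l^{\otimes 2}$,
$$E_2^{p,q}=\HH^p(X,\cH^q_{\acute{e}t}(\mu_l^{\otimes 2}))\Rightarrow H^{p+q}_{\acute{e}t}(X,\mu_l^{\otimes 2}),$$
together with Bloch's formula in its mod-$l$ form. By the Bloch--Ogus/Merkurjev--Suslin results, $E_2^{2,2}=\HH^2(X,\cH^2_{\acute{e}t}(\mu_l^{\otimes 2}))\cong \CH^2(X)/l$. Also $E_2^{p,q}=0$ for $p>q$. Divisibility of $\CH^2(X)$ amounts to $\CH^2(X)/l=0$ for every $l>1$, so the plan is to show $E_2^{2,2}=0$.

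The first step is to compute the abutment. Since $X$ is topologically contractible, Artin's comparison theorem gives $H^i_{\acute{e}t}(X,\mu_l^{\otimes 2})\cong H^i(X(\mathbb C),\mathbb Z/l)=0$ for all $i>0$. In particular $H^4_{\acute{e}t}(X,\mu_l^{\otimes 2})=0$, so $E_\infty^{2,2}=0$.

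The second step is to show $E_2^{2,2}=E_\infty^{2,2}$, by analysing the differentials into and out of the $(2,2)$ spot. Outgoing differentials $d_r\colon E_r^{2,2}\to E_r^{2+r,3-r}$ land in terms with $p>q$, which vanish. The only incoming differential is $d_2\colon E_2^{0,3}\to E_2^{2,2}$; the source of $d_r$ for $r\ge 3$ would be $E_r^{2-r,1+r}$ with $2-r<0$. Now $E_2^{0,3}=\HH^0(X,\cH^3_{\acute{e}t}(\mu_l^{\otimes 2}))$, which vanishes by hypothesis. Hence $E_2^{2,2}=E_\infty^{2,2}$, which is a subquotient of $H^4_{\acute{e}t}(X,\mu_l^{\otimes 2})=0$.

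It follows that $\CH^2(X)/l=0$ for every $l>1$, that is, $\CH^2(X)$ is divisible.

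The main point needing care is the identification $E_2^{2,2}\cong\CH^2(X)/l$. This is Bloch--Ogus' identification of $\HH^p(X,\cH^p)$ with $\CH^p/l$ via the Gersten resolution, valid for any smooth variety, using that the cokernel of the last Gersten map for $\cH^2$ is $\bigoplus_{x\in X^{(2)}}\mathbb Z/l$ modulo rational equivalence. We need only the $p=q=2$ instance, which is standard. The vanishing of terms with $p>q$ also comes from the Gersten complex and should be cited. Affineness is not essential here beyond the setting of the statement; contractibility is used only to kill $H^4_{\acute{e}t}$.
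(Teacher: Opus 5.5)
Your proposal is correct and follows essentially the same route as the paper. Both arguments use the Bloch--Ogus spectral sequence with $E_2^{p,q}=0$ for $p>q$, the vanishing of $H^4_{\acute{e}t}(X,\mu_l^{\otimes 2})$ by the comparison theorem, the hypothesis to kill $E_2^{0,3}$, and the identification $E_2^{2,2}\cong\CH^2(X)/l$.

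The differences are cosmetic. The paper writes the four-term sequence $H^3\to E_2^{0,3}\to E_2^{2,2}\to H^4$ and also uses $H^3=0$, whereas you rightly note that only $H^4=0$ is needed. The paper invokes the norm residue theorem for the last identification, while you note that Bloch--Ogus with Kummer theory already suffices in this degree.
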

\begin{proof}
  Fix an integer $l>1$. In the Bloch--Ogus spectral sequence \cite[(4.2)]{ASENS_1974_4_7_2_181_0}, $E^{p,q}_2=0$ for $p>q$. Hence there is an exact sequence
    \begin{equation*}
        0\to E_{\infty}^{0,3}\to E_{2}^{0,3}\to E_{2}^{2,2}\to E_{\infty}^{2,2}\to 0
    \end{equation*}
    and therefore an exact sequence
    \begin{equation*}
        H_{\acute{e}t}^3(X, \mu_l^{\otimes 2})\to \HH^0(X,\cH_{\acute{e}t}^3(\mu_l^{\otimes 2})))\to \HH^2(X,\cH_{\acute{e}t}^2(\mu_l^{\otimes 2})))\to H_{\acute{e}t}^4(X, \mu_l^{\otimes 2}).
    \end{equation*}
     Since $X$ is topologically contractible, the comparison theorem between \'etale and singular cohomology makes the two outer terms vanish \cite[Theorem 21]{milne}. Thus,
     $$\HH^0(X,\cH_{\acute{e}t}^3(\mu_l^{\otimes 2})))\cong \HH^2(X,\cH_{\acute{e}t}^2(\mu_l^{\otimes 2}))).$$
     The hypothesis therefore implies that $\HH^2(X,\cH_{\acute{e}t}^2(\mu_l^{\otimes 2})))=0$. The norm-residue isomorphism theorem then gives
     $$\CH^2(X)/l\cong\HH^2(X,\cH_{\acute{e}t}^2(\mu_l^{\otimes 2}))=0$$
     \cite{Voevodsky2003,Voevodsky2008OnMC}.
\end{proof}

\begin{remark}
    The proof of Theorem \ref{connectedcontractible} shows the following weaker consequence. If $X$ is a smooth affine complex threefold that is acyclic and whose zeroth $\mathbb{A}^1$-homology sheaf $\mathbf{H}_0^{\mathbb{A}^1}(X)$ is isomorphic to the constant sheaf $\mathbb{Z}$, then $\CH^2(X)$ is divisible.
\end{remark}
Since $\mathbb{A}^1$-connectedness implies $\HH^0(X,\cH_{\acute{e}t}^3(\mu_l^{\otimes 2}))=0$ \cite[Lemma 4.7]{Asok+2013+39+64}, the following is immediate.
\begin{corollary}
    If $X$ is a topologically contractible smooth affine variety over $\CC$ that is also $\mathbb{A}^1$-connected,
    then $\CH^2(X)$ is divisible.
\end{corollary}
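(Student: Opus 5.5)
The statement follows by combining Theorem \ref{connectedcontractible} with the vanishing of unramified cohomology coming from $\mathbb{A}^1$-connectedness. So the proof is a short chain of citations rather than a new argument.

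First, I take $X$ topologically contractible, smooth, affine over $\CC$, and $\mathbb{A}^1$-connected. Then the sheaf $\pi_0^{\mathbb{A}^1}(X)$ is trivial. By \cite[Lemma 4.7]{Asok+2013+39+64}, every strictly $\mathbb{A}^1$-invariant sheaf $M$ that is unramified in the sense of the definition preceding Definition \ref{unramified definition} satisfies $M(X)\cong M(\Spec \CC)$. I apply this to $M=\cH^3_{\acute{e}t}(\mu_l^{\otimes 2})$, which is strictly $\mathbb{A}^1$-invariant as noted in Definition \ref{unramified definition}. This gives
$$\HH^0(X,\cH^3_{\acute{e}t}(\mu_l^{\otimes 2}))\cong H^3_{\acute{e}t}(\Spec\CC,\mu_l^{\otimes 2})=0$$
for every $l>1$, since $\CC$ has cohomological dimension $0$.

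The one point to check is that the cited lemma applies to global sections over $X$ itself, not merely over a compactification. This holds because the lemma is formulated for an arbitrary $\mathbb{A}^1$-connected smooth scheme, with no properness assumed.

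Second, this verifies the hypothesis of Theorem \ref{connectedcontractible} for every $l>1$. That theorem then yields $\CH^2(X)/l=0$ for all $l$, i.e. $\CH^2(X)$ is divisible.

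I expect no real obstacle. The only subtlety is the matching of Zariski versus Nisnevich sheafification of $\cH^3$. The two give the same global sections on smooth varieties by Bloch–Ogus/Gersten, so Theorem \ref{connectedcontractible} applies verbatim.
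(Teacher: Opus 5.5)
Your proposal is correct and follows the paper's argument. The paper also invokes \cite[Lemma 4.7]{Asok+2013+39+64} to get $\HH^0(X,\cH^3_{\acute{e}t}(\mu_l^{\otimes 2}))=0$ from $\mathbb{A}^1$-connectedness and then applies Theorem \ref{connectedcontractible}; your remark that the sections over $X$ agree with those over $\Spec\CC$, which vanish because $\CC$ is algebraically closed, simply spells out what the paper leaves implicit.
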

\begin{remark}
      If an affine variety is rational, the group $\HH^0_{\mathrm{Nis}}(X, \mathcal{H}^p_{\acute{e}t}(\mu_n^{\otimes q}))$ need not be trivial. Rationality, or even stable rationality, guarantees the vanishing of
    $\HH^0_{\mathrm{Nis}}(\bar{X}, \mathcal{H}^p_{\acute{e}t}(\mu_n^{\otimes q}))$ for a smooth proper model $\bar X$ and every $p,q>0$ \cite[Corollary 4.6]{Schreieder2021UnramifiedCA}; see also Remark \ref{unramified clarification}.
    By contrast, $\mathbb{A}^1$-connectedness implies the vanishing of these groups even when the variety is not proper \cite[Lemma 4.7]{Asok+2013+39+64}.
    For example, consider the punctured affine line $X = \mathbb{A}^1_\mathbb{C} \setminus \{0\}$. For $p=q=1$, the local-to-global spectral sequence gives
    $$\HH^0_{\text{Nis}}(X, \mathcal{H}^1_{\acute{e}t}(\mu_n)) \cong \HH^1_{\acute{e}t}(X, \mu_n).$$
    The Kummer exact sequence identifies the right-hand side with
    $$\mathcal{O}(X)^\times/(\mathcal{O}(X)^\times)^n,$$
    because $\Pic(X)=0$. Since $\mathcal{O}(X)^\times\cong\mathbb C^\times\times\mathbb Z$, generated by constants and the coordinate $t$, this quotient is $\mathbb Z/n\mathbb Z$. Thus the class of $t$ is non-trivial. It is unramified on $X$ but has a residue at the missing origin.
\end{remark}

\begin{remark}
\begin{enumerate}
    \item The simple connectedness of $X$ and the mixed Hodge structure on a resolution $\widetilde D\to D$ give $H^1(D_i,\mathbb Q)=0$ for every irreducible component $D_i$ of $D$. This raises the question whether every boundary component of a compactification $(\bar X,D=\bar X\setminus X)$ of a contractible threefold is rational. The classifications of Fujita, M\"uller, and Kishimoto in the one- and two-component cases imply that each $D_i$ occurring there is a rational surface.
    \item The Danielewski surface $X_n = \{x^n z = y^2 - 1\} \subset \mathbb{A}^3$ ($n\geq 2$) is a rational smooth affine surface with rational boundaries \cite[Section 5.3.4]{Asok2021}, but it is not $\AA^1$-contractible.
\end{enumerate}
   
\end{remark}

Conversely, as in \cite[Theorem 1]{Asok_2013}, consider the group
$$\HH^0(X,\cH_{\acute{e}t}^q(\mu_l^{\otimes n})).$$
Its nonvanishing is an obstruction to $\AA^1$-connectedness. For a topologically contractible smooth complex variety, $\CH^2(X)$ provides such an obstruction.

\begin{theorem}\label{converse}
    Let $X$ be a topologically contractible smooth quasi-projective complex variety. If $\CH^2(X)$ is not torsion and $\CH^2(X)/l$ is non-trivial for some $l$, then $X$ is not $\AA^1$-connected.
\end{theorem}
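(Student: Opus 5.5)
We argue by contraposition: assume $X$ is $\AA^1$-connected and show that then, for every $l>1$, $\CH^2(X)/l=0$. This contradicts the hypothesis that $\CH^2(X)/l\neq 0$ for some $l$. The hypothesis that $\CH^2(X)$ is not torsion is not needed for this contradiction.

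First, since $X$ is $\AA^1$-connected, \cite[Lemma 4.7]{Asok+2013+39+64} gives the vanishing of all unramified classes on $X$. In particular,
$$\HH^0(X,\cH_{\acute{e}t}^3(\mu_l^{\otimes 2}))=0 \quad \text{for every } l>1.$$

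Second, we run the Bloch--Ogus argument from the proof of Theorem \ref{connectedcontractible}. Because $E_2^{p,q}=0$ for $p>q$, we get the exact sequence
$$H_{\acute{e}t}^3(X, \mu_l^{\otimes 2})\to \HH^0(X,\cH_{\acute{e}t}^3(\mu_l^{\otimes 2}))\to \HH^2(X,\cH_{\acute{e}t}^2(\mu_l^{\otimes 2}))\to H_{\acute{e}t}^4(X, \mu_l^{\otimes 2}).$$
Topological contractibility, together with the comparison isomorphism between étale and singular cohomology, kills the two outer terms. Hence
$$\HH^2(X,\cH^2_{\acute{e}t}(\mu_l^{\otimes2}))\cong \HH^0(X,\cH^3_{\acute{e}t}(\mu_l^{\otimes2}))=0.$$

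Third, the norm-residue isomorphism (Bloch's formula mod $l$) identifies this group with Chow groups:
$$\CH^2(X)/l\cong \HH^2(X,\cH^2_{\acute{e}t}(\mu_l^{\otimes 2}))=0.$$
This holds for every $l$, contradicting the assumption. So $X$ cannot be $\AA^1$-connected.

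The main point needing care is that Theorem \ref{connectedcontractible} was stated for affine $X$, whereas here $X$ is only quasi-projective. I expect no real obstacle. The Bloch--Ogus spectral sequence, the vanishing $E_2^{p,q}=0$ for $p>q$, the comparison theorem, and the identification $\CH^2(X)/l\cong \HH^2(X,\cH^2)$ all hold for any smooth complex variety. Likewise, \cite[Lemma 4.7]{Asok+2013+39+64} requires only smoothness. Affineness played no role in the argument, so it goes through unchanged.
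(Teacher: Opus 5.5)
Your proposal is correct and is essentially the paper's proof, read contrapositively. The paper likewise uses the Bloch--Ogus identification $\HH^0(X,\cH^3_{\acute{e}t}(\mu_l^{\otimes 2}))\cong \HH^2(X,\cH^2_{\acute{e}t}(\mu_l^{\otimes 2}))\cong \CH^2(X)/l$, which holds for any smooth topologically contractible $X$ since affineness plays no role, and then invokes \cite[Lemma 4.7]{Asok+2013+39+64}. Your observation that the non-torsion hypothesis is superfluous is also accurate: the paper mentions Srinivas's theorem on the torsion case only as an aside, not as a step in the deduction.
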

\begin{proof}
    As in the proof of Theorem \ref{connectedcontractible}, we have
    $$\HH^0(X,\cH_{\acute{e}t}^3(\mu_l^{\otimes 2}))\cong \HH^2(X,\cH_{\acute{e}t}^2(\mu_l^{\otimes 2}))\cong\CH^2(X)/l.$$
    If $\CH^2(X)$ is torsion, then by \cite[Theorem 17]{KUMAR1986483}
    due to Srinivas, $\CH^2(X)$ is trivial. If $\CH^2(X)$ is not torsion and $\CH^2(X)/l$ is non-trivial for some $l$, then $\HH^0(X,\cH_{\acute{e}t}^3(\mu_l^{\otimes2}))$ is non-trivial. The conclusion follows from \cite[Lemma 4.7]{Asok+2013+39+64}.
\end{proof}

\section{Algebraic vector bundles over contractible threefolds}\label{vector bundles on contractible}
Algebraic vector bundles on smooth affine threefolds over $\mathbb{C}$ are classified by their Chern classes in the Chow groups. Results of Mohan Kumar--Murthy and Asok--Fasel imply that, for a threefold $X$, all vector bundles on $X$ are trivial if and only if $\CH^i(X)$ vanishes for every $i>0$ \cite[Theorem 2.1(iii)]{MohanKumarMurthy}; see also \cite[Theorem 1]{Asok12a}. Gurjar and Shastri proved that every topologically contractible surface is rational \cite{GSa,GSb}. Thus, in dimensions at most $2$, every algebraic vector bundle on a topologically contractible smooth complex variety is trivial.

In this section, we study the Chow groups of a topologically contractible smooth affine threefold $X$ over $\mathbb{C}$ in relation to the generalized Serre question (Question \ref{serre question}). Theorem \ref{main theorem} proves that if $X$ admits a smooth projective rationally connected compactification, then $\CH_1(X)$ is trivial; the classification above then gives the triviality of every vector bundle on $X$ (Corollary \ref{main corollary}). Consequently, an affirmative answer to Bloch's conjecture for threefolds implies an affirmative answer to the generalized Serre question in dimension $3$. If $X$ admits an uniruled projective compactification $\bar X$, the corresponding implication can instead be related to Bloch's conjecture for surfaces; see Section \ref{relationtobloch}.
Before proving the main theorem, we recall the following result of Srinivas.
\begin{theorem}\label{srinivas}
 Suppose that $X$ is a smooth variety over $\mathbb{C}$ such that, for some $N>0$, $\CH^2(X)$ is $N$-torsion and $H^3(X, \mathbb{Q})=0$. Then the cycle class map
 $$\operatorname{cl}:\CH^2(X)\longrightarrow H^4(X,\mathbb Z)$$
 is injective \cite[Appendix, Theorem]{KUMAR1986483}.
\end{theorem}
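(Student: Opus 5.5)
The plan is to reduce injectivity of $\operatorname{cl}$ to injectivity of Bloch's torsion cycle map $\lambda$ into $H^3(X,\mathbb Q/\mathbb Z(2))$, and then use $H^3(X,\mathbb Q)=0$ to see that the Bockstein carries $H^3(X,\mathbb Q/\mathbb Z(2))$ injectively into $H^4(X,\mathbb Z)$.

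\emph{Step 1 (the torsion cycle map).} Since $\CH^2(X)$ is $N$-torsion, every class is torsion, so it suffices to show that $\operatorname{cl}$ is injective on $\CH^2(X)_{\mathrm{tors}}$. I will use Bloch's map
$$\lambda:\CH^2(X)_{\mathrm{tors}}\longrightarrow H^3_{\acute et}(X,\mathbb Q/\mathbb Z(2)),$$
together with the theorem of Merkurjev--Suslin (as packaged by Colliot-Th\'el\`ene--Sansuc--Soul\'e) that $\lambda$ is injective for every smooth complex variety. Concretely, for each $l$ there is an exact sequence
$$0\to H^1(X,\mathcal K_2)\otimes\mathbb Q/\mathbb Z\to N^1H^3(X,\mathbb Q/\mathbb Z(2))\to \CH^2(X)_{\mathrm{tors}}\to 0 .$$
It comes from the Bloch--Ogus spectral sequence with $\mu_{l}^{\otimes 2}$-coefficients, the identification $\CH^2(X)/l\cong \HH^2(X,\cH^2(\mu_l^{\otimes2}))$ already used in Theorem \ref{connectedcontractible}, and Hilbert 90 for $K_2$. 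The last ingredient says that $\cH^1(\mu_l^{\otimes 2})$ is the $l$-torsion of $\mathcal K_2$-cohomology, and this is what forces $\lambda$ to be injective. This step is the main obstacle. I will not reprove Merkurjev--Suslin; I will cite it and only check that no properness hypothesis on $X$ is needed, since the Bloch--Ogus machinery works for any smooth $X$.

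\emph{Step 2 (compatibility with $\operatorname{cl}$).} Let $\beta:H^3(X,\mathbb Q/\mathbb Z(2))\to H^4(X,\mathbb Z(2))_{\mathrm{tors}}$ be the Bockstein of $0\to\mathbb Z\to\mathbb Q\to\mathbb Q/\mathbb Z\to0$, where we use the comparison of \'etale and singular cohomology. The standard compatibility, checked at finite level $l$ via the cycle class in $H^4(X,\mu_l^{\otimes 2})$, is
$$\beta\circ\lambda=\pm\operatorname{cl}|_{\CH^2(X)_{\mathrm{tors}}}.$$

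\emph{Step 3 (using $H^3(X,\mathbb Q)=0$).} The kernel of $\beta$ is the image of $H^3(X,\mathbb Q)\to H^3(X,\mathbb Q/\mathbb Z)$, equivalently $H^3(X,\mathbb Z)\otimes\mathbb Q/\mathbb Z$. Since $H^3(X,\mathbb Q)=0$, the group $H^3(X,\mathbb Z)$ is torsion (it is finitely generated, as $X$ is a variety), so this kernel vanishes and $\beta$ is injective. Hence $\operatorname{cl}=\pm\beta\circ\lambda$ is a composition of injective maps on $\CH^2(X)_{\mathrm{tors}}=\CH^2(X)$. This proves the statement; the $N$-torsion hypothesis is used only to know that all of $\CH^2(X)$ is torsion.
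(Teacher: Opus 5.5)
\textbf{Gap in Step 1.} The paper gives no proof of this statement; it quotes it from Srinivas's appendix. Your strategy (Merkurjev--Suslin, the Bloch--Ogus sequence, then a Bockstein) is the natural one, and Steps 2 and 3 are fine. Step 1, however, asserts something that is not true in the generality you claim.

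The sequence you write,
$0\to H^1(X,\mathcal K_2)\otimes\mathbb Q/\mathbb Z\to N^1H^3(X,\mathbb Q/\mathbb Z(2))\xrightarrow{\pi}\CH^2(X)_{\mathrm{tors}}\to 0$,
presents $\CH^2(X)_{\mathrm{tors}}$ as a \emph{quotient} of $N^1H^3$. So it is only a subquotient of $H^3$, and the sequence produces no map $\lambda$ out of $\CH^2(X)_{\mathrm{tors}}$ at all. Hilbert 90 for $K_2$ (that $\cH^1(\mu_l^{\otimes 2})$ is the $l$-torsion subsheaf of $\mathcal K_2$) gives the surjectivity of $\pi$ and identifies its kernel. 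It does not make $\pi$ injective.

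The injectivity of Bloch's map is a statement about smooth \emph{projective} varieties. There the image of $H^1(X,\mathcal K_2)\otimes\mathbb Q/\mathbb Z$ in $H^3$ vanishes for weight reasons. Without properness this image can be nonzero. For $X=\mathbb G_m\times\PP^2$, the class $t\cdot h\in H^1(X,\mathcal K_2)$ has non-torsion Betti class $[t]\otimes c_1(h)\in H^3(X,\mathbb Z(2))\cong\mathbb Z$. Hence $\ker\pi$ contains a copy of $\mathbb Q/\mathbb Z$, even though $\CH^2(X)\cong\mathbb Z$ is torsion-free. So the properness check you planned to skip is exactly where the cited statement breaks down.

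\textbf{Repair.} Let the hypothesis $H^3(X,\mathbb Q)=0$ do the work that properness does. State the compatibility of Step 2 as $\beta|_{N^1H^3}=\pm\operatorname{cl}\circ\pi$; this is what the finite-level computation with $\mu_l^{\otimes 2}$-coefficients actually gives.

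Now take $\alpha\in\CH^2(X)$ with $\operatorname{cl}(\alpha)=0$. Since $\alpha$ is torsion, choose $\gamma\in N^1H^3(X,\mathbb Q/\mathbb Z(2))$ with $\pi(\gamma)=\alpha$. Then $\beta(\gamma)=\pm\operatorname{cl}(\alpha)=0$. By your Step 3, $\ker\beta$ is the image of $H^3(X,\mathbb Q)=0$, so $\gamma=0$ and hence $\alpha=0$. Equivalently: $\beta$ kills $\ker\pi$ and $\beta$ is injective, so $\pi$ is an isomorphism and $\lambda=\pi^{-1}$ does exist in this situation.

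With this change your argument is complete. As you noted, it uses only that $\CH^2(X)$ is torsion, not that it has bounded exponent.
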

The main result in this section is the following:
\begin{theorem} \label{main theorem}
    Let $X$ be a topologically contractible smooth affine complex threefold admitting a smooth projective compactification $\bar{X}$ such that $D=\bar{X} \setminus X$ is a divisor with simple normal crossings. Assume that $\CH_0(\bar{X})$ is supported on a curve. Then $\CH_1(X)$
    is trivial.
\end{theorem}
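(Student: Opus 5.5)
The strategy is to reduce to Srinivas' injectivity result, Theorem \ref{srinivas}. Since $X$ is topologically contractible, $H^3(X,\mathbb Q)=0$ and $H^4(X,\mathbb Z)=0$. So once $\CH^2(X)=\CH_1(X)$ is known to be $N$-torsion for some $N>0$, the cycle class map $\CH^2(X)\to H^4(X,\mathbb Z)=0$ is injective and hence $\CH_1(X)=0$. The whole proof therefore comes down to showing that $\CH^2(X)$ is killed by a fixed integer. The plan is to get this from Bloch--Srinivas decomposition of the diagonal on $\bar X$, combined with the topology of the pair $(\bar X,D)$ recorded in Section \ref{properties of contractible}.

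\textbf{Step 1 (decomposition and the algebraically trivial part).} Since $\bar X$ is simply connected (Lemma \ref{simply connected}), $\CH_0(\bar X)$ being supported on a curve upgrades to $\CH_0(\bar X)\cong\mathbb Z$. Bloch--Srinivas then gives a decomposition
$$N\Delta_{\bar X}=\bar X\times\{pt\}+Z \quad\text{in } \CH^3(\bar X\times\bar X),$$
where $Z$ is supported on $W\times\bar X$ for some divisor $W\subset\bar X$. From this I will use the standard consequences in dimension $3$:
\begin{enumerate}
\item the Griffiths group of codimension-$2$ cycles on $\bar X$ is $N$-torsion;
\item $\CH^2(\bar X)_{\mathrm{alg}}$ is isomorphic, via the Abel--Jacobi map, to the intermediate Jacobian $J^2(\bar X)$. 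Here injectivity on torsion is Merkurjev--Suslin, as in Bloch--Srinivas.
\end{enumerate}
Next I will show $H^3(\bar X,\mathbb Q)=0$. The restriction $H^3(\bar X)\to H^3(X)=0$ is zero, so by the argument in the proof of Theorem \ref{regular q-forms of contractible}, $H^3(\bar X,\mathbb Q)$ is the image of the Gysin map from $H^1(\widetilde D,\mathbb Q)$. This group vanishes, because $H^1(D_i,\mathbb Q)=0$ for every component (remark in Section \ref{Chow group of contractible}). Hence $J^2(\bar X)=0$ and $\CH^2(\bar X)_{\mathrm{alg}}=0$, so every homologically trivial codimension-$2$ cycle on $\bar X$ is $N$-torsion.

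\textbf{Step 2 (lifting and correcting by boundary cycles).} Let $\alpha\in\CH^2(X)$ and lift it to $\bar\alpha\in\CH^2(\bar X)$. The pair sequence together with $H_i(\bar X,D)\cong H^{6-i}(X)=0$ for $i<6$ gives $H_2(D,\mathbb Z)\xrightarrow{\sim}H_2(\bar X,\mathbb Z)$. If $\mathrm{cl}(\bar\alpha)$ is the class of an algebraic $1$-cycle $\beta$ supported on $D$, then $\bar\alpha-\beta$ is homologically trivial, so $N(\bar\alpha-\beta)=0$ by Step 1. Restricting to $X$ kills $\beta$, and therefore $N\alpha=0$ in $\CH^2(X)$. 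Theorem \ref{srinivas} then finishes the proof.

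\textbf{Main obstacle.} The step I expect to be hardest is showing that $H_2(D,\mathbb Z)$ is spanned by classes of algebraic curves lying in $D$. This may hold only up to torsion, which is harmless after enlarging $N$. My first attempt is to use $H^2(D_i,\mathcal O)=0$ for each component and apply Lefschetz $(1,1)$ on each $D_i$. For the vanishing I would deduce $h^{2,0}(D_i)=0$ from $h^{2,0}(\bar X)=0$ (Theorem \ref{regular q-forms of contractible}) and from the mixed Hodge structure of $D$, using $H^2(\bar X)\cong H^2(D)$. I would then handle the gluing along the double curves $D_i\cap D_j$ through the Mayer--Vietoris weight spectral sequence for the simple normal crossing divisor $D$. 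If this direct route fails, the fallback is to use the correspondence $Z$ itself: $N\bar\alpha=Z^*\bar\alpha$ up to the term from $\bar X\times\{pt\}$, which acts trivially on $1$-cycles. This term factors through $\CH_1(\widetilde W)=\Pic(\widetilde W)$, and I would choose $W\supseteq D$ and analyse the image of $\Pic(\widetilde W\cap X)$ in $\CH^2(X)$ directly.
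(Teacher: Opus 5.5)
Your reduction to Theorem \ref{srinivas} matches the paper. However, Step 1 contains a false claim, and Step 2 depends on it.

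\textbf{The false claim.} The vanishing $H^1(D_i,\mathbb Q)=0$ for all components does not follow from $H^1(D,\mathbb Q)\cong H^1(\bar X,\mathbb Q)=0$, even though a remark in Section 3 asserts it. By the weight spectral sequence of the snc divisor $D$, that vanishing only says two things: the dual complex has trivial $H^1$, and $\bigoplus_i H^1(D_i,\mathbb Q)\to\bigoplus_{i<j}H^1(D_i\cap D_j,\mathbb Q)$ is injective. In fact, purity of $H^3(D)\cong H^3(\bar X)$ gives $H^3(\bar X,\mathbb Q)\cong\bigoplus_i H^3(D_i,\mathbb Q)$, so $b_3(\bar X)=\sum_i b_1(D_i)$, which can be positive.

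\textbf{A counterexample.} Let $S$ be a smooth plane cubic in a plane $H\subset\PP^3$, let $\bar X=\operatorname{Bl}_S\PP^3$, and let $D=\widetilde H\cup E$. Then $D$ is snc, $X=\bar X\setminus D\cong\PP^3\setminus H=\AA^3$, and $\bar X$ is rational. Yet $E$ is ruled over $S$, $H^3(\bar X,\mathbb Q)\cong H^1(S,\mathbb Q)\neq0$, and $\CH_1(\bar X)_{\mathrm{hom}}$ contains $\CH_0(S)_{\deg 0}\cong J(S)$, which is divisible and not torsion. This is the situation of Case 3b in the proof of Theorem \ref{trivial motive muller}. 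So $J^3(\bar X)$ need not vanish, and homologically trivial $1$-cycles on $\bar X$ need not be torsion. Consequently, in Step 2 the homological triviality of $\bar\alpha-\beta$ does not give $N(\bar\alpha-\beta)=0$.

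\textbf{The missing ingredient.} What you need is the paper's Lemma \ref{homologically trivial}: the pushforward $\CH_1(\widetilde D)_{\mathrm{hom}}\to\CH_1(\bar X)_{\mathrm{hom}}$ is surjective. It follows by comparing two Abel--Jacobi isomorphisms,
$\Pic^0(\widetilde D)\cong J^1(\widetilde D)$ and $\CH^2(\bar X)_{\mathrm{hom}}\cong J^3(\bar X)$ (the latter by Bloch--Srinivas, since $\CH_0(\bar X)$ is supported on a curve), with the surjectivity of the Gysin map $H^1(\widetilde D,\mathbb C)\to H^3(\bar X,\mathbb C)$. You actually proved that surjectivity in Step 1, before wrongly concluding that its source is zero. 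With this lemma, $\bar\alpha-\beta$ is itself a pushforward from the boundary and dies on $X$, so your Step 2 goes through. The paper then gets a uniform exponent from $\CH_1(X)$ being both torsion and finitely generated.

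\textbf{Your other concerns.} Your anticipated ``main obstacle'' is harmless. The vanishing $h^{2,0}(D_i)=0$ and the purity of $H^2(D)\cong H^2(\bar X)$ give a surjection $\bigoplus_i H_2(D_i,\mathbb Q)\twoheadrightarrow H_2(\bar X,\mathbb Q)$ whose source is algebraic by Lefschetz $(1,1)$. Finite index only enlarges $N$. The paper obtains the same conclusion via a splitting on Hodge classes. Your fallback via the correspondence $Z$ does not avoid the problem. The $\Pic^0(\widetilde W)$-part of its image lands in $\CH^2(\bar X)_{\mathrm{alg}}\cong J^3(\bar X)$, so you would again need $J^3(\bar X)$ to be covered from the boundary.
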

    \begin{proof}
    We prove the theorem in two steps.
    \begin{enumerate}
        \item \textbf{Claim 1:} $\CH_1(X)$ is torsion, or equivalently $\CH_1(X)_\mathbb{Q} = 0$, where $G_\mathbb{Q}:= G \otimes_\mathbb{Z} \mathbb{Q}$ for an abelian group $G$.
        \item \textbf{Claim 2:} $\CH_1(X)$ is finitely generated.
    \end{enumerate}
    Together, the two claims imply that $\CH_1(X)$ is annihilated by some positive integer $N$.
    Theorem \ref{srinivas} then implies that $\CH_1(X)$ is trivial.

    We now prove the two claims.
    
    \textbf{Proof of Claim 1.} The localization sequence for Chow groups remains exact after tensoring with $\mathbb{Q}$:
    $$\CH_1(D)_\mathbb{Q} \longrightarrow \CH_1(\bar{X})_\mathbb{Q} \longrightarrow \CH_1(X)_\mathbb{Q} \longrightarrow 0.$$

    It is therefore enough to prove that the proper pushforward of $1$-cycles
    $$j_*: \CH_1(D)_\mathbb{Q} \to \CH_1(\bar{X})_\mathbb{Q},$$
    induced by the closed immersion $j:D\to \bar{X}$ is surjective. Let $\widetilde D$ be a smooth resolution of $D$; it may be disconnected. Write
    $$\sigma: \widetilde{D} \to D$$
    for the proper birational morphism.
    This gives the following composition of morphisms
    $$\CH_1(\widetilde{D})_\mathbb{Q} \xrightarrow{\sigma_*} \CH_1(D)_\mathbb{Q} \xrightarrow{j_*} \CH_1(\bar X)_\mathbb{Q}.$$
    Thus it is enough to prove that $j_* \circ \sigma_* = (j \circ \sigma)_*$ is surjective. We shall prove that
    $$(j\circ \sigma)_*: \CH_1(\widetilde{D})_\mathbb{Q} \to \CH_1(\bar{X})_\mathbb{Q}$$
    is surjective.

    Consider the following commutative diagram
    \[
\xymatrix{
\CH_1(\widetilde{D})_\mathbb{Q} \ar[r]\ar[d] & \CH_1(\bar{X})_\mathbb{Q} \ar[d] \\
\operatorname{Hdg}^2(\widetilde{D}, \mathbb{Q}) \ar[r] & \operatorname{Hdg}^4(\bar{X}, \mathbb{Q}).
}
\]
In the diagram, for a smooth projective variety $Y$ of dimension $n$, the group
\begin{align*}
\operatorname{Hdg}^{2k}(Y, \mathbb{Q})
&:= H^{2k}(Y, \mathbb{Q}) \cap H^{k,k}(Y) \\
&= \{x \in H^{2k}(Y,\mathbb{Q}) \mid \text{the image of } x \text{ in } H^{2k}(Y, \mathbb{C}) \text{ is of type }(k,k)\}.
\end{align*}
This denotes the subgroup of rational Hodge classes in $H^{2k}(Y,\mathbb Q)$.
In the diagram, the vertical maps are the cycle class maps
$$\CH_1(\widetilde{D})_\mathbb{Q} \to H_2(\widetilde{D}, \mathbb{Q}) \quad \text{and} \quad \CH_1(\bar{X})_\mathbb{Q} \to H_2(\bar{X}, \mathbb{Q}),$$
whose images lie in the corresponding groups of rational Hodge classes. The morphism $j \circ \sigma: \widetilde{D} \to \bar{X}$ induces
$$H^2(\widetilde{D},\mathbb Q) \to H^4(\bar{X},\mathbb Q)$$
by Poincar\'e duality. This map restricts to the morphism of rational Hodge classes
$$\operatorname{Hdg}^2(\widetilde{D}, \mathbb{Q}) \to \operatorname{Hdg}^4(\bar{X}, \mathbb{Q}).$$
This gives the commutative diagram above.

For a smooth projective variety of dimension $n$, the rational Hodge conjecture holds in degrees $2$ and $2n-2$. The degree-two case is the Lefschetz theorem on $(1,1)$-classes, and the degree-$(2n-2)$ case follows from it and the hard Lefschetz theorem \cite[Section 2.2.2]{voisinannals}. Therefore, both vertical maps are surjective.

We claim that the bottom horizontal map is also surjective. Since $X$ is contractible, the cohomology long exact sequence gives the maps
$$H^i(\bar{X}, \mathbb{Z}) \to H^i(D, \mathbb{Z})$$
are isomorphisms for $1 \leq i \leq 5$ \cite[Section 3]{MS1990}. The mixed Hodge structure on $\widetilde D$ and \cite[Lemma 7.2]{hodge} then show that
$$H^i(\bar{X}, \mathbb{Z}) \to H^i(\widetilde D, \mathbb{Z})$$
is injective for $1 \leq i \leq 5$. Since $\mathbb Q$ is flat over $\mathbb Z$, the maps
$$H^i(\bar{X}, \mathbb{Q}) \to H^i(\widetilde D, \mathbb{Q})$$
are also injective in this range. These groups are finite-dimensional $\mathbb Q$-vector spaces.

Duality therefore shows that
$$H_i(\widetilde D, \mathbb{Q}) \to H_i(\bar{X}, \mathbb{Q})$$
is surjective for $1\leq i \leq 5$. In particular, Poincar\'e duality gives a surjection
$$H^2(\widetilde D,\mathbb Q) \to H^4(\bar{X},\mathbb Q).$$
We claim that it remains surjective on rational Hodge classes. Since $\bar{X}$ is smooth and projective, its rational Hodge structures are polarizable \cite[Section 2.2.1]{voisinannals}. The surjective morphism of Hodge structures of type $(1,1)$ \cite[Definition 7.22]{voisin1}
$$H^2(\widetilde D,\mathbb Q) \to H^4(\bar{X},\mathbb Q)$$
admits a section
$$s: H^4(\bar{X},\mathbb Q) \to H^2(\widetilde D,\mathbb Q)$$
of type $(-1,-1)$ \cite[Corollary 2.24]{voisinannals}. Hence, for $\alpha \in \operatorname{Hdg}^4(\bar{X}, \mathbb{Q})$, the class $s(\alpha)$ belongs to $\operatorname{Hdg}^2(\widetilde D,\mathbb Q)$. Thus the map restricts to a surjection
$$\operatorname{Hdg}^2(\widetilde D, \mathbb{Q}) \to \operatorname{Hdg}^4(\bar{X}, \mathbb{Q}).$$



To prove that the upper horizontal map $\CH_1(\widetilde D)_{\mathbb{Q}} \to \CH_1(\bar{X})_{\mathbb{Q}}$ is surjective, let $[Z] \in \CH_1(\bar{X})_{\mathbb{Q}}$. The surjectivity of the bottom and left arrows gives a class $[Z_{\widetilde D}] \in \CH_1(\widetilde D)_{\mathbb{Q}}$ such that
$$[(j \circ \sigma)_*(Z_{\widetilde D})] = [Z] \in H_2(\bar{X}, \mathbb{Q}).$$
Consequently,
$$[(j \circ \sigma)_*(Z_{\widetilde D})-Z] \in \CH_1(\bar{X})_{\mathrm{hom}} \otimes_\mathbb{Z} \mathbb{Q}.$$
The map
$$\CH_1(\widetilde D)_{\mathrm{hom}} \otimes_\mathbb{Z} \mathbb{Q} \to \CH_1(\bar{X})_{\mathrm{hom}} \otimes_\mathbb{Z} \mathbb{Q}$$
is surjective by Lemma \ref{homologically trivial}. Hence there is a class $[Z^\prime_{\widetilde D}] \in \CH_1(\widetilde D)_{\mathrm{hom}} \otimes_\mathbb{Z} \mathbb{Q}$ such that
$$[(j \circ \sigma)_*(Z_{\widetilde D})-Z] = [(j \circ \sigma)_*(Z^\prime_{\widetilde D})] \in \CH_1(\bar{X})_{\mathrm{hom}} \otimes_\mathbb{Z} \mathbb{Q}.$$
Thus,
$$[(j\circ\sigma)_*(Z_{\widetilde D}- Z^\prime_{\widetilde D})] = [Z] \in \CH_1(\bar{X})_{\mathbb{Q}}.$$
Thus $\CH_1(\widetilde D)_\mathbb Q\to\CH_1(\bar X)_\mathbb Q$ is surjective. Consequently, $\CH_1(X)_\mathbb{Q}=0$, proving Claim 1 modulo Lemma \ref{homologically trivial}.

We next prove that $\CH_1(X)$ is finitely generated.

\textbf{Proof of Claim 2.} The localization sequence
$$\CH_1(D) \xrightarrow{j_*} \CH_1(\bar{X}) \longrightarrow \CH_1(X) \longrightarrow 0,$$
identifies $\CH_1(X)$ with $\CH_1(\bar{X})/\operatorname{Im}(j_*)$. Since
$$(j \circ\sigma)_*: \CH_1(\widetilde D)_{\mathrm{hom}} \to \CH_1(\bar{X})_{\mathrm{hom}}$$
is surjective by Lemma \ref{homologically trivial},
$$\CH_1(\bar{X})_{\mathrm{hom}} \subseteq \operatorname{Im}(j_*).$$
The integral cohomology groups of the smooth projective threefold $\bar X$ are finitely generated.
Thus from the cycle class map
$$\CH_1(\bar{X}) \to H_2(\bar{X}, \mathbb{Z})\cong H^4(\bar{X}, \mathbb{Z}),$$
we conclude that 
$\CH_1(\bar{X})/\CH_1(\bar{X})_{\mathrm{hom}}$ is finitely generated. Since
$$\CH_1(\bar{X})_{\mathrm{hom}} \subseteq \operatorname{Im}(j_*) \quad \text{and} \quad \CH_1(X) \cong \CH_1(\bar{X})/\operatorname{Im}(j_*),$$
the group $\CH_1(X)$ is finitely generated.

This proves Claim 2. It remains to prove Lemma \ref{homologically trivial}.


\end{proof}
\begin{lemma} \label{homologically trivial}
  Let $\bar{X}$ and $\widetilde D$ be as in the proof of Theorem \ref{main theorem}. Then
$$(j \circ\sigma)_*: \CH_1(\widetilde D)_{\mathrm{hom}} \to \CH_1(\bar{X})_{\mathrm{hom}}$$
is surjective.
\end{lemma}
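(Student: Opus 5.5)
The plan is to pass through the Abel--Jacobi map of $\bar X$, using the hypothesis that $\CH_0(\bar X)$ is supported on a curve to make that map an isomorphism on $\CH_1(\bar X)_{\mathrm{hom}}$. On the source side, $\CH_1(\widetilde D)_{\mathrm{hom}}$ consists of homologically trivial divisors on the smooth projective surfaces $\widetilde D_i$, so it contains $\operatorname{Pic}^0(\widetilde D)=\prod_i\operatorname{Pic}^0(\widetilde D_i)$.

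\emph{Step 1: the target.} Since $\CH_0(\bar X)$ is supported on a curve, I would apply the Bloch--Srinivas decomposition of the diagonal to the threefold $\bar X$. It gives that homological and algebraic equivalence coincide for $1$-cycles, so $\CH_1(\bar X)_{\mathrm{hom}}=\CH_1(\bar X)_{\mathrm{alg}}$. It also gives that the intermediate Jacobian $J^3(\bar X)$ is an abelian variety; this is consistent with $h^{3,0}(\bar X)=0$ from Theorem \ref{regular q-forms of contractible}. Finally, it gives that
$$\operatorname{AJ}:\CH_1(\bar X)_{\mathrm{hom}}\to J^3(\bar X)$$
is an isomorphism. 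Rationally this is part of Bloch--Srinivas. Integrally, injectivity additionally needs that $\operatorname{AJ}$ is injective on the torsion of $\CH^2$, which I would take from Merkurjev--Suslin as in Murre's theorem. Combining the rational statement with this torsion injectivity gives injectivity on all of $\CH_1(\bar X)_{\mathrm{hom}}$, and surjectivity follows from algebraicity of $J^3(\bar X)$.

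\emph{Step 2: surjectivity on intermediate Jacobians.} The proof of Theorem \ref{main theorem} shows that $H^3(\bar X,\mathbb Z)\to H^3(\widetilde D,\mathbb Z)$ is injective. Dually, the Gysin map
$$H^1(\widetilde D,\mathbb Z)\cong H_3(\widetilde D,\mathbb Z)\to H_3(\bar X,\mathbb Z)\cong H^3(\bar X,\mathbb Z)$$
is surjective modulo torsion. It is a morphism of Hodge structures of type $(1,1)$, so it induces a morphism of complex tori
$$(j\circ\sigma)_*:\operatorname{Pic}^0(\widetilde D)=J^1(\widetilde D)\to J^3(\bar X).$$
This morphism is surjective, because it is induced by a surjection of rational Hodge structures. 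Abel--Jacobi maps are functorial for proper pushforward, so
$$\operatorname{AJ}_{\bar X}\circ(j\circ\sigma)_*=(j\circ\sigma)_*\circ\operatorname{AJ}_{\widetilde D}.$$
On divisors, $\operatorname{AJ}_{\widetilde D}$ is the identification $\CH^1(\widetilde D)_{\mathrm{alg}}\cong\operatorname{Pic}^0(\widetilde D)$.

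\emph{Step 3: conclusion.} Given $z\in\CH_1(\bar X)_{\mathrm{hom}}$, Step 2 provides $w\in\operatorname{Pic}^0(\widetilde D)\subset\CH_1(\widetilde D)_{\mathrm{hom}}$ with $\operatorname{AJ}((j\circ\sigma)_*w)=\operatorname{AJ}(z)$. Injectivity from Step 1 then gives $z=(j\circ\sigma)_*w$, which proves the lemma.

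The main obstacle is Step 1 in integral form: the Bloch--Srinivas argument naturally yields statements up to multiplication by the integer $N$ in the decomposition $N\Delta=Z_1+Z_2$. I would first establish $\operatorname{Griff}^2(\bar X)\otimes\mathbb Q=0$ and rational injectivity of $\operatorname{AJ}$. Then I would use that $\operatorname{Griff}^2(\bar X)$ is killed by $N$, together with the injectivity of $\operatorname{AJ}$ on torsion, to remove the denominators. If integral injectivity proves troublesome, the rational version already suffices for Claim 1 of Theorem \ref{main theorem}. For Claim 2, I would separately show that the kernel of $\operatorname{AJ}$, being torsion and injecting into $J^3(\bar X)$, vanishes.
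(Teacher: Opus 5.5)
Your proof is correct and is essentially the paper's argument: injectivity of $AJ_{\bar X}$ on $\CH^2(\bar X)_{\mathrm{hom}}$ comes from $\CH_0(\bar X)$ being supported on a curve, surjectivity of $J^1(\widetilde D)\to J^3(\bar X)$ comes from the surjectivity of the Gysin map $H^1(\widetilde D,\mathbb C)\to H^3(\bar X,\mathbb C)$, and the conclusion follows by a diagram chase through $\CH^1(\widetilde D)_{\mathrm{hom}}\cong J^1(\widetilde D)$. Your integral-injectivity discussion (Bloch--Srinivas plus Merkurjev--Suslin) simply unpacks the reference the paper cites; two small points: you only ever use injectivity of $AJ_{\bar X}$, and integral injectivity of restriction gives the dual Gysin map only a finite cokernel, not surjectivity modulo torsion, but the rational surjectivity you actually use is enough.
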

\begin{proof}
    Consider the Abel--Jacobi maps
$$\Phi^1_{\widetilde D}: \CH^1(\widetilde D)_{\mathrm{hom}} \to J^1(\widetilde D)$$
and
$$\Phi^2_{\bar{X}}: \CH^2(\bar{X})_{\mathrm{hom}} \to J^3(\bar{X}),$$
where $J^1(\widetilde D)$ and $J^3(\bar{X})$ are the intermediate Jacobians associated with $\widetilde D$ and $\bar{X}$, respectively. For a smooth projective variety $Y$, one has \cite[Section 12.1]{voisin1}
$$J^{2k-1}(Y):=H^{2k-1}(Y,\mathbb C)/\bigl(F^kH^{2k-1}(Y,\mathbb C)+H^{2k-1}(Y,\mathbb Z)\bigr),$$
together with an Abel--Jacobi map
$$\CH^k(Y)_{\mathrm{hom}} \to J^{2k-1}(Y),$$
where $\CH^k(Y)_{\mathrm{hom}}$ is the kernel of the cycle class map
$$\CH^k(Y) \to H^{2k}(Y, \mathbb{Z}).$$
For $k=1$, from the exponential exact sequence there is an isomorphism
$$\Pic^0(Y):= \ker\bigl(c_1:\Pic(Y) \to H^2(Y, \mathbb{Z})\bigr) \cong J^1(Y).$$
Thus the Abel-Jacobi map
$$\CH^1(Y)_{\mathrm{hom}} \to J^{1}(Y)$$
is an isomorphism \cite[Section 12.1.3]{voisin1}.
Since $\CH_0(\bar{X})$ is supported on a curve, the Abel--Jacobi map for $k=2$,
$$\CH^2(\bar{X})_{\mathrm{hom}} \to J^{3}(\bar{X}),$$
is an isomorphism \cite[Theorem 6.24 and Remark 6.25]{voisinannals}; see also \cite[Theorem 0.3]{BlochSrinivas,Murre85,Voisin13}.
Now consider the following commutative diagram
 \[
\xymatrix{
\CH^1(\widetilde D)_{\mathrm{hom}} \ar[r]\ar[d]_{AJ_{\widetilde D}} & \CH^2(\bar{X})_{\mathrm{hom}} \ar[d]^{AJ_{\bar{X}}} \\
J^1(\widetilde D) \ar[r] & J^3(\bar{X})
}
\]
The horizontal morphisms are induced by proper pushforward of $1$-cycles, and both vertical maps are
isomorphisms. To show the map
$$J^1(\widetilde D) \to J^3(\bar{X})$$
is surjective, consider the following commutative diagram
 \[
\xymatrix{
H^1(\widetilde D, \mathbb{C}) \ar[r]\ar[d] & H^3(\bar{X}, \mathbb{C}) \ar[d] \\
J^1(\widetilde D) \ar[r] & J^3(\bar{X}).
}
\]
The vertical maps are the canonical quotient maps. By Poincar\'e duality, the map
$$H^1(\widetilde D, \mathbb{C}) \to H^3(\bar{X}, \mathbb{C})$$
is dual to
$$H_3(\widetilde D, \mathbb{C}) \to H_3(\bar{X}, \mathbb{C}),$$
which is surjective by the proof of Claim 1 and \cite[Lemma 7.2]{hodge}. The diagram therefore shows that
$$J^1(\widetilde D) \to J^3(\bar{X})$$
is surjective. Consequently,
$$(j \circ\sigma)_*: \CH_1(\widetilde D)_{\mathrm{hom}} \to \CH_1(\bar{X})_{\mathrm{hom}}$$
is surjective. This completes the proof.
\end{proof}

The classification results of Mohan Kumar--Murthy and Asok--Fasel now give the following corollary.
\begin{corollary} \label{main corollary}
    Let $X$ be a topologically contractible smooth affine complex threefold admitting a smooth projective compactification $\bar{X}$ such that $D=\bar{X} \setminus X$ is a divisor with simple normal crossings. Assume that one of the following conditions holds:
    \begin{enumerate}
    \item $\CH_0(\bar{X})$ is supported on a curve.
    \item $\CH_0(\bar{X}) \cong \mathbb{Z}$.
      \item $\bar{X}$ is rationally connected; for example, $\bar{X}$ is unirational, Fano, or $\mathbb{A}^1$-connected.
 \end{enumerate}       
        Then every algebraic vector bundle on $X$ is trivial. In fact, (1) and (2) are equivalent under the hypotheses above.
         \end{corollary}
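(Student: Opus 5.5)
The plan is to show that $\CH^i(X)=0$ for $i=1,2,3$ and then apply Theorem \ref{threefold}, i.e.\ the Mohan Kumar--Murthy and Asok--Fasel classification by Chern classes. The equivalence of (1) and (2) will be settled first.

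For (2)$\Rightarrow$(1): if $\CH_0(\bar X)\cong\mathbb Z$ via the degree, then every $0$-cycle is rationally equivalent to a multiple of a point lying on any chosen curve. Hence $\CH_0(\bar X)$ is supported on a curve.

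For (1)$\Rightarrow$(2): suppose $\CH_0(\bar X)$ is supported on a curve $C$, which we may take to be smooth after normalizing. Then the Albanese map is an isomorphism on $\CH_0(\bar X)_{\mathrm{hom}}$. This follows from the Bloch--Srinivas decomposition of the diagonal, or from Roitman's theorem together with the fact that $\CH_0(C)_{\mathrm{hom}}=J(C)$ surjects onto $\CH_0(\bar X)_{\mathrm{hom}}$, which is therefore representable. On the other hand, $\mathrm{Alb}(\bar X)$ has dimension $h^{1,0}(\bar X)=0$ by Theorem \ref{regular q-forms of contractible} (or by $H_1(\bar X,\mathbb Z)=0$ from Lemma \ref{simply connected}). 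Thus $\CH_0(\bar X)_{\mathrm{hom}}=0$, and the degree map gives $\CH_0(\bar X)\cong\mathbb Z$. Condition (3) implies (2), since a rationally connected variety has $\CH_0\cong\mathbb Z$. The listed examples are rationally connected: unirational and Fano varieties are standard cases, and the $\mathbb A^1$-connected case is the discussion before Lemma \ref{highest chow group trivial}.

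It remains to prove vanishing under (2). First, $\CH^1(X)=\Pic(X)=0$ by the line bundle lemma in Section \ref{properties of contractible}. Second, $\CH^3(X)=\CH_0(X)=0$: the degree-one point class in $\CH_0(\bar X)\cong\mathbb Z$ can be represented by a point of $D$, so $\CH_0(D)\to\CH_0(\bar X)$ is surjective, and localization gives the result, exactly as in the argument preceding Lemma \ref{highest chow group trivial}. Third, $\CH^2(X)=\CH_1(X)=0$ by Theorem \ref{main theorem}, whose hypothesis is (1). Theorem \ref{threefold} then gives the triviality of every algebraic vector bundle on $X$.

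The main obstacle is (1)$\Rightarrow$(2), specifically justifying that support on a curve makes $\CH_0(\bar X)_{\mathrm{hom}}$ representable, hence injecting into the Albanese. I would cite the Bloch--Srinivas decomposition, as already used in Lemma \ref{homologically trivial} via \cite{voisinannals}, together with Roitman's theorem for the torsion. The remaining steps are direct citations of earlier results.
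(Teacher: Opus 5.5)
Your proposal is correct and takes essentially the same route as the paper. The paper's proof also reduces to (1)$\Rightarrow$(2): it gets $\operatorname{Alb}(\bar X)=0$ from Theorem \ref{regular q-forms of contractible}, uses the Bloch--Srinivas argument to make the Albanese kernel torsion, and uses Roitman's theorem to make it torsion-free. The vanishing of $\CH^1(X),\CH^2(X),\CH^3(X)$ and the appeal to Theorem \ref{threefold} are left implicit there, via Theorem \ref{main theorem} and the localization argument you cite; your write-up simply spells these out, together with (2)$\Rightarrow$(1) and (3)$\Rightarrow$(2).
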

\begin{proof}
    We only need to show that if $\CH_0(\bar{X})$ is supported on a curve, then $\CH_0(\bar{X}) \cong \mathbb{Z}$. By Theorem \ref{regular q-forms of contractible}, $h^{0,1}(\bar X)=0$, so the irregularity of the compactification is $q(\bar{X}) = 0$. Thus the Albanese variety $\operatorname{Alb}(\bar{X})$ is trivial. The Albanese kernel
    $$T(\bar{X})=\ker\bigl(\CH_0(\bar{X})_{\deg 0}\to \operatorname{Alb}(\bar{X})\bigr)$$
    is a torsion group by the Bloch--Srinivas decomposition-of-the-diagonal argument; see \cite{BlochSrinivas,Murre85}. On the other hand, Roitman's theorem states that the Albanese map induces an isomorphism on torsion subgroups \cite{R}. Hence its kernel $T(\bar X)$ is torsion-free. It follows that $T(\bar X)=0$. Since $\operatorname{Alb}(\bar X)=0$, we obtain $\CH_0(\bar X)_{\deg 0}=0$, and therefore $\CH_0(\bar X)\cong\mathbb Z$.
\end{proof}
\begin{corollary}\label{contractible rational vector bundles}
    Every algebraic vector bundle on a topologically contractible smooth rational affine complex threefold is trivial.
\end{corollary}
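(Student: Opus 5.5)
To prove Corollary \ref{contractible rational vector bundles}, I reduce to Corollary \ref{main corollary}(3), but I have to supply the compactification hypothesis that the statement does not give.

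Let $X$ be a topologically contractible smooth rational affine complex threefold. First I produce a compactification with simple normal crossings boundary. Embed $X$ as an open subset of some projective variety $Y$; for instance, take the closure of $X$ under an embedding $X \subset \mathbb{A}^N \subset \mathbb{P}^N$. Hironaka's resolution of singularities, applied to the pair $(Y, Y\setminus X)$ and performed by blowups centred over $Y \setminus X$ (possible because $X$ is smooth), gives a smooth projective $\bar X$. This $\bar X$ contains $X$ as an open subset, and $D=\bar X\setminus X$ is a simple normal crossings divisor. The boundary is a divisor and not of higher codimension because $X$ is affine; this is item (2) of the topological properties listed at the beginning of Section \ref{properties of contractible}.

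Second, since $X$ is rational, $\bar X$ is birational to $\mathbb{P}^3$. Rational connectedness is a birational invariant among smooth projective varieties, and $\mathbb{P}^3$ is rationally connected, so $\bar X$ is rationally connected. More directly, $\CH_0$ is a birational invariant of smooth projective varieties, so $\CH_0(\bar X)\cong \CH_0(\mathbb{P}^3)\cong\mathbb{Z}$. In particular $\CH_0(\bar X)$ is supported on a point, hence on a curve. Either condition (2) or condition (3) of Corollary \ref{main corollary} is therefore satisfied.

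Third, Corollary \ref{main corollary} gives the conclusion. In terms of its ingredients: Theorem \ref{main theorem} gives $\CH^2(X)=\CH_1(X)=0$; Lemma \ref{highest chow group trivial}, or the argument preceding it, gives $\CH^3(X)=\CH_0(X)=0$ from the surjection $\CH_0(D)\to\CH_0(\bar X)\cong\mathbb{Z}$; and $\Pic(X)=0$ by the line bundle lemma in Section \ref{properties of contractible}. Theorem \ref{threefold} then shows that every algebraic vector bundle on $X$ is trivial.

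I do not expect a real obstacle. The only point needing care is that the hypotheses of Theorem \ref{main theorem} (a smooth projective compactification with simple normal crossings boundary, and $\CH_0(\bar X)$ supported on a curve) are not part of the statement. The first is supplied by resolution of singularities and the second by the birational invariance of $\CH_0$, as above.
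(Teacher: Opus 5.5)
Your proposal is correct and is essentially the paper's argument: the paper obtains this corollary directly from Corollary \ref{main corollary}(3). A smooth projective model of a rational threefold is rational, hence rationally connected with $\CH_0(\bar X)\cong\mathbb{Z}$, and the existence of a simple normal crossings compactification via Hironaka is taken for granted in the paper, whereas you make it explicit. One small caution: get $\CH_0(X)=0$ from the localization argument preceding Lemma \ref{highest chow group trivial}, not from the lemma itself, since $X$ need not be $\mathbb{A}^1$-connected (e.g.\ the Ramanujam cylinder is a rational contractible affine threefold that is not).
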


\begin{remark}\label{relation with ABH}
    The conjecture of Asok, Fasel and Hopkins states that every topological vector bundle on a smooth complex cellular variety admits a unique motivic lift \cite[Conjecture 5]{AsokFaselHopkins2020}. For a smooth affine variety, motivic vector bundles are represented by algebraic vector bundles. \cite[Theorem 4]{AsokBachmannHopkins2026} proves the corresponding algebraic--topological comparison under the precise hypothesis of even cellularity. This comparison and Corollary \ref{main corollary} are complementary: the former applies in every dimension and requires no compactification, but assumes even cellularity; the latter does not assume even cellularity, but is specific to threefolds and imposes a condition on the Chow group of a smooth projective compactification.

    There is also a direct connection with the motive computations below. Under the R\"ondigs--{\O}stv{\ae}r equivalence between Voevodsky motives and motivic $H\mathbb Z$-modules \cite{RondigsOstvaer2008}, an isomorphism $M(X)\cong\mathbb Z$ gives $H\mathbb Z[X]\cong H\mathbb Z$; hence $X$ is even cellular in the sense of \cite[Definition 1.4.7]{AsokBachmannHopkins2026}. For surfaces, Asok proved that every topologically contractible smooth complex surface has trivial integral motive \cite[Theorem 1]{Asok_motive}, and hence is even cellular. In dimension $3$, the analogous motivic statement remains open even for rational topologically contractible smooth affine varieties; see Question \ref{rational motive question}. The same observation applies to Koras--Russell threefolds of the first and second kinds, whose motives are trivial \cite{HKO}. For Koras--Russell threefolds of the third kind, however, the motive is not known to be trivial, and no even-cellularity result is presently available; Corollary \ref{korasrussellthird} proves the triviality of their algebraic vector bundles without this hypothesis. 
\end{remark}

By \cite[Corollary 2.6]{kaliman2002bfcactionscontractiblethreefolds}, a smooth topologically contractible affine threefold over $\mathbb{C}$ with a non-trivial algebraic $\CC^+$-action is rational. Hence:
\begin{corollary}\label{Cplusaction}
    Let $X$ be a smooth topologically contractible affine threefold over $\mathbb{C}$ with a non-trivial algebraic $\CC^+$-action. Then every algebraic vector bundle on $X$ is trivial.
\end{corollary}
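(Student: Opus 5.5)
The plan is to reduce Corollary \ref{Cplusaction} to Corollary \ref{contractible rational vector bundles}. By \cite[Corollary 2.6]{kaliman2002bfcactionscontractiblethreefolds}, a smooth topologically contractible affine threefold $X$ over $\mathbb{C}$ with a non-trivial algebraic $\CC^+$-action is rational. Once $X$ is known to be rational, Corollary \ref{contractible rational vector bundles} says every algebraic vector bundle on $X$ is trivial. So the substance is to check that Corollary \ref{contractible rational vector bundles} really follows from Corollary \ref{main corollary} for an arbitrary rational $X$.

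First, I would choose a compactification satisfying the hypotheses of Corollary \ref{main corollary}. Since $X$ is affine, it is quasi-projective, so it embeds as an open subset of some projective variety. By Hironaka's resolution of singularities, applied away from the smooth locus $X$ and followed by embedded resolution of the boundary, I obtain a smooth projective $\bar X$ containing $X$ as an open subset with $D=\bar X\setminus X$ a simple normal crossings divisor. (Because $X$ is affine, $D$ has pure codimension one.)

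Next, $\bar X$ is birational to $X$, hence rational. A smooth projective rational variety is rationally connected, since rational connectedness is a birational invariant among smooth projective varieties and holds for $\mathbb{P}^3$. Equivalently, $\CH_0$ is a birational invariant of smooth projective varieties, so $\CH_0(\bar X)\cong\CH_0(\mathbb{P}^3)\cong\mathbb{Z}$. Either way, hypothesis (2) or (3) of Corollary \ref{main corollary} holds, so $\CH_1(X)=\CH^2(X)=0$ by Theorem \ref{main theorem}.

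Finally, to conclude triviality of all bundles via Theorem \ref{threefold}, I also need $\CH^3(X)=\CH_0(X)=0$. This follows from the localization sequence $\CH_0(D)\to\CH_0(\bar X)\to\CH_0(X)\to 0$: the first map is surjective because $\CH_0(\bar X)\cong\mathbb{Z}$ and $D$ is nonempty, so $\CH_0(X)=0$. Theorem \ref{threefold} then gives the result, since $\CH^1(X)=\Pic(X)=0$ as well.

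The only genuine input is the rationality theorem of Kaliman, so I expect no real obstacle beyond confirming that its hypotheses (smooth, affine, contractible, non-trivial $\CC^+$-action) match ours exactly.
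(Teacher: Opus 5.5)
Your proposal is correct and follows the paper's route: Kaliman's \cite[Corollary 2.6]{kaliman2002bfcactionscontractiblethreefolds} gives rationality, and then Corollary \ref{contractible rational vector bundles}, i.e.\ Corollary \ref{main corollary} applied to an snc compactification with $\CH_0(\bar X)\cong\mathbb{Z}$, gives the triviality of all bundles. Your additional steps (Hironaka to get the snc compactification, localization to get $\CH_0(X)=0$, and Theorem \ref{threefold}) only make explicit what the paper leaves implicit in that corollary.
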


Recall that the generalized van de Van question asks that
whether a smooth, topologically contractible, complex variety is rational, so we have the following corollary:

\begin{corollary}\label{vdVimpliesSerre}
    Suppose, $X$ is a topologically contractible, smooth, affine, complex threefold that admits a smooth projective compactification $\bar{X}$ such that $D=\bar{X} \setminus X$ is a divisor with simple normal crossings. Then in case of affine threefolds, the positive answer to the generalized van de Ven question implies an affirmative answer to the generalized Serre question (\cite[Question 6 and Question 8]{Asok2021}).
\end{corollary}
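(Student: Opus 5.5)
The plan is to reduce the statement to Corollary \ref{main corollary}(3). The hypothesis of Corollary \ref{vdVimpliesSerre} is that the generalized van de Ven question has an affirmative answer, at least in dimension $3$. So every topologically contractible smooth affine complex threefold $X$ is rational.

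First I will take such an $X$ together with a smooth projective compactification $\bar X$ whose boundary $D=\bar X\setminus X$ is a simple normal crossings divisor. Since $X$ is rational and $\bar X$ is birational to $X$, the variety $\bar X$ is a smooth projective rational threefold. Smooth projective rational varieties are rationally connected: through two general points of $\mathbb P^3$ there is a line, and its image under a birational map joins two general points of $\bar X$. Alternatively, rational connectedness of smooth projective varieties is a birational invariant \cite{Kollar}. Hence $\bar X$ satisfies condition (3) of Corollary \ref{main corollary}.

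Next I will apply Corollary \ref{main corollary}. Rational connectedness gives $\CH_0(\bar X)\cong\mathbb Z$, and in particular $\CH_0(\bar X)$ is supported on a point. Theorem \ref{main theorem} then gives $\CH_1(X)=\CH^2(X)=0$. For the top Chow group, the surjection $\CH_0(D)\to\CH_0(\bar X)\cong\mathbb Z$ and the localization sequence give $\CH_0(X)=\CH^3(X)=0$, as explained before Lemma \ref{highest chow group trivial}. Finally, Theorem \ref{threefold}, or equivalently the Mohan Kumar--Murthy and Asok--Fasel classification, shows that every algebraic vector bundle on $X$ is trivial. This is the generalized Serre question for $X$.

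The only subtle point is the existence of the compactification with simple normal crossings boundary. The corollary assumes it, and in any case it is available by Nagata compactification followed by Hironaka resolution. If one wants the implication for all contractible affine threefolds, not just those with a given compactification, this remark removes the hypothesis: rationality does not depend on the choice of compactification, so any such $\bar X$ works. Beyond this point I expect no real obstacle, since the substantive work is contained in Theorem \ref{main theorem}.
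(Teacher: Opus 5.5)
Your proposal is correct and follows the paper's (implicit) argument: a positive answer to van de Ven makes $X$, hence $\bar X$, rational and therefore rationally connected, so Corollary \ref{main corollary}(3) (equivalently Corollary \ref{contractible rational vector bundles}) applies. Your explicit derivation of $\CH^3(X)=0$ from the localization sequence and your remark on the existence of a simple normal crossings compactification fill in details the paper leaves unstated.
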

    \begin{remark}
         If $X$ is $\mathbb{A}^1$-connected, then the compactification $\bar{X}$ is always $\mathbb{A}^1$-connected \cite[Proposition 5.4.2.7]{Asok2021}. However, it is not true that $X$ is topologically contractible and $\bar{X}$ is $\mathbb{A}^1$-connected, that would imply $X$ is also $\mathbb{A}^1$-connected; for example we can take $X = R \times \mathbb{A}^1$, where $R$ is the Ramanujam surface \cite{Ramanujam71}. Then the compactification $\bar{X}$ is $\mathbb{A}^1$-connected, since any proper, rational variety is $\mathbb{A}^1$-connected \cite{AsokMorel}; but $X$ is not $\mathbb{A}^1$-connected \cite[Theorem 5.1]{ChoudhuryRoy+2024+55+80}.
    \end{remark}
   

In the end if we assume Bloch's conjecture \ref{bloch} for threefolds, then

\begin{corollary}\label{BlochimpliesSerre}
    Suppose, $X$ is a topologically contractible, smooth, affine, complex threefold that admits a smooth projective compactification $\bar{X}$ such that $D=\bar{X} \setminus X$ is a divisor with simple normal crossings. Then an affirmative solution to Bloch's Conjecture in case of $\bar{X}$ implies an affirmative answer to the generalized Serre question. 
\end{corollary}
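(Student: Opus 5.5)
The plan is to reduce Corollary \ref{BlochimpliesSerre} to Corollary \ref{main corollary}(2). We only need to check that $\bar X$ satisfies the hypothesis of Bloch's conjecture (Conjecture \ref{bloch}), namely $h^{p,0}(\bar X)=0$ for every $p\geq 1$.

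Since $\bar X$ is a smooth projective threefold, the relevant values are $p=1,2,3$. Hodge symmetry gives $h^{p,0}(\bar X)=h^{0,p}(\bar X)=\dim H^p(\bar X,\mathcal O_{\bar X})$. Theorem \ref{regular q-forms of contractible} applies directly, because $X$ is topologically contractible and $D=\bar X\setminus X$ is a simple normal crossings divisor. It gives $H^i(\bar X,\mathcal O_{\bar X})=0$ for all $i>0$, so $h^{1,0}=h^{2,0}=h^{3,0}=0$. The hypothesis of Conjecture \ref{bloch} therefore holds for $\bar X$.

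Granting an affirmative answer to Bloch's conjecture for $\bar X$, we get $\CH_0(\bar X)\cong\mathbb Z$. In particular $\CH_0(\bar X)$ is supported on a point, hence on a curve. Corollary \ref{main corollary}, case (2), then says every algebraic vector bundle on $X$ is trivial, which is the generalized Serre question for $X$.

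For completeness, we can trace the chain underneath. Theorem \ref{main theorem} gives $\CH_1(X)=\CH^2(X)=0$. The localization argument at the start of Section \ref{Chow group of contractible} gives $\CH_0(X)=\CH^3(X)=0$, because $\CH_0(D)\to\CH_0(\bar X)\cong\mathbb Z$ is surjective. Finally, $\Pic(X)=0$. With these vanishings, Theorem \ref{threefold} together with the Mohan Kumar--Murthy classification yields triviality of all bundles.

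There is no real obstacle, since the argument is bookkeeping. The one point to check is that the Hodge-number vanishing is unconditional. It uses only mixed Hodge theory (Theorem \ref{regular q-forms of contractible}) and not simple connectivity or uniruledness, so the sole input taken on faith is Bloch's conjecture itself.
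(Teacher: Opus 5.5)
Your proposal is correct and follows the paper's intended argument. Theorem \ref{regular q-forms of contractible}, together with Hodge symmetry, verifies the hypothesis $h^{p,0}(\bar X)=0$ of Conjecture \ref{bloch}; the conjecture then gives $\CH_0(\bar X)\cong\mathbb Z$, and Corollary \ref{main corollary} yields triviality of all algebraic vector bundles on $X$.
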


\subsection{Relation to Bloch's conjecture for surfaces}\label{relationtobloch}
Suppose that $X$ is a topologically contractible affine threefold satisfying the hypotheses of Corollary \ref{main corollary}.  Moreover, assume that $\bar{X}$ is uniruled. By a theorem of Miyaoka-Mori, this is equivalent to the fact that $\bar{X}$ has negative Kodaira dimension \cite[Section1, Chapter IV]{Kollar}.

There exists maximal rationally connected (MRC fibration \cite[Section 5, Chapter IV]{Kollar}), $\pi:\bar{X} \to Z$. Since we have assumed $\bar{X}$ is uniruled, so the dimension of $Z$ is atmost $2$.
After choosing suitable smooth projective birational models, we may assume that $\pi$ is a morphism and
$Z$ is a smooth proper
variety. Moreover,
\begin{lemma}\label{mrc fibration invariant}
\begin{enumerate}
    \item The MRC fibration $\pi$ is Chow constant. The pushforward
$$\pi_*: \CH_0(\bar{X}) \to \CH_0(Z)$$
is an isomorphism \cite[Corollary 2.7]{stapleton}.
\item The MRC fibration $\pi$ is cohomologically constant in the sense of \cite{stapleton}. Thus,
$$h^0(Z, \Omega_Z^p)=h^0(\bar{X}, \Omega_{\bar X}^p)=0$$
for every $p \geq 1$.
\end{enumerate}

\end{lemma}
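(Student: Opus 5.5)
The plan is to prove the two parts separately. The geometric input is that the general fibre of $\pi$ is rationally connected, and, by Koll\'ar's degeneration argument, every fibre of the proper morphism $\pi$ is rationally chain connected. Since $h^0(\Omega^p)$ and $\CH_0$ of smooth projective varieties are birational invariants, replacing $\bar X$ and $Z$ by the smooth birational models on which $\pi$ is a morphism costs nothing. So we may work with a surjective morphism $\pi:\bar X\to Z$ of smooth projective varieties whose general fibre is rationally connected.

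For (1), surjectivity of $\pi_*$ on $\CH_0$ is immediate: $\pi$ is proper and surjective, so every closed point of $Z$ is the image of a closed point of $\bar X$. For injectivity, I would take a $0$-cycle $z$ on $\bar X$ with $\pi_*z=0$ and proceed in three steps.
\begin{enumerate}
\item Write the rational equivalence $\pi_*z\sim 0$ as a sum of divisors of rational functions on curves $C_i\subset Z$.
\item Over the normalization $\widetilde C_i$, the base change $\bar X\times_Z\widetilde C_i\to\widetilde C_i$ has rationally connected general fibre. By Graber--Harris--Starr it therefore admits a section $s_i$, so the relation $\mathrm{div}(f_i)$ lifts to the relation $\mathrm{div}(f_i\circ\pi)$ restricted to $s_i(\widetilde C_i)$, integrally.
\item Subtracting these lifted relations, $z$ becomes rationally equivalent to a cycle that is a sum of differences of points lying in a common fibre. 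Such differences vanish in $\CH_0(\bar X)$ because the fibres are rationally chain connected.
\end{enumerate}
The main obstacle is step 2 when $C_i$ lies in the locus where fibres are not rationally connected. For this I would first move the rational equivalence on $Z$ so that the curves $C_i$ pass through general points; this is possible since $Z$ is smooth projective, by a moving lemma for $0$-cycles. Failing that, I would simply invoke \cite[Corollary 2.7]{stapleton}, which packages exactly this argument.

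For (2), pullback $\pi^*:H^0(Z,\Omega_Z^p)\to H^0(\bar X,\Omega_{\bar X}^p)$ is injective because $\pi$ is dominant and generically smooth. It is surjective because a holomorphic $p$-form on $\bar X$ restricts to zero on the rationally connected fibres, together with their normal directions in the appropriate sense, and hence descends to $Z$ (Koll\'ar's argument, \cite[Section 5, Chapter IV]{Kollar}). The vanishing then follows from Theorem \ref{regular q-forms of contractible}: it gives $h^{0,p}(\bar X)=0$ for $p\ge1$, and Hodge symmetry gives $h^0(\bar X,\Omega^p_{\bar X})=h^{p,0}(\bar X)=h^{0,p}(\bar X)=0$. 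Hence $h^0(Z,\Omega_Z^p)=0$ as well.
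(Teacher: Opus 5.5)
Your proposal is correct. The paper gives no argument beyond the citations built into the statement. Part (1) is \cite[Corollary 2.7]{stapleton}. In (2), the equality is attributed to Stapleton's cohomological constancy, and the vanishing comes from Theorem \ref{regular q-forms of contractible} plus Hodge symmetry. You instead unpack (1) into the standard proof: Graber--Harris--Starr sections over curves meeting the good locus, combined with Koll\'ar's rational chain connectedness of \emph{all} fibres to kill fibrewise degree-zero cycles. For (2), you note that injectivity of $\pi^*$ on $H^0(\Omega^p)$ together with $h^{p,0}(\bar X)=h^{0,p}(\bar X)=0$ already gives everything the lemma asserts. So your descent/surjectivity sketch, which as written (``normal directions in the appropriate sense'') is too vague to count as a proof, can simply be dropped.

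One precision is needed in (1). The usual moving lemma moves a $0$-cycle, not a rational equivalence. Moving $z$ or $\pi_*z$ into the good open set $U\subset Z$ does not stop the relation on $Z$ from using curves contained in $Z\setminus U$. What you actually need is that relations in $\CH_0(Z)$ can be realized on curves not contained in $Z\setminus U$. On the surface $Z$ this follows from the Gersten (tame-symbol) complex. For an irreducible curve $C\subset Z\setminus U$ and $f\in\mathbb C(C)^*$, choose $F,G\in\mathbb C(Z)^*$ with $\operatorname{ord}_C F=1$, $\operatorname{ord}_C G=0$, $G|_C=f$, and all other components of $\operatorname{div}F$ and $\operatorname{div}G$ general, so none lies in $Z\setminus U$; sections of $\mathcal O(nH)\otimes\mathcal I_C$ for $n\gg0$ do this. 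Then $\operatorname{div}_C(f)$ equals a sum of divisors of functions on those other curves. If $\dim Z\le 1$, no moving is needed at all.

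With that fix, your argument is self-contained and shows where the rational-connectedness hypotheses enter. The paper's route buys brevity by relying on Stapleton's framework.
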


\begin{remark}
\begin{enumerate}
    \item By Lemma \ref{mrc fibration invariant} and \cite[Theorem 3.7, Lecture V]{peternellbook}, the dimension of $Z$ is either $0$ or $2$.
    \item If $Z$ is a point, then $\bar{X}$ is rationally connected \cite[Corollary 5.7.1]{Kollar}, and hence $\CH_0(\bar{X})\cong\mathbb{Z}$. Corollary \ref{main corollary} then implies that every vector bundle on $X$ is trivial.
    \item If $\dim Z=2$, then Bloch's conjecture for zero-cycles on surfaces (Conjecture \ref{bloch}; see also \cite[Conjecture 11.2]{Voisin_2003} and \cite[Conjecture B]{stapleton}) implies that $\CH_0(\bar{X})\cong\mathbb{Z}$. Thus, every vector bundle on $X$ is trivial by Corollary \ref{main corollary}.
    \item Bloch's conjecture for surfaces is known when $Z$ is not of general type, that is, when $\kappa(Z)\leq1$ \cite{blochoriginal}.
    For several families of surfaces of general type with $q=p_g=0$, Bloch's conjecture is known either for the general member of the family, as for Godeaux surfaces \cite{Voisin1992}, or for specific members, such as the Barlow surface \cite{Barlow1985}; it also holds for Catanese surfaces \cite{Voisin14}. It is therefore natural to ask which surfaces of general type can occur as the base $Z$ when $\bar X$ is uniruled.
    
\end{enumerate}
    
\end{remark}

This gives the following corollary.

\begin{corollary}\label{surface bloch implies serre}
    Let $X$ be a topologically contractible smooth affine complex threefold admitting a smooth projective uniruled compactification $\bar X$ such that $D=\bar X\setminus X$ is a divisor with simple normal crossings. Then an affirmative answer to the classical Bloch conjecture for surfaces (Conjecture \ref{bloch}) implies that every algebraic vector bundle on $X$ is trivial.
\end{corollary}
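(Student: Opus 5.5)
The statement is Corollary \ref{surface bloch implies serre}. The plan is to reduce it to Corollary \ref{main corollary}. It suffices to show that Bloch's conjecture for surfaces forces $\CH_0(\bar X)\cong\mathbb Z$; Corollary \ref{main corollary}(2) then gives triviality of every algebraic vector bundle on $X$.

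\textbf{Step 1 (MRC fibration).} Since $\bar X$ is uniruled, I take the MRC fibration $\pi:\bar X\dashrightarrow Z$ with $\dim Z\le 2$. I replace $\bar X$ by a resolution of indeterminacy, a smooth projective blow-up $\bar X'\to\bar X$, so that $\pi$ becomes a morphism to a smooth projective $Z$. Blow-ups along smooth centers preserve $\CH_0\cong\mathbb Z$ in both directions, since $\CH_0$ is a birational invariant of smooth projective varieties. The Hodge numbers $h^{p,0}$ are also birational invariants. So nothing is lost by passing to $\bar X'$.

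\textbf{Step 2 (Case analysis on $Z$).} By Lemma \ref{mrc fibration invariant}(1), $\pi_*:\CH_0(\bar X')\to\CH_0(Z)$ is an isomorphism. By Lemma \ref{mrc fibration invariant}(2) together with Theorem \ref{regular q-forms of contractible}, $h^{p,0}(Z)=h^{p,0}(\bar X)=0$ for all $p\ge1$; here Hodge symmetry converts the vanishing of $H^i(\bar X,\mathcal O_{\bar X})$ into vanishing of $h^{i,0}(\bar X)$.
\begin{itemize}
\item If $\dim Z=0$, then $\CH_0(Z)=\mathbb Z$.
\item If $\dim Z=1$, then $Z$ is a curve with $h^{1,0}(Z)=0$, so $Z\cong\PP^1$ and again $\CH_0(Z)=\mathbb Z$. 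Alternatively, this case is excluded outright by the remark following Lemma \ref{mrc fibration invariant}.
\item If $\dim Z=2$, then $Z$ is a smooth projective surface with $q(Z)=p_g(Z)=0$. Bloch's conjecture (Conjecture \ref{bloch} for surfaces) then gives $\CH_0(Z)\cong\mathbb Z$.
\end{itemize}
In all cases $\CH_0(\bar X)\cong\mathbb Z$.

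\textbf{Step 3 (Conclusion).} With $\CH_0(\bar X)\cong\mathbb Z$, Corollary \ref{main corollary} applies: Theorem \ref{main theorem} gives $\CH^2(X)=0$, and Lemma \ref{highest chow group trivial}'s localization argument gives $\CH^3(X)=\CH_0(X)=0$. Moreover $\Pic(X)=0$. By Theorem \ref{threefold}, every algebraic vector bundle on $X$ is then trivial.

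The main obstacle is Step 1's compatibility check. Lemma \ref{mrc fibration invariant}, citing Stapleton, is stated for a morphism model, so I must make sure that replacing $\bar X$ by $\bar X'$ is harmless. That is exactly the birational invariance of $\CH_0$ and of $h^{p,0}$ noted above. The snc hypothesis is only needed for the original $\bar X$ when invoking Corollary \ref{main corollary}, and there $\CH_0(\bar X)\cong\CH_0(\bar X')$.
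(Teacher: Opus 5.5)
Your proposal is correct and follows essentially the same route as the paper. Both arguments pass to a morphism model of the MRC fibration $\pi:\bar X\to Z$, use Chow-constancy and cohomological constancy (Lemma \ref{mrc fibration invariant}) with Theorem \ref{regular q-forms of contractible} to reduce to $\CH_0(Z)\cong\mathbb Z$, apply Bloch's conjecture when $\dim Z=2$, and conclude via Corollary \ref{main corollary}. Your explicit checks that the birational model change is harmless and that the case $\dim Z=1$ (which cannot actually occur) gives $Z\cong\PP^1$ only spell out details the paper handles by citation.
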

\begin{remark}
    If $X$ is biholomorphic to $\mathbb{C}^3$, then $\bar{X}$ has negative Kodaira dimension by Kodaira's theorem and is therefore uniruled by three-dimensional minimal model theory; see \cite{MS1990,Kollar}. There are topologically contractible affine complex threefolds that are not biholomorphic to $\mathbb{C}^3$. It is not known whether every projective compactification of a topologically contractible affine threefold is uniruled; see Remark \ref{biholomorphic}.
\end{remark}

\subsection{Further consequences}
We collect several consequences concerning $\mathbb A^1$-connectedness, Chow groups, and vector bundles.

\begin{corollary}\label{fourfold}
    Let $X$ be a topologically contractible smooth affine complex variety of dimension $4$. Assume that $X$ is $\AA^1$-connected. If $\CH^2(X)=\CH^3(X)=0$, then every algebraic vector bundle on $X$ is trivial.
\end{corollary}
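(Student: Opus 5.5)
The plan is to reduce to the criterion of \cite[Corollary 2.27]{syed2024motiviccohomologycycliccoverings}, as announced in the introduction. For a smooth affine complex fourfold, algebraic vector bundles of rank $r\geq 4$ split off a trivial summand by Serre's splitting principle and Suslin cancellation. So it suffices to treat ranks $r\leq 4$ and show that each such bundle is trivial. Rank $1$ bundles are already handled: $\Pic(X)=\CH^1(X)=0$ since $X$ is topologically contractible.

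For ranks $2$ and $3$ I would use the Asok--Fasel style description of $\mathcal{V}_r(X)$ via the Postnikov tower of $\operatorname{Gr}_r$ in $\mathbf{H}(\CC)$. The obstructions and classification data live in groups of the form $\HH^i_{\mathrm{Nis}}(X,\boldsymbol{\pi}^{\AA^1}_{i-1}(\operatorname{Gr}_r))$ for $i\leq 4$. On a fourfold, the relevant primary invariants are the Chern classes in $\CH^1,\CH^2,\CH^3$, together with (for rank $4$) the Euler class in $\widetilde{\CH}^4(X)$. The remaining secondary invariants are cohomology groups of strictly $\AA^1$-invariant sheaves in degrees $3$ and $4$.

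The higher-degree terms are where $\AA^1$-connectedness enters. First, $\CH^4(X)=\CH_0(X)=0$ by Lemma \ref{highest chow group trivial}, since $X$ is $\AA^1$-connected. Next, for $\boldsymbol{\pi}$ a contracted sheaf such as $\mathbf{K}^M_n/2$ or $\mathbf{K}^{MW}_4$, I would show that $\HH^4(X,\cdot)$ vanishes. The argument is via the Gersten resolution: the top term is a quotient of $\bigoplus_{x\in X^{(4)}}$ of groups that, over the algebraically closed residue fields, reduce to $\CH_0(X)$ or $\CH_0(X)/2$. Both of these vanish. Similarly, $\HH^3(X,\mathbf{K}^M_4/2)$ is a quotient of $\CH^3(X)/2$-type data, which vanishes by hypothesis. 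Taken together, these show that $\CH^2(X)=\CH^3(X)=0$, combined with $\CH^1=\CH^4=0$, kills every invariant. That is precisely the content of Syed's criterion, so invoking \cite[Corollary 2.27]{syed2024motiviccohomologycycliccoverings} finishes the proof. The role of topological contractibility plus $\AA^1$-connectedness is only to supply $\Pic(X)=0$ and $\CH_0(X)=0$.

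The main obstacle is matching hypotheses exactly: checking that Syed's corollary requires only the vanishing of $\CH^i(X)$ for $1\leq i\leq 4$ (or of $\CH_0$ plus lower Chow groups), and no additional input such as vanishing of $\HH^3(X,\mathbf{K}^M_4/2)$ beyond what follows from $\CH^3(X)=0$ over $\CC$. If an extra term like $\HH^3(X,\mathbf{I}^4)$ appears, I would argue that over $\CC$ the sheaf $\mathbf{I}^j$ vanishes, since $\CC$ has cohomological $2$-dimension $0$. On a fourfold over an algebraically closed field, the Witt-type contributions then drop out, leaving only the Chow groups.
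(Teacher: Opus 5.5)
\textbf{Verdict: there is a genuine gap.} The gap is in how you dispose of the Witt-sheaf term.

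\textbf{What Syed's criterion actually requires.} The criterion of \cite[Corollary 2.27]{syed2024motiviccohomologycycliccoverings} asks for more than vanishing Chow groups. It requires $\CH^i(X)=0$ for $2\leq i\leq 4$ \emph{and} $\HH^2_{\mathrm{Nis}}(X,\mathbf I^3)=0$. The second condition is not a Chow-group condition, which is why it is listed separately.

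\textbf{Why your fallback argument fails.} You argue that $\mathbf I^j$ vanishes over $\CC$ because $\CC$ has $2$-cohomological dimension $0$. This is false: only $I(\CC)=0$. The sheaf $\mathbf I^j$ has nonzero sections on every field of transcendence degree $\geq j$ over $\CC$. For example, the Pfister form $\langle\langle x_1,x_2,x_3\rangle\rangle$ is anisotropic over $\CC(x_1,\dots,x_4)$, so $I^3(\CC(X))\neq 0$. Likewise, in the Gersten complex computing $\HH^2(X,\mathbf I^3)$, the codimension-$2$ term involves $I(\kappa(x))$ for residue fields of transcendence degree $2$, and these are nonzero.

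For a topologically contractible fourfold, a Bloch--Ogus analysis ties $\HH^2(X,\mathbf I^3)$ to the degree-four unramified group $\HH^0(X,\cH^4_{\acute{e}t}(\mu_2^{\otimes 4}))$. Nothing in your argument controls that group. So your step ``the Witt-type contributions drop out, leaving only the Chow groups'' would fail.

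\textbf{The missing idea.} $\AA^1$-connectedness is used a second time, contrary to your claim that it only supplies $\CH_0(X)=0$. The paper uses the epimorphism $\HH^0(X,\cH^4_{\acute{e}t}(\mu_2^{\otimes4}))\twoheadrightarrow\HH^2_{\mathrm{Nis}}(X,\mathbf I^3)$ from \cite[Proposition 3.1(c)]{brazelton2025algebraicvectorbundlesrank}. It then applies \cite[Lemma 4.7]{Asok+2013+39+64}, which kills positive-degree unramified cohomology on an $\AA^1$-connected variety, even a non-proper one. This gives $\HH^2_{\mathrm{Nis}}(X,\mathbf I^3)=0$. With this step added, the rest of your outline agrees with the paper: $\CH^4(X)=0$ via Lemma \ref{highest chow group trivial}, plus the hypotheses on $\CH^2$ and $\CH^3$. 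Your reconstruction of Postnikov invariants is not needed once Syed's criterion is invoked with its correct hypotheses.

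\textbf{Minor point.} Serre splitting removes a trivial summand from bundles of rank $>4$, not $\geq 4$. A rank-$4$ bundle on a fourfold splits off a trivial summand only when its Euler class vanishes. Your conclusion that it suffices to treat ranks $\leq 4$ is nonetheless correct.
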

\begin{proof}
    By \cite[Corollary 2.27]{syed2024motiviccohomologycycliccoverings}, every algebraic vector bundle on a topologically contractible smooth affine complex fourfold is trivial if $\CH^i(X)=0$ for $2\leq i\leq4$ and $\HH^2_{\mathrm{Nis}}(X,\mathbf I^3)=0$. Under the additional assumption that $X$ is $\AA^1$-connected, the argument of Lemma \ref{highest chow group trivial} gives $\CH^4(X)=0$. By \cite[Proposition 3.1(c)]{brazelton2025algebraicvectorbundlesrank}, there is an epimorphism
    $$\HH^0(X,\cH_{\acute{e}t}^4(\mu_2^{\otimes4}))\twoheadrightarrow\HH^2_{\mathrm{Nis}}(X,\mathbf I^3).$$
    Finally, \cite[Lemma 4.7]{Asok+2013+39+64}, gives the required vanishing.
\end{proof}
\begin{corollary}
    If a contractible threefold $X$ is obtained by an affine modification of a rational affine threefold, then every algebraic vector bundle on $X$ is trivial. Thus all the examples in \cite{DPO} obtained by affine modifications of $\mathbb{A}^3$ have only trivial vector bundles. Some of them are stably $\mathbb{A}^1$-contractible by \cite[Theorem 3.2]{DPO}.

    Likewise, the examples in \cite[Table 1]{MS1990} that admit a smooth projective compactification with two smooth boundary components meeting along a smooth curve have only trivial algebraic vector bundles. The same holds for the examples in \cite[Table 1]{Kishimoto2005} with a smooth Fano compactification $V$ and two boundary components $D_1,D_2$ such that $K_V+D_1+D_2$ is not nef. Theorems \ref{trivial motive muller} and \ref{trivial motive Kishimoto} assert that these examples have trivial motive.
\end{corollary}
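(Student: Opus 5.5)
The plan is to reduce every assertion to Corollary \ref{main corollary}(3), or to Corollary \ref{contractible rational vector bundles}. Each case then comes down to two checks: that $X$ (or its compactification) is rational or rationally connected, and that a smooth projective compactification with simple normal crossings boundary exists.

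\emph{Affine modifications.} Let $\sigma\colon X\to Y$ be an affine modification of a rational smooth affine threefold $Y$. The first step is to note that $\sigma$ is birational. By construction, $\mathcal O(X)=\mathcal O(Y)[I/f]$ is a subring of the fraction field of $\mathcal O(Y)$ containing $\mathcal O(Y)$, so $\mathbb C(X)=\mathbb C(Y)$. Hence $X$ is rational. Since $X$ is smooth, Hironaka's resolution of singularities gives a smooth projective compactification $\bar X$ with $\bar X\setminus X$ a simple normal crossings divisor. This $\bar X$ is rational, hence rationally connected, so Corollary \ref{main corollary}(3) applies (equivalently, Corollary \ref{contractible rational vector bundles}). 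The conclusion is that every algebraic vector bundle on $X$ is trivial. The examples of \cite{DPO} are affine modifications of $\mathbb A^3$, which is rational, so they are covered. Stable $\mathbb A^1$-contractibility of some of them is simply quoted from \cite[Theorem 3.2]{DPO} and needs no argument here.

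\emph{The classified families.} For \cite[Table 1]{MS1990} and \cite[Table 1]{Kishimoto2005}, the compactification $V$ is smooth projective and the boundary is $D=D_1+D_2$ with $D_1,D_2$ smooth meeting along a smooth curve. The step I would verify is that this boundary is simple normal crossings. The components are smooth, and transversality along the smooth curve $D_1\cap D_2$ should follow from the tabulated intersection data; if it does not, a further blow-up centered over $D$ produces an SNC boundary without changing $X$. Next, $V$ must be rationally connected. In Kishimoto's setting $V$ is Fano, hence rationally connected. In the M\"uller setting $V$ also appears in the list of Fano threefolds, such as $\mathbb P^3$, the quadric, $V_5$ and $V_{22}$, all of which are rational. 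Corollary \ref{main corollary}(3) then gives triviality of all vector bundles. Alternatively, once Theorems \ref{trivial motive muller} and \ref{trivial motive Kishimoto} are available, $M(X)\cong\mathbb Z$ forces $\CH^i(X)=0$ for $i>0$, and Theorem \ref{threefold} yields the same conclusion.

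The main obstacle is the case-by-case check, for the two tables, that the compactifications are rationally connected and that the boundary genuinely meets the SNC hypothesis. I would rely on the Fano property to settle the first point uniformly, and on an embedded resolution of the boundary as a fallback for the second.
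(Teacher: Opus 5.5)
Your route is the paper's: an affine modification satisfies $\mathbb C(X)=\mathbb C(Y)$, so $X$ is rational, and Corollary \ref{contractible rational vector bundles} (that is, Corollary \ref{main corollary}(3) applied to a Hironaka compactification) gives triviality. For the two tables the paper likewise uses only that every $X$ there is rational (Remark \ref{table1 rational}), with Theorems \ref{trivial motive muller} and \ref{trivial motive Kishimoto} as a second route.

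One justification is wrong, though. In the M\"uller--Stach case $V$ cannot be $\mathbb P^3$, $\mathbb Q^3$, $V_5$ or $V_{22}$. With two boundary components, Corollary \ref{compactification picard} gives $\Pic(V)\cong\mathbb Z^{2}$, whereas those four have Picard rank one. They are only the bases of the blow-ups in Case 3 of the proof of Theorem \ref{trivial motive muller}; the other entries are projective bundles over $\mathbb P^1$ or $\mathbb P^2$.

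Nor do the stated hypotheses force $V$ to be Fano. Blow up $\mathbb P^3$ along a smooth plane curve of degree $d\ge 4$, and remove the strict transform of the plane together with the exceptional divisor. This gives a compactification of $\mathbb A^3$ of exactly this shape, yet $-K_V=4H-E$ has degree $4-d\le 0$ on the strict transform of a general line in that plane.

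What you actually need is rationality, which is clear type by type: these are projective bundles over rational bases or blow-ups of rational threefolds. Arguing this way also dissolves your ``main obstacle''. There is no need to check that the tabulated boundaries are SNC: as in your affine-modification paragraph, Hironaka provides some SNC compactification. Rational connectedness is a birational invariant of smooth projective varieties, so it survives the extra blow-ups. This last point is also what you need in Kishimoto's case, where the resolved compactification is no longer Fano, so ``Fano'' alone does not settle it.

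Finally, the motivic route is not a complete alternative for Kishimoto's table. Theorem \ref{trivial motive Kishimoto} leaves Type XIII with a singular modification divisor open, so there the rational-connectedness argument is the one doing the work.
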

\begin{remark}
  In \cite[Theorems 5.5.2.7 and 5.5.2.8]{Asok2021}, one reduces to affine schemes using the fact that every topologically contractible smooth complex variety is $\AA^1$-weakly equivalent to a topologically contractible smooth affine scheme \cite[Lemma 5.3.1.4]{Asok2021}. Every topologically contractible smooth complex surface is affine \cite[\S2, Theorem 1]{Fujita1982OnTT}. It remains unknown whether there exists a non-affine topologically contractible smooth complex threefold \cite[Question 5.3.5.5]{Asok2021}. In dimension $4$, however, there is a non-affine $\mathbb{A}^1$-contractible smooth complex variety carrying a non-trivial vector bundle \cite[Example 5.3.2.4]{Asok2021}.
\end{remark}

\begin{remark}\phantomsection\label{KRremark}
\begin{enumerate}
   \item In \cite{KorasRussell}, Koras-Russell threefold is a smooth hypersurface in $\AA^4_\CC$ which is topologically contractible. Antieau observed that the Koras-Russell threefold of the first kind is $\AA^1$-chain connected \cite[Example 2.28]{DPO}, thus in particular it is $\AA^1$-connected \cite[Proposition 2.2.7]{AsokMorel}. Then by Corollary \ref{main corollary}, every algebraic vector bundle on Koras-Russell threefold of the first kind is trivial, which is also proved in \cite[Corollary 3.8]{Murthy02} and \cite[Corollary 3.7]{HKO} by completely different methods.

   \item Consider the complex sphere $X = \Spec \frac{\mathbb{C}[x, y, z]}{(x^2+y^2+z^2 - 1)}$. 
   It is a smooth, affine, rational surface. Indeed,
   the compactification of $X$ is isomorphic to the hypersurface $xy-zw = 0$ in $\mathbb{P}^3_{\mathbb{C}}$, which is isomorphic to $\mathbb{P}^1_{\mathbb{C}} \times_{\mathbb{C}} \mathbb{P}^1_{\mathbb{C}}$. The complex sphere $X$ is a Jouanolou device over the complex projective line $\mathbb{P}^1_{\mathbb{C}}$. Thus, in particular, the Picard group of $X$ is non-trivial. Therefore, $Y = X \times_{\mathbb{C}} \mathbb{A}^1_{\mathbb{C}}$ is a smooth rational affine threefold, and $Y$ is $\mathbb{A}^1$-weakly equivalent to $\mathbb{P}^1_{\mathbb{C}} \times_{\mathbb{C}} \mathbb{A}^1_{\mathbb{C}}$.
 
   \item In \cite{Murthy69}, Murthy constructed a smooth, rational, affine complex threefold $X = \Spec \ \frac{\mathbb{C}[x, y, t_1, t_2]}{(xy - t_1^2-t_2^3-1)}$. The threefold $X$
   is $\AA^1$-chain connected but not topologically contractible (also not $\AA^1$-contractible) and $X$ possesses a non-decomposable rank $2$ vector bundle. 
   

   \item Let $X$ be the Ramanujam surface of \cite{Ramanujam71}. Then $X$ is a topologically contractible smooth complex surface, but it is not isomorphic to the complex plane. In particular, $X$ is affine and rational \cite{GSa,GSb}. By \cite{Asok_motive}, its motive is $\mathbb Z$; hence all its positive-codimension Chow groups vanish and every algebraic vector bundle over $X$ is trivial. However, $X$ is not $\mathbb{A}^1$-connected \cite[Theorem 5.1]{ChoudhuryRoy+2024+55+80}. Consider the cylinder $Y = X \times_{\mathbb{C}}\mathbb{A}^1_{\mathbb{C}}$. Then $Y$ is a smooth complex affine threefold that is topologically contractible and has trivial Chow groups by homotopy invariance. Thus, all algebraic vector bundles over $Y$ are trivial \cite[Theorem 5.5.2.10]{Asok2021}, but $Y$ is not $\mathbb{A}^1$-connected. Another such example is a Koras--Russell threefold of the third kind; cf. Properties \ref{KRproperty}.

    \item These examples show that $\mathbb A^1$-connectedness is not necessary for the triviality of vector bundles on a rational topologically contractible affine threefold. Conversely, \cite[Example 4.14]{Asok+2013+39+64} shows that a nonzero group $\HH^0(X,\cH_{\acute{e}t}^2(\Gm))$ obstructs $\mathbb A^1$-connectedness; for proper $X$, this group is the cohomological Brauer group $H_{\acute{e}t}^2(X,\Gm)$. The proof of Theorem \ref{connectedcontractible} also shows that a topologically contractible smooth affine threefold with $\HH^0(X,\cH_{\acute{e}t}^3(\mu_l^{\otimes2}))\neq0$ for some $l$ would have $\CH^2(X)/l\neq0$ and would therefore give a counterexample to the generalized Serre question in dimension $3$.

    \end{enumerate}
\end{remark}

 In \cite[Theorem 6.6]{Asok12a}, the rank $2$ vector bundles on a smooth affine threefold $X$ over an algebraically closed field of characteristic unequal to $2$ are classified by $[X, BGL_2^{(2)}]_{\AA^1}$, where $BGL_2^{(2)}$ is the second stage of the Postnikov tower for the simplicial classifying space $BGL_2$ \cite[Section 4.1]{MV99}. We say that a pointed smooth variety $X$ is $\mathbb A^1$-$2$-connected if $\pi_0^{\mathbb A^1}(X)$ is trivial and $\pi_i^{\mathbb A^1}(X)=0$ for $i=1,2$.

\begin{theorem}\label{rational}
    If $X$ is an $\mathbb A^1$-$2$-connected rational smooth affine threefold over an algebraically closed field of characteristic not equal to $2$, then every algebraic vector bundle on $X$ is trivial.
\end{theorem}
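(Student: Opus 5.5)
The plan is to treat each rank separately and reduce everything to rank $\le 3$. Since $X$ is a smooth affine threefold over an algebraically closed field, Serre's splitting theorem shows that every bundle of rank $\ge 4$ splits off a trivial summand. Hence it suffices to show that bundles of rank $1$, $2$ and $3$ are trivial. For ranks $1$ and $2$ we argue purely homotopically, using the hypothesis that $X$ is $\mathbb A^1$-$2$-connected. For rank $3$ we reduce, via the Chern class classification, to the vanishing of $\CH^1(X)$, $\CH^2(X)$ and $\CH^3(X)$, which the rank $\le 2$ case, $\mathbb A^1$-connectedness and rationality will provide.

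\emph{Ranks $1$ and $2$.} We use affine representability: $\mathcal V_r(X)\cong[X,BGL_r]_{\mathbb A^1}$. By \cite[Theorem 6.6]{Asok12a}, in rank $2$ the map to the second Postnikov stage induces a bijection $\mathcal V_2(X)\cong[X,BGL_2^{(2)}]_{\mathbb A^1}$. The target $BGL_2^{(2)}$ is $2$-truncated, with $\pi_1^{\mathbb A^1}=\Gm$ and $\pi_2^{\mathbb A^1}=\mathbf K^{MW}_2$. Since $X$ is $\mathbb A^1$-$2$-connected, the structure map $X\to\Spec k$ induces an isomorphism on Postnikov truncations, $\tau_{\le 2}X\simeq \ast$. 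Therefore every map $X\to BGL_2^{(2)}$ factors through $\tau_{\le 2}X\simeq\ast$. Put differently, the obstruction groups $\HH^{i}_{\mathrm{Nis}}(X,\pi_i^{\mathbb A^1})$ for $i=1,2$ are irrelevant, because the relevant relative homotopy sheaves of $X\to\ast$ vanish. One has to be a little careful with free versus pointed homotopy classes. Here $X$ is $\mathbb A^1$-simply connected, so the action of $\pi_1^{\mathbb A^1}(BGL_2^{(2)})$ on pointed classes accounts for the difference, and all free classes are still null. Hence $\mathcal V_2(X)=\ast$. The same argument with the $1$-truncated space $BGL_1=B\Gm$ gives $\Pic(X)=\CH^1(X)=0$.

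\emph{Chow groups.} Over an algebraically closed field of characteristic $\neq 2$, Asok--Fasel show that the Chern class map $\mathcal V_2(X)\to \CH^1(X)\times\CH^2(X)$ is a bijection \cite[Theorem 5.5.2.8]{Asok2021}. In particular every class in $\CH^2(X)$ is $c_2$ of a rank $2$ bundle, so the previous step gives $\CH^2(X)=0$. For $\CH^3(X)=\CH_0(X)$, we note that $X$ is $\mathbb A^1$-connected, so $\mathbf H_0^{\mathbb A^1}(X)\cong\mathbb Z$. By the argument of Lemma \ref{highest chow group trivial} and Remark \ref{zerocycleremark}, a smooth projective compactification $\bar X$ then has $\CH_0(\bar X)\cong\mathbb Z$. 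Rationality gives an independent route to the same conclusion, since a smooth projective rational $\bar X$ has $\CH_0(\bar X)=\mathbb Z$. Now the localization sequence $\CH_0(\bar X\setminus X)\to\CH_0(\bar X)\to\CH_0(X)\to 0$ shows $\CH_0(X)=0$, because the boundary contains a point and so surjects onto $\CH_0(\bar X)\cong\mathbb Z$. Over a general algebraically closed field we should take the compactification to be normal, or use resolution where available; only proper pushforward and the degree map are needed.

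\emph{Rank $3$, and the main obstacle.} By Mohan Kumar--Murthy \cite[Theorem 2.1(iii)]{MohanKumarMurthy}, the Chern class map $\mathcal V_3(X)\to\CH^1\times\CH^2\times\CH^3$ is a bijection for smooth affine threefolds over an algebraically closed field. With all three groups zero, every rank $3$ bundle is trivial, which completes the argument. The main step needing care is the homotopical one. First, we must justify that $\mathbb A^1$-$2$-connectedness kills \emph{unpointed} classes in $[X,BGL_2^{(2)}]_{\mathbb A^1}$, including the twisting by $\pi_1^{\mathbb A^1}(BGL_2)=\Gm$. I would handle this by first using $\Pic(X)=0$ to reduce to $BSL_2^{(2)}$, which is simply connected. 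Second, we must confirm that Mohan Kumar--Murthy applies in positive characteristic $\neq 2$ and not only over $\mathbb C$. If that classification were unavailable, the fallback is to reduce a rank $3$ bundle to rank $2$ by showing that its top Chern class in $\CH^3(X)=0$ forces a nowhere vanishing section, using Murthy's splitting theorem over algebraically closed fields.
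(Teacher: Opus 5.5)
Your proof is correct, but it organizes the middle step differently from the paper.

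\textbf{The paper's route.} The paper never uses $BGL_2^{(2)}$. It first kills the Chow groups:
\begin{itemize}
\item by Bloch's formula, $\CH^2(X)=H^2_{\mathrm{Nis}}(X,\mathbf K_2^M)=[X,K(\mathbf K_2^M,2)]_{\mathbb A^1}$, and also $\Pic(X)=[X,B\Gm]_{\mathbb A^1}$;
\item these targets are $2$- and $1$-truncated respectively, so $\mathbb A^1$-$2$-connectedness forces both groups to vanish;
\item $\CH^3(X)=0$ comes from rationality and localization, exactly as in your argument.
\end{itemize}
The Chern class classification of Mohan Kumar--Murthy and Asok--Fasel then handles all ranks at once.

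\textbf{Your route.} You kill $\mathcal V_2(X)$ directly, using the Postnikov-stage classification $\mathcal V_2(X)\cong[X,BGL_2^{(2)}]_{\mathbb A^1}$. You then read off $\CH^1(X)=\CH^2(X)=0$ from the Asok--Fasel bijection $\mathcal V_2(X)\to\CH^1(X)\times\CH^2(X)$.

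\textbf{Comparison.} The two routes use the Chern class bijection in opposite directions. The paper obtains $\CH^2=0$ from representability and deduces the rank-$2$ statement. You obtain the rank-$2$ statement from the Postnikov classification and deduce $\CH^2=0$. Your version makes the triviality of rank-$2$ bundles visibly homotopical and avoids Bloch's formula. However, it relies on a deeper input (Asok--Fasel's Theorem 6.6). The paper only needs that Nisnevich cohomology of a strictly $\mathbb A^1$-invariant sheaf is represented by an Eilenberg--MacLane space.

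\textbf{Unnecessary steps.} Your worry about free versus pointed classes, and the proposed reduction to $BSL_2^{(2)}$, can be dropped. For a $2$-truncated $\mathbb A^1$-local target $Y$, one has $\mathrm{Map}(X,Y)\simeq\mathrm{Map}(\tau_{\le 2}L_{\mathbb A^1}X,Y)$ already on unpointed mapping spaces. Moreover, $[\Spec k,Y]$ is trivial because $\pi_0$ of $Y$ is trivial and every Nisnevich cover of $\Spec k$ splits. The fallback through Murthy's splitting theorem is also not needed, since the paper simply cites Mohan Kumar--Murthy for rank $3$, as you do.
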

\begin{proof}
    As $X$ is rational, a smooth projective compactification of $X$ is rational and has Chow group of $0$-cycles isomorphic to $\mathbb Z$. The localization sequence therefore gives
    $$\CH^3(X)=\CH_0(X)=0.$$
    Line bundles are represented in $\mathbb A^1$-homotopy theory by $B\Gm$, while
    $$\CH^2(X)=H^2_{\mathrm{Nis}}(X,\mathbf K_2^M)=[X,K(\mathbf K_2^M,2)]_{\mathbb A^1}.$$
    Since $B\Gm$ and $K(\mathbf K_2^M,2)$ are respectively $1$- and $2$-truncated, $\mathbb A^1$-$2$-connectedness gives
    $$\Pic(X)=[X,B\Gm]_{\mathbb A^1}=0
    \quad\text{and}\quad
    \CH^2(X)=0.$$
    The classification of vector bundles on smooth affine threefolds by their Chern classes now implies the result by \cite[Theorem 2.1(iii)]{MohanKumarMurthy} and \cite[Theorem 1]{Asok12a}.
\end{proof}
\begin{question}
    Smooth Fano threefolds are rationally connected, so if $\bar{X}$ is Fano, then $X$ has only trivial vector bundles.
    There is a classification of smooth Fano threefolds. It is  important to investigate which of these contain a contractible open threefold embedded into it. Also to note that there are non-stably rational Fano threefolds (cubic threefolds).
    Precisely, one can ask:
\begin{enumerate}
    \item If $X$ is a contractible, affine threefold over $\mathbb{C}$, then is $\bar{X}$ always Fano? 
    \item If $X$ is contractible, affine threefold over $\mathbb{C}$ such that $\bar{X}$ is Fano. Then by Theorem \ref{main theorem}, $\CH^i(X)$ is trivial for every $i \geq 1$. It is natural to ask whether $M(X)$ is trivial (or whether $\bar{X}$ is an $\mathbb{A}^1$-connected)?
\end{enumerate}

If the boundary $D=\bar{X} \setminus X$ has one component, then $\bar{X}$ is always Fano \cite{MS1990}. Based on Kishimoto's classification \cite{Kishimoto2005}, if $\bar{X} \setminus X$ has two irreducible components $D_1$ and $D_2$ such that $K_{\bar{X}}+D_1+D_2$ is not a nef divisor and $\bar{X}$ is a Fano threefold, then $M(X)$ is trivial (See Section \ref{motive computation}).
\end{question}

\section{Application to Koras-Russell threefolds}\label{SecKR}
A Koras--Russell threefold is a topologically contractible smooth affine threefold $X$ over $\mathbb C$ equipped with a hyperbolic action of $T=\mathbb G_m$, with a unique fixed point $o$, such that the affine quotient $X/\!/T$ is isomorphic to the quotient of the induced linear action on $T_oX$ \cite{KorasRussell}. The nontrivial members of this family are not isomorphic to $\mathbb A^3_\mathbb C$. We first isolate the birational argument used below.

\begin{lemma}\label{torus quotient rationality lemma}
Let $Y$ be an integral complex variety of dimension $n$ endowed with a rational action of $T=\mathbb G_m$. Assume that the generic stabilizer is finite and that
\[
    \mathbb C(Y)^T\cong \mathbb C(t_1,\ldots,t_{n-1}).
\]
Then $Y$ is rational.
\end{lemma}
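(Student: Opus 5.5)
The plan is to show that $\mathbb C(Y)$ is a purely transcendental extension of degree one of the invariant field $K:=\mathbb C(Y)^T$. Then $\mathbb C(Y)\cong \mathbb C(t_1,\ldots,t_{n-1})(s)$, which is a purely transcendental extension of $\mathbb C$ of degree $n$. Rationality is a property of the function field, so replacing $Y$ by a $T$-stable dense open subset costs nothing.

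The first step puts the action in a good birational form. By Rosenlicht's theorem on rational quotients, there is a $T$-stable dense open subset $Y_0\subset Y$ and a geometric quotient $\pi:Y_0\to Z$ with $\mathbb C(Z)=K$. Since the generic stabilizer is finite, the generic orbit has dimension $1$. Hence $\dim Z=n-1$, and $\mathbb C(Y)$ has transcendence degree $1$ over $K$. If the generic stabilizer is $\mu_m$, I pass to the action of $T/\mu_m\cong\mathbb G_m$. This action has the same invariant field and a trivial generic stabilizer, so I may assume the action is generically free.

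The second step uses the fact that $T$ is a special group in Serre's sense: every $\mathbb G_m$-torsor over a field is trivial, since $H^1(F,\mathbb G_m)=0$ by Hilbert 90. After shrinking $Y_0$ so that the action is free, $\pi:Y_0\to Z$ is a $T$-torsor; freeness together with Luna's slice theorem, or the general theory of principal bundles for generically free actions, gives this. Its generic fibre is then a $\mathbb G_m$-torsor over $K$, hence trivial, so it has a $K$-point. Over a dense open subset of $Z$ we therefore obtain a rational section, and this yields a birational $T$-equivariant isomorphism $Y_0\sim Z\times \mathbb G_m$. Consequently $\mathbb C(Y)\cong K(s)$, and the hypothesis on $K$ finishes the proof.

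The step I expect to be the main obstacle is identifying the generic fibre of the rational quotient with a $\mathbb G_m$-torsor, which is exactly where finiteness of the stabilizer enters. I plan to avoid heavy machinery with a direct function-field argument. The group $T$ acts on $\mathbb C(Y)$, and $\mathbb C(Y)$ decomposes into weight spaces. Choose a semi-invariant $f\neq 0$ of minimal positive weight $d$; one exists because the action is nontrivial. Every semi-invariant of weight $kd$ is then of the form $g f^k$ with $g\in K$. Because the action is generically free after dividing out by $\mu_d$, the weights occurring are exactly $d\mathbb Z$, and $\mathbb C(Y)$ is the fraction field of $\bigoplus_k K f^k$. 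Here $f$ is transcendental over $K$, since distinct powers of $f$ have distinct weights. This gives $\mathbb C(Y)=K(f)$ directly. The one point that needs care is showing that semi-invariants generate $\mathbb C(Y)$ as a field. I would handle it by passing to a $T$-stable affine open subset, whose coordinate ring is a graded algebra, and taking fractions of homogeneous elements.
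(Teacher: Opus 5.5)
Your first two steps are the paper's proof. You take a Rosenlicht geometric quotient $Y_0\to Z$ with $\mathbb C(Z)=K$, pass from the generic stabilizer $\mu_m$ to $T/\mu_m\cong\mathbb G_m$, identify the generic fibre with a $\mathbb G_{m,K}$-torsor, and use Hilbert's Theorem 90 to get $\mathbb C(Y)\cong K(s)$.

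Your final paragraph is a genuinely different and more elementary route, and it is correct.
\begin{itemize}
\item On a $T$-stable affine open, the coordinate ring is $\mathbb Z$-graded, so semi-invariants generate $\mathbb C(Y)$ as a field. Such an open exists after regularizing the rational action by Weil's theorem and applying Sumihiro's theorem on the normal locus.
\item Their weights form a subgroup $d\mathbb Z$ of $\mathbb Z$. It is nonzero because a finite generic stabilizer means the action is nontrivial.
\item Every semi-invariant of weight $kd$ lies in $Kf^k$, so $\mathbb C(Y)=K(f)$.
\item $f$ is transcendental over $K$, either by a Vandermonde argument on $\sum_k\lambda^{kd}g_kf^k=0$ or simply because $\operatorname{trdeg}_K\mathbb C(Y)=1$.
\end{itemize}

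Two phrasings should be tightened:
\begin{itemize}
\item $\mathbb C(Y)$ itself does not decompose into weight spaces. For the scaling action on $\mathbb A^1$, the function $1/(x-1)$ is not a finite sum of semi-invariants. This is exactly why your passage to a graded coordinate ring is needed.
\item The weights are $d\mathbb Z$ simply because they form a subgroup, not because of generic freeness.
\end{itemize}

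What each route buys:
\begin{itemize}
\item The torsor argument is conceptual. It carries over to generically free actions of any special group (e.g.\ $\mathrm{GL}_n$ or $\mathrm{SL}_n$), where there are no semi-invariants to exploit. But it depends on Rosenlicht's theorem and on the claim that the generic fibre of the geometric quotient is a torsor. That claim needs a small extra argument: the action map must be an isomorphism, not merely bijective on geometric points, which generic smoothness supplies in characteristic $0$.
\item The semi-invariant argument needs neither. It uses the finite-stabilizer hypothesis only to produce a nonzero weight. It also makes the trivialization explicit: a semi-invariant of minimal weight gives an equivariant trivialization of the generic fibre, i.e.\ the function-field form of Hilbert 90 for $\mathbb G_m$.
\end{itemize}
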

\begin{proof}
Set $K=\mathbb C(Y)^T$. By Rosenlicht's theorem \cite{Rosenlicht1956}, there is a dense $T$-invariant open subset $U\subset Y$ admitting a geometric quotient $r:U\to Z$ with $\mathbb C(Z)=K$. Let $H\subset T_K$ be the generic stabilizer group scheme. By assumption $H$ is finite, so $H\cong\mu_m$ for some $m\geq1$ and $T_K/H\cong\mathbb G_{m,K}$. If $\eta=\Spec K$ is the generic point of $Z$, then, since $r$ is a geometric quotient, $T_K/H$ acts simply transitively on $U_\eta$; equivalently, $U_\eta$ is a torsor under the split torus $T_K/H$. Hilbert's Theorem 90 gives
\[
    H^1_{\mathrm{fppf}}(K,\mathbb G_m)=0,
\]
so this torsor is trivial. It follows that
\[
    \mathbb C(Y)\cong K(s)\cong\mathbb C(t_1,\ldots,t_{n-1},s),
\]
and hence $Y$ is rational.
\end{proof}

\begin{theorem}\label{all KR rational}
    Every Koras--Russell threefold is rational.
\end{theorem}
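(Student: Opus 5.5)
The plan is to apply Lemma \ref{torus quotient rationality lemma} with $Y=X$ and $T=\mathbb G_m$ the hyperbolic torus action from the definition of a Koras--Russell threefold. Two things must be checked: the generic stabilizer is finite, and the invariant field $\mathbb C(X)^T$ is purely transcendental of degree $2$.

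\textbf{Finiteness of the generic stabilizer.} The action is hyperbolic with a unique fixed point $o$. Hence it is nontrivial, so the stabilizer of a general point is a proper closed subgroup of $\mathbb G_m$, i.e.\ some $\mu_m$. Concretely, I will use the Koras--Russell linearization: $X$ embeds $T$-equivariantly in $\mathbb A^4$ with $T$ acting diagonally with weights $(a_1,a_2,a_3,a_4)$ of both signs. On the open set where all coordinates are nonzero, the stabilizer is $\mu_g$ with $g=\gcd$ of the weights. Since $X$ is not contained in a coordinate hyperplane, this describes the generic stabilizer.

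\textbf{The invariant field.} First I will identify $\mathbb C(X)^T$ with the function field of the quotient. Since $X$ is affine and the action is hyperbolic, invariant rational functions are quotients of semi-invariants. For $\mathbb G_m$ acting on an affine variety with weights of both signs, the generic orbit is closed. Hence $\mathbb C(X)^T=\operatorname{Frac}(\mathbb C[X]^T)=\mathbb C(X/\!/T)$, using a Rosenlicht-type argument: semi-invariant rational functions of weight zero are ratios of regular semi-invariants of equal weight, and ratios of such are in $\operatorname{Frac}(\mathbb C[X]^T)$ once one uses a semi-invariant of opposite weight.

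By the definition of a Koras--Russell threefold, $X/\!/T\cong T_oX/\!/T$, the quotient of a linear diagonal $\mathbb G_m$-action on $\mathbb A^3$. This is an affine toric surface. It is two-dimensional because the action on $T_oX$ is hyperbolic, hence has weights of both signs and nonzero weights, so the invariants have transcendence degree $3-1=2$. Any affine toric surface is rational, so $\mathbb C(X)^T\cong\mathbb C(t_1,t_2)$. Lemma \ref{torus quotient rationality lemma} then gives rationality of $X$.

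\textbf{Main obstacle.} The delicate step is the equality $\mathbb C(X)^T=\mathbb C(X/\!/T)$, i.e.\ that the categorical quotient has the same dimension as the Rosenlicht quotient. This requires that generic orbits be closed, equivalently that $\mathbb C[X]$ has nonzero homogeneous elements of both positive and negative weight. I would deduce this from the linearization at $o$: by Luna's slice theorem at the fixed point, $X$ is étale-locally $T_oX$ near $o$, and the weights on $T_oX$ have both signs. If an issue arises with zero weights in $T_oX$, I would fall back on the explicit equations of the three kinds of Koras--Russell threefolds in $\mathbb A^4$. For those, one writes down two algebraically independent Laurent-monomial invariants and checks they generate $\mathbb C(X)^T$ by a degree count.
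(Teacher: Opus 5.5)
Your proposal is correct and takes essentially the paper's route: apply Lemma \ref{torus quotient rationality lemma}, use $X/\!/T\cong T_oX/\!/T$ together with the rationality of the two-dimensional linear quotient, and identify $\mathbb C(X)^T$ with $\mathbb C(X/\!/T)$ using semi-invariants of both signs coming from the tangent weights at $o$. The differences are in the sub-steps. You prove $\mathbb C(X)^T=\operatorname{Frac}(\mathbb C[X]^T)$ algebraically: take a homogeneous element of the $T$-stable denominator ideal, then clear the weight with a suitable power of a semi-invariant of opposite sign. The paper instead uses a Rosenlicht quotient and the closedness of orbits on $\{z_1z_2z_3\neq0\}$. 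Luna's slice theorem is more than you need, since lifting a weight basis of $\mathfrak m_o/\mathfrak m_o^2$ to homogeneous elements of $\mathfrak m_o$ already suffices. Your fallback to explicit equations is also unnecessary, because an isolated fixed point forces all tangent weights to be nonzero.
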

\begin{proof}
    Let $V=T_oX\cong\mathbb A^3$ be the tangent representation. After replacing $T$ by its effective quotient, its weights may be written
    \[
        w_1<0,\qquad w_2,w_3>0,
        \qquad \gcd(w_1,w_2,w_3)=1.
    \]
    Consider the weight homomorphism
    \[
        \chi:\mathbb Z^3\longrightarrow\mathbb Z,
        \qquad (m_1,m_2,m_3)\longmapsto
        w_1m_1+w_2m_2+w_3m_3.
    \]
    Choose a basis $e_1,e_2$ of $\ker(\chi)$ and $e_3\in\mathbb Z^3$ with $\chi(e_3)=1$. On the dense torus of $V$, the corresponding Laurent monomials $u,v,s$ satisfy
    \[
        \mathbb C(V)=\mathbb C(u,v,s),
        \qquad \lambda\cdot(u,v,s)=(u,v,\lambda s).
    \]
    Therefore
    \[
        \mathbb C(V)^T=\mathbb C(u,v).
    \]
    Since the weights have both signs, the quotient $V\to V/\!/T$ is geometric over a dense open subset, and hence
    \[
        \mathbb C(V/\!/T)=\mathbb C(V)^T=\mathbb C(u,v).
    \]

    By the defining quotient property of a Koras--Russell threefold,
    \[
        X/\!/T\cong V/\!/T.
    \]
    We next identify the function field of the affine quotient with the invariant rational function field. Let $\mathfrak m_o$ be the maximal ideal of the fixed point. Since $T$ is linearly reductive, an eigenbasis of $\mathfrak m_o/\mathfrak m_o^2=T_o^*X$ lifts to nonzero homogeneous functions $z_1,z_2,z_3\in\mathfrak m_o$. Their weights are the negatives of $w_1,w_2,w_3$ and hence include both signs. Since $X$ is integral, the invariant open subset
    \[
        X^\circ=\{z_1z_2z_3\ne0\},
    \]
    is nonempty and dense. Every orbit in $X^\circ$ is closed in $X$ and has finite stabilizer. Indeed, its orbit map factors through $T/H\cong\mathbb G_m$ for a finite subgroup $H\subset T$. If the orbit were not closed, the normalization of its closure would add at least one of the two points $0$ and $\infty$ to $T/H$. At $0$, the restriction of a nonzero semi-invariant of negative weight has a pole, whereas at $\infty$ the same holds for a semi-invariant of positive weight. Since all $z_i$ are regular on $X$ and nonzero along the orbit, neither endpoint can map to $X$. Thus the orbit is closed.

    Let $q:X\to Q=X/\!/T$ be the affine quotient and let $r:U\to Z$ be a Rosenlicht geometric quotient of a dense invariant open subset $U\subset X^\circ$. The invariant morphism $q|_U$ induces a dominant rational map
    \[
        \rho:Z\dashrightarrow Q.
    \]
    The fibers of an affine quotient by a reductive group contain unique closed orbits. Since the $T$-orbits in $U$ are closed, two general points of $Z$ with the same image in $Q$ represent the same orbit. Thus $\rho$ is generically injective. Over $\mathbb C$ it is therefore birational, and hence
    \[
        \mathbb C(X)^T=\mathbb C(X/\!/T)
        \cong\mathbb C(u,v).
    \]

    The generic stabilizer is finite because the action is effective. Lemma \ref{torus quotient rationality lemma} now proves that $X$ is rational.
\end{proof}

The Koras--Russell threefolds of the first kind are given by the equation
$$\{x + x^dy+z^{\alpha_2}+t^{\alpha_3 }= 0\} \subset \mathbb{A}^4_\mathbb{C} = \Spec \ \mathbb{C}[x,y,z,t],$$
where $d, \alpha_2, \alpha_3 \geq 2$ are positive integers and $\alpha_2, \alpha_3$ are relatively prime. Koras--Russell threefolds of the second kind are given by the equation
$$\{x+(x^d+z^{\alpha_2})^ly+t^{\alpha_3} = 0\} \subset \mathbb{A}^4_\mathbb{C} = \Spec \ \mathbb{C}[x,y,z,t],$$
where $l \geq 1$ and $d, \alpha_2, \alpha_3 \geq 2$ are positive integers such that $\gcd(\alpha_2, d\alpha_3) = 1$.
\begin{properties}\phantomsection\label{properties first kind}
\begin{enumerate}
    \item The first two kinds admit non-trivial $\mathbb G_a$-actions. Every non-trivial Koras--Russell threefold has a non-trivial Makar--Limanov invariant and hence is not isomorphic to $\mathbb A^3_{\mathbb C}$; it is also expected not to be biholomorphic to $\mathbb C^3$ \cite[Problem 3]{zaidenbergselected}.
    \item Murthy proved the triviality of vector bundles for the first kind \cite[Corollary 3.8]{Murthy02}. Hoyois--Krishna--{\O}stv{\ae}r proved it for the first and second kinds and showed that both are stably $\mathbb A^1$-contractible, hence have trivial motive \cite[Corollary 3.7 and Theorem 4.2]{HKO}. Moreover, a threefold of the first kind is $\mathbb A^1$-contractible \cite[Theorem 1.1]{df}, and therefore $\mathbb A^1$-chain connected \cite[Example 2.21]{DPO}; whether a threefold of the second kind is $\mathbb A^1$-contractible remains unknown.
    \item For the first two kinds, rationality can also be seen directly. For the first kind, the morphism
    $$\phi: \mathbb{A}^2_\mathbb{C} \times_\mathbb{C} \mathbb{G}_m \to X, \qquad (z,t,x) \mapsto \left(x, \frac{-z^{\alpha_2}-t^{\alpha_3}-x}{x^d}, z,t\right),$$
    is birational; for the second kind, so is
    $$\psi: \mathbb{A}^1_\mathbb{C} \times_\mathbb{C} (\mathbb{A}^2_\mathbb{C} \setminus \{x^d+z^{\alpha_2}=0\}) \to X, \qquad (t,x,z) \mapsto \left(x, \frac{-t^{\alpha_3}-x}{(x^d+z^{\alpha_2})^l}, z, t\right),$$
    and their non-trivial $\mathbb G_a$-actions imply logarithmic Kodaira dimension $-\infty$.
     \end{enumerate}
\end{properties}


There is another class of Koras--Russell threefolds, known as Koras--Russell threefolds of the third kind \cite[Example 7.6(2)]{KorasRussell}. They are given by
$$Y(d, \alpha_1, \alpha_2, \alpha_3) := \{ x + x^d y^{\alpha_1} + z^{\alpha_2} + t^{\alpha_3} = 0 \} \subset \mathbb{A}^4_\mathbb{C}=\Spec \ \mathbb{C}[x,y,z,t],$$
where $d, \alpha_1, \alpha_2,\alpha_3 \geq 2$ are positive integers such that $\alpha_1, \alpha_2, \alpha_3$ are pairwise coprime and $\gcd(d-1, \alpha_1) =1$.
\begin{properties}\phantomsection\label{KRproperty}
\begin{enumerate}
    \item A Koras--Russell threefold of the third kind is also topologically contractible \cite[Example 7.6(2)]{KorasRussell} and admits a $\mathbb{G}_m$-action. However, if $\alpha_1, \alpha_2, \alpha_3$ are sufficiently large, then $Y(d, \alpha_1, \alpha_2, \alpha_3)$ has non-negative logarithmic Kodaira dimension. Therefore, for such $\alpha_i$, the threefold $Y(d, \alpha_1, \alpha_2, \alpha_3)$ neither admits a $\mathbb{G}_a$-action nor is $\mathbb{A}^1$-connected \cite[Theorem 2.11 and Corollary 2.12]{CHOUDHURY2026}.
\item Koras--Russell threefolds of the third kind can be constructed by taking cyclic covers of Koras--Russell threefolds of the first kind \cite{syed2024motiviccohomologycycliccoverings}.
\item Syed proved that $\CH^i(Y(d, \alpha_1, \alpha_2, \alpha_3))=0$ for $i=1,2,3$ \cite[Theorem 2]{syed2026vectorbundlescertainkorasrussell}; in particular, all vector bundles on this family are trivial.

Therefore, it is natural to ask whether a Koras--Russell threefold of the third kind is stably $\mathbb{A}^1$-contractible or whether its motive is trivial in $\textbf{DM}_{gm}(\mathbb{C}, \mathbb{Z})$.
    \end{enumerate}
\end{properties}
\begin{corollary} \label{korasrussellthird}
    Every algebraic vector bundle over a Koras--Russell threefold is trivial.
\end{corollary}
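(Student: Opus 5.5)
The plan is to combine the rationality result, Theorem \ref{all KR rational}, with Corollary \ref{contractible rational vector bundles}. Let $X$ be a Koras--Russell threefold. By definition, $X$ is a topologically contractible smooth affine threefold over $\mathbb C$. By Theorem \ref{all KR rational}, $X$ is rational. So it is enough to check that $X$ meets the hypotheses of Corollary \ref{main corollary}.

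First I construct the compactification. Embed $X$ as a closed subvariety of some affine space, take its closure in projective space, and resolve singularities via Hironaka in a way that does not touch $X$. This gives a smooth projective $\bar X \supset X$ such that $D=\bar X\setminus X$ is a simple normal crossings divisor.

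Since $X$ is rational, $\bar X$ is a smooth projective rational variety. Rationality is a birational property and $\bar X$ is birational to $X$, so $\bar X$ is rationally connected. Hence $\CH_0(\bar X)\cong\mathbb Z$, because any two points are joined by rational curves and therefore rationally equivalent. Condition (3), or equivalently (2), of Corollary \ref{main corollary} then holds.

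Theorem \ref{main theorem} now gives $\CH_1(X)=\CH^2(X)=0$. The localization argument at the start of Section \ref{Chow group of contractible} gives $\CH_0(X)=\CH^3(X)=0$, since $\CH_0(D)\to\CH_0(\bar X)\cong\mathbb Z$ is surjective and $D\neq\emptyset$. We also have $\Pic(X)=0$ by topological contractibility. Theorem \ref{threefold} (Mohan Kumar--Murthy, Asok--Fasel) therefore shows that every algebraic vector bundle on $X$ is trivial.

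Once Theorem \ref{all KR rational} is available, none of these steps is hard. The only points that need checking are that the resolution can be chosen to be an isomorphism over the smooth locus $X$, which is standard Hironaka, and that the SNC hypothesis is all Corollary \ref{main corollary} needs. The real content is the rationality theorem already proved above.
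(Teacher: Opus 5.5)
Your proposal is correct and follows the paper's proof, which combines Theorem~\ref{all KR rational} with Corollary~\ref{contractible rational vector bundles}. You have simply unpacked that corollary: a Hironaka compactification with SNC boundary, then $\CH_0(\bar X)\cong\mathbb Z$ from rationality, Theorem~\ref{main theorem}, localization for $\CH^3$, and Theorem~\ref{threefold}, which is how it follows from Corollary~\ref{main corollary}.
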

\begin{proof}
    Combine Theorem \ref{all KR rational} with Corollary \ref{contractible rational vector bundles}.
\end{proof}
This answers \cite[Question 7.12]{KorasRussell} for every Koras--Russell threefold.

For the displayed third-kind family, rationality also admits the following direct birational proof.
\begin{theorem}\label{thirdkindrational}
    A Koras--Russell threefold of the third kind $Y(d, \alpha_1, \alpha_2, \alpha_3)$
    is rational.
\end{theorem}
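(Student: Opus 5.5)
The plan is to apply Lemma \ref{torus quotient rationality lemma} to an explicit $\mathbb G_m$-action on $Y=Y(d,\alpha_1,\alpha_2,\alpha_3)$. The equation $x+x^dy^{\alpha_1}+z^{\alpha_2}+t^{\alpha_3}=0$ is quasi-homogeneous for
\[
\lambda\cdot(x,y,z,t)=\bigl(\lambda^{\alpha_1\alpha_2\alpha_3}x,\ \lambda^{-(d-1)\alpha_2\alpha_3}y,\ \lambda^{\alpha_1\alpha_3}z,\ \lambda^{\alpha_1\alpha_2}t\bigr).
\]
Indeed, $x^dy^{\alpha_1}$ has weight $d\alpha_1\alpha_2\alpha_3-(d-1)\alpha_1\alpha_2\alpha_3=\alpha_1\alpha_2\alpha_3$, so every monomial has weight $\alpha_1\alpha_2\alpha_3$. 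The action is nontrivial with finite generic stabilizer, because $x$ has nonzero weight and is not identically zero on $Y$. After dividing by the gcd of the weights, we may take it effective.

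The key step is to compute $\mathbb C(Y)^T$. Consider the invariant rational functions
\[
u=\frac{z^{\alpha_2}}{x},\qquad v=\frac{t^{\alpha_3}}{x},\qquad w=x^{d-1}y^{\alpha_1}.
\]
Dividing the equation by $x$ gives $1+w+u+v=0$, so $w\in\mathbb C(u,v)$. Since $u,v$ are algebraically independent (their images dominate $\mathbb A^2$), we get $\mathbb C(u,v)\subseteq\mathbb C(Y)^T$, and the transcendence degree of both fields is $2$. The goal is to show this inclusion is an equality. Then Lemma \ref{torus quotient rationality lemma} gives rationality, with $n=3$, $t_1=u$, $t_2=v$.

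To prove $\mathbb C(Y)^T=\mathbb C(u,v)$, I would describe $\mathbb C(Y)$ as generated over $\mathbb C(u,v)$ by the torus coordinates $x,y,z,t$ subject to the monomial relations $z^{\alpha_2}=ux$, $t^{\alpha_3}=vx$, $x^{d-1}y^{\alpha_1}=-(1+u+v)$. Over the generic point of $\Spec\mathbb C(u,v)$, the fiber of $(u,v)$ is then a torsor under the diagonalizable group
\[
S=\ker\bigl(\mathbb G_m^4\to\mathbb G_m^3\bigr),\qquad (x,y,z,t)\mapsto\bigl(z^{\alpha_2}x^{-1},\ t^{\alpha_3}x^{-1},\ x^{d-1}y^{\alpha_1}\bigr).
\]
The character group of $S$ is the cokernel of the exponent matrix with rows $(-1,0,\alpha_2,0)$, $(-1,0,0,\alpha_3)$, $(d-1,\alpha_1,0,0)$. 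Equality of invariant fields amounts to showing that $S$ is connected, i.e.\ $S\cong\mathbb G_m$ with $T$ mapping onto it. This is equivalent to the cokernel being torsion-free, i.e.\ the $3\times3$ minors of that matrix having gcd $1$. A first computation gives minors such as $\alpha_1\alpha_2\alpha_3$, $(d-1)\alpha_2\alpha_3$, $\alpha_1\alpha_3$, $\alpha_1\alpha_2$ up to sign. Their gcd is $1$ exactly because $\alpha_1,\alpha_2,\alpha_3$ are pairwise coprime and $\gcd(d-1,\alpha_1)=1$. If the gcd were $m>1$, the fiber would split into $m$ components permuted by an extra $\mu_m$, and $\mathbb C(Y)^T$ would be a degree-$m$ extension of $\mathbb C(u,v)$.

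Granting connectedness, the generic fiber of $(u,v)$ is a single $T$-orbit. Hence $\mathbb C(Y)^T=\mathbb C(u,v)$, and Lemma \ref{torus quotient rationality lemma} (equivalently, Hilbert 90 for the $\mathbb G_{m,K}$-torsor) yields $\mathbb C(Y)\cong\mathbb C(u,v,s)$. I expect the main obstacle to be this lattice/gcd bookkeeping, together with checking that the generic fiber is genuinely irreducible. This is precisely where the arithmetic hypotheses on $d,\alpha_i$ enter. If the minor computation turns out not to give gcd $1$ directly, I would instead exhibit an explicit $S$-equivariant parametrization, taking $s$ to be a suitable Laurent monomial in $x,y,z,t$ of $T$-weight $1$ built from Bézout coefficients.
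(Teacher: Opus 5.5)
Your proposal is correct, but it argues differently from the paper. Your minor computation is complete. The four $3\times 3$ minors of your exponent matrix are $\alpha_1\alpha_2\alpha_3$, $(d-1)\alpha_2\alpha_3$, $\alpha_1\alpha_3$ and $-\alpha_1\alpha_2$. By Cramer's rule these are exactly the weights of your $T$-action, and their gcd is $1$ because the $\alpha_i$ are pairwise coprime and $\gcd(d-1,\alpha_1)=1$. Hence $T\to S$ is an isomorphism, the generic fibre of $(z^{\alpha_2}/x,\,t^{\alpha_3}/x)$ is a $\mathbb{G}_m$-torsor, and your conclusion follows. You could even bypass Lemma \ref{torus quotient rationality lemma} and Hilbert 90: a saturated character sublattice splits, so $\mathbb{G}_m^4\cong S\times\mathbb{G}_m^3$ and $Y\cap\mathbb{G}_m^4\cong \mathbb{G}_m\times H$, where $H$ is the open part of the plane $1+u+v+w=0$.

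The paper instead chooses integer B\'ezout coefficients for $\gcd(\alpha_2\alpha_3(d-1),\alpha_1)=1$ and $\gcd(\alpha_2,\alpha_3)=1$. It then writes down a monomial map $\Theta$ whose exponent matrix lies in $SL_4(\mathbb{Z})$, such that $f\circ\Theta$ is a Laurent monomial times the first-kind equation $x+x^dy+z^{\alpha_2}+t^{\alpha_3}$. Thus $Y$ is birational to a first-kind threefold, and rationality is imported from the explicit parametrization in Properties \ref{properties first kind}(iii).

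The paper's route gives an explicit birational identification of the third kind with the first kind. It does so at the cost of needing the first kind as input and an unmotivated matrix. Your route is self-contained and shows transparently that the arithmetic hypotheses amount to saturation of the exponent lattice. It is also a hands-on instance of the strategy of Theorem \ref{all KR rational}, with $\mathbb{C}(Y)^T$ computed directly rather than through the Koras--Russell quotient property.
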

\begin{proof}
Let
$$Y:=Y(d, \alpha_1, \alpha_2, \alpha_3) = \{ f(x,y,z,t) = x + x^d y^{\alpha_1} + z^{\alpha_2} + t^{\alpha_3} = 0 \} \subset \mathbb{A}^4_\mathbb{C}$$
be a Koras--Russell threefold of the third kind, where $d, \alpha_1, \alpha_2,\alpha_3 \geq 2$ are positive integers such that $\alpha_1, \alpha_2, \alpha_3$ are pairwise coprime and $\gcd(d-1, \alpha_1) =1$. Denote the product $\alpha_2\alpha_3$ by $M$. Since
$$\gcd(M(d-1), \alpha_1)=1
\qquad\text{and}\qquad
\gcd(\alpha_2, \alpha_3)=1,$$
there are integers $u,v,k_x,k_y$ such that
$$u \alpha_1 + v M (d-1) = 1 \text{ and } k_x \alpha_3 + k_y \alpha_2 = 1 - u.$$
Consider the rational map
$$\Theta: \mathbb{A}^4_\mathbb{C} \dashrightarrow \mathbb{A}^4_\mathbb{C}$$
defined by
\begin{equation*}
  \left\{
    \begin{aligned}
& x \mapsto & xy^{vM}z^{k_x \alpha_1M}t^{k_y\alpha_1M} \\
& y \mapsto & y^u z^{-k_xM(d-1)} t^{-k_yM(d-1)} \\
& z \mapsto & y^{v\alpha_3} z^{k_x \alpha_1\alpha_3 +1} t^{k_y\alpha_1\alpha_3} \\ 
& t \mapsto & y^{v \alpha_2} z^{k_x\alpha_1\alpha_2} t^{k_y\alpha_1\alpha_2+1}
    \end{aligned}
    \right.
    \end{equation*}
    The $4 \times 4$ matrix $W$
$$W = 
\begin{pmatrix}
1 & vM & k_x \alpha_1 M & k_y \alpha_1 M \\
0 & u & -k_x M (d-1) & -k_y M (d-1) \\
0 & v\alpha_3 & k_x \alpha_1 \alpha_3 + 1 & k_y \alpha_1 \alpha_3 \\
0 & v\alpha_2 & k_x \alpha_1 \alpha_2 & k_y \alpha_1 \alpha_2 + 1
\end{pmatrix}$$
    has determinant $1$. Indeed, expanding along its first column and using $M=\alpha_2\alpha_3$ gives
    \begin{align*}
        \det(W)
        & =u\bigl(1+\alpha_1(k_x\alpha_3+k_y\alpha_2)\bigr)
        +vM(d-1)(k_x\alpha_3+k_y\alpha_2)\\
        & =u+(1-u)\bigl(u\alpha_1+vM(d-1)\bigr)=1.
    \end{align*}
    Thus $W \in SL_4(\mathbb{Z})$, and the restriction of $\Theta$ to the dense torus $(\mathbb G_m)^4$ is an algebraic torus automorphism. Therefore, $\Theta$ is birational. 
We claim that $\Theta$ restricts to a rational map $\phi: X \dashrightarrow Y$, where $X$ is the Koras--Russell threefold of the first kind given by the equation
$$X \equiv \{g(x,y,z,t) =x + x^dy+z^{\alpha_2}+t^{\alpha_3 }= 0\} \subset \mathbb{A}^4_\mathbb{C}.$$
Indeed,
    \begin{align*}
 & f(xy^{vM}z^{k_x \alpha_1M}t^{k_y\alpha_1M}, y^u z^{-k_xM(d-1)} t^{-k_yM(d-1)},y^{v\alpha_3} z^{k_x \alpha_1\alpha_3 +1} t^{k_y\alpha_1\alpha_3}, y^{v \alpha_2} z^{k_x\alpha_1\alpha_2} t^{k_y\alpha_1\alpha_2+1})\\
 = & xy^{vM}z^{k_x \alpha_1M}t^{k_y\alpha_1M}+ (xy^{vM}z^{k_x \alpha_1M}t^{k_y\alpha_1M})^d (y^u z^{-k_xM(d-1)} t^{-k_yM(d-1)})^{\alpha_1}+ \\ 
 &(y^{v\alpha_3} z^{k_x \alpha_1\alpha_3 +1} t^{k_y\alpha_1\alpha_3})^{\alpha_2} + (y^{v \alpha_2} z^{k_x\alpha_1\alpha_2} t^{k_y\alpha_1\alpha_2+1})^{\alpha_3}\\
= & y^{vM}z^{k_x\alpha_1M}t^{k_y\alpha_1M} g(x,y,z,t)
    \end{align*}
    Thus $\Theta$ restricts to a rational map
    $$\phi: X \dashrightarrow Y.$$
    Let $T=(\mathbb G_m)^4$. The intersections $X\cap T$ and $Y\cap T$ are dense open subsets of $X$ and $Y$, respectively. Since $W\in SL_4(\mathbb Z)$, the restriction $\Theta|_T:T\to T$ is an algebraic torus automorphism. The identity above shows that it maps $X\cap T$ into $Y\cap T$. Conversely, if $q\in Y\cap T$ and $p=\Theta^{-1}(q)$, then
    $$0=f(q)=f(\Theta(p))=y(p)^{vM}z(p)^{k_x\alpha_1M}t(p)^{k_y\alpha_1M}g(p).$$
    The monomial factor is invertible on $T$, so $g(p)=0$. Hence $\Theta^{-1}$ maps $Y\cap T$ into $X\cap T$, and $\Theta$ restricts to an isomorphism between the dense open subsets $X\cap T$ and $Y\cap T$. In particular, $\phi$ is dominant and, because $W \in SL_4(\mathbb{Z})$, the induced map
    $$\phi^*: \mathbb{C}(Y) \to \mathbb{C}(X)$$
    between the function fields of $X$ and $Y$ is an isomorphism. Therefore, $\phi$ is birational. Since a Koras--Russell threefold of the first kind is rational, $Y$ is rational.
    
\end{proof}
For sufficiently large exponents, Theorem \ref{thirdkindrational} and Properties \ref{KRproperty}(i) provide rational topologically contractible threefolds without a non-trivial $\mathbb G_a$-action; compare \cite[Corollary 2.6]{kaliman2002bfcactionscontractiblethreefolds}.

\section{Motive computations for selected compactifications} \label{motive computation}
Let $\mathbf{DM}_{gm}(\mathbb C,\mathbb Z)$ denote Voevodsky's triangulated category of geometric motives with integral coefficients.
In dimension $1$, every topologically contractible complex curve is $\mathbb A^1$. In dimension $2$, contractibility implies rationality \cite{GSa,GSb} and trivial Voevodsky motive \cite{Asok_motive}, but not isomorphism with $\mathbb A^2$, as shown by the Ramanujam surface \cite{Ramanujam71}. On the other hand, a smooth surface homeomorphic to $\mathbb C^2$ is isomorphic to $\mathbb A^2_\mathbb C$ \cite{Ramanujam71}; hence there are no exotic affine planes.

The situation is completely different in dimension $3$ and higher. By the Dimca--Ramanujam theorem \cite[Theorem 3.2]{zaidenberglecture}, a topologically contractible smooth affine complex variety of dimension $n\geq 3$ is diffeomorphic to $\mathbb{C}^n$.
A complex affine threefold $X$ is called an exotic $\mathbb A^3$ if it is diffeomorphic to $\mathbb C^3$ but not algebraically isomorphic to $\mathbb A^3_\mathbb C$ \cite[Definition 2.3]{Kishimoto2005}. Nontrivial Koras--Russell threefolds form an important family of such varieties \cite{KorasRussell}. Those of the first and second kinds have trivial motive \cite{HKO}. The motive of a Koras--Russell threefold of the third kind is not known to be trivial, although Corollary \ref{korasrussellthird} shows that all its algebraic vector bundles are trivial; for certain third-kind threefolds this was also proved directly in \cite{syed2026vectorbundlescertainkorasrussell}.


    \begin{question} \label{general motive question}
     Let $X$ be a topologically contractible smooth affine complex threefold. Is $M(X)\cong\mathbb Z$ in $\mathbf{DM}_{gm}(\mathbb C,\mathbb Z)$?
    \end{question}
Fix a smooth projective compactification $\bar X$ whose boundary $D=\bar X\setminus X$ is a simple normal crossings divisor, and write $D_1,\ldots,D_r$ for its irreducible components. M\"uller--Stach classified the pairs $(\bar X,D)$ for which $X$ is biholomorphic to $\mathbb C^3$, $r=2$, and $D_1\cap D_2$ is a smooth curve \cite{MS1990}. Biholomorphism with $\mathbb C^3$ is strictly stronger than topological contractibility; see Remark \ref{biholomorphic}.
Kishimoto classified the corresponding pairs when $X$ is topologically contractible, $r=2$, $\bar X$ is Fano, and $K_{\bar X}+D_1+D_2$ is not nef \cite{Kishimoto2005}.
The computations below give an affirmative answer to Question \ref{general motive question} for the pairs in M\"uller--Stach's classification and for the pairs in Kishimoto's classification other than possibly Type XIII. The same conclusion holds in Type XIII whenever its affine-modification divisor is smooth, by Theorem \ref{trivial motive Kishimoto}.
When $D$ has one irreducible component (not necessarily smooth), the Picard group of $\bar{X}$ is $\ZZ$. If, in addition, $\bar X$ is Fano, then \cite[Corollary 2.1]{Kishimoto2005} shows that a contractible affine threefold $X$ embedded in $\bar X$ is isomorphic to $\AA^3$. We can summarize this case as follows:

\begin{theorem} 
Let $X$ be a topologically contractible smooth affine complex threefold with smooth Fano compactification $\bar X$. If $D=\bar X\setminus X$ is irreducible, then $X\cong\mathbb A^3_{\mathbb C}$. In particular, $M(X)\cong\mathbb Z$.
\end{theorem}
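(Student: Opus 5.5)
The plan is to reduce the statement to the result of Kishimoto cited just before it, and then deduce the motivic consequence from the fact that affine space has trivial motive. Set $D=\bar X\setminus X$ and assume it is irreducible. I would first show that $\Pic(\bar X)\cong\mathbb Z$, generated by $[D]$. This should follow from the topological properties listed at the start of Section \ref{properties of contractible} together with Corollary \ref{compactification picard}. The point is that $\dim_\mathbb C H^2(\bar X,\mathbb C)$ equals the number of irreducible components of the boundary. The argument there uses only that $H^i(\bar X,D;\mathbb Z)\cong H_{6-i}(X;\mathbb Z)=0$ for $i\le 5$ and that $H^i(\bar X,\mathcal O_{\bar X})=0$ for $i>0$, via Theorem \ref{regular q-forms of contractible}.

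One subtlety is that those statements were phrased for a simple normal crossings boundary, whereas here $D$ is only assumed irreducible and possibly singular. I would check that the Lefschetz duality isomorphism $H^i(\bar X,D)\cong H^i_c(X)$ needs no normal crossings hypothesis. I would also check that $H^2(\bar X,\mathbb Q)\cong H^2(D,\mathbb Q)$ has rank one for an irreducible surface $D$ when $H^1(D)=0$, using $H^2(D,\mathbb Q)\cong H_4(D,\mathbb Q)^\vee\cong\mathbb Q$ by the analogous top-degree duality argument. For the vanishing of $h^{0,i}$, one can pass to an embedded resolution, which does not change $H^i(\bar X',\mathcal O)$ since this is a birational invariant. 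Alternatively, because $\bar X$ is Fano, Kodaira vanishing gives $H^i(\bar X,\mathcal O_{\bar X})=0$ directly, and $H^2(\bar X,\mathbb Z)$ is torsion-free of rank $1$. Hence $\Pic(\bar X)\cong\mathbb Z$, and $\bar X$ is a Fano threefold of Picard rank one.

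With $\bar X$ a smooth Fano threefold, $\rho(\bar X)=1$, and $X=\bar X\setminus D$ a topologically contractible smooth affine threefold with irreducible boundary, we are exactly in the hypotheses of \cite[Corollary 2.1]{Kishimoto2005}, which gives $X\cong\mathbb A^3_\mathbb C$. I expect this verification of hypotheses to be the main obstacle. Kishimoto's setup in turn relies on Furushima's and Peternell--Schneider's classification of compactifications of $\mathbb C^3$ with $b_2=1$, which yields candidates $\PP^3$, $Q^3$, $V_5$ and $V_{22}$ with explicit boundaries. I would need to match the conventions there, namely the index of $\bar X$, whether $D$ is required to be normal, and the phrasing ``contractible'' versus ``homology $\mathbb C^3$''. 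If necessary I would cite the underlying classification directly.

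Finally, $M(\mathbb A^3_\mathbb C)\cong M(\Spec\mathbb C)=\mathbb Z$ in $\mathbf{DM}_{gm}(\mathbb C,\mathbb Z)$ by $\mathbb A^1$-homotopy invariance of Voevodsky motives, applied three times to the projections $\mathbb A^{k}\to\mathbb A^{k-1}$. Hence $M(X)\cong\mathbb Z$.
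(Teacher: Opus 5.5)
Your proposal follows the paper's own route: irreducibility of $D$ forces $\Pic(\bar X)\cong H^2(\bar X,\mathbb Z)\cong\mathbb Z$, so $\bar X$ is a Picard-rank-one Fano threefold; then \cite[Corollary 2.1]{Kishimoto2005} gives $X\cong\mathbb A^3_{\mathbb C}$, and homotopy invariance gives $M(X)\cong\mathbb Z$. One small correction: $H^2(D,\mathbb Q)$ is not dual to $H_4(D,\mathbb Q)$ in general, and the condition $H^1(D)=0$ plays no role; instead, the pair sequence gives $H^4(\bar X)\cong H^4(D)$, and Poincar\'e duality on the smooth $\bar X$ then yields $b_2(\bar X)=b_4(\bar X)=\dim H^4(D,\mathbb Q)=1$.
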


Moreover, for compactifications of affine space with smooth boundary, this result extends to dimensions up to $6$:

\begin{theorem}
Suppose that $X$ is biholomorphic to $\mathbb C^n$, where $n\leq6$, and admits a smooth projective compactification $\bar X$ with smooth irreducible boundary $D$. Then $(\bar X,D)\cong(\mathbb P^n,\mathbb P^{n-1})$, so $X\cong\mathbb A^n_{\mathbb C}$ and $M(X)\cong\mathbb Z$.
\end{theorem}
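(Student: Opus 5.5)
The statement is the known theorem on compactifications of $\mathbb C^n$ with smooth irreducible boundary. My plan is to reduce it to the classification of pairs $(\bar X,D)$ with $b_2(\bar X)=1$ and smooth ample boundary.

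The first step is to extract the topological and Hodge-theoretic constraints from Section \ref{properties of contractible}. Since $X$ is biholomorphic to $\mathbb C^n$, it is topologically contractible. As $D$ is irreducible, property (4) together with Corollary \ref{compactification picard} gives $\Pic(\bar X)\cong H^2(\bar X,\mathbb Z)\cong\mathbb Z$, generated by $[D]$. Theorem \ref{regular q-forms of contractible} gives $H^i(\bar X,\mathcal O_{\bar X})=0$ for $i>0$. Lemma \ref{simply connected} shows $\bar X$ is simply connected. Property (1) gives $H^i(\bar X,\mathbb Z)\cong H^i(D,\mathbb Z)$ for $i\le 2n-2$, so $D$ is a smooth projective $(n-1)$-fold with the integral cohomology of $\mathbb P^{n-1}$ in the relevant range. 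In particular $D$ is ample: it generates $\Pic$, it is effective and nonzero, and $X=\bar X\setminus D$ is affine. Then $\bar X$ has the integral cohomology ring of $\mathbb P^n$. The ring structure follows from Poincaré duality and from $[D]^n=\deg_D([D]|_D)$; here $[D]|_D$ generates $H^2(D,\mathbb Z)$ because restriction is an isomorphism, so unimodularity forces $D^n=1$.

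The second step is to show $\bar X$ is Fano. Write $K_{\bar X}=aD$. Biholomorphism with $\mathbb C^n$ gives negative Kodaira dimension, as recalled in Remark \ref{biholomorphic} via \cite{MS1990}. Since $\Pic\cong\mathbb Z$ is generated by an ample class, $a<0$, so $\bar X$ is Fano of index $-a$.

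The third step, which I expect to be the main obstacle, is to pin down the index. A Fano $n$-fold with $D^n=1$, $b_2=1$ and the Betti numbers of $\mathbb P^n$ should be $\mathbb P^n$. I would first compute $\chi(\mathcal O_{\bar X}(tD))$ by Riemann--Roch and Kodaira vanishing. The Hilbert polynomial of $D$ has degree $n$, leading coefficient $1/n!$, vanishes at $t=-1,\dots,a+1$ by Serre duality and vanishing, and equals $1$ at $t=0$. If the index is $n+1$, the Kobayashi--Ochiai characterization (index $\ge n+1$ forces $\mathbb P^n$) gives $\bar X\cong\mathbb P^n$ and $D$ a hyperplane. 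If the index is $n$ with $D^n=1$, Fujita's classification of del Pezzo manifolds gives degree-one del Pezzo manifolds. These have $b_3\neq 0$ in dimension $3$, or fail to have $D$ with cohomology of $\mathbb P^{n-1}$, so they are excluded. Lower indices are the hard case. Here I would use the cohomological constraint on $D$: by adjunction $K_D=(a+1)D|_D$, and $D$ is a smooth hypersurface with the cohomology of $\mathbb P^{n-1}$. For $n-1\le 5$ I would invoke the known results (Hirzebruch's problem, and the work of Brenton--Morrow, Peternell--Schneider, Fujita, and Furushima for $n\le 6$) that a compactification of $\mathbb C^n$ with smooth irreducible boundary is $(\mathbb P^n,\mathbb P^{n-1})$. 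This inductive use of the low-dimensional classification is exactly where the bound $n\le6$ enters.

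Once $(\bar X,D)\cong(\mathbb P^n,\mathbb P^{n-1})$, we get $X\cong\mathbb A^n_{\mathbb C}$ algebraically. Then $M(X)\cong M(\mathbb A^n)\cong\mathbb Z$ by $\mathbb A^1$-invariance of Voevodsky motives.
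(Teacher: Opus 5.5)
Your argument rests on the same input as the paper: the van de Ven--Fujita theorem that a compactification of $\mathbb C^n$ ($n\le 6$) with smooth irreducible boundary is $(\mathbb P^n,\mathbb P^{n-1})$, which the paper simply cites, so the two proofs are essentially the same and your preliminary reductions (cohomology ring, Fano index, Hilbert polynomial) are not needed. One slip in those reductions: by Kobayashi--Ochiai, index $n$ forces the quadric $Q^n$ (excluded because $D^n=1\neq 2$), while del Pezzo manifolds are the index $n-1$ case; also, Peternell--Schneider and Furushima treat $n=3$ with possibly singular boundary, not the smooth-boundary statement you need.
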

\begin{proof}
    By \cite{vandeVen1962,Fujita1980OnTC}, since the boundary $D$ is smooth, the only compactification of $X$ is $\PP^n$ for $n\leq 6$, and $\partial\bar{X}=\PP^{n-1}$.
\end{proof}

We first record a splitting observation that will be used repeatedly. Call an object of $\textbf{DM}_{gm}(\mathbb C,\mathbb Z)$ \emph{pure Tate} if it is a finite direct sum of motives $\mathbb Z(i)[2i]$.

\begin{lemma}\label{Tate splitting lemma}
Let $f:P\to Q$ be a morphism between pure Tate motives. Suppose that the Betti realization of $f$ is injective in every degree and has torsion-free cokernel. Then $f$ is split injective in $\textbf{DM}_{gm}(\mathbb C,\mathbb Z)$, and its cone is the pure Tate motive determined by the graded cokernel of its Betti realization.
\end{lemma}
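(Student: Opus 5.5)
The plan is to reduce to a statement about Hom groups between Tate motives, which are completely computable. Write $P=\bigoplus_a \mathbb Z(p_a)[2p_a]$ and $Q=\bigoplus_b \mathbb Z(q_b)[2q_b]$. The computation of motivic cohomology of a point gives
$$\operatorname{Hom}_{\mathbf{DM}}(\mathbb Z(p)[2p],\mathbb Z(q)[2q])=H^{2(q-p),\,q-p}(\Spec\mathbb C,\mathbb Z)=\mathbb Z$$
when $p=q$, and zero otherwise. Indeed, for $q<p$ the weight is negative, and for $q>p$ the group $H^{2n,n}(\mathbb C)=\CH^n(\Spec\mathbb C)$ vanishes when $n>0$. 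Hence $f$ is block diagonal with respect to the weight grading: $f=\bigoplus_i f_i$, where $f_i$ is an integer matrix $\mathbb Z^{m_i}\to\mathbb Z^{n_i}$ acting on the weight-$i$ summands $\mathbb Z(i)[2i]^{\oplus m_i}\to\mathbb Z(i)[2i]^{\oplus n_i}$.

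Next we identify $f_i$ with a Betti realization. Betti realization sends $\mathbb Z(i)[2i]$ to $\mathbb Z$ concentrated in degree $2i$ (homologically), and it sends the generator $\mathrm{id}$ of the endomorphism ring $\mathbb Z$ to the identity. So the realization of $f$ in degree $2i$ is exactly the integer matrix $f_i\colon \mathbb Z^{m_i}\to\mathbb Z^{n_i}$. The hypothesis says that each $f_i$ is injective with torsion-free cokernel. By elementary divisor theory, injectivity together with a torsion-free cokernel means the Smith normal form of $f_i$ has all invariant factors equal to $1$. Thus there are $A_i\in GL_{m_i}(\mathbb Z)$ and $B_i\in GL_{n_i}(\mathbb Z)$ with
$$B_if_iA_i=\begin{pmatrix}I_{m_i}\\ 0\end{pmatrix}.$$

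Integer matrices act as automorphisms of the direct sums $\mathbb Z(i)[2i]^{\oplus m}$. Changing bases in $\mathbf{DM}_{gm}$ by $A_i$ and $B_i$ therefore identifies $f$ with the inclusion of the summand
$$P\hookrightarrow P\oplus\bigoplus_i \mathbb Z(i)[2i]^{\oplus (n_i-m_i)}.$$
This inclusion is split injective. In an additive triangulated category, the cone of the inclusion of a direct summand is the complementary summand. So the cone is $\bigoplus_i\mathbb Z(i)[2i]^{\oplus(n_i-m_i)}$, whose Betti realization is precisely $\operatorname{coker}(f_i)\cong\mathbb Z^{n_i-m_i}$ in each degree $2i$. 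This is the claimed cone.

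The main obstacle is the Hom computation in the first step, and in particular the vanishing of Homs between different weights. This is where we use integral coefficients over the algebraically closed field $\mathbb C$: we need $H^{p,q}(\mathbb C,\mathbb Z)=0$ for $q<0$ (by definition), and $H^{2n,n}=\CH^n(\text{pt})=0$ for $n>0$ (by the comparison with higher Chow groups). Once this holds, the rest is linear algebra over $\mathbb Z$.
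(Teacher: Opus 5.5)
Your proof is correct and follows the paper's argument essentially step for step. Both use the computation $\operatorname{Hom}(\mathbb Z(i)[2i],\mathbb Z(j)[2j])\cong H^{2(j-i),j-i}(\Spec\mathbb C,\mathbb Z)$ to make $f$ block-diagonal in the weights, identify the blocks with the Betti realization, apply Smith normal form, and realize the integral base changes as automorphisms of $P$ and $Q$; your explicit justification of the cross-weight vanishing (negative weight, and $\CH^n(\Spec\mathbb C)=0$ for $n>0$) fills in a point the paper only asserts.
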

\begin{proof}
Write
$$P=\bigoplus_i\mathbb Z(i)[2i]^{\oplus a_i}
\qquad\text{and}\qquad
Q=\bigoplus_i\mathbb Z(i)[2i]^{\oplus b_i}.$$
For the motive of a point,
$$\operatorname{Hom}\bigl(\mathbb Z(i)[2i],\mathbb Z(j)[2j]\bigr)
\cong H^{2(j-i),j-i}(\Spec\mathbb C,\mathbb Z)$$
is $\mathbb Z$ if $i=j$ and is zero otherwise. Thus $f$ is given, for each $i$, by an integral matrix
$$f_i:\mathbb Z^{a_i}\longrightarrow\mathbb Z^{b_i},$$
and Betti realization gives the same matrices. By hypothesis, each $f_i$ is injective with torsion-free cokernel. Its Smith normal form therefore has only diagonal entries equal to $1$, so $f_i$ is equivalent over $\mathbb Z$ to the standard inclusion of a direct summand. The same changes of basis define automorphisms of $P$ and $Q$ in $\textbf{DM}_{gm}(\mathbb C,\mathbb Z)$. Hence $f$ is split injective and its cone is the direct sum of the complementary Tate summands.
\end{proof}

In the computations below, $\operatorname{coker}(f)$ denotes the complementary summand of the target of a split monomorphism $f$; equivalently, it is the cone of $f$ under the identification furnished by Lemma \ref{Tate splitting lemma}.

\begin{theorem} \label{trivial motive muller}
   Suppose that $X$ is biholomorphic to $\mathbb{C}^3$, and that $D$ consists of two smooth projective surfaces (so $r = 2$) intersecting along a smooth curve, as in \cite[Main Theorem]{MS1990}. Then $M(X)$ is isomorphic to $\mathbb{Z}$ in $\textbf{DM}_{gm}(\mathbb{C}, \mathbb{Z})$; hence, every vector bundle over $X$ is trivial.
\end{theorem}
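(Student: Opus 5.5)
The plan is to compute $M(X)$ through the Gysin triangles for the stratification of $\bar X$ by $D$. Write $D=D_1\cup D_2$ and $C=D_1\cap D_2$, a smooth curve. Müller--Stach's Main Theorem gives an explicit list of pairs $(\bar X,D_1,D_2)$. In each entry $\bar X$ is a rational Fano threefold such as $\mathbb P^3$, a quadric $Q^3$, $V_5$ or $V_{22}$ (or a blow-up of these). The $D_i$ are rational surfaces, e.g. a plane, a quadric cone or a Hirzebruch surface, and the curve $C$ is a smooth rational curve. The first step is therefore to check from the table that $C\cong\mathbb P^1$ in every case. This also follows formally: $H^1(C)$ injects into $H^2$ of the boundary data, which vanishes by the topological properties recorded in Section \ref{properties of contractible}. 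It also checks that $M(\bar X)$, $M(D_1)$ and $M(D_2)$ are pure Tate. For $\bar X$ this holds because each variety in the list admits a cellular decomposition or is obtained from $\mathbb P^3$ or $Q^3$ by blow-ups along rational curves and points. For the $D_i$ it holds because they are smooth rational surfaces, hence blow-ups of $\mathbb P^2$ or Hirzebruch surfaces, and the blow-up formula applies.

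Next I compute $M(D_1\setminus C)$ and $M(D_2\setminus C)$ using the Gysin triangle
$$M(D_i\setminus C)\to M(D_i)\to M(C)(1)[2]\to.$$
Then I assemble $X$ in two steps:
$$X\subset \bar X\setminus D_1,\qquad \bar X\setminus D_1\supset D_2\setminus C,$$
with Gysin triangles
$$M(\bar X\setminus D_1)\to M(\bar X)\to M(D_1)(1)[2]\to,\qquad M(X)\to M(\bar X\setminus D_1)\to M(D_2\setminus C)(1)[2]\to.$$
A cleaner bookkeeping is available because the boundary is SNC: the motive with compact support $M^c(X)$ is the iterated cone of
$$M(\bar X)\to M(D_1)\oplus M(D_2)\to M(C).$$
All terms are pure Tate, and $M(X)\cong M^c(X)^*(3)[6]$ by duality. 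So it suffices to show that this complex collapses to $\mathbb Z(3)[6]$.

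The key step is Lemma \ref{Tate splitting lemma}. Each map in the complex above is a morphism of pure Tate motives. Its Betti realization is the corresponding map of integral cohomology, i.e. the Mayer--Vietoris restriction maps for the SNC divisor $D$. The topological input is $H^i_c(X,\mathbb Z)=0$ for $i<6$ and $H^i(\bar X,\mathbb Z)\cong H^i(D,\mathbb Z)$ for $i\le 4$. Combined with the Mayer--Vietoris sequence for $D=D_1\cup_C D_2$, this gives two facts:
\begin{enumerate}
\item $M(C)$ splits off as a summand of $M(D_1)\oplus M(D_2)$ through a split monomorphism on Betti realizations, since both $H^*(C)$ and the cokernels are torsion-free.
\item The resulting cokernel, which is the motive of $D$, maps isomorphically to $M(\bar X)$ away from the top Tate summand $\mathbb Z(3)[6]$.
\end{enumerate}
Once the injectivity and torsion-freeness hypotheses are verified, the lemma makes each cone pure Tate and equal to the graded cokernel. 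That cokernel is concentrated in $\mathbb Z(3)[6]$ in compactly supported terms, giving $M(X)\cong\mathbb Z$.

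The main obstacle is verifying these integral torsion-freeness and injectivity conditions. They require knowing that all the Betti groups involved ($\bar X$, $D_i$, $C$) are torsion-free and that the relevant maps are saturated. I would first try to get this uniformly from contractibility plus the torsion-freeness of the cohomology of rational surfaces and of the cellular $\bar X$. If that fails, I would fall back on a case-by-case check against Müller--Stach's table, computing intersection numbers of $[C]$ in $D_i$ and of $[D_i]$ in $\bar X$. Finally, triviality of vector bundles follows: $M(X)\cong\mathbb Z$ forces $\CH^i(X)=0$ for $i>0$, and Theorem \ref{threefold} then applies.
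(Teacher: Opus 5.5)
Your architecture (Mayer--Vietoris for $D=D_1\cup_C D_2$, the localization triangle for $D\subset\bar X$, Lemma~\ref{Tate splitting lemma}, then duality) is the paper's. However, the uniform reduction to pure Tate motives on which everything rests is false, and this is a genuine gap.

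It is not true in every case that $C\cong\mathbb P^1$, that both $D_i$ are rational, or that $M(\bar X)$ is pure Tate. In the paper's Case 3b, $\bar X=\operatorname{Bl}_S Y$ with $Y$ one of $\mathbb P^3,\mathbb Q^3,\mathbb V_5,\mathbb A_{22}$ and $S$ a smooth curve possibly of positive genus. There $D_2$ is the exceptional $\mathbb P^1$-bundle over $S$, and $D_1\cap D_2\cong S$. Concretely, blow up $\mathbb P^3$ along a smooth cubic $S$ lying in a plane $H$. Then $\bar X\setminus(\widetilde H\cup E)\cong\mathbb P^3\setminus H\cong\mathbb A^3$ and $\widetilde H\cong\mathbb P^2$. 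The curve $\widetilde H\cap E\cong S$ has genus $1$, and $M(\bar X)\cong M(\mathbb P^3)\oplus M(S)(1)[2]$ is not pure Tate (compare Remark~\ref{non tate}).

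Your ``formal'' argument fails exactly here. In $H^1(D_1)\oplus H^1(D_2)\to H^1(C)\to H^2(D)$, the first arrow is an isomorphism on the $H^1(E)$-factor because $C$ is a section of $E\to S$. So $H^1(C)$ does not inject into $H^2(D)$, and in any case $H^2(D)\cong H^2(\bar X)\cong\mathbb Z^2\neq 0$. A weight argument only shows that $H^1(C,\mathbb Q)$ comes from $H^1(D_1,\mathbb Q)\oplus H^1(D_2,\mathbb Q)$. That forces $C$ to be rational only if the $D_i$ have irregularity zero, which is precisely what fails.

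Once $M(S)$ with $g(S)>0$ enters, Lemma~\ref{Tate splitting lemma} no longer applies, because morphisms involving such motives are not detected by Betti realization. For example, $\operatorname{Hom}(\mathbb Z,M(S))=\CH_0(S)$ contains the homologically trivial part $\Pic^0(S)$. So injectivity with torsion-free cokernel in integral cohomology does not produce splittings in $\mathbf{DM}_{gm}(\mathbb C,\mathbb Z)$. Your case-by-case fallback on intersection numbers addresses torsion and saturation, not this.

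What is missing is a geometric splitting, which is how the paper treats Case 3b:
\begin{enumerate}
\item Since $C$ is a section of the projective bundle $p:D_2\to S$, the composite $M(C)\to M(D_2)\xrightarrow{p_*}M(S)$ is an isomorphism. Hence the Mayer--Vietoris morphism $M(C)\to M(D_1)\oplus M(D_2)$ is split injective, with complement $M(D_1)\oplus M(S)(1)[2]$ by the projective bundle formula.
\item By the blow-up formula, the summand $M(S)(1)[2]$ of $M(\bar X)\cong M(Y)\oplus M(S)(1)[2]$ is induced by the exceptional divisor. Hence the localization morphism $M(D)\to M(\bar X)$ is an isomorphism on that summand.
\item Only after cancelling this common summand are you left with a morphism of pure Tate motives, to which Lemma~\ref{Tate splitting lemma} applies.
\end{enumerate}

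A minor point: $M^c$ is covariant for proper maps, so the complex computing $M^c(X)$ is $M(C)\to M(D_1)\oplus M(D_2)\to M(\bar X)$. The arrows go in the opposite direction from what you wrote.
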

\begin{proof} 
 We follow the classification of pairs $(\bar{X}, D)$ in which $D$ consists of two smooth irreducible components, as given in \cite[Table 1 and Main Theorem]{MS1990}.

First consider any of the four rational Fano threefolds
$$\mathbb P^3,\quad\mathbb Q^3,\quad\mathbb V_5,\quad\mathbb A_{22}.$$
Each has $b_3=0$, Picard group $\mathbb Z$, and pure Tate motive
$$M(Y) \cong M(\mathbb{P}^3) \cong \mathbb{Z} \oplus \mathbb{Z}(1)[2] \oplus \mathbb{Z}(2)[4] \oplus \mathbb{Z}(3)[6].$$
 
Indeed, the bounded derived category $\mathcal{D}^b(Y)$ admits a full exceptional collection of the expected length \cite[Corollary 2.1]{MarcolliTabuada+2015+153+167}; see also \cite[Proposition 1]{ciolli}. Therefore, the Chow motive of $Y$ is of Lefschetz type \cite[Theorem 2.2]{Gorchin}. We compute the motive in the category $DM_{gm}(\mathbb{C}, \mathbb{Z})$ \cite{Voevodsky+2011+188+238}.

The localization sequence for compactly supported motives \cite[Proposition 4.1.5]{Voevodsky+2011+188+238} gives a triangle of the form
\begin{equation*}
    M^c(\partial\bar{X})\to M^c(\bar{X})\to M^c(X)\to M^c(\partial\bar{X})[1].
\end{equation*}
Writing $\partial\bar X=A_1\cup A_2$, the Mayer--Vietoris sequence \cite[Proposition 4.1.1]{Voevodsky+2011+188+238} gives
\begin{equation*}
    M(A_1\cap A_2)\to M(A_1)\oplus M(A_2)\to M(\partial\bar X)\to M(A_1\cap A_2)[1].
\end{equation*}

Since $\bar{X}$ and $D$ are projective, it follows
that $M^c(\bar{X})\cong M(\bar{X})$ and $M^c(\partial\bar{X})\cong M(\partial\bar{X})$. We then describe the motive of the compactification $M(\bar{X})$ and the boundary $M(\partial\bar{X})$. 

\textbf{Case 1:} Following \cite[Table 1]{MS1990}, $\partial\bar{X} = A_1 \cup A_2$, where $A_1$ is isomorphic to the projectivization of a rank $2$ vector bundle over $\mathbb{P}^1$ and $A_2$ is isomorphic to $\mathbb{P}^2$. The intersection $A_1\cap A_2$ is isomorphic to $\mathbb{P}^1$. By the projective bundle formula for geometric motives \cite[Corollary 4.1.11]{Voevodsky+2011+188+238},
\begin{align*}
     M(\bar{X}) &\cong \bigoplus_{i=0}^2 M(\mathbb{P}^1)(i)[2i]  \\
     & \cong \bigoplus_{i=0}^2 \mathbb{Z}(i)[2i] \oplus \bigoplus_{i=0}^2 (\mathbb{Z}(1)[2])(i)[2i]
     \\
     &
     \cong \mathbb{Z} \oplus \mathbb{Z}(1)[2]^{\oplus 2} \oplus \mathbb{Z}(2)[4]^{\oplus 2} \oplus \mathbb{Z}(3)[6].
\end{align*}
$M(A_1 \cap A_2) \cong M(\mathbb{P}^1) \cong \mathbb{Z} \oplus \mathbb{Z}(1)[2]$, $M(A_1) \cong M(\mathbb{P}^1) \oplus M(\mathbb{P}^1)(1)[2] \cong \mathbb{Z} \oplus \mathbb{Z}(1)[2]^{\oplus 2} \oplus \mathbb{Z}(2)[4]$ and $M(A_2) \cong M(\mathbb{P}^2) \cong \mathbb{Z} \oplus \mathbb{Z}(1)[2]\oplus \mathbb{Z}(2)[4]$.

Thus, the Mayer--Vietoris triangle has the form
\begin{equation*}
    \mathbb{Z} \oplus \mathbb{Z}(1)[2] \to \mathbb{Z}^{\oplus 2} \oplus \mathbb{Z}(1)[2]^{\oplus 3} \oplus \mathbb{Z}(2)[4]^{\oplus 2} \to M(\partial\bar{X}) \to (\mathbb{Z} \oplus \mathbb{Z}(1)[2])[1]
\end{equation*}
    All motives in the first two terms are pure Tate. Under Betti realization, the first morphism becomes the first map in the topological Mayer--Vietoris sequence. Since $H^*(\partial\bar X,\mathbb Z)\cong H^*(\bar X,\mathbb Z)$ in degrees at most $4$ and the cohomology of $\bar X$ is free and concentrated in even degrees in this case, this map is injective with torsion-free cokernel. Lemma \ref{Tate splitting lemma} therefore shows that the first morphism is split injective, and
    $$M(\partial \bar{X}) \cong \operatorname{coker}(\mathbb{Z} \oplus \mathbb{Z}(1)[2] \to \mathbb{Z}^{\oplus 2} \oplus \mathbb{Z}(1)[2]^{\oplus 3} \oplus \mathbb{Z}(2)[4]^{\oplus 2}) \cong \mathbb{Z} \oplus \mathbb{Z}(1)[2]^{\oplus 2} \oplus \mathbb{Z}(2)[4]^{\oplus 2}.$$
Put
$$P:=\mathbb{Z} \oplus \mathbb{Z}(1)[2]^{\oplus 2} \oplus \mathbb{Z}(2)[4]^{\oplus 2}.$$
Thus, the localization sequence takes the form
\begin{equation*}
    P\longrightarrow P\oplus\mathbb Z(3)[6]\longrightarrow M^c(X)\longrightarrow P[1].
\end{equation*}
  The first map is a morphism between pure Tate motives. Its Betti realization is an isomorphism in degrees below $6$ and has cokernel $\mathbb Z$ in degree $6$: this follows from the topological localization sequence and the fact that $X$ is contractible. Lemma \ref{Tate splitting lemma} therefore shows that it is split injective, and
  \begin{align*}
      M^c(X) & \cong \operatorname{coker}\bigl(P\longrightarrow P\oplus\mathbb Z(3)[6]\bigr) \\
      & \cong \mathbb{Z}(3)[6].
  \end{align*}

\textbf{Case 2a:}
In this case $\bar{X}$ is either a projective bundle over $\mathbb{P}^2$ or $\bar{X}$ is a smooth divisor of type $(2,1)$ in $\mathbb{P}^2 \times \mathbb{P}^2$. If $\bar{X}$ is given by a bihomogeneous polynomial of $x_0,x_1,x_2;y_0,y_1,y_2$ of bidegree $(2,1)$, then also $\bar{X}$ is a $\mathbb{P}^1$-bundle over $\mathbb{P}^2$. Indeed, write the equation as $q_0(x)y_0+q_1(x)y_1+q_2(x)y_2=0$. The quadrics $q_0,q_1,q_2$ have no common zero: at a common zero, a nonzero linear relation among their differentials would produce a singular point of $\bar X$. Hence every fiber of the first projection is a hyperplane in $\mathbb P^2$, and the projection $\bar{X} \to \mathbb{P}^2$ given by $([x_0:x_1:x_2], [y_0:y_1:y_2]) \mapsto [x_0:x_1:x_2]$ is a $\mathbb{P}^1$-bundle.

The projective bundle formula yields
\begin{align*}
    M(\bar{X}) &\cong M(\mathbb{P}^2) \oplus M(\mathbb{P}^2)(1)[2] \\
            & \cong \mathbb{Z} \oplus \mathbb{Z}(1)[2]^{\oplus 2} \oplus \mathbb{Z}(2)[4]^{\oplus 2} \oplus \mathbb{Z}(3)[6]
            \end{align*}
In this case the boundary is $\partial\bar X=A_1\cup A_2$, where $A_1$ and $A_2$ are smooth projective rational surfaces and $A_1\cap A_2\cong\mathbb P^1$. Hence
$$M(A_1) \cong \mathbb{Z} \oplus \mathbb{Z}(1)[2]^{\oplus r} \oplus \mathbb{Z}(2)[4], \text{ and }$$
$$M(A_2) \cong \mathbb{Z} \oplus \mathbb{Z}(1)[2]^{\oplus s} \oplus \mathbb{Z}(2)[4], $$
for some integers $r, s \geq 1$.
The Mayer--Vietoris triangle becomes
$$\mathbb{Z} \oplus \mathbb{Z}(1)[2] \to \mathbb{Z}^{\oplus 2} \oplus \mathbb{Z}(1)[2]^{\oplus (r+s)} \oplus \mathbb{Z}(2)[4]^{\oplus 2} \to M(\partial \bar{X}) \to (\mathbb{Z} \oplus \mathbb{Z}(1)[2])[1].$$
The first map is a morphism between pure Tate motives. As in Case 1, the topological Mayer--Vietoris sequence identifies its Betti realization with an injection having torsion-free cokernel. Lemma \ref{Tate splitting lemma} therefore shows that the first morphism is split injective, and
  \begin{align*}
      M(\partial \bar{X}) & \cong \operatorname{coker}(\mathbb{Z} \oplus \mathbb{Z}(1)[2] \to \mathbb{Z}^{\oplus 2} \oplus \mathbb{Z}(1)[2]^{\oplus (r+s)} \oplus \mathbb{Z}(2)[4]^{\oplus 2}) \\
      & \cong \mathbb{Z} \oplus \mathbb{Z}(1)[2]^{\oplus (r+s-1)} \oplus \mathbb{Z}(2)[4]^{\oplus 2}.
  \end{align*}
  Thus, the rank of $\Pic(\partial \bar{X})$ is $r+s-1$. The morphism
$$H^2(\bar{X}, \mathbb{Z}) \to H^2(\partial \bar{X})$$
is an isomorphism \cite[Section 3]{MS1990}. Since $\Pic(\bar{X}) \cong \mathbb{Z} \oplus \mathbb{Z}$, we have
$$r+s = 3.$$
Thus $M(\partial\bar X)\cong P$ and $M(\bar X)\cong P\oplus\mathbb Z(3)[6]$. The localization and Tate-splitting argument of Case 1 applies verbatim and gives
$$M^c(X)\cong\mathbb Z(3)[6].$$
    


\textbf{Case 2b:} In this case $\bar{X}$ is also a projective bundle over $\mathbb{P}^2$, the surfaces $A_1$ and $A_2$ are rational (in particular, $A_1 \cong \mathbb{P}^2$ and $A_2 \cong \mathbb{F}_a$), and $A_1 \cap A_2 \cong \mathbb{P}^1$. Hence, this case is included in Case 2a, and we conclude that
$$M^c(X) \cong \mathbb{Z}(3)[6].$$
\textbf{Case 3:} Here $\bar X$ is the blow-up $\pi:\bar X\to Y$ along a point or a smooth curve, where $Y$ is one of $\mathbb P^3$, $\mathbb Q^3$, $\mathbb V_5$, or $\mathbb A_{22}$. Write $\partial\bar X=A_1\cup A_2$.

If $\pi$ is the blow-up of $Y$ at a $k$-point, then $A_1$ and $A_2$ are rational and $A_1\cap A_2\cong\mathbb P^1$. The blow-up formula \cite[Proposition 3.5.3]{Voevodsky+2011+188+238} gives
$M(\bar X)\cong P\oplus\mathbb Z(3)[6]$ and $\Pic(\bar X)\cong\mathbb Z^2$. The boundary calculation of Case 2a gives $M(\partial\bar X)\cong P$, so the localization argument of Case 1 yields
$$M^c(X)\cong\mathbb Z(3)[6].$$

\textbf{Case 3a:} In this case, $\pi$ is the blow-up along a smooth curve $S \subset Y$, where $S \cong \mathbb{P}^1$; moreover, $A_1$ and $A_2$ are rational surfaces and $A_1 \cap A_2 \cong \mathbb{P}^1$.

The same blow-up formula gives
$M(\bar X)\cong P\oplus\mathbb Z(3)[6]$. Proceeding as above gives
$$M^c(X) \cong \mathbb{Z}(3)[6].$$
\textbf{Case 3b:} Here the center of $\pi$ is a smooth curve $S$, possibly of positive genus. After renumbering, the exceptional divisor is the boundary component $A_2$, a $\mathbb P^1$-bundle over $S$. The other component $A_1$ is a smooth rational surface, and $A_1\cap A_2\cong S$.

The projective bundle formula gives
$$M(E) \cong M(A_2)  \cong M(S) \oplus M(S)(1)[2].$$
and therefore $\operatorname{rk}\operatorname{NS}(A_2)=2$.

By \cite[Proposition 3.5.3]{Voevodsky+2011+188+238}, we have
$$M(\bar{X}) \cong \mathbb{Z} \oplus \mathbb{Z}(1)[2] \oplus \mathbb{Z}(2)[4] \oplus \mathbb{Z}(3)[6] \oplus M(S)(1)[2].$$
$$M(S) \to \mathbb{Z} \oplus \mathbb{Z}(1)[2] \oplus \mathbb{Z}(2)[4] \oplus M(S) \oplus M(S)(1)[2] \to M(\partial\bar{X}) \to M(S)[1]$$
Thus, $\operatorname{rk}\Pic(\bar{X}) = 2$.

Since $A_1$ is rational,
$$M(A_1) \cong \mathbb{Z} \oplus \mathbb{Z}(1)[2]^{\oplus r} \oplus \mathbb{Z}(2)[4].$$
The topological Mayer--Vietoris sequence gives
$$\operatorname{rank}H^2(\partial\bar X,\mathbb Z)
=\operatorname{rank}H^2(A_1,\mathbb Z)+\operatorname{rank}H^2(A_2,\mathbb Z)-\operatorname{rank}H^2(S,\mathbb Z)
=r+1.$$
Since the morphism
$$H^2(\bar{X}, \mathbb{Z}) \to H^2(\partial \bar{X},\mathbb Z)$$
is an isomorphism \cite[Section 3]{MS1990}, we have $2=r+1$.
Thus $r=1$ and
$$M(A_1)\cong\mathbb Z\oplus\mathbb Z(1)[2]\oplus\mathbb Z(2)[4].$$
Here the Mayer--Vietoris triangle has the form
\begin{equation*}
    M(S)\to \mathbb{Z} \oplus \mathbb{Z}(1)[2] \oplus \mathbb{Z}(2)[4] \oplus M(S) \oplus M(S)(1)[2] \to M(\partial\bar{X})\to M(S)[1].
    \end{equation*}
The inclusion $S=A_1\cap A_2\hookrightarrow A_2$ is a section of the projective bundle $A_2\to S$. Consequently, the component $M(S)\to M(A_2)$ of the first morphism has a retraction. After an automorphism of $M(A_1)\oplus M(A_2)$, the Mayer--Vietoris morphism is therefore the inclusion of the $M(S)$-summand in
$$M(A_2)\cong M(S)\oplus M(S)(1)[2].$$
Thus it is split injective, and
\begin{align*}
    M(\partial \bar{X}) & \cong \operatorname{coker}(M(S)\to \mathbb{Z} \oplus \mathbb{Z}(1)[2] \oplus \mathbb{Z}(2)[4] \oplus M(S) \oplus M(S)(1)[2]) \\
   & \cong \mathbb{Z} \oplus \mathbb{Z}(1)[2] \oplus \mathbb{Z}(2)[4] \oplus M(S)(1)[2]
\end{align*}

Put
$$Q:=\mathbb{Z} \oplus \mathbb{Z}(1)[2] \oplus \mathbb{Z}(2)[4] \oplus M(S)(1)[2].$$
Thus, the localization sequence takes the form
\begin{align*}
  Q\longrightarrow Q\oplus\mathbb Z(3)[6]\longrightarrow M^c(X)\longrightarrow Q[1].
\end{align*}
Under the blow-up decomposition
$$M(\bar X)\cong M(Y)\oplus M(S)(1)[2],$$
the summand $M(S)(1)[2]$ is induced by the exceptional divisor $A_2$. Hence the boundary morphism restricts to an isomorphism on the common $M(S)(1)[2]$-summand. Cancelling this summand reduces the first map to a morphism between the pure Tate motives
$$\mathbb Z\oplus\mathbb Z(1)[2]\oplus\mathbb Z(2)[4]
\longrightarrow
\mathbb Z\oplus\mathbb Z(1)[2]\oplus\mathbb Z(2)[4]\oplus\mathbb Z(3)[6].$$
Its Betti realization is an isomorphism below degree $6$ with cokernel $\mathbb Z$ in degree $6$. Lemma \ref{Tate splitting lemma} shows that the reduced morphism, and hence the original boundary morphism, is split injective. Therefore,
\begin{align*}
    M^c(X) & \cong \operatorname{coker}\bigl(Q\longrightarrow Q\oplus\mathbb Z(3)[6]\bigr) \\
    & \cong \mathbb{Z}(3)[6].
\end{align*}


Since $X$ is smooth of dimension $3$, motivic duality applied to $M^c(X)\cong\mathbb Z(3)[6]$ gives $M(X)\cong\mathbb Z$ \cite[Theorem 4.3.7.3]{Voevodsky+2011+188+238}. Hence $\CH^i(X)=0$ for every $i>0$, and every algebraic vector bundle on $X$ is trivial by Theorem \ref{threefold}.
\end{proof}
\begin{remark}\label{affinemodification}
    In \cite{Kishimoto2005}, Kishimoto presented a classification of compactifications of topologically contractible threefolds with two boundary components. Notably, the classification contains exotic $\AA^3$'s. Following Kishimoto's classification \cite[Remark 1.3]{Kishimoto2005}, in case (1), if $X$ is isomorphic to the affine modification of $\mathbb{A}^3$ along the surface $S_{a,b}$ (a tom Dieck--Petrie surface) at the center of a smooth point of $S_{a,b}$, then $X$ is stably $\mathbb{A}^1$-contractible \cite[Theorem 3.2]{DPO}, because $M(S_{a,b})$ is trivial: indeed, $S_{a,b}$ is topologically contractible by \cite[Proposition 4]{Asok_motive}. \par
    Also, in case (2), the threefold $X$ is isomorphic to the affine modification of $\mathbb{A}^3$ along $S$ (where $S$ is a cylinder over the cuspidal curve) at the center of a smooth point; this threefold is in turn isomorphic to
    $$X \cong S_{a,b} \times \mathbb{A}^1,$$
    so in this case also, $X$ is stably $\mathbb{A}^1$-contractible by \cite[Example 2.25]{DPO}, but $X$ is not $\mathbb{A}^1$-connected. 

    The following theorem combines these descriptions with a Gysin calculation for Type XIII. It proves that the contractible threefolds $X$ appearing in Types XII and XXII of \cite[Table 1]{Kishimoto2005} have trivial motive, and gives the same conclusion in Type XIII whenever the divisor in the affine-modification description is smooth.


\end{remark}

\begin{proposition}\label{homology Gm surface}
    Let $S$ be a smooth affine surface over $\CC$ such that
    \[
        H_0(S,\ZZ)\cong H_1(S,\ZZ)\cong\ZZ,
        \qquad H_i(S,\ZZ)=0\quad(i>1).
    \]
    Then $S$ is rational and
    \[
        M(S)\cong M(\mathbb G_m)
        \cong\ZZ\oplus\ZZ(1)[1]
    \]
    in $\textbf{DM}_{gm}(\CC,\ZZ)$.
\end{proposition}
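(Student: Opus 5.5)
Rationality comes from the classification of $\mathbb Q$-homology planes and $\mathbb Q$-homology $\mathbb C^*$'s. The triviality of the motive comes from realizing $S$ as a $\mathbb C^*$-type fibration or a cyclic-cover quotient, and comparing with $\mathbb G_m$ via localization triangles. Start with rationality. Since $H_2(S,\ZZ)=0$ and $H_1(S,\ZZ)\cong\ZZ$, $S$ is a smooth affine $\ZZ$-homology $\mathbb C^*$. By Fujita's theory, the logarithmic Kodaira dimension $\bar\kappa(S)$ is $-\infty$, $0$, $1$ or $2$. For $\bar\kappa=-\infty$, $S$ carries an $\mathbb A^1$-ruling over a curve $B$; the homology forces $B\cong\mathbb A^1$ or $\mathbb G_m$, so $S$ is rational. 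For $\bar\kappa\ge 0$, invoke the structure results for homology $\mathbb C^*$'s (Miyanishi--Sugie, Gurjar--Miyanishi type results analogous to \cite{GSa,GSb}). Alternatively, use a smooth SNC compactification $\bar S$ and argue as in Theorem~\ref{regular q-forms of contractible}: the kernel of $H^i(\bar S)\to H^i(S)$ lies in $F^1$, hence $q(\bar S)=p_g(\bar S)=0$. Then use the fact that for such surfaces $\bar\kappa\le 1$, or a $\mathbb C^*$-fibration exists, to conclude via Castelnuovo's criterion (Kodaira dimension of $\bar S$ negative). I expect this step to be the main obstacle: excluding non-rational $\bar S$ with $q=p_g=0$ (e.g.\ Enriques-type or general type) needs a genuine input from the classification of homology $\mathbb C^*$'s, which I would cite rather than reprove.

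For the motive, fix a smooth projective SNC compactification $\bar S$ with boundary $D$. By the rationality just proved, $M(\bar S)\cong \ZZ\oplus\ZZ(1)[2]^{\oplus \rho}\oplus\ZZ(2)[4]$ is pure Tate, since a rational surface is a blow-up of $\mathbb P^2$ or $\mathbb F_a$. The boundary $D$ is a connected union of smooth curves. The Mayer--Vietoris triangles of \cite[Proposition 4.1.1]{Voevodsky+2011+188+238} express $M(D)$ through the components $M(D_i)$ and the points of intersection. Here the topology of $S$ matters: $H^1(\bar S)=0$ and Poincar\'e--Lefschetz duality give $H^i_c(S)\cong H_{4-i}(S)$, so $H^2_c(S)=0$, $H^3_c(S)\cong\ZZ$ and $H^4_c(S)\cong\ZZ$. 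The long exact sequence of the pair then shows $H^1(D,\ZZ)=0$ and $H^2(\bar S)\to H^2(D)$ injective with cokernel $\ZZ$, up to the $H^3_c$ term. In particular each $D_i\cong\mathbb P^1$ and the dual graph of $D$ has first Betti number $1$: a single cycle of rational curves. A cycle of $\mathbb P^1$'s has motive $\ZZ\oplus\ZZ[1]\oplus\ZZ(1)[2]^{\oplus n}$, computed by Mayer--Vietoris exactly as in the proof of Theorem~\ref{trivial motive muller}, using Lemma~\ref{Tate splitting lemma}.

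Now apply the localization triangle $M(D)\to M(\bar S)\to M^c(S)\to M(D)[1]$. The Tate parts of the map are integral matrices whose Betti realizations are, by the computation above, injective with torsion-free cokernel, except in degree $2$, where the cokernel is $\ZZ(2)[4]$. Lemma~\ref{Tate splitting lemma}, applied to the pure Tate part after splitting off $\ZZ[1]$, yields $M^c(S)\cong\ZZ(2)[4]\oplus\ZZ(1)[1]$. The $\ZZ(1)[1]$ arises as the shift of the loop summand $\ZZ[1]$ twisted appropriately; more carefully, the $\ZZ[1]$ in $M(D)$ maps to zero in $M(\bar S)$ and contributes $\ZZ[2]$ to the cone. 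This is the one point where I need care in matching weights: I will identify the loop class with $\ZZ(1)[3]$ via the duality check $M^c(\mathbb G_m)\cong\ZZ(1)[2]\oplus\ZZ[1]$. Finally, motivic duality \cite[Theorem 4.3.7.3]{Voevodsky+2011+188+238}, $M(S)\cong M^c(S)^*(2)[4]$, gives $M(S)\cong\ZZ\oplus\ZZ(1)[1]\cong M(\mathbb G_m)$.
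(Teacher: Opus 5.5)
The motivic half of your argument has a genuine error. You correctly get $H^1(D,\ZZ)=0$ from $H^1(\bar S)=0$ and $H^2_c(S)\cong H_2(S)=0$, but you then conclude that the dual graph of $D$ is a single cycle. That contradicts what you just proved. By the normalization sequence, $H^1$ of the dual graph is a subgroup of $H^1(D,\ZZ)$, so $D$ is a \emph{tree} of smooth rational curves. Indeed, if $D$ were a cycle, $H^1(D)\cong\ZZ$ would inject into $H^2_c(S)=0$.

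The extra $\ZZ$ in $\operatorname{coker}(H^2(\bar S)\to H^2(D))\cong H^3_c(S)$ is an extra irreducible component, not a loop. So $D$ has $r=\rho+1$ components, and $M(D)\cong\ZZ\oplus\ZZ(1)[2]^{\oplus r}$ has no $\ZZ[1]$ summand. The step in which $\ZZ[1]$ maps to zero, produces $\ZZ[2]$ in the cone, and is then ``identified'' with $\ZZ(1)[3]$ therefore has nothing to stand on. It could not be made rigorous anyway: $\ZZ[2]$ and $\ZZ(1)[3]$ are non-isomorphic in $\mathbf{DM}_{gm}(\CC,\ZZ)$, and a $\ZZ[2]$ in $M^c(S)$ would dualize to $\ZZ(2)[2]$, not $\ZZ(1)[1]$. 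Note also that the value $M^c(S)\cong\ZZ(2)[4]\oplus\ZZ(1)[1]$ you write down dualizes to $\ZZ\oplus\ZZ(1)[3]$. The correct value is $\ZZ(1)[3]\oplus\ZZ(2)[4]$.

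Your splitting step is also misdirected. Lemma~\ref{Tate splitting lemma} needs an injective Betti realization. But in degree $2$ the realization of $M(D)\to M(\bar S)$ is $H_2(D)\cong\ZZ^{r}\to H_2(\bar S)\cong\ZZ^{\rho}$, which is surjective with kernel $H_3^{\mathrm{BM}}(S)\cong H^1(S)\cong\ZZ$. The unhit class $\ZZ(2)[4]$ lives in degree $4$, not degree $2$. The paper argues directly instead:
\begin{itemize}
\item the morphism is the identity on the $\ZZ$-summand;
\item on the $\ZZ(1)[2]$-parts it is a split-surjective integral matrix;
\item it has no other components, because $\operatorname{Hom}(\ZZ(i)[2i],\ZZ(j)[2j])=0$ for $i\neq j$.
\end{itemize}
The kernel summand $\ZZ(1)[2]$ then contributes $\ZZ(1)[3]$ to the cone, and duality gives $M(S)\cong\ZZ\oplus\ZZ(1)[1]$.

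Your rationality sketch is at citation level, as is the paper's, but two points need fixing. Your $F^1$-argument does not give $q=0$, since $H^1(S)\neq0$. And $q=p_g=0$ alone does not feed Castelnuovo's criterion. The paper gets $q=0$ from weights: $W_1H^1(S,\QQ)$ is a polarizable weight-one structure inside a one-dimensional space, hence zero, and it is the image of $H^1(\bar S,\QQ)$. It then applies the affine case of Gurjar--Parameswaran ($S$ affine, $e(S)=0$, connected boundary). This gives a minimal model that is either $\PP^2$ or ruled over a curve of genus $q=0$.
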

\begin{proof}
    Choose a smooth projective compactification $S\subset Y$ whose boundary $B=Y\setminus S$ is a simple normal crossing divisor. Poincar\'e duality gives
    \[
        H_c^i(S,\ZZ)\cong
        \begin{cases}
            \ZZ,&i=3,4,\\
            0,&i\ne3,4.
        \end{cases}
    \]
    The localization sequence for $S\subset Y$ in degrees zero and one shows that $B$ is connected.

    We first prove that $S$ is rational. The mixed Hodge structure on $H^1(S,\QQ)$ has dimension one. Its subspace $W_1H^1(S,\QQ)$ is a polarizable Hodge structure of weight one and therefore has even dimension. Consequently,
    \[
        W_1H^1(S,\QQ)=0.
    \]
    On the other hand, the natural map
    \[
        H^1(Y,\QQ)\longrightarrow H^1(S,\QQ)
    \]
    is injective and its image is $W_1H^1(S,\QQ)$ \cite{PMIHES_1971__40__5_0}. Hence $q(Y)=0$. The surface $S$ is affine, its Euler characteristic is zero, and its boundary $B$ is connected. The affine case of \cite[\S5]{GurjarParameswaran1995} therefore shows that the minimal model of $Y$ is either $\PP^2$ or a ruled surface. In the latter case, the genus of the base of the ruling equals $q(Y)$ and is therefore zero. Thus $Y$, and hence $S$, is rational.

    We now compute the motive. Since $Y$ is a smooth projective rational surface, there is an integral decomposition
    \[
        M(Y)\cong\ZZ\oplus\ZZ(1)[2]^{\oplus\rho}
        \oplus\ZZ(2)[4],
        \qquad \rho=b_2(Y).
    \]
    The localization sequence and the above computation of $H_c^*(S,\ZZ)$ also give $H^1(B,\ZZ)=0$. It follows from the normalization sequence that every irreducible component of $B$ is a rational curve and that the dual graph of $B$ is a tree. If $r$ is the number of irreducible components of $B$, then
    \[
        M(B)\cong\ZZ\oplus\ZZ(1)[2]^{\oplus r}.
    \]
    Since $Y$ is rational, $H_3(Y,\mathbb Z)=0$; moreover, the preceding paragraph gives $H_1(B,\mathbb Z)=0$. The Borel--Moore localization sequence therefore contains
    \[
        0\longrightarrow H_3^{\mathrm{BM}}(S,\ZZ)
        \longrightarrow H_2(B,\ZZ)\longrightarrow H_2(Y,\ZZ)
        \longrightarrow H_2^{\mathrm{BM}}(S,\ZZ)\longrightarrow0.
    \]
    Here $H_3^{\mathrm{BM}}(S,\ZZ)\cong H^1(S,\ZZ)\cong\ZZ$ and
    $H_2^{\mathrm{BM}}(S,\ZZ)\cong H^2(S,\ZZ)=0$. Hence
    \[
        0\longrightarrow\ZZ\longrightarrow\ZZ^r
        \longrightarrow\ZZ^\rho\longrightarrow0,
    \]
    so $r=\rho+1$ and the map $H_2(B,\ZZ)\to H_2(Y,\ZZ)$ is a split surjection. Under the displayed Tate decompositions, the morphism
    $M(B)\to M(Y)$ is the identity on the $\ZZ$-summand, is represented by this split-surjective integral matrix on the $\ZZ(1)[2]$-summands, and has no other components for degree reasons. The localization triangle
    \[
        M(B)\longrightarrow M(Y)\longrightarrow M^c(S)
        \longrightarrow M(B)[1]
    \]
    therefore gives
    \[
        M^c(S)\cong\ZZ(1)[3]\oplus\ZZ(2)[4].
    \]
    Finally, motivic duality for the smooth surface $S$ yields
    \[
        M(S)\cong M^c(S)^\vee(2)[4]
        \cong\ZZ\oplus\ZZ(1)[1],
    \]
    as claimed.
\end{proof}

\begin{theorem} \label{trivial motive Kishimoto}
    Let $X=\bar X\setminus(D_1\cup D_2)$ be a topologically contractible smooth affine threefold appearing in \cite[Table 1]{Kishimoto2005}, where $\bar X$ is a smooth Fano threefold and $K_{\bar X}+D_1+D_2$ is not nef. Then $M(X)$ is isomorphic to $\mathbb{Z}$ in $\textbf{DM}_{gm}(\mathbb{C}, \mathbb{Z})$ in every case except possibly Type XIII. The same conclusion holds in Type XIII whenever the divisor in its affine-modification description is smooth.
    In each of these cases, every vector bundle over $X$ is trivial.
\end{theorem}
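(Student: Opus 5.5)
Following Kishimoto's table, the contractible threefolds $X$ fall into three types: XII, XIII and XXII. Remark \ref{affinemodification} recalls Kishimoto's description of each as an affine modification of $\mathbb A^3$. The plan is to compute $M(X)$ type by type from that description rather than from the compactification $(\bar X, D_1\cup D_2)$. This works because the affine-modification picture shows directly which pieces carry non-Tate motive.

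\textbf{Types XII and XXII.} In case (1) of \cite[Remark 1.3]{Kishimoto2005}, $X$ is the affine modification of $\mathbb A^3$ along a tom Dieck--Petrie surface $S_{a,b}$, centered at a smooth point. Since $S_{a,b}$ is topologically contractible, \cite{Asok_motive} gives $M(S_{a,b})\cong\mathbb Z$, and \cite[Theorem 3.2]{DPO} shows that $X$ is stably $\mathbb A^1$-contractible. Hence $M(X)\cong\mathbb Z$. In case (2), $X\cong S_{a,b}\times\mathbb A^1$, so by homotopy invariance $M(X)\cong M(S_{a,b})\cong\mathbb Z$. The task here is only to match these two cases of Remark \ref{affinemodification} with Types XII and XXII of the table.

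\textbf{Type XIII.} Here $X$ is the affine modification of $\mathbb A^3$ along a divisor $E\subset\mathbb A^3$ with a center $C\subset E$. I would use the standard geometry of affine modifications: there is a divisor $E'\subset X$, and $X\setminus E'\cong\mathbb A^3\setminus E$. Assume $E'$ is smooth, which should follow from smoothness of the modification divisor. Then there is a Gysin triangle
\[
M(X\setminus E')\to M(X)\to M(E')(1)[2]\to M(X\setminus E')[1],
\]
and similarly for $E\subset\mathbb A^3$, from which $M(\mathbb A^3\setminus E)$ is computed. The topological data would come from the long exact sequences of these pairs together with contractibility of $X$ and of $\mathbb A^3$. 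I expect $E'$ to satisfy $H_0\cong H_1\cong\mathbb Z$ and to have no higher homology, so that $E'$ (in Kishimoto's description, a $\mathbb G_m$-type surface) falls under Proposition \ref{homology Gm surface}. That proposition then gives $M(E')\cong\mathbb Z\oplus\mathbb Z(1)[1]$.

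The same argument applied to the complement, together with its $\mathbb G_m$-fibration structure, should give $M(X\setminus E')\cong\mathbb Z\oplus\mathbb Z(1)[1]\oplus(\text{terms}(1)[2])$. The Gysin triangle then expresses $M(X)$ as a cone between mixed Tate motives of weight at most $2$. The point is to show the connecting morphism is an isomorphism on the non-unit summands, leaving $M(X)\cong\mathbb Z$. Homs between the relevant Tate twists are given by $H^{*,*}(\Spec\mathbb C,\mathbb Z)$; in particular $\operatorname{Hom}(\mathbb Z(1)[1],\mathbb Z(1)[1])=\mathbb Z$ and $\operatorname{Hom}(\mathbb Z(1)[1],\mathbb Z(2)[3])=0$. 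So the morphism is an integral matrix, and it can be detected by Betti realization, as in Lemma \ref{Tate splitting lemma}. Betti realization is acyclic by contractibility of $X$, so the matrix is invertible.

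\textbf{Conclusion and main obstacle.} In every case, $M(X)\cong\mathbb Z$ gives $\CH^i(X)=0$ for $i>0$, and Theorem \ref{threefold} yields triviality of vector bundles. The main obstacle is Type XIII. There I must verify the homology hypotheses of Proposition \ref{homology Gm surface} for the smooth modification divisor $E'$. I must also control the possibly non-split extension classes in the Gysin triangle, which requires the extended splitting argument for mixed Tate objects involving $\mathbb Z(1)[1]$.
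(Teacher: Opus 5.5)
\textbf{Types XII and XXII.} Your treatment matches the paper. (The remaining types of the table have $X\cong\mathbb A^3$; you should dispose of them explicitly.)

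\textbf{Type XIII: the geometry is misidentified.} Remark~\ref{affinemodification} covers only Types XII and XXII. In Type XIII (Kishimoto's Lemma 5.5), $X$ is the affine modification of $Z=SL(2,\mathbb C)$, not of $\mathbb A^3$. The modification is along a divisor $D\subset Z$ with a \emph{point} center $p$, so the exceptional divisor (your $E'$) is $E\cong\mathbb A^2$. The surface with the homology of $\mathbb G_m$ is the modification divisor $D$ in the base. The smoothness hypothesis in the statement concerns $D$: it is what makes the Gysin triangle for $D\subset Z$ and Proposition~\ref{homology Gm surface} available. Applying that proposition to $E'$ therefore fails.

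In fact your configuration cannot produce such a surface at all. If both the base and $X$ are contractible, the two Thom--Gysin sequences together with $X\setminus E'\cong\mathbb A^3\setminus E$ force $H_*(E')\cong H_*(E)$. With a point center both are then acyclic. That is exactly the configuration of Types XII and XXII.

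\textbf{What is missing is the non-contractible base.} The paper's argument runs as follows.
\begin{enumerate}
\item $M(SL(2,\mathbb C))\cong\mathbb Z\oplus\mathbb Z(2)[3]$, since projection to the first column is an $\mathbb A^1$-bundle over $\mathbb A^2\setminus\{0\}$.
\item $M(D)(1)[2]\cong\mathbb Z(1)[2]\oplus\mathbb Z(2)[3]$, from Proposition~\ref{homology Gm surface} applied to $D$, whose homology Kishimoto computes.
\item In the Gysin triangle for $D\subset Z$, the components $\mathbb Z\to\mathbb Z(1)[2]$ and $\mathbb Z(2)[3]\to\mathbb Z(1)[2]$ of the connecting map vanish, since they lie in $H^{2,1}$ and $H^{-1,-1}$ of a point.
\item The $\mathbb Z(2)[3]$-component is $\pm1$. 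Its Betti realization is the boundary map $H_3(SL(2,\mathbb C),\mathbb Z)\to H_1(D,\mathbb Z)$, which is an isomorphism because $Z^*\cong X^*$ has the homology of $S^1$.
\item Hence $M(X^*)\cong M(Z^*)\cong\mathbb Z\oplus\mathbb Z(1)[1]$ exactly, with none of the extra $(1)[2]$-terms you anticipate.
\item The Gysin triangle for $E\cong\mathbb A^2\subset X$ has third term $\mathbb Z(1)[2]$. The map $\mathbb Z(1)[1]\to\mathbb Z\oplus\mathbb Z(1)[1]$ is the summand inclusion. The paper gets this from the morphism of Gysin triangles given by purity; it also follows from $\operatorname{Hom}(\mathbb Z(1)[1],\mathbb Z)=0$ together with Betti realization. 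So $M(X)\cong\mathbb Z$.
\end{enumerate}

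\textbf{On extension classes.} Your concern is legitimate in general: Betti realization does not see $\operatorname{Hom}(\mathbb Z,\mathbb Z(1)[1])\cong\mathbb C^\times$. The ``integral matrix detected by Betti realization'' argument works here only because every off-diagonal Hom group in this specific configuration vanishes.
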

\begin{proof}
    In \cite[Table 1]{Kishimoto2005}, there is a complete classification of contractible complex affine threefolds satisfying the above condition. In most cases, $X\cong \AA^3$. As indicated in \cite[Remark 1.3]{Kishimoto2005}, threefolds of Types XII, XIII, and XXII in the classification may be exotic $\AA^3$'s.

    By \cite[Lemma 5.4 and Remark 1.3]{Kishimoto2005}, Types XII and XXII are the two affine-modification cases recalled in Remark \ref{affinemodification}. In the first case, \cite[Theorem 3.2]{DPO} shows that $X$ is stably $\mathbb A^1$-contractible. In the second case, Kishimoto's description gives
    $$X\cong S_{a,b}\times\mathbb A^1,$$
    where the tom Dieck--Petrie surface $S_{a,b}$ is a smooth topologically contractible affine surface. Hence $M(S_{a,b})\cong\mathbb Z$ by \cite{Asok_motive}, and homotopy invariance gives $M(X)\cong\mathbb Z$. Stable $\mathbb A^1$-contractibility also implies that the motive is $\mathbb Z$ in the first case. Thus $M(X)\cong\mathbb Z$ in both Types XII and XXII.

    It remains to consider Type XIII when its modification divisor is smooth. We use the notation of \cite[Lemma 5.5]{Kishimoto2005}. Let $Z=SL(2,\mathbb C)$, and let $\sigma:X\to Z$ be the affine modification with smooth divisor $D$, center $p\in D$, and exceptional divisor $E\cong\mathbb A^2$ \cite[Definition 2.2 and Lemma 5.2(2)]{Kishimoto2005}. Put $X^*=X\setminus E$ and $Z^*=Z\setminus D$. The affine modification restricts to an isomorphism $X^*\cong Z^*$. Kishimoto's calculation gives
    \[
        H_0(D,\ZZ)\cong H_1(D,\ZZ)\cong\ZZ,
        \qquad H_i(D,\ZZ)=0\quad(i>1)
    \]
    \cite[Lemma 5.5]{Kishimoto2005}. Proposition \ref{homology Gm surface} therefore gives
    \[
        M(D)\cong M(\mathbb G_m)
        \cong\ZZ\oplus\ZZ(1)[1].
    \]
    By purity \cite[Property 14.5.5]{MVW}, the two Gysin distinguished triangles form a morphism of triangles:
    \[
    \begin{tikzcd}
    M(\AA^2)(1)[1] \arrow[r] \arrow[d] 
    & M(X^*) \arrow[r] \arrow[d, "\cong"] 
    & M(X) \arrow[r] \arrow[d, "\sigma"'] 
    & M(\AA^2)(1)[2] \arrow[d]  
    \\
    M(D)(1)[1] \arrow[r] 
    & M(Z^*) \arrow[r] 
    & M(Z) \arrow[r] 
    & M(D)(1)[2]
    \end{tikzcd}
    \]
    The projection from $SL(2,\mathbb C)$ to its first column is a Zariski locally trivial $\mathbb A^1$-bundle over $\mathbb A^2\setminus\{0\}$. Therefore,
    $$M(Z)=M(SL(2,\mathbb C))\cong M(\mathbb A^2\setminus\{0\})
    \cong\mathbb Z\oplus\mathbb Z(2)[3]$$
    and, by Proposition \ref{homology Gm surface},
    $$M(D)(1)[2]\cong\mathbb Z(1)[2]\oplus\mathbb Z(2)[3].$$
    Put $A=\mathbb Z(1)[2]$ and $T=\mathbb Z(2)[3]$. With respect to these decompositions, the last morphism in the lower Gysin triangle has the form
    \[
        \delta:\mathbb Z\oplus T\longrightarrow A\oplus T,
        \qquad
        \delta=
        \begin{pmatrix}
        0&0\\
        c&n\operatorname{id}_T
        \end{pmatrix}
    \]
    for some $c\in\operatorname{Hom}(\mathbb Z,T)$ and $n\in\mathbb Z$. Indeed,
    $\operatorname{Hom}(\mathbb Z,A)=\operatorname{Hom}(T,A)=0$, while $\operatorname{End}(T)=\mathbb Z$. The Betti realization of $n\operatorname{id}_T$ is the boundary map
    $$H_3(SL(2,\mathbb C),\mathbb Z)\longrightarrow H_1(D,\mathbb Z).$$
    The upper topological Gysin sequence, together with the contractibility of $X$ and $E\cong\mathbb A^2$, gives
    $$H_1(X^*,\mathbb Z)\cong\mathbb Z,
    \qquad H_i(X^*,\mathbb Z)=0\quad(i>1).$$
    Since $X^*\cong Z^*$, the lower topological Gysin sequence shows that this boundary map is an isomorphism. Hence $n=\pm1$. The automorphism
    \[
    \mathbb Z\oplus T\longrightarrow\mathbb Z\oplus T,
    \qquad (z,t)\longmapsto(z,t-n^{-1}c(z))
    \]
    eliminates the entry $c$. After this change of decomposition, $\delta$ is the direct sum of the zero morphism $\mathbb Z\to A$ and the isomorphism $n\operatorname{id}_T:T\to T$. It follows that
    \[
        M(Z^*)\cong\mathbb Z\oplus A[-1]
        =\mathbb Z\oplus\mathbb Z(1)[1].
    \]
    Moreover, under this decomposition, the first morphism in the rotated lower Gysin triangle
    \[
        M(D)(1)[1]=A[-1]\oplus T[-1]\longrightarrow M(Z^*)
    \]
    restricts to the inclusion of the $A[-1]=\mathbb Z(1)[1]$-summand and vanishes on the $T[-1]$-summand.
    The morphism $E\to D$ factors through the center $p$. The point map $M(p)\to M(D)$ is split by the structure morphism $D\to\Spec\mathbb C$. Since
    $$\operatorname{Hom}(\mathbb Z(1)[1],\mathbb Z)=0,$$
    the above decomposition of $M(D)$ may be chosen so that $M(p)\to M(D)$ is the inclusion of its $\mathbb Z$-summand. After twisting by $(1)[1]$, the left vertical morphism in the diagram is therefore the inclusion
    \[
        M(E)(1)[1]=A[-1]\longrightarrow A[-1]\oplus T[-1]=M(D)(1)[1].
    \]
    By the commutativity of the diagram and the isomorphism $M(X^*)\cong M(Z^*)$, the first morphism in the upper Gysin triangle is the inclusion
    \[
        A[-1]\longrightarrow\mathbb Z\oplus A[-1].
    \]
    Its cone is $\mathbb Z$, and the upper Gysin triangle therefore gives $M(X)\cong\mathbb Z$. Thus $\CH^i(X)=0$ for every $i>0$, and Theorem \ref{threefold} implies that every algebraic vector bundle over $X$ is trivial.
\end{proof}
\begin{remark} \label{table1 rational}
    All the threefolds appearing in \cite[Table 1]{MS1990} and \cite[Table 1]{Kishimoto2005} are affine and rational.
    Thus also by Corollary \ref{main corollary}, every vector bundle over these threefolds is trivial.
\end{remark}
\begin{remark} \label{non tate}
    Every smooth projective rational surface is isomorphic to an iterated blow-up of either a Hirzebruch surface or the projective plane. Thus, its motive can be computed by the projective bundle formula \cite[Theorem 15.12]{MVW} and the blow-up triangle \cite[Property 14.5.4]{MVW}. Since the motives of the two base cases are Tate, the motive of a smooth projective rational surface is always Tate. However, the motive of a smooth projective rational threefold need not be Tate. For example, suppose that $S$ is an elliptic curve embedded in $\mathbb{P}^3$ and $X$ is the blow-up of $\mathbb{P}^3$ along $S$. Then $X$ is a projective rational threefold whose motive is not pure Tate.

    From the blow-up triangle,
    $$M(X) \cong M(\mathbb{P}^3) \oplus M(S)(1)[2].$$
    Since $S$ is an elliptic curve, $M(S)$ is not a pure Tate motive, and thus $M(X)$ is not pure Tate.
\end{remark}
The computations above suggest the following rational special case of Question \ref{general motive question}; compare Question \ref{question on trivial motive}.
\begin{question} \label{rational motive question}
Suppose that $X$ is a topologically contractible, smooth affine threefold over $\mathbb{C}$ such that $\bar{X}$ is rational. Is $M(X)$ trivial?
\end{question}

By the weak factorization theorem, a birational map $\mathbb P^3\dashrightarrow\bar X$ factors into a sequence of blow-ups and blow-downs along smooth centers \cite{AKMW}. In dimension $3$, the non-trivial centers may be taken to be points or smooth curves. Tracking the boundary through such a factorization may help determine the motive $M(X)$. However, a rational threefold can have a motive which is not pure Tate (Remark \ref{non tate}), and weak factorization alone does not control the motives of all boundary components.


\section{Appendix} \label{appendix}
Affine modifications provide an important source of topologically contractible varieties \cite{zaidenberglecture}. Let $\pi:\widetilde X\to X$ be the affine modification of $X$ along an irreducible hypersurface $D$ with center $C\subsetneq D$, where $C$ lies in the smooth locus of $D$. Let $\xi$ be the exceptional divisor of $\operatorname{Bl}_C(X)$ and put $\widetilde\xi=\xi\cap\widetilde X$.
Then $\pi|_{\widetilde\xi}:\widetilde\xi\to C$ is a Zariski locally trivial $\mathbb A^d$-bundle, where $d+1=\operatorname{codim}_X(C)$, and $\pi$ restricts to an isomorphism $\widetilde X\setminus\widetilde\xi\xrightarrow{\sim}X\setminus D$.
\begin{lemma} \label{connectedness of modification}
    Let $\widetilde X,X,C,D,\xi$, and $\widetilde\xi$ be as above over $\mathbb C$. If $\widetilde X$ and $D$ are $\mathbb A^1$-connected, then $X$ is $\mathbb A^1$-connected.
\end{lemma}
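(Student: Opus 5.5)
The plan is to use the sheaf-theoretic characterization of $\mathbb A^1$-connectedness. By the unstable $\mathbb A^1$-connectivity theorem (Morel) and the description of $\pi_0^{\mathbb A^1}$, a smooth variety $Y$ is $\mathbb A^1$-connected if and only if, for every finitely generated separable field extension $L/\mathbb C$, the set $\pi_0^{\mathbb A^1}(Y)(L)$ is a single point. Since $\pi_0^{\mathbb A^1}(Y)(L)$ is a quotient of $Y(L)$ (via $Y\to\pi_0^{\mathbb A^1}(Y)$, an epimorphism of Nisnevich sheaves, surjective on fields), it suffices to show that any two $L$-points of $X$ have the same image in $\pi_0^{\mathbb A^1}(X)(L)$. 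We also have $X(L)\neq\emptyset$, since $X$ is nonempty and $L\supseteq\mathbb C$.

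The key structural input is the pair of morphisms $\pi:\widetilde X\to X$ and the inclusion $\iota:D\hookrightarrow X$. These induce maps $\pi_0^{\mathbb A^1}(\widetilde X)\to\pi_0^{\mathbb A^1}(X)$ and $\pi_0^{\mathbb A^1}(D)\to\pi_0^{\mathbb A^1}(X)$, both from the trivial sheaf by hypothesis. Hence all $L$-points in $\pi(\widetilde X(L))$ have one common class, say $c_1$. Likewise all $L$-points of $D$ have one common class $c_2$.

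These two subsets together cover $X(L)$:
\begin{itemize}
\item $\pi$ is an isomorphism over $X\setminus D$, so $X(L)\setminus D(L)\subseteq \pi(\widetilde X(L))$;
\item the remaining points lie in $D(L)$.
\end{itemize}
Moreover, they intersect. The set $\widetilde\xi(L)$ is nonempty, because $\widetilde\xi\to C$ is an $\mathbb A^d$-bundle and $C(L)\neq\emptyset$ (here $C$ is nonempty over $\mathbb C\subseteq L$). Each such point maps into $C(L)\subseteq D(L)$, so its image carries both classes and $c_1=c_2$. Therefore $\pi_0^{\mathbb A^1}(X)(L)$ is a single point for every $L$, and $\pi_0^{\mathbb A^1}(X)$ is trivial.

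The main obstacle is justifying the field-point criterion, since $\pi_0^{\mathbb A^1}$ is not known to be $\mathbb A^1$-invariant or unramified in general. I would argue as follows. The map $X\to\pi_0^{\mathbb A^1}(X)$ is surjective on stalks, hence on Henselian local rings and in particular on fields. Then, to deduce triviality of the sheaf from triviality on fields, I would invoke Morel's result that $\pi_0^{\mathbb A^1}$ of a space is trivial if and only if its sections over all finitely generated field extensions are trivial (Morel's connectivity theorem, as used in \cite{AsokMorel}, Lemma 6.1.3 style). If that precise form is unavailable, the fallback is to work with $\mathbf H_0^{\mathbb A^1}$ or with $\mathbb A^1$-chain connectedness on fields, where the same covering argument applies verbatim.
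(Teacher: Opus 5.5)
Your proposal is correct and follows essentially the same route as the paper. Both reduce via Morel's field-valued criterion to comparing classes of $L$-points, cover $X(L)$ by $D(L)$ together with the lifts through $\widetilde X\setminus\widetilde\xi\cong X\setminus D$, and identify the two classes using a point of $\widetilde\xi$ lying over $C\subset D$; the paper simply invokes Morel's criterion directly and takes a $\mathbb C$-point $\widetilde x_0\in\widetilde\xi(\mathbb C)$, so your fallback discussion is not needed.
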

\begin{proof}
    Let $F/\mathbb C$ be a finitely generated field extension and let $x,y\in X(F)$. By Morel's field-valued criterion, it suffices to show that they have the same image in $\pi_0^{\mathbb A^1}(X)(F)$. This is immediate if $x,y\in D(F)$. If both lie in $(X\setminus D)(F)$, lift them through the isomorphism $\widetilde X\setminus\widetilde\xi\cong X\setminus D$ and use the $\mathbb A^1$-connectedness of $\widetilde X$.
     
     Finally, suppose that $x\in(X\setminus D)(F)$ and $y\in D(F)$. Lift $x$ to $\widetilde x\in\widetilde X(F)$. Choose $\widetilde x_0\in\widetilde\xi(\mathbb C)$ and set $x_0=\pi(\widetilde x_0)\in C(\mathbb C)$. The $\mathbb A^1$-connectedness of $\widetilde X$ identifies $x$ with $x_0$ in $\pi_0^{\mathbb A^1}(X)(F)$, while that of $D$ identifies $x_0$ with $y$. Hence $x$ and $y$ have the same image, as required.
\end{proof}
\begin{remark}
    The converse is false. For example, the tom Dieck--Petrie surfaces are affine modifications of the affine plane, but they are not $\mathbb{A}^1$-connected.
\end{remark}
\begin{example}
    \begin{enumerate}
        \item Let $X$ be a tom Dieck--Petrie surface; for example,
        \[
        X=\{z^2x^3+3zx^2+3x-zy^2-2y=1\}\subset\mathbb A^3_{\mathbb C}.
        \]
        Put $Y=X\times_{\mathbb C}\mathbb A^1_{\mathbb C}$. The surface $X$ is topologically contractible and has logarithmic Kodaira dimension $1$ \cite{Asok_motive}, whereas the cylinder $Y$ has logarithmic Kodaira dimension $-\infty$. The surface $X$ contains a unique affine line $\gamma=X\cap\{z=0\}$. Let $\widetilde Y$ be the affine modification associated with $(Y,p=(1,1,0,0),\Gamma=\gamma\times\mathbb A^1)$:
        $$\Tilde{Y} = \Sigma_{p, \Gamma} (Y).$$ 
        The variety $\Tilde{Y}$ is a smooth affine threefold, which is birational to $X \times_{\mathbb{C}} \mathbb{A}^1_{\mathbb{C}}$.
         \begin{remark}
           \begin{enumerate}
               \item Since $X$ and $\gamma$ are topologically contractible, $\Tilde{Y}$ is also topologically contractible.
               \item Since $X$ is rational, $\Tilde{Y}$ is also rational.
               \item Since $\Gamma$ is smooth, the Voevodsky motive $\mathbf{M}(\Tilde{Y}) \cong \mathbb{Z}$ \cite[Proposition 4]{Asok_motive}. Thus, in particular, all algebraic vector bundles over $\Tilde{Y}$ are trivial.
               \item Since $\Gamma$ is smooth and $X$ is not $\mathbb{A}^1$-contractible, \cite[Theorem 2.14]{DPO} shows that $\Tilde{Y}$ is not $\mathbb{A}^1$-contractible. However, by \cite[Theorem 3.2]{DPO}, $\Tilde{Y}$ is stably $\mathbb{A}^1$-contractible. The triviality of algebraic vector bundles over $\Tilde{Y}$ follows also from the motivic computation in the preceding item.
               \item The threefold $\Tilde{Y}$ has negative logarithmic Kodaira dimension. We can prove it this way. The ideal $I$ in $\mathcal{O}(Y)$ corresponding to the point $p \in Y$ is generated by the regular sequence $(\bar{z}, \overline{x-1}, \bar{w})$. Thus $\Tilde{Y}$ is given by the following system of equations in $Y \times_{\mathbb{C}} \mathbb{A}^2_{\mathbb{C}}$ (here, $Y \subset \Spec \ \mathbb{C}[x, y, z, w]$) as
               $$zy_1 = x - 1 \text{ and } zy_2 = w.$$
               The open subset $\Tilde{Y} \cap \{z \neq 0\}$ is isomorphic to $(X \cap \{z \neq 0\}) \times_{\mathbb{C}} \mathbb{A}^1_{\mathbb{C}}$. Thus, $\bar{\kappa}(\Tilde{Y}) = - \infty$.
               \item Here the modification divisor $\Gamma\cong\mathbb A^2$ is $\mathbb A^1$-connected, whereas the base $Y=X\times\mathbb A^1$ is not. The contrapositive of Lemma \ref{connectedness of modification} therefore shows that $\widetilde Y$ is not $\mathbb A^1$-connected.
               \item Suppose that we take $X$ to be the Ramanujam surface or a tom Dieck--Petrie surface, let $p = (q, 0) \in Y$ (where $Y = X \times_{\mathbb{C}} \mathbb{A}^1_{\mathbb{C}}$ and $q \in X(\mathbb{C})$), and set $\Tilde{Y} = \Sigma_{p, X} (Y)$. Then all the remarks above remain true except possibly (vi): we do not know whether $\Tilde{Y}$ is $\mathbb{A}^1$-connected in these cases.
               \end{enumerate}
    \end{remark}
\item Suppose that $X \subset \Spec \ \mathbb{C}[x, y, z]$ is given by the Brieskorn--Pham polynomial
    $$x^a + y^b+ z^c = 0, \qquad \text{where }a, b, c \geq 2 \text{ are pairwise coprime integers.}$$
    We may assume that $b$ is odd. The variety $X$ is normal and has a unique singularity at the origin. Consider the affine modification $\Tilde{X}$ of the triple $(\mathbb{A}^3_{\mathbb{C}}, p = (1, -1,0), X)$, so
    $$\Tilde{X} = \Sigma_{p, X}(\mathbb{A}^3_{\mathbb{C}}).$$
 Then $\Tilde{X}$ is a smooth rational affine threefold, and by \cite[Example 2.2]{Zaidenbergmodification}, it can be realized as the hypersurface in $\Spec \ \mathbb{C}[x,y,z,w]$ given by
 $$\frac{(xw+1)^a + (yw-1)^b + (zw)^c - w}{w} = 0,$$
 for simplicity, if we take $(a, b, c) = (2, 3, 5)$, then $\Tilde{X}$ is given by
 $$\{z^5w^4+ y^3w^2+ x^2w- 3y^2w+2x+3y = 1\} \subset \Spec \ \mathbb{C}[x, y, z, w].$$
 \begin{remark}\label{modificationremark}
 \begin{enumerate}
     \item The threefold $\Tilde{X}$ is topologically contractible. Indeed, the hypersurface in $\mathbb{A}^3_{\mathbb{C}}$ given by the Brieskorn polynomial $x^a+y^b+z^c$ admits a $\mathbb{G}_m$-action with unique fixed point, given by
     $$\lambda\cdot(x, y, z) = (\lambda^{bc}x, \lambda^{ac} y, \lambda^{ab}z).$$
     The $\mathbb{G}_m$-action map extends to the explicit homotopy $\mathbb{A}^1_{\mathbb{C}} \times_{\mathbb{C}} X \to X$, which is the null homotopy. So $\Tilde{X}$ is topologically contractible.
     \item Every algebraic vector bundle on $\Tilde X$ is trivial by Corollary \ref{contractible rational vector bundles}. It is not known in general whether $\Tilde X$ is stably $\mathbb A^1$-contractible or whether $M(\Tilde X)\cong\mathbb Z$. If $(a,b,c)=(2,3,5)$ or $c=mab+1$ for some $m>0$, however, $\Tilde X$ is stably $\mathbb A^1$-contractible \cite[Corollaries 3.7 and 3.8]{DPO}.
      \end{enumerate}
 \end{remark}
\begin{question}
    \begin{enumerate}
        \item Is $\Tilde{X}$ $\mathbb{A}^1$-connected? Does $\Tilde{X}$ have negative logarithmic Kodaira dimension? The variety $\Tilde{X}$ admits a morphism
        $$\phi: \Tilde{X} \to \mathbb{P}^2_{\mathbb{C}} \text{ given by } (x,y,z,w) \mapsto [(xw+1)^a:(yw-1)^b:(zw)^c].$$
        \item Can $\Tilde{X}$ be $\mathbb{A}^1$-contractible? We cannot use \cite[Theorem 2.14]{DPO} here, since the divisor is not smooth.
    \end{enumerate}
\end{question}
    \end{enumerate}
   
\end{example}


    \bibliographystyle{alpha}
	\bibliography{references}

\end{document}